\documentclass[11pt]{amsart}

\usepackage{xcolor}

\usepackage{amsfonts}
\usepackage{amsmath}
\usepackage{amssymb}

\newcommand{\R}{{\mathbb R}}
\newcommand{\N}{{\mathbb N}}

\newcommand{\HH}{\mathcal{H}}
\newtheorem{theorem}{Theorem}[section]
\newtheorem{lemma}[theorem]{Lemma}
\newtheorem{prop}[theorem]{Proposition}
\newtheorem{coro}[theorem]{Corollary}
\newtheorem{claim}[theorem]{Claim}

\newtheorem{example}[theorem]{Example}
\newtheorem{remark}[theorem]{Remark}

\title{The $L_p$ centro-sectional Minkowski problem}
\author{K\'aroly J. B\"or\"oczky}
\address{Alfr\'ed R\'enyi Institute of Mathematics, Hungarian Academy
  of Sciences, Re\'altanoda u. 13-15, H-1053 Budapest, Hungary, and
Institute of Mathematics, E\"otv\"os University, P\'azm\'any P\'eter s\'et\'any 1/c, H-1117, Budapest, Hungary}
\email{boroczky.karoly.j@renyi.hu}

\author{Jiaqian Liu}
\address{School of Mathematics and statistics, Henan University, Jinming Avenue, 475001, Kaifeng, China}
\email{liujiaqian@henu.edu.cn}

\author{Renqing You}
\address{Department of Mathematics, Shanghai University, Shanghai, 200444,China}
\email{yrenqing2020@163.com}

\thanks{2010 \emph{Mathematics Subject Classification}: 53C65 (52A38, 35J20, 35J96).\\
\emph{Keywords}: 
$L_{p}$ centro-sectional Minkowski problem; Monge-Amp\`{e}re equation.}

\begin{document}
\maketitle

\begin{abstract}
As part of Lutwak's broadening of the Brunn-Minkowski theory, 
and extending the notion of affine quermassintegrals and dual curvature measure (see Milman, Yehudayoff \cite{MiY23} and Huang, Lutwak, Yang and Zhang \cite{HLYZ16}), centro-sectional measures with parameter $q\in\R$  have been recently introduced by Cai, Leng, Wu, Xi \cite{CLWX26}. In this paper, we introduce the $L_p$ centro-sectional Minkowski problem analogously to the  $L_p$ dual Minkowski problem formulated by Lutwak, Yang and Zhang \cite{LYZ18}.   
We solve the $L_{p}$ centro-sectional Minkowski problem for $p>1$ and $q>0$,  discuss the regularity and uniqueness of the solution, and prove $L_p$ Brunn-Minkowski-type inequalities when $p$ is relatively large. 
\end{abstract}

\section{Introduction}
Following the footsteps of Lutwak, Yang and Zhang \cite{LYZ18} in the case of $L_p$ dual curvature measures, we introduce the $L_p$-centro-sectional measures with parameter $q\in\R$, extending the notion of affine quermassintegrals and dual curvature measure, and in general, centro-sectional measures, see the papers Milman, Yehudayoff \cite{MiY23}, Huang, Lutwak, Yang and Zhang \cite{HLYZ16} and Cai, Leng, Wu, Xi \cite{CLWX26}. 

 We write $o$ to denote the origin in $\R^n$, $\langle\cdot,\cdot\rangle$ for the standard inner product, and $\|\cdot\|$ for its induced norm. We denote the unit ball  by $B^n=\{x\in\R^n:\|x\|\leq 1\}$, the unit sphere by $S^{n-1}=\partial B^n$. The notation $\HH^k(\cdot)$ stands for the $k$-dimensional Hausdorff measure where $\HH^{k}(\emptyset)=0$, and for the $n$-dimensional volume (Lebesgue measure) we use the notation $|\cdot|$. In particular, the volume of the unit ball is $\omega_n=|B^n|=\frac{\pi^{\frac n2}}{\Gamma(\frac n2 +1)}$ and its surface area is  $\HH^{n-1}(S^{n-1})=n\omega_n$, where $\Gamma$ is Euler's gamma function (cf. Artin \cite{Art64}). We call a compact convex set $K\subset\R^n$ with non-empty interior a convex body, and write ${\rm conv}\,X$ to denote the convex hull of an $X\subset\R^n$.  We write $\mathcal{K}^n_o$ to denote the family of compact convex sets $K\subset \R^n$ containing the origin, and $\mathcal{K}^n_{(o)}$ to denote the family of all convex bodies $K\subset\R^n$ which contain $o$ in their interior. For properties of compact convex sets we need,  we refer  to Gruber \cite{Gru07}, Schneider \cite{Sch14} and B\"or\"oczky, Figalli, Ramos \cite{BFR26}. 

For a convex compact set $K\subset\R^n$, the support function $h_K(u):S^{n-1}\to \R$ is defined as $h_K(u)=\max \{\langle x,u\rangle : x\in K\}$. For a $u\in S^{n-1}$, the face of $K$ with exterior unit normal $u$ is 
$F(K,u)=\{x\in K: \langle x,u\rangle=h_K(u) \}$. Assuming that $K\in\mathcal{K}^n_o$ is a convex body, the radial function $\varrho_K(u)=\max\{t\geq 0:tu\in K\}$ of $u\in S^{n-1}$ is also closely related to the support function. If $h_K(v)>0$ and $h_K$ is differentiable at a $v\in S^{n-1}$, and hence $F(K,v)=\{x\}$ for an $x\in\partial K$, then
$u=x/\|x\|$ satisfies
\begin{equation}
\label{rhoK-hK}
\varrho_K(u)=\|x\|=\sqrt{\|\nabla h_K(v)\|^2+h_K(v)^2}.
\end{equation}

If $K\subset \R^n$ is a convex body and has a unique supporting hyperplane at $x$, then we say that $x$ is a regular boundary point, and write $\nu_K(x)$ to denote the unique element of $\nu_K(\{x\})$. In addition, we write $\partial'K$ to denote the set of regular boundary points that satisfies
\begin{equation}
\label{regular-boundary}
\partial' K\mbox{ is a Borel set and \ }\HH^{n-1}(\partial K\backslash \partial' K)=0.
\end{equation}
Now, $\nu_K:\partial'K\to S^{n-1}$ is a continuous function that is usually called the spherical Gauss map, and the push forward measure of $\HH^{n-1}$ by $\nu_K$ is the  surface area measure $S_K$; namely, if $\eta\subset S^{n-1}$ is a Borel set, then
\begin{equation}
\label{SK-def}
S_K(\eta)=\HH^{n-1}\left(\nu_K^{-1}(\eta)\right). 
\end{equation}
It can be considered as the first variation of volume as it satisfies Alexandrov's variational formula (going back to Minkowski)
\begin{equation}
\label{Alexandrov}
\lim_{t\to 0^+}\frac{|K+t L|-|K|}{t}=\int_{S^{n-1}}h_L\,d S(K, \cdot)
\end{equation}
for any convex body $L\subset\R^n$.

The classical Minkowski problems asks for necessary and sufficient conditions for a Borel measure on $S^{n-1}$ to be the surface area measure of a convex body, which was solved by Minkowski \cite{Min97,Min03} in the discrete case (when the solution is a polytope) and in the "smooth" case (when the solution has $C^\infty_+$ boundary), and by Alexandrov \cite{Ale38,Ale96} in general. The associated Monge-Amp\'ere equation is
\begin{equation}
\label{Minkowsi-Problem-Monge-Ampere}
\det(\nabla^2h+hI_{n-1})=f
\end{equation} 
for some integrable $f:S^{n-1}\to[0,\infty)$ where $h$ is the restriction of the support function of the "solution" convex body with $dS_K=f\,d\HH^{n-1}$; moreover, $\nabla^2 h$ and $\nabla h$ are the spherical Hessian and gradient with respect to a moving frame. The regularity of the solution of the Minkowski problem \eqref{Minkowsi-Problem-Monge-Ampere} was intensively  investigated for almost a century by
Nirenberg \cite{Nir53}, Cheng, Yau \cite{ChY76} and Pogorelov \cite{Pog78} in the middle of the 20th century, and finally, Caffarelli \cite{Caf90a,Caf90b} settled this issue around 1990 (see also Chapter 4 in Figalli \cite{Fig17}). For the fundamental properties of Monge-Amp\`ere equations, see Figalli \cite{Fig17} and Trudinger, Wang \cite{TrW08}, and the for properties related to Minkowski-type problems, see  Huang, Yang, Zhang \cite{HYZ25} and  B\"or\"oczky, Figalli, Ramos \cite{BFR26}.

Since the pioneering work of Lutwak in the 1990's (see, for example, Lutwak \cite{Lut93}), various versions of the Minkowski problem have arisen with fundamental geometric meaning. For example, one can replace the Minkowski sum in \eqref{Alexandrov} by the $L_p$ linear combination defined by Firey \cite{Fir62} if $p>1$ and by B\"or\"oczky, Lutwak, Yang, Zhang \cite{BLYZ12} if $0< p<1$, and consider
\begin{align}
\label{Lp-sum}
\alpha\cdot K+_p\beta\cdot L=\left\{x\in\R^n:\langle x,u\rangle^p\leq \alpha h_K(u)^p+\beta h_L(u)^p\right\}
\end{align} 
for $K,L\in \mathcal{K}^n_o$ and $\alpha,\beta\geq 0$. Here 
$$
h_{\alpha\cdot K+_p\beta\cdot L}=\left(\alpha h_K^p+\beta h_L^p\right)^{\frac1p} \mbox{ \ \ \  if $p\geq 1$},
$$
and hence $L_p$-sum is the Minkowski sum if $p=1$. For $p>0$, Lutwak defined the $L_p$-surface area measure $S_p(K,\cdot)$ of a convex body $K\in \mathcal{K}^n_{(o)}$ by the variational formula (cf. B\"or\"oczky, Lutwak, Yang, Zhang \cite{BLYZ12})
\begin{equation}
\label{Lutwak}
\lim_{t\to 0^+}\frac{|K+_p t\cdot  L|-|K|}{t}=\frac1p\int_{S^{n-1}}h_L^p\,d S_p(K, \cdot)
\end{equation}
for any convex body $L\in \mathcal{K}^n_{(o)}$. According to Lutwak, the definition of the $L_p$-surface area measure $S_p(K,\cdot)$ can be extended to any $p\in\R$ by the formula (cf. B\"or\"oczky, Figalli, Ramos \cite{BFR26})
\begin{align}
\label{Lp-surface-area}
dS_p(K,\cdot)=&h_K^{1-p}\,dS_K\\
\label{Lp-surface-area-Bd}
\int_{S^{n-1}}g\,dS_p(K,\cdot)=&\int_{\partial'K}g(\nu_K(x))\langle x,\nu_K(x)\rangle^{1-p}\,d\HH^{n-1}(x)
\end{align} 
for a Borel measurable $g:S^{n-1}\to\R$ bounded or non-negative.

Fast forwarding, in this paper, we replace the volume with the recently intensively investigated quantity $\widetilde{\Psi}_{m,q}(K)$ from integral geometry (also introduced by Lutwak) for a convex body $K\in \mathcal{K}^n_o$, $m=1,\ldots,n-1$ and $q\in\R$ (see Milman, Yehudayoff \cite{MiY23} and Cai, Leng, Wu, Xi \cite{CLWX25,CLWX26}). In particular, writing $\nu_{m,n}$ to denote the Haar probability measure on the Grassmannian ${\rm G}(n,m)$ of linear $m$-planes of $\R^n$, if $m\in\{1,\ldots,n-1\}$ and $q\neq 0$, then
\begin{align}
\label{Psimqdef-sectionarea}
\widetilde{\Psi}_{m,q}(K)=&\int_{{\rm G}(n,m)}\HH^m(\xi\cap K)^q\,d\nu_{n,m}(\xi)\\
\label{Psimqdef-Radon}
=& \int_{{\rm G}(n,m)} \mathcal{R}_m\left(\frac1m\,\varrho_K^m\right)^q\,d\nu_{n,m}
\end{align}
where $\mathcal{R}_m (f)$ is the Radon transform on ${\rm G}(n,m)$ of  a bounded or non-negative Borel function $f:S^{n-1}\to \R$   defined by
\begin{equation}
\label{Radon-def}
\mathcal{R}_m(f)(\xi)=\int_{\xi\cap S^{n-1}}f\,d\HH^{m-1}.
\end{equation}
In addition, if $q=0$, then
$$
\widetilde{\Psi}_{m,0}(K)=\int_{{\rm G}(n,m)}\log \HH^m(\xi\cap K)\,d\nu_{n,m}(\xi).
$$
Readily, $\widetilde{\Psi}_{m,q}(K)<\infty$ if $K\in\mathcal{K}^n_{(o)}$ and $q\in \R$, or $K\in\mathcal{K}^n_{o}$ and $q>0$.  We observe that for $L\in\mathcal{K}^n_{o}$ with $K\subsetneq L$, we have $\widetilde{\Psi}_{m,q}(K)<\widetilde{\Psi}_{m,q}(L)$ if $q>0$, and $\widetilde{\Psi}_{m,q}(K)>\widetilde{\Psi}_{m,q}(L)$ if $q<0$ and $\widetilde{\Psi}_{m,q}(K)<\infty$.

For $p>0$, $q\in\R\backslash\{0\}$ and $m=1,\ldots,n-1$, $K\in \mathcal{K}^n_{(o)}$, we define the $L_p$ centro-sectional measure $\widetilde{A}_{m,q,p}(K, \cdot)$ on $S^{n-1}$ by the formula
\begin{equation}
\label{Amqp}
\lim_{t\to 0^+}\frac{\widetilde{\Psi}_{m,q}(K+_p t\cdot  L)-\widetilde{\Psi}_{m,q}(K)}{t}=\frac{q}p\int_{S^{n-1}}h_L^p\,d \widetilde{A}_{m,q,p}(K, \cdot)
\end{equation}
for any $L\in \mathcal{K}^n_{(o)}$, and if $q=0$, then
\begin{equation}
\label{Am0p}
\lim_{t\to 0^+}\frac{\widetilde{\Psi}_{m,0}(K+_p t\cdot  L)-\widetilde{\Psi}_{m,0}(K)}{t}=\frac{1}p\int_{S^{n-1}}h_L^p\,d \widetilde{A}_{m,q,0}(K, \cdot).
\end{equation}
 We establish the existence of the Borel measure $\widetilde{A}_{m,q,p}(K, \cdot)$ on $S^{n-1}$ based on the results in Cai, Leng, Wu, Xi \cite{CLWX26} (cf. Remark~\ref{Amqp-Lpsum}). 

For the general formula describing $\widetilde{A}_{m,q,p}(K, \cdot)$, we write ${\rm G}(V,k)$ to denote the Grassmanian of linear $k$-subspaces of a linear subspace $V\subset\R^n$, $0\leq k<{\rm dim}\,V$, and $\nu_{V,k}$ to denote the corresponding Haar probability measure. If $u\in S^{n-1}$ and $m=1,\ldots,n-1$, then the dual Radon transform $\mathcal{R}^*_m:S^{n-1}\to\R$ of a Borel function $F:{\rm G}(n,m)\to\R$ that is bounded or non-negative is defined by the formula
\begin{equation}
\label{dualRadon-def} 
\mathcal{R}^*_mF(u)=\frac{m\omega_m}{n\omega_n}\int_{{\rm G}(u^\bot,m-1)}F\left(\zeta+\R u\right)\,d\nu_{u^\bot,m-1}(\zeta).
\end{equation}
For a convex body $K\in \mathcal{K}^n_{o}$ and $p,q\in\R$, we use the notation $\HH^m\left(K\cap \cdot\right)^{q-1}$ for the function $\HH^m\left(K\cap \xi\right)^{q-1}$ of $\xi\in {\rm G}(n,m)$, and define the $L_p$ centro-sectional measure 
$\widetilde{A}_{m,q,p}(K,\cdot)$ on $S^{n-1}$ by the formula
 (see Sections~\ref{seccentrosectional} and \ref{secLpcentrosectional})
\begin{align}
\label{tildeAmqp-bdK-eq0}
\int_{S^{n-1}}g\,d\widetilde{A}_{m,q,p}(K,\cdot)=
\int_{\partial' K} &g(\nu_K(x))\langle \nu_K(x),x\rangle^{1-p}\|x\|^{m-n}\\
\nonumber 
&\mathcal{R}^*_m\HH^m\left(K\cap \cdot\right)^{q-1}\left(\frac{x}{\|x\|}\right)\,d\HH^{n-1}(x) 
\end{align}
 for any Borel function $g:\,S^{n-1}\to[0,\infty)$. We note that the formula \eqref{tildeAmqp-bdK-eq0} makes sense if either $p\leq 1$, or $p> 1$ and $\HH^{n-1}(\Xi_K)=0$ (for example, $K\in \mathcal{K}^n_{(o)}$)
where 
$$
\Xi_K=\{x\in\partial'K:\langle x,\nu_K(x)\rangle=0\},
$$
and hence $\Xi_K=\emptyset$ if $K\in \mathcal{K}^n_{(o)}$.
In particular, 
\begin{equation}
\label{Amqp-hAmq}
d\widetilde{A}_{m,q,p}(K,\cdot)=h_K^{-p}\,d\widetilde{A}_{m,q}(K,\cdot),
\end{equation}
where the Borel measure $\widetilde{A}_{m,q}$ on $S^{n-1}$ is the centro-sectional measure defined by  Cai, Leng, Wu, Xi \cite{CLWX26} satisfying that if $\eta\subset S^{n-1}$ is Borel, then 
\begin{align}
\label{Amq-eta} 
\widetilde{A}_{m,q}(K,\eta)=&\int_{\alpha^*_K(\eta)}\varrho_K(u)^m
\mathcal{R}^*_m\HH^m\left(K\cap \cdot\right)^{q-1}\left(u\right)\,d\HH^{n-1}(u)\\
\label{alpha*K-eta}
\alpha^*_K(\eta)=&\left\{\frac{x}{\|x\|}:x\in\partial'K\backslash \Xi_K\mbox{ and }\nu_K(x)\in\eta\right\}\subset S^{n-1}.
\end{align}
Since $\nu_K$ is continous on the Borel set $\partial'K$, we deduce that  $\alpha^*_K(\eta)$ is Borel if $\eta\subset S^{n-1}$ is so. We observe that $\widetilde{A}_{m,q,0}(K,\eta)=\widetilde{A}_{m,q}(K,\eta)$ by definition \eqref{Amqp-hAmq}, and if $o\in\partial K$ and $p\in\R$, then $\widetilde{A}_{m,q,p}(K,\{h_K=0\})=\widetilde{A}_{m,q}(K,\{h_K=0\})=0$.

It follows from Cai, Leng, Wu, Xi \cite{CLWX26} that if $K\in \mathcal{K}^n_{o}$ is a convex body, $q\neq 0$ and $\widetilde{\Psi}_{m,q}(K)$   is finite, then (see Section~\ref{seccentrosectional})
\begin{equation}
\label{Amq-sn-1}
\widetilde{A}_{m,q}(K,S^{n-1})=m\widetilde{\Psi}_{m,q}(K)<\infty.
\end{equation}
Readily, both $\widetilde{\Psi}_{m,q}(K)$  and $\widetilde{A}_{m,q,p}(K, \cdot)$ are finite for any $p,q\in\R$ and $K\in \mathcal{K}^n_{(o)}$.\\

\noindent{\bf $L_p$ centro-sectional Minkowski problem.}{\it Given  $m\in\{1,\ldots,n-1\}$ and $p,q\in\R$, characterize $\widetilde{A}_{m,q,p}(K, \cdot)$ among a meaningful family of convex bodies $K\in \mathcal{K}^n_{o}$.}\\

It follows from \eqref{rhoK-hK}, \eqref{Minkowsi-Problem-Monge-Ampere} and \eqref{tildeAmqp-bdK-eq0} that the associated  Monge-Amp\'ere equation is
\begin{align}
\label{Amqp-Monge-Ampere}
\mathcal{R}^*_m\circ\mathcal{R}_m\left(\frac{\left(\|\nabla h\|^2+h^2\right)^{\frac{m}2}}m\right)^{q-1} \left(\frac{\nabla h+h\cdot {\rm id}}{\left(\|\nabla h\|^2+h^2\right)^{\frac12}}\right)\cdot & \\
\nonumber
\cdot h^{1-p}\left(\|\nabla h\|^2+h^2\right)^{\frac{m-n}2}\cdot \det(\nabla^2h+hI_{n-1})&=f
\end{align} 
More precisely, if $p>1$, then $h$ might be zero even if $f$ is positive and continuous (see Remark~\ref{h-zero}); therefore, in line with \eqref{Amqp-hAmq} and Chou, Wang \cite{ChW06}, if $p>1$, then the right form of the Monge-Amp\'ere equation is
\begin{align}
\label{Amqp-Monge-Ampere-plarge}
\mathcal{R}^*_m\circ\mathcal{R}_m\left(\frac{\left(\|\nabla h\|^2+h^2\right)^{\frac{m}2}}m\right)^{q-1} \left(\frac{\nabla h+h\cdot {\rm id}}{\left(\|\nabla h\|^2+h^2\right)^{\frac12}}\right)\cdot & \\
\nonumber
\cdot h\cdot \left(\|\nabla h\|^2+h^2\right)^{\frac{m-n}2}\cdot \det(\nabla^2h+hI_{n-1})&=h^{p}f.
\end{align} 

We note that if $m=1$ and $q\neq 0$, then $\widetilde{V}_{q}(K)=\int_{S^{n-1}}\varrho_K(u)^q\,d\HH^{n-1}$ is the dual intrinsic volume introduced by Lutwak for $K\in\mathcal{K}^n_{o}$, and $\widetilde{C}_{q}(K, \cdot)$ is the celebrated dual curvature measure associated to $\widetilde{V}_{q}(K)$ from Huang, Lutwak, Yang, Zhang \cite{HLYZ16}; moreover, the study of the $L_p$ dual curvature measure $d\widetilde{C}_{q,p}(K, \cdot)=h_K^{-p}d\widetilde{C}_{p}(K, \cdot)$ was initiated by Lutwak, Yang, Zhang \cite{LYZ18}. 
If $K$ is $o$-symmetric ($K=-K$), then
$\widetilde{\Psi}_{1,q}(K)=\frac{2^q}{n\omega_n}\,\widetilde{V}_{q}(K)$ and $\widetilde{A}_{1,q}(K, \cdot)=\frac{2^q}{n\omega_n}\,\widetilde{C}_{q}(K, \cdot)$ (see Section~\ref{seccentrosectional}), and hence the even centro-sectional measure is a direct generalization of the even dual curvature measure. However, while $\widetilde{A}_{1,q}(K,\cdot)\geq \frac{2}{n\omega_n}\,\widetilde{C}_{q}(K,\cdot)$ if $q\geq 1$ and $\widetilde{A}_{1,q}(K,\cdot)\leq \frac{2}{n\omega_n}\,\widetilde{C}_{q}(K,\cdot)$ if $q\leq 1$, $\widetilde{A}_{1,q}(K,\cdot)$ can't be estimated from above in terms of $\widetilde{C}_{q}(K,\cdot)$ if $K\in\mathcal{K}^n_{(o)}$ is not $o$-symmetric and $q>1$.

Combining the papers B\"or\"oczky, Fodor \cite{BoF19}, Huang, Zhao \cite{HuZ18}, Chen, Li \cite{ChL21}, Lu, Pu \cite{LuP21} shows that if $p>0$ and $q\neq p,0$, then any finite Borel measure
not concentrated on a closed hemisphere is a  $L_p$ $q$th dual Minkowski curvature measure. 
See also Guang, Li, Wang \cite{GLW23} for a flow approach when $p<0$ and $q>n$, and Chou, Wang \cite{ChW06}, Bianchi,  B\"or\"oczky, Colesanti, Yang \cite{BBCY19} and Guang,  Li, Wang \cite{GLW26} when $p<0$ and $q=n$ under regularity assumptions.

In this paper, we solve the Monge-Amp\`ere equation \eqref{Amqp-Monge-Ampere-plarge} when $p>1$ and $q>0$. We note that if $m=1$, $q=n$ and $1<p<n$ - that is the case of the $L_p$ Minkowski problem -, then there exists a positive and $C^{0,\alpha}$ function  $f$ such that the unique solution of \eqref{Amqp-Monge-Ampere} corresponds to a convex body $K\in\mathcal{K}^n_o$ with $o\in\partial K$ (see Hug, Lutwak, Yang, Zhang \cite{HLYZ05}); therefore, we should allow the possibility that the origin is on the boundary.

\begin{theorem}
\label{main}
Let $p>1$, $q>0$ with $p\neq mq$, let $m\in\{1,\ldots,n-1\}$ and let $\mu$ be a finite Borel measure on $S^{n-1}$. There exists a convex body $K\in\mathcal{K}^n_{o}$ 
with $\HH^{n-1}(\Xi_K)=0$ and $\widetilde{A}_{m,q,p}(K,\cdot)<\infty$ such that 
$d\widetilde{A}_{m,q}(K,\cdot)=h_K^p\,d\mu$ if and only if $\mu$ is not concentrated on any closed hemisphere. 

If, in addition, either $m\geq 2$ and $p\geq m$, or $m=1$ and $p\geq q$, or $\mu$ is discrete ($p>1$, $q>0$ and the solution $K$ is a polytope), then any solution $K$ satisfies $K\in\mathcal{K}^n_{(o)}$ and $\widetilde{A}_{m,q,p}(K,\cdot)=\mu$.
\end{theorem}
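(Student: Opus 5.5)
Necessity is the easy direction. If $d\widetilde{A}_{m,q}(K,\cdot)=h_K^p\,d\mu$ and $\mu$ were concentrated on a closed hemisphere $\{u:\langle u,v\rangle\le 0\}$, then $\widetilde{A}_{m,q}(K,\cdot)$ would be concentrated there as well. By \eqref{Amq-eta}, the density $\varrho_K^m\,\mathcal{R}^*_m\HH^m(K\cap\cdot)^{q-1}$ is positive on $S^{n-1}$. Hence $\alpha^*_K(\{\langle u,v\rangle>0\})$ would be $\HH^{n-1}$-null. But the set of $x\in\partial'K\backslash\Xi_K$ with $\langle\nu_K(x),v\rangle>0$ has radial projection of positive measure for a convex body $K$. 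This is a contradiction.

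For sufficiency I would follow the variational method of Chou--Wang and Hug--Lutwak--Yang--Zhang for the $L_p$ Minkowski problem with $p>1$. The plan is as follows.
\begin{enumerate}
\item \emph{Discrete case.} First treat discrete $\mu=\sum_i c_i\delta_{u_i}$ with $u_i$ not in any closed hemisphere. Consider polytopes $P$ with facet normals among the $u_i$ and $o\in P$, and maximize $\widetilde{\Psi}_{m,q}(P)$ subject to $\sum_i c_i h_P(u_i)^p=1$. The alternative normalization $\Phi(P)=\frac1p\log\int h_P^p\,d\mu-\frac1{mq}\log\widetilde{\Psi}_{m,q}(P)$ is scale invariant because $\widetilde{\Psi}_{m,q}(\lambda K)=\lambda^{mq}\widetilde{\Psi}_{m,q}(K)$; the condition $p\neq mq$ is exactly what lets the Lagrange multiplier be absorbed by a dilation.
\item \emph{Compactness.} Boundedness of the constraint set follows from the hemisphere condition: $h_P(u_i)\le c_i^{-1/p}$ and $\int\langle x,u\rangle_+^p\,d\mu\ge c>0$ bound $P$. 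By Blaschke selection the maximum exists, and it has nonempty interior since $\widetilde{\Psi}_{m,q}>0$ and $q>0$.
\item \emph{Euler--Lagrange equation.} Use the variational formula \eqref{Amqp} from the Cai--Leng--Wu--Xi results (Remark~\ref{Amqp-Lpsum}), applied with Wulff-shape perturbations $h_t=(h_P^p+tg)^{1/p}$. This gives $\widetilde{A}_{m,q}(P,\cdot)=\lambda h_P^p\mu$. Rescaling by $\lambda^{1/(mq-p)}$ yields the solution, since $\widetilde{A}_{m,q}$ is homogeneous of degree $mq$.
\item \emph{Origin in the interior.} One must show $o\in{\rm int}\,P$ in the discrete case, or else justify the Euler--Lagrange equation on facets with $h_P(u_i)=0$. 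I would argue as in HLYZ: if $h_P(u_i)=0$ for some $i$, push that facet outward by $t$. The $\mu$-cost is $c_it^p$, which is $o(t)$ since $p>1$, while the gain in $\widetilde{\Psi}_{m,q}$ is of order $t$ because $q>0$ and the sections gain area linearly. This contradicts maximality, so $P\in\mathcal{K}^n_{(o)}$ and $\widetilde{A}_{m,q,p}(P,\cdot)=\mu$, which is the polytope claim.
\end{enumerate}

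\emph{General $\mu$.} Approximate $\mu$ weakly by discrete measures $\mu_k$ not concentrated on hemispheres, with uniform constants, and take solutions $P_k$. I need uniform bounds: an upper bound on the diameter and a lower bound on $\widetilde{\Psi}_{m,q}(P_k)$. Both come from the normalized maximization values compared with a fixed ball, since $\Phi$ is continuous along weak convergence. Then $P_k\to K$ in Hausdorff distance, and weak continuity of $\widetilde{A}_{m,q}(\cdot,\cdot)$ on convex bodies in $\mathcal{K}^n_o$ (valid for $q>0$) passes the identity $d\widetilde{A}_{m,q}(P_k,\cdot)=h_{P_k}^pd\mu_k$ to the limit. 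The properties $\HH^{n-1}(\Xi_K)=0$ and $\widetilde{A}_{m,q,p}(K,\cdot)<\infty$ then follow from $h_K^{-p}\,d\widetilde{A}_{m,q}=d\mu$ off $\{h_K=0\}$.

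\emph{Origin in the interior in the additional cases.} Here $o\in\partial K$ must be excluded. Assume $o\in\partial K$ and integrate $h_K^{-p}$ against $\widetilde{A}_{m,q}(K,\cdot)$ near the normal cone at $o$. Near $o$ we have $\|x\|^{m-n}\langle x,\nu\rangle^{1-p}$, and $\mathcal{R}^*_m\HH^m(K\cap\cdot)^{q-1}$ is bounded below (for $q\ge1$ trivially; for $q<1$ by boundedness of $K$). In a cone-like neighbourhood of $o$, $\int\|x\|^{m-n}\langle x,\nu\rangle^{1-p}d\HH^{n-1}$ diverges as soon as $p\ge m$. Indeed $\langle x,\nu\rangle\le\|x\|$, so the integrand is at least $\|x\|^{m-n+1-p}$, which is not integrable on an $(n-1)$-dimensional surface through $o$ when $m-p\le 0$. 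For $m=1$ the finer analysis of the dual curvature case gives $p\ge q$. This divergence would contradict finiteness of $\mu$.

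I expect the main obstacle to be this last step together with the uniform non-degeneracy in the approximation. One needs a lower estimate of $\mathcal{R}^*_m\HH^m(K\cap\cdot)^{q-1}$ uniform near $o$. One also needs to control $\widetilde{\Psi}_{m,q}(P_k)$ from below when $q<1$, where sections through a boundary origin might shrink.
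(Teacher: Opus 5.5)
Your overall architecture is the paper's own. You maximize $\widetilde{\Psi}_{m,q}$ under $\int h^p\,d\mu\le 1$ for discrete data. You use the $t$-versus-$t^p$ perturbation to force $o$ into the interior when $p>1$ (Proposition~\ref{extremal-problem}(ii)). You then approximate, pass to the limit by Blaschke selection and weak continuity of $\widetilde{A}_{m,q}$ on $\mathcal{K}^n_o$, and exclude $o\in\partial K$ by a divergence estimate (Proposition~\ref{largep-oint}).

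\textbf{The gap.} You claim that $\HH^{n-1}(\Xi_K)=0$ follows from $h_K^{-p}\,d\widetilde{A}_{m,q}=d\mu$ off $\{h_K=0\}$. That is false, because the limit identity cannot see $\Xi_K$. In the boundary representation of $\widetilde{A}_{m,q}(K,\cdot)$ the weight $\langle \nu_K(x),x\rangle$ vanishes on $\Xi_K$ (equivalently, $\alpha^*_K$ discards $\Xi_K$). Moreover $\nu_K(\Xi_K)\subset\{h_K=0\}$, where $h_K^p\mu$ vanishes as well. Concretely, take $K={\rm conv}\{o,e_1,\ldots,e_n\}$ and let $w$ be the normal of its facet not containing $o$. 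Set $\mu=\widetilde{A}_{m,q,p}(K,\cdot)+\sum_i\delta_{-e_i}$. Then $d\widetilde{A}_{m,q}(K,\cdot)=h_K^p\,d\mu$, and $\mu$ is not concentrated on any closed hemisphere. Yet $\Xi_K$ contains $n$ facets, and $o\in\partial K$ although $\widetilde{A}_{m,q,p}(K,\cdot)$ is finite for every $p$. Only the finiteness half of your claim is correct, since $\widetilde{A}_{m,q,p}(K,\cdot)=\mu|_{\{h_K>0\}}$.

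\textbf{Why it matters.} The property $\HH^{n-1}(\Xi_K)=0$ is part of the statement, and it is exactly what your other steps consume.
\begin{itemize}
\item Your necessity argument needs the set of $x\in\partial'K\backslash\Xi_K$ with $\langle\nu_K(x),v\rangle>0$ to have positive measure. The same simplex with $\mu$ a point mass at $w$ shows this fails without the hypothesis.
\item Your divergence argument near $o$ needs $\HH^{n-1}$-a.e.\ point of the boundary graph over a disc around $o$ to lie outside $\Xi_K$. In the example above the relevant integrand is simply absent.
\end{itemize}

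\textbf{The missing idea (Lemma~\ref{KmXiK}).} Extract $\HH^{n-1}(\Xi_K)=0$ from the approximation, using $p>1$ and the uniform bound $\widetilde{A}_{m,q,p}(K_\ell,S^{n-1})=\mu_\ell(S^{n-1})\le C$ for the polytopes $K_\ell\in\mathcal{K}^n_{(o)}$.
\begin{enumerate}
\item Suppose $\HH^{n-1}(\Xi_K)>0$, and take a compact piece of ${\rm cl}\,\Xi_K$ of measure $\tau>0$ away from $o$.
\item Near this piece every outer normal $u$ of $K$ has $h_K(u)\le\varepsilon$. Hence on the open cone neighbourhood $\sigma(\mathcal{U})$, for large $\ell$, the normals of $K_\ell$ satisfy $h_{K_\ell}\le 2\varepsilon$, and $\|x\|$ stays bounded away from $0$.
\item By lower semicontinuity of surface area on open sets, $\HH^{n-1}(\sigma(\mathcal{U})\cap\partial K_\ell)\ge\tau/2$. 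Also $\mathcal{R}^*_m\HH^m(K_\ell\cap\cdot)^{q-1}$ is uniformly bounded below there.
\item So $\widetilde{A}_{m,q,p}(K_\ell,S^{n-1})$ is at least a constant times $(2\varepsilon)^{1-p}\tau$, which exceeds $C$ once $\varepsilon$ is small.
\end{enumerate}

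\textbf{Minor slips.}
\begin{itemize}
\item In your last step, the lower bound on $\mathcal{R}^*_m\HH^m(K\cap\cdot)^{q-1}$ is trivial for $q\le 1$, because $K\subset RB^n$. For $q>1$ it needs an inner ball together with $m\ge 2$ (Corollary~\ref{mplanes-ball-section}). For $m=1$ it genuinely fails near $o$, which is why the condition $p\ge q$ appears there.
\item The rescaling exponent should be $1/(p-mq)$ with your normalization of $\lambda$.
\end{itemize}
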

\noindent{\bf Remarks.}
\begin{itemize}
\item Equivalently, $h=h_K|_{S^{n-1}}$ is a weak solution of \eqref{Amqp-Monge-Ampere-plarge}  if and only if $\mu$ is not concentrated on any closed hemisphere where $h$ is positive provided either $m\geq 2$ and $p\geq m$, or $m=1$ and $p>q$.  
\item In fact, we prove the existence of a convex body $K_0\in\mathcal{K}^n_{o}$ with
$$
d\widetilde{A}_{m,q}(K_0,\cdot)=m\widetilde{\Psi}_{m,q}(K_0)\cdot h_{K_0}^p\,d\mu
$$
 which $K_0$ exists even if $p=mq$.

\item Even if $m=1$ and $1<p< q$, or if $m\geq 2$ and $1<p<m$, and $o\in\partial K$, the restrictions of the measures $\widetilde{A}_{m,q,p}(K,\cdot)$ and $\mu$ to the set $\{h_K>0\}$ coincide.

\item Let $m\in\{1,\ldots,n-1\}$. For $p\in\R$ and $q\in\R$, the measures $\widetilde{A}_{m,q}(K,\cdot)$ and $\widetilde{A}_{m,q,p}(K,\cdot)$ are weakly convergent on $\mathcal{K}^n_{(o)}$, and if $q>0$ and $p\leq 1$, then $\widetilde{A}_{m,q,p}(K,\cdot)$ and $\widetilde{A}_{m,q}(K,\cdot)$ are weakly convergent even on $\mathcal{K}^n_{o}$ by defining $\widetilde{A}_{m,q}(K,\cdot)\equiv 0$ when ${\rm dim}\,K\leq n-1$. 
\end{itemize} 
We note that the special case $q=n$ and $p\geq n+(n-2)m$ of Theorem~\ref{main} is proved by Lin, Wu \cite{LiW26+}.

The solution of the $L_p$ dual Minkowski problem for $\widetilde{C}_{q,p}(K,\cdot)$ is known  to be unique in some cases:
\begin{itemize}
\item if $p>q$ and $\mu$ is discrete ($K$ is a polytope) according to  Lutwak, Yang and Zhang \cite{LYZ18},
\item if $p>q$, and $\mu$ has a positive $C^{0,\alpha}$ density function $f$ for $\alpha\in(0,1]$
according to
Huang, Zhao \cite{HuZ18},
\item if $p>1$ and $q=n$ according to Hug, Lutwak, Yang, Zhang \cite{HLYZ05}.
\end{itemize}

\begin{theorem}
\label{Psimqp-uniqueness}
Let $m\in\{1,\ldots,n-1\}$ and  $q\neq 0$, and let  $p> \max\{m,mq\}$.
If $\widetilde{A}_{m,p,q}(K,\cdot)=\widetilde{A}_{m,p,q}(L,\cdot)$ holds for $K,L\in\mathcal{K}^n_{(o)}$,
 then $K=L$.
\end{theorem}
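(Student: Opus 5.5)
The plan is the maximum-principle argument of Lutwak, Yang, Zhang \cite{LYZ18} and Hug, Lutwak, Yang, Zhang \cite{HLYZ05}, adapted to the sectional functional. (The statement writes $\widetilde{A}_{m,p,q}$; I read it as $\widetilde{A}_{m,q,p}$.)

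\textbf{Reduction.} Assume $K\neq L$. Possibly after swapping $K$ and $L$, we may suppose $K\not\subset L$. Put
$$
\lambda=\max_{u\in S^{n-1}}\frac{\varrho_K(u)}{\varrho_L(u)}>1 ,
$$
and let $\widetilde L=\lambda L$. Then $K\subset \widetilde L$ and $\partial K\cap\partial\widetilde L\neq\emptyset$. From the defining formula \eqref{tildeAmqp-bdK-eq0} we read off the homogeneity
$$
\widetilde{A}_{m,q,p}(\lambda L,\cdot)=\lambda^{mq-p}\,\widetilde{A}_{m,q,p}(L,\cdot).
$$
Indeed, the factor $\langle\nu,x\rangle^{1-p}$ contributes $\lambda^{1-p}$, $\|x\|^{m-n}$ contributes $\lambda^{m-n}$, the sections contribute $\lambda^{m(q-1)}$, and $d\HH^{n-1}$ contributes $\lambda^{n-1}$.

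\textbf{Comparison on the contact set.} Let $\eta$ be the set of outer normals of $\widetilde L$ at points of the contact set $\partial K\cap\partial\widetilde L$. At these points $K$ touches $\widetilde L$ from inside, so:
\begin{enumerate}
\item For $x=\varrho_K(u)u=\varrho_{\widetilde L}(u)u$, the sections satisfy $\HH^m(K\cap\xi)\le \HH^m(\widetilde L\cap\xi)$ for every $\xi$. This holds strictly on a set of positive measure of $\xi\ni u$ unless $K=\widetilde L$.
\item The normals and support values compare favourably: $h_K(\nu)\le h_{\widetilde L}(\nu)$.
\end{enumerate}

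I would work with the representation \eqref{Amq-eta} via $\alpha^*$. The aim is to show
$$
\widetilde{A}_{m,q,p}(\widetilde L,\eta)\;\gtrless\; \widetilde{A}_{m,q,p}(K,\eta),
$$
with the inequality direction set by the signs of $q-1$ and $p$.

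\textbf{Choosing a good test set.} Picking $\eta$ well is delicate. The standard trick (see \cite{HLYZ05}, \cite{LYZ18}) is to use the set where $h_K/h_{\widetilde L}$ is maximal. One compares $\alpha^*_K(\eta)$ with $\alpha^*_{\widetilde L}(\eta)$, using $\alpha^*_K(\eta)\subset\alpha^*_{\widetilde L}(\eta)$ up to null sets at touching directions.

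\textbf{Alternative route.} A cleaner alternative avoids local analysis and uses a global inequality. Let $\mu=\widetilde{A}_{m,q,p}(K,\cdot)=\widetilde{A}_{m,q,p}(L,\cdot)$. Using the variational formula \eqref{Amqp} together with an $L_p$ Brunn--Minkowski-type inequality for $\widetilde{\Psi}_{m,q}$ (valid exactly in the range $p>\max\{m,mq\}$, which the abstract says the paper proves), one derives an $L_p$ Minkowski-type inequality
$$
\frac{q}{p}\int h_L^p\,d\widetilde{A}_{m,q,p}(K,\cdot)\ \ge\ \frac{mq}{p}\,\widetilde{\Psi}_{m,q}(K)^{1-\frac{p}{mq}}\,\widetilde{\Psi}_{m,q}(L)^{\frac{p}{mq}},
$$
with equality iff $L$ is a dilate of $K$. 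The sign conventions must be checked carefully for $q<0$.

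\textbf{Deriving uniqueness from the global inequality.} Apply the inequality with $\mu$ for the pair $(K,L)$ and for the pair $(L,K)$. Since $\int h_K^p\,d\mu=\widetilde{A}_{m,q}(K,S^{n-1})=m\widetilde{\Psi}_{m,q}(K)$ by \eqref{Amq-sn-1}, combining the two inequalities gives $\widetilde{\Psi}_{m,q}(K)=\widetilde{\Psi}_{m,q}(L)$ in the range $p\neq mq$. Then equality holds, so $L=\lambda K$. The homogeneity with $\lambda^{mq-p}\neq1$ for $\lambda\ne1$ forces $\lambda=1$.

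\textbf{Main obstacle.} The hardest step is the Brunn--Minkowski-type concavity. One needs $t\mapsto\widetilde{\Psi}_{m,q}(K+_pt\cdot L)^{p/(mq)}$ to be concave (convex when $q<0$), with a characterised equality case. My first attempt would be to use that for $p\ge m$ the $L_p$ combination of sections contains the $L_p$ combination of sections. Since $\HH^m$ is $\frac1m$-concave under $L_m$ sums, and $L_p$ sums contain $L_m$ sums when $p\ge m$, sectionwise concavity follows. Then integrate, using Minkowski's integral inequality in the exponent $q$; this is where $p\ge mq$ enters.

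Equality then forces sectionwise dilations with a common factor, hence global dilation. Making this equality case rigorous is the step I expect to need the most care.
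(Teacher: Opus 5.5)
Your first, contact-set sketch is never carried out. The paper uses such a comparison only for polytopes with $q\geq1$ (Proposition~\ref{discrete-unique}), where $\mathcal{R}^*_m\HH^m(K\cap\cdot)^{q-1}$ is monotone in $K$. So everything rests on your alternative route.

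\textbf{The reduction is sound.} It is the paper's own: a Brunn--Minkowski inequality with equality only for dilates gives, through the variational formula \eqref{Amqp}, the Minkowski inequality $\int_{S^{n-1}}h_L^p\,d\widetilde{A}_{m,q,p}(K,\cdot)\geq m\widetilde{\Psi}_{m,q}(K)^{1-\frac{p}{mq}}\widetilde{\Psi}_{m,q}(L)^{\frac{p}{mq}}$ (Lemma~\ref{LpBMMequivalent}). Your swap argument is the proof of Theorem~\ref{uniqueness-general}. Two small slips do not affect the swap:
\begin{itemize}
\item The factors $\frac qp$ cancel when you differentiate $\widetilde{\Psi}_{m,q}(K+_pt\cdot L)^{p/(mq)}$, so the inequality reads ``$\geq$'' for every $q\neq0$.
\item This power is concave, not convex, also for $q<0$.
\end{itemize}

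\textbf{The gap is in your proof of the Brunn--Minkowski inequality.} Passing through $L_m$ sums yields at best $\HH^m(M_\lambda\cap\xi)\geq(1-\lambda)\HH^m(K\cap\xi)+\lambda\HH^m(L\cap\xi)$ for $M_\lambda=(1-\lambda)\cdot K+_p\lambda\cdot L$; your ``$\frac1m$-concavity'' is weaker still. From a sectional $\theta$-concavity you can integrate $\HH^m(\cdot)^q$ only when the power mean of order $q/\theta$ is concave, i.e.\ $q\leq\theta$. For $q>\theta$ that power mean is convex, and Minkowski's inequality bounds the integral from above, not from below. So with $\theta\leq1$ nothing is proved for $q>1$. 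That is exactly the range where $p>mq$ is the operative hypothesis, and indeed $p\geq mq$ never enters your argument. For $q<1$ the argument does work; it is a sectional variant of Propositions~\ref{Psimqp-q01} and~\ref{Psimqp-qneg}, where the paper uses Lemma~\ref{lp-include}, Jensen and the linearity of $\mathcal{R}_m$.

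\textbf{The repair is to keep the exponent $\frac pm$ in each section.} Since $h_{K\cap\xi}(\Pi_\xi u)\leq h_K(u)$, one has $(1-\lambda)\cdot(K\cap\xi)+_p\lambda\cdot(L\cap\xi)\subset M_\lambda\cap\xi$. Firey's $L_p$ Brunn--Minkowski inequality in the $m$-space $\xi$ (valid for $p\geq1$) then gives
\[
\HH^m(M_\lambda\cap\xi)^{\frac pm}\geq(1-\lambda)\HH^m(K\cap\xi)^{\frac pm}+\lambda\HH^m(L\cap\xi)^{\frac pm},
\]
with equality for $p>1$ only if the two sections are dilates. Set $s=\frac{mq}{p}$. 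Then $\widetilde{\Psi}_{m,q}(C)^{\frac{p}{mq}}=\bigl(\int_{{\rm G}(n,m)}(\HH^m(C\cap\xi)^{\frac pm})^{s}\,d\nu_{n,m}(\xi)\bigr)^{1/s}$. The map $F\mapsto\bigl(\int F^s\,d\nu_{n,m}\bigr)^{1/s}$ is increasing and concave on positive functions whenever $s<1$, $s\neq0$ (reverse Minkowski). This gives \eqref{Psimqp-BM-qnozero-eq}, and this is where $p>mq$ enters. Equality forces $L\cap\xi=c\,(K\cap\xi)$ for a.e.\ $\xi$ with one constant $c$, hence $L=cK$.

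With this correction your route is valid and genuinely different from the paper's. The paper gets the Minkowski inequality for $q\geq1$ from discrete uniqueness, the extremal problem (Proposition~\ref{extremal-problem}) and polytopal approximation with weak continuity, and for $q<1$ from radial $L_p$ sums. The sectional argument treats all $q\neq0$ at once, avoids that machinery, and needs only $p>\max\{1,mq\}$.
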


As  is common in this context, uniqueness of the  solution of the Monge-Amp\`ere equation \eqref{Amqp-Monge-Ampere-plarge} in the smooth case is interrelated with Brunn-Minkowski type inequalities. For example, if $q\geq 1$, then the discrete version of Theorem~\ref{Psimqp-uniqueness} about uniqueness combined with the variational method that we use to prove Theorem~\ref{main} yields a weaker version of Theorem~\ref{Psimqp-BM} about Brunn-Minkowski-type inequalities by continuity, which weaker version implies Theorem~\ref{Psimqp-BM} together with the characterization of equality, which in turn leads to  Theorem~\ref{Psimqp-uniqueness}. On the other hand, if $q<1$, then we prove first Theorem~\ref{Psimqp-BM}, which in turn yields  Theorem~\ref{Psimqp-uniqueness}. In particular, we prove the following $L_p$ centro-sectional Brunn-Minkowski inequalities.

\begin{theorem}
\label{Psimqp-BM}
Let $m\in\{1,\ldots,n-1\}$ and  $q\neq 0$, and let  $p> \max\{m,mq\}$.
If $K,L\in\mathcal{K}^n_{(o)}$ and $\alpha,\beta> 0$, then
\begin{equation}
\label{Psimqp-BM-qnozero-eq}
\widetilde{\Psi}_{m,q}(\alpha\cdot K+_p\beta\cdot  L)^{\frac{p}{mq}}\geq \alpha\widetilde{\Psi}_{m,q}(K)^{\frac{p}{mq}}+\beta\widetilde{\Psi}_{m,q}(L)^{\frac{p}{mq}},
\end{equation}
and equality holds if and only if $K$ and $L$ are dilates. In addition, if $q>0$, then \eqref{Psimqp-BM-qnozero-eq} holds (including the characterization of equality) for any convex bodies $K,L\in\mathcal{K}^n_{o}$.
\end{theorem}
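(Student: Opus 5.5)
The plan is to prove \eqref{Psimqp-BM-qnozero-eq} by a chain of three elementary inequalities: a pointwise comparison of radial functions, then a reverse Minkowski inequality inside each $m$-plane, then a reverse Minkowski inequality over ${\rm G}(n,m)$. The formula \eqref{Psimqdef-Radon} is what makes this chain available.

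\emph{Step 1 (radial comparison).} For $M=\alpha\cdot K+_p\beta\cdot L$ with $p\geq 1$ we show
\[
\varrho_M(u)^p\geq \alpha\varrho_K(u)^p+\beta\varrho_L(u)^p
\qquad\text{for all } u\in S^{n-1}.
\]
Set $a=\varrho_K(u)$, $b=\varrho_L(u)$ and $z=(\alpha a^p+\beta b^p)^{1/p}u$. If $\langle u,v\rangle>0$, then $a\langle u,v\rangle\leq h_K(v)$ and $b\langle u,v\rangle\leq h_L(v)$. Hence
\[
\langle z,v\rangle^p=\alpha (a\langle u,v\rangle)^p+\beta(b\langle u,v\rangle)^p\leq \alpha h_K(v)^p+\beta h_L(v)^p .
\]
If $\langle u,v\rangle\leq 0$, the defining inequality in \eqref{Lp-sum} is trivial. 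Therefore $z\in M$.

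\emph{Step 2 (sections).} For $\xi\in{\rm G}(n,m)$ put $f_\xi(K)=\HH^m(\xi\cap K)=\frac1m\int_{\xi\cap S^{n-1}}\varrho_K^m\,d\HH^{m-1}$. Write $r=m/p$, so $r<1$ because $p>m$. Then $f_\xi(K)^{p/m}=m^{-1/r}\|\varrho_K^p\|_{L^r(\xi\cap S^{n-1})}$. The functional $F\mapsto\|F\|_r$ is monotone, positively homogeneous and superadditive on nonnegative functions when $r<1$ (reverse Minkowski). Combined with Step 1 this gives
\[
f_\xi(M)^{p/m}\geq \alpha f_\xi(K)^{p/m}+\beta f_\xi(L)^{p/m}.
\]

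\emph{Step 3 (averaging over ${\rm G}(n,m)$).} Write $s=mq/p$, so $s<1$ because $p>mq$, and $s\neq 0$. Then $\widetilde\Psi_{m,q}(K)^{p/(mq)}=\big(\int_{{\rm G}(n,m)}(f_\xi(K)^{p/m})^s\,d\nu_{n,m}\big)^{1/s}$. For every $s<1$ with $s\neq0$, including $s<0$, the map $G\mapsto(\int G^s)^{1/s}$ is increasing and superadditive on positive functions. For $s<0$ we need $G>0$, which holds when $K,L\in\mathcal{K}^n_{(o)}$. Applying this to the inequality from Step 2 yields \eqref{Psimqp-BM-qnozero-eq}.

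For $q>0$ and $K,L\in\mathcal{K}^n_o$ we have $s\in(0,1)$, so zero values are harmless. Step 1 still holds for every $u$, with $\varrho=0$ allowed, and everything stays finite. So the same chain applies verbatim.

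\emph{Equality.} If $K=\lambda L$, then $M$ is a dilate of $L$ and equality is a direct computation. Conversely, suppose equality holds. Then equality holds in Step 3, so $f_\xi(K)^{p/m}=c\,f_\xi(L)^{p/m}$ for $\nu_{n,m}$-a.e.\ $\xi$ (strict superadditivity for $s<1$), with a constant $c$ independent of $\xi$. Equality also holds in Step 2 for a.e.\ $\xi$, so $\varrho_K^p=c_\xi\varrho_L^p$ a.e.\ on $\xi\cap S^{n-1}$ (strict reverse Minkowski for $r<1$). Continuity of radial functions, which is fine when $o\in{\rm int}$, then upgrades this to every $\xi$ by density.

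If $m\geq2$, any two $m$-great-subspheres can be joined through a chain of ones that meet, so $c_\xi$ is a single constant and $K$, $L$ are dilates. If $m=1$, then $\xi\cap S^{n-1}=\{\pm u\}$, and Step 2 forces $\varrho_K(u)/\varrho_L(u)=\varrho_K(-u)/\varrho_L(-u)$. Step 3 then makes this ratio the same constant for all $u$.

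\emph{Main obstacle.} The delicate part is the equality case when $q>0$ and $o\in\partial K$ or $o\in\partial L$. There the radial functions may vanish on large sets, and the strict-equality conditions in reverse Minkowski only give proportionality where the functions are positive. I would handle this by showing that the zero sets of $\varrho_K$ and $\varrho_L$ must coincide: the reverse Minkowski equality with $r<1$ forces $F$ and $G$ to have the same support. I would then run the chaining argument on the open set $\{\varrho_L>0\}$, which is a convex cone intersected with the sphere. Its connectedness should give a global constant, so that $K$ and $L$ are dilates.
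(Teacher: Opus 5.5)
Apart from the boundary equality case discussed below, your proof is correct. It follows a genuinely different route from the paper.

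\textbf{Comparison with the paper.} The paper splits according to $q$.
For $q\geq 1$ it:
(a) proves the $L_p$ Minkowski inequality for polytopes $K$ by solving the discrete extremal problem (Proposition~\ref{extremal-problem}) and applying discrete uniqueness (Proposition~\ref{discrete-unique});
(b) extends this to $\mathcal{K}^n_{(o)}$ by weak continuity of $\widetilde{A}_{m,q}$;
(c) converts it into \eqref{Psimqp-BM-qnozero-eq} via Lemma~\ref{LpBMMequivalent} and passes to $\mathcal{K}^n_o$ by approximation;
(d) obtains equality from the inequality at some $p'\in(mq,p)$, together with Claim~\ref{Lp-Lprime} and strict monotonicity of $\widetilde{\Psi}_{m,q}$.

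For $q<1$ the paper starts as you do; your Step~1 is Lemma~\ref{lp-include} of Xi--Zhang. It then uses only the pointwise power-mean bound $\varrho^m_{(1-\lambda)\cdot K\widetilde{+}_p\lambda\cdot L}\geq(1-\lambda)\varrho_K^m+\lambda\varrho_L^m$, linearity of $\mathcal{R}_m$, and concavity (convexity for $q<0$) of $t\mapsto t^q$. That last step is exactly what fails for $q>1$.

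You keep the exponent $p/m$ on each section, via reverse Minkowski in $L^{m/p}$. This is a section-wise dual $L_p$ Brunn--Minkowski inequality. The outer exponent then becomes $s=mq/p$, so the outer step needs only $p>mq$ instead of $q\le 1$.

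What your route buys:
\begin{itemize}
\item a short, uniform, elementary proof for every $q\neq0$, with no dependence on the existence theory, weak continuity or approximation;
\item it works directly on $\mathcal{K}^n_o$ when $q>0$, and the inequality even survives at $p=\max\{m,mq\}$;
\item via Lemma~\ref{LpBMMequivalent} it yields the Minkowski inequality, hence Theorem~\ref{Psimqp-uniqueness}, without Proposition~\ref{discrete-unique}.
\end{itemize}
The paper's heavier route for $q\geq1$ is designed to exhibit the link between discrete uniqueness and Brunn--Minkowski inequalities.

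\textbf{The boundary equality case.} What is not yet a proof is the equality case for $q>0$ with $o\in\partial K$ or $o\in\partial L$. Your sketch there has two inaccuracies:
\begin{itemize}
\item $\{\varrho_L>0\}=S^{n-1}\cap{\rm pos}\,L$ need not be open (take a polytope with a vertex at $o$), and $\varrho_L$ need not be continuous on it;
\item ``same support'' only comes out $\HH^{m-1}$-a.e.\ on $\nu_{n,m}$-a.e.\ section.
\end{itemize}
You need neither zero sets, nor continuity, nor connectedness.

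Write $G_K(\xi)=\HH^m(\xi\cap K)^{p/m}$. Equality in Step~3 gives $G_K=c\,G_L$ $\nu_{n,m}$-a.e., with $c>0$.
\begin{itemize}
\item For a.e.\ $\xi$ with $G_L(\xi)>0$, equality in Step~2 gives $\varrho_K^p=c_\xi\varrho_L^p$ $\HH^{m-1}$-a.e.\ on $\xi\cap S^{n-1}$. Integrating gives $G_K(\xi)=c_\xi G_L(\xi)$, hence $c_\xi=c$.
\item For a.e.\ $\xi$ with $G_L(\xi)=0$, also $G_K(\xi)=0$, so both radial functions vanish a.e.\ on $\xi\cap S^{n-1}$.
\end{itemize}
Thus $E=\{\varrho_K^p\neq c\,\varrho_L^p\}$ satisfies $\HH^{m-1}(E\cap\xi)=0$ for a.e.\ $\xi$. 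Applying \eqref{Rm-Rm*-eq0} with $f=\mathbf{1}_E$ and $F\equiv1$ gives $\HH^{n-1}(E)=0$. Therefore $|K\triangle c^{1/p}L|=\frac1n\int_{S^{n-1}}|\varrho_K^n-c^{n/p}\varrho_L^n|\,d\HH^{n-1}=0$, and two convex bodies with null symmetric difference coincide.

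This handles $\mathcal{K}^n_{(o)}$, $\mathcal{K}^n_o$ and $m=1$ uniformly, and makes your continuity and chaining argument unnecessary.
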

\noindent{\bf Remarks. } 
\begin{itemize}
\item If $q=0$, $\lambda\in(0,1)$, $p\geq m$ and $K,L\in\mathcal{K}^n_{(o)}$, then
\begin{equation}
\label{Psimqp-BM-qzero-eq}
\widetilde{\Psi}_{m,0}((1-\lambda)\cdot K+_p\lambda\cdot  L)\geq (1-\lambda)\widetilde{\Psi}_{m,0}(K)+\lambda\widetilde{\Psi}_{m,0}(L),
\end{equation}
and equality holds if and only if $K=L$.

\item If $q\in \R$,  $p> \max\{m,mq\}$ and $K,L\in\mathcal{K}^n_{(o)}$, then the $L_p$ centro-sectional Minkowski inequality is
\begin{align}
\label{Lp-Minkowski-qnonzero}
\int_{S^{n-1}}\frac{h_L^p}{h_K^p}\,d\widetilde{A}_{m,q}(K,\cdot)\geq & m\widetilde{\Psi}_{m,q}(K)^{1-\frac{p}{mq}}\widetilde{\Psi}_{m,q}(L)^{\frac{p}{mq}}\mbox{ \ if }q\neq 0\\
\label{Lp-Minkowski-q0}
\int_{S^{n-1}}\frac{h_L^p}{h_K^p}\,d\widetilde{A}_{m,0}(K,\cdot)\geq & p\left(\widetilde{\Psi}_{m,0}(L)-\widetilde{\Psi}_{m,0}(K)\right)+m \mbox{ \ if }q=0,
\end{align}
and  equality holds in \eqref{Lp-Minkowski-qnonzero} if and only if $K$ and $L$ are dilates, and in \eqref{Lp-Minkowski-q0} if and only if $K=L$.

\item Let $0< p_1<p_2$. If the inequality \eqref{Psimqp-BM-qnozero-eq} or \eqref{Lp-Minkowski-qnonzero} hold for $p=p_1$ and given $m\in\{1,\ldots,n-1\}$,  $q\neq 0$, and $K,L\in\mathcal{K}^n_{(o)}$, then \eqref{Psimqp-BM-qnozero-eq} and \eqref{Lp-Minkowski-qnonzero} hold for $p=p_2$, and if addition, $p_1\geq 1$, then equality holds for $p=p_2$ if and only if $K$ and $L$ are dilates
(cf. Lemma~\ref{LpBMMequivalent} and Claim~\ref{Lp-Lprime}).

\item If $q\neq 0$ and $K$ and $L$ are $o$-symmetric, then \eqref{Psimqp-BM-qnozero-eq} and \eqref{Lp-Minkowski-qnonzero} hold even for $p>\max\{0,mq\}$
according to Theorem~\ref{Psimqp-even}. In particular, if $m=1$, $q\neq 0$, $\alpha,\beta>0$ and $p>\max\{0,q\}$, then any $o$-symmetric convex bodies $K,L\subset\R^n$ satisfy \begin{align}
\label{Lp-BM-qnonzero-Vq}
\widetilde{V}_{q}(\alpha\cdot K+_p\beta\cdot  L)^{\frac{p}{q}}\geq &\alpha\widetilde{V}_{q}(K)^{\frac{p}{q}}+\beta\widetilde{V}_{q}(L)^{\frac{p}{q}},\\
\label{Lp-Minkowski-qnonzero-Cq}
\int_{S^{n-1}}\frac{h_L^p}{h_K^p}\,d\widetilde{C}_{q}(K,\cdot)\geq &\widetilde{V}_{q}(K)^{1-\frac{p}{q}}\widetilde{V}_{q}(L)^{\frac{p}{q}}.
\end{align}
We note that \eqref{Lp-BM-qnonzero-Vq} has been proved earlier by
Xi, Zhang \cite{XiZ22} for any $K,L\in\mathcal{K}^n_{(o)}$ and $p>\max\{0,q\}$.
\end{itemize}

 Some additional $L_p$ Brunn-Minkowski-type and Minkowski-type inequalities can be found in Section~\ref{secBM}, for example, Lemma~\ref{LpBMMequivalent} provides the usual possible equivalent formulations of Brunn-Minkowski-type inequalities.

In addition, the regularity of the solution of the $L_p$ dual Minkowski is well understood based on Caffarelli \cite{Caf90a,Caf90b} (see B\"or\"oczky, Fodor \cite{BoF19} and Huang, Zhao \cite{HuZ18}).
We prove the following concerning the uniqueness and regularity of the "smooth" $L_p$ centro-sectional Minkowski Problem.

\begin{theorem}
\label{regularity-uniqueness}
Let $p,q\in\R$, $m\in\{1,\ldots,n-1\}$ and let the $f$ in the Monge-Amp\`ere equation \eqref{Amqp-Monge-Ampere} be a positive $C^{0,\alpha}$ function on $S^{n-1}$.
\begin{enumerate}
\item[(i)] Any solution $h$ of \eqref{Amqp-Monge-Ampere} is locally $C^{2,\alpha}$ on $\{h>0\}$.
\item[(ii)] If $p>mq$, then \eqref{Amqp-Monge-Ampere} has a unique positive solution.
\end{enumerate}
\end{theorem}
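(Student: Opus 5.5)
For part (i), I will reduce to Caffarelli's regularity theory for the classical Monge--Amp\`ere equation \eqref{Minkowsi-Problem-Monge-Ampere}. Set $K$ the convex body with $h=h_K$ and rewrite \eqref{Amqp-Monge-Ampere} as $\det(\nabla^2h+hI_{n-1})=\tilde f$, where
$$
\tilde f = f\, h^{p-1}\left(\|\nabla h\|^2+h^2\right)^{\frac{n-m}2}\Big/\mathcal{R}^*_m\HH^m(K\cap\cdot)^{q-1}\left(\frac{\nabla h+h\,{\rm id}}{(\|\nabla h\|^2+h^2)^{1/2}}\right).
$$
The first step is to show that on any compact subset of $\{h>0\}$ the function $\tilde f$ is bounded above and below by positive constants. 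The factor $h^{p-1}$ is fine there, $\|\nabla h\|^2+h^2=\varrho_K^2$ lies between $h^2$ and $({\rm diam}K)^2$, and $\xi\mapsto\HH^m(K\cap\xi)$ is bounded above. It is also bounded below by a positive constant on the planes used by the dual Radon transform in the relevant directions. More precisely, the direction $u=x/\|x\|$ with $\langle x,\nu_K(x)\rangle=h>0$ forces $K$ to have nonempty interior, and every $m$-plane through $o$ containing $x$ meets $K$ in an $m$-dimensional set. I will handle the possibility $o\in\partial K$ separately for $q<1$, using that the sections still have volume bounded below near such directions.

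With $\tilde f$ pinched, Caffarelli's strict convexity and $C^{1,\beta}$ results give $h\in C^{1,\beta}$ locally on $\{h>0\}$. Then $\nabla h$ is H\"older, hence so is the radial direction map $v\mapsto(\nabla h+h v)/\varrho$. The function $u\mapsto\mathcal{R}^*_m\HH^m(K\cap\cdot)^{q-1}(u)$ is Lipschitz in $u$ on directions where sections are nondegenerate, since section volumes of a convex body vary Lipschitz-continuously with the plane away from degeneracy. So $\tilde f\in C^{0,\gamma}$ locally, and Caffarelli's interior $C^{2,\gamma}$ estimate gives $C^{2,\gamma}$. A bootstrap then improves $\gamma$ to $\alpha$: with $h\in C^{2}$, the direction map is $C^{1}$, so $\tilde f$ inherits exactly the $C^{0,\alpha}$ regularity of $f$.

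For part (ii), existence of a positive solution when $p>mq$ comes from Theorem~\ref{main}, or from its variational construction in the remaining cases, together with (i) and a maximum-principle argument at a minimum point of $h$ showing $h>0$. At a minimum $v_0$ of $h$ we have $\nabla h=0$ and $\nabla^2h\ge0$, so the left side of \eqref{Amqp-Monge-Ampere} is at least $c\,h^{1-p+m-n+n-1}$. If $h(v_0)\to0$, this is incompatible with $f\le\max f$ once $p>mq$, after tracking the scaling of the Radon factor, which is homogeneous of degree $m(q-1)$.

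Uniqueness I will prove by the standard maximum-principle comparison, which exploits the homogeneity. The operator in \eqref{Amqp-Monge-Ampere} is homogeneous of degree $mq-p$ under $h\mapsto\lambda h$: the Radon factor has degree $m(q-1)$, and the remaining factor $h^{1-p}\varrho^{m-n}\det$ has degree $1-p+m-n+n-1=m-p$. Given two positive solutions $h_1,h_2$, let $\lambda=\max h_1/h_2$, attained at $v_0$, and assume $\lambda>1$ after swapping. Then $\lambda h_2\ge h_1$ with equality at $v_0$, so $\nabla(\lambda h_2)=\nabla h_1$ and $\nabla^2(\lambda h_2)\ge\nabla^2h_1$ at $v_0$. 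Also $\lambda K_2\supseteq K_1$, so every section satisfies $\HH^m(\lambda K_2\cap\xi)\ge\HH^m(K_1\cap\xi)$.

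This last point is the main obstacle, because the Radon factor is nonlocal and its monotonicity depends on the sign of $q-1$. For $q\ge1$ the containment pushes the Radon factor up, which is the right direction. For $q<1$ it goes the wrong way, and I would instead compare the local terms at $v_0$ together with a global inequality. The plan for $q<1$ is to fall back on Theorem~\ref{Psimqp-uniqueness}/\ref{Psimqp-BM}, via the Minkowski inequality \eqref{Lp-Minkowski-qnonzero}, when $p>\max\{m,mq\}$. For $q\ge1$, evaluating at $v_0$ gives $f(v_0)=F(\lambda h_2)(v_0)\,\lambda^{-(mq-p)}\ge\ldots$. Here I must be careful: $\det$ is larger for $\lambda h_2$, and the direction argument coincides at $v_0$, so $\lambda^{p-mq}f(v_0)\ge f(v_0)$ is consistent. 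I would therefore run the comparison at the minimum of $h_1/h_2$ instead, where all inequalities reverse and yield $\lambda^{p-mq}\le1$ with $\lambda<1$, a contradiction when $p>mq$. Getting the direction of each inequality consistent is exactly where I expect the difficulty.
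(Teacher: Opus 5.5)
Your part (i) follows the paper's Lemma~\ref{regularity-mpq}. You bound the non-determinant factor above and below on compact subsets of $\{h>0\}$ and then invoke Caffarelli's theory. Your remark that the open segment $(o,x)$ lies in ${\rm int}\,K$ when $\langle x,\nu_K(x)\rangle>0$ is a clean way to control the Radon factor when $o\in\partial K$.

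In (ii) the paper proves only that a positive solution is unique (Proposition~\ref{uniqueness-p-mq}). Your existence paragraph is unnecessary and does not hold up as written: Theorem~\ref{main} needs $p>1$, $q>0$, and the Radon factor at a minimum point of $h$ is not controlled by $h(v_0)$.

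For $q\ge 1$ your touching argument is correct and uses the same mechanism as the paper: homogeneity of degree $mq-p$ plus a maximum principle. The paper runs the strong maximum principle on a linearization, while you compare directly at the touching point. However, your signs are reversed. Take $\lambda=\max h_1/h_2>1$, attained at $v_0$.
\begin{itemize}
\item At $v_0$, $h_1$ and $\lambda h_2$ agree, as do their gradients, hence also the radial direction and $\varrho$.
\item The determinant is at least as large for $\lambda h_2$.
\item The inclusion $\lambda K_2\supseteq K_1$ with $q\ge1$ makes the Radon factor at least as large.
\end{itemize}
Hence $\lambda^{mq-p}f(v_0)\ge f(v_0)$, which is impossible for $\lambda>1$ and $p>mq$. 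So the maximum already gives the contradiction, and no switch to the minimum is needed. At the minimum the correct conclusion is $\lambda^{mq-p}\le 1$. Your version, $\lambda^{p-mq}\le 1$ with $\lambda<1$, is not a contradiction.

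The genuine gap is the case $q<1$. There the inclusion $\lambda K_2\supseteq K_1$ gives $\mathcal{R}^*_m\HH^m(\lambda K_2\cap\cdot)^{q-1}(u_0)\le \mathcal{R}^*_m\HH^m(K_1\cap\cdot)^{q-1}(u_0)$. The determinant inequality points the other way, so the touching argument yields nothing.

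Your fallback through the Minkowski inequalities (Theorem~\ref{uniqueness-general}, or Theorem~\ref{Psimqp-uniqueness}) only reaches $p\ge m$. Propositions~\ref{Psimqp-q01}, \ref{Psimqp-qneg} and \ref{Psimqp-qzero-prop} rest on the power-mean step $((1-\lambda)\varrho_K^p+\lambda\varrho_L^p)^{m/p}\ge(1-\lambda)\varrho_K^m+\lambda\varrho_L^m$, which is valid only for $p\ge m$. In addition, Lemma~\ref{LpBMMequivalent} needs $p>0$. So your proposal leaves the range $q<1$, $mq<p<m$ unproved; for $q<0$ this range includes $p\le 0$.

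The paper does not split cases. In Proposition~\ref{uniqueness-p-mq} it treats $J$ from \eqref{Amqp-Monge-AmpereJ} as a function of the pointwise data $(\nabla h(u),h(u))$. It then linearizes $F-J$ along the segment from $h_1$ to $th_2$ and applies the strong maximum principle to $th_2-h_1$. The nonlocal dependence you flagged is exactly what that step passes over.
\begin{itemize}
\item For $q\ge1$ the step can be repaired. Freeze the Radon factor of $tK_2$; the inclusion $tK_2\supseteq K_1$ then makes the true $J$ of $K_1$ dominate the frozen one, which is the favourable direction.
\item For $q<1$ that inequality reverses.
\end{itemize}
To complete (ii) in this range you would have to justify the pointwise treatment of $J$ or find a different argument. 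Neither your comparison nor your fallback does this.
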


We note that the even $L_p$ centro-sectional Minkowski problem is a direct generalization of the even $L_p$ dual Minkowski problem proposed by Lutwak, Yang, Zhang \cite{LYZ18}.
The even centro-sectional Minkowski problem (the even $L_p$ centro-sectional Minkowski problem  for $p=0$) for $q>0$  and $m=1,\ldots,n-1$ is solved by Cai, Leng, Wu, Xi \cite{CLWX26}. As a byproduct of our methods in this paper, we deduce the $p>0$ and $q\in\R$ case. One reason why essentially the same method yields stronger results in the even case is that variational formulas only work if the origin is in the interior of the convex body, not on the boundary. While this issue can be a real headache in general (see, for example, Proposition~\ref{extremal-problem}), the property that the origin is in the interior is given free in the origin symmetric case.

\begin{theorem}
\label{symmetric}
Let $p>0$, $q\in \R$ with $p\neq mq$, let $m\in\{1,\ldots,n-1\}$ and let $\mu$ be a finite even Borel measure on $S^{n-1}$. 
 There exists an $o$-symmetric convex body $K\subset\R^n$ 
 such that 
$\widetilde{A}_{m,q,p}(K,\cdot)=\mu$ if and only if $\mu$ is not concentrated on any great subsphere. In addition, the solution is unique if $p>mq$.
\end{theorem}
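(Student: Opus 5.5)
We follow the standard variational scheme for even $L_p$ Minkowski problems. Necessity is immediate. If $K$ is $o$-symmetric then $K\in\mathcal{K}^n_{(o)}$, and by \eqref{tildeAmqp-bdK-eq0} the measure $\widetilde{A}_{m,q,p}(K,\cdot)$ is even. Its density with respect to $S_K$ is positive and bounded away from zero, because $\varrho_K$ and $h_K$ are bounded above and below and $\mathcal{R}^*_m\HH^m(K\cap\cdot)^{q-1}$ is positive and bounded on $S^{n-1}$. Moreover $S_K$ is not concentrated on any great subsphere. Hence $\widetilde{A}_{m,q,p}(K,\cdot)$ is not concentrated on a great subsphere either.

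For sufficiency, consider for $q\neq0$ the functional
$$\Phi(Q)=\frac1p\log\int_{S^{n-1}}h_Q^p\,d\mu-\frac{1}{mq}\log\widetilde{\Psi}_{m,q}(Q)$$
on $o$-symmetric convex bodies $Q$. For $q=0$ we take instead $\frac1p\log\int h_Q^p\,d\mu-\frac1m\widetilde{\Psi}_{m,0}(Q)$. The functional $\Phi$ is invariant under dilations, so we minimize it over $o$-symmetric bodies. The plan is as follows.

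\emph{Step 1 (coercivity).} Take a minimizing sequence $Q_k$, normalized so that $\max h_{Q_k}=1$. Since $\mu$ is even and not concentrated on a great subsphere, $\int|\langle u,v\rangle|^p\,d\mu(u)\ge c>0$ for every $v\in S^{n-1}$. Therefore $\int h_{Q_k}^p\,d\mu$ stays bounded below, using that $h_{Q_k}(u)\ge|\langle u,v_k\rangle|$ for a point $v_k\in Q_k$ of norm $1$. By Blaschke selection, $Q_k\to Q_0$, and it remains to exclude that $Q_0$ is lower dimensional.

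For $q>0$, flattening sends $\widetilde{\Psi}_{m,q}(Q_k)\to0$, so the second term blows up in the right direction. We must check that this divergence beats the first term, which stays bounded; this needs a quantitative comparison via the John ellipsoid. The most delicate regime is the one where $p<mq$ fails to be compensated.

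When $p\neq mq$, we handle both signs of $p-mq$ by the dilation trick from the $L_p$ dual Minkowski problem. Rather than normalizing, we minimize $\frac1p\int h_Q^p\,d\mu$ subject to $\widetilde{\Psi}_{m,q}(Q)=1$ for $q>0$. For $q<0$ we exploit the reversed monotonicity, and for $q=0$ the constraint $\widetilde{\Psi}_{m,0}=0$. Nondegeneracy then follows as in \cite{HLYZ16}, \cite{LYZ18}. For $o$-symmetric $Q$ with John ellipsoid axes $a_1\le\dots\le a_n$, we have $\HH^m(Q\cap\xi)\approx$ the product of the $m$ largest semiaxes of the ellipsoid section. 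Averaging over ${\rm G}(n,m)$ gives two-sided estimates of $\widetilde{\Psi}_{m,q}(Q)$ in terms of the $a_i$. The $q<0$ case requires controlling the set of $\xi$ meeting the short axes, via an integrability estimate on the Grassmannian for powers of $|\xi\cap e_1^\perp|$-type quantities. We expect this estimate to be the main obstacle: it yields conditions involving only $q$ and $m$, and it must be shown that any $q\in\R$ is allowed in the symmetric setting.

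\emph{Step 2 (Euler--Lagrange).} Since $Q_0\in\mathcal{K}^n_{(o)}$ by symmetry, the variational formulas \eqref{Amqp} and \eqref{Am0p} apply. They apply via the Wulff-shape (Aleksandrov body) perturbation $h_t=(h_{Q_0}^p+tg)^{1/p}$ with $g$ even continuous, and we may assume this extension from Cai, Leng, Wu, Xi \cite{CLWX26} as stated in Remark~\ref{Amqp-Lpsum}. The derivative condition gives $\widetilde{A}_{m,q,p}(Q_0,\cdot)=\lambda\mu$ for some $\lambda>0$. Because $\widetilde{A}_{m,q,p}(rK,\cdot)=r^{mq-p}\widetilde{A}_{m,q,p}(K,\cdot)$, the condition $p\neq mq$ lets us rescale to remove $\lambda$.

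\emph{Step 3 (uniqueness for $p>mq$).} For $p>\max\{m,mq\}$ this is Theorem~\ref{Psimqp-uniqueness}. For $mq<p\le m$ we instead use the even version of the Minkowski inequality \eqref{Lp-Minkowski-qnonzero} together with its equality case, referenced as Theorem~\ref{Psimqp-even} (valid for $p>\max\{0,mq\}$ on $o$-symmetric bodies). Suppose $\widetilde{A}_{m,q,p}(K,\cdot)=\widetilde{A}_{m,q,p}(L,\cdot)$. Integrating $h_L^p$ against the $K$-measure and $h_K^p$ against the $L$-measure, and applying the inequality twice, gives $\widetilde{\Psi}(K)^{1-p/(mq)}\le\widetilde{\Psi}(L)^{1-p/(mq)}$ together with its reverse. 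So the two values coincide, equality holds, and $K,L$ are dilates. Homogeneity with $p\neq mq$ then forces $K=L$. The case $q=0$ is analogous via \eqref{Lp-Minkowski-q0}.
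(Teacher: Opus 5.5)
Your scheme is the same as the paper's. Necessity comes from positivity of the boundary density (Lemma~\ref{Amqp-support}). Existence comes from extremizing $\widetilde{\Psi}_{m,q}$ against $\int h_Q^p\,d\mu$ over $o$-symmetric bodies; your dilation-invariant $\Phi$ is equivalent to maximizing $\widetilde{\Psi}_{m,q}$ (minimizing it if $q<0$) on $\{\int h_Q^p\,d\mu\le 1\}$, as in Proposition~\ref{extremal-problem-even}. The Euler--Lagrange equation follows from Theorem~\ref{Variation-Phimq}, with $o\in{\rm int}\,Q_0$ for free, and you rescale using $p\neq mq$. Uniqueness comes from Theorem~\ref{Psimqp-even}.

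\textbf{The gap is in Step~1.} For $q<0$ you leave non-degeneracy of the minimizing sequence as an unresolved ``main obstacle'' and suspect it might restrict $q$. For $q=0$ you say nothing. So existence is not established for $q\le 0$. The missing ingredient is the elementary flattening estimate the paper uses, Claim~\ref{pancake}. Suppose $Q\subset RB^n$ and $h_Q(\pm w)\le\varepsilon$. Then every $\xi$ with $\|\Pi_\xi w\|\ge\sqrt{\varepsilon}$ has $\HH^m(\xi\cap Q)\le C\sqrt{\varepsilon}$, and by \eqref{Gnm-u} the remaining $\xi$ have $\nu_{n,m}$-measure $O(\varepsilon^{m/2})$. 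Hence $\widetilde{\Psi}_{m,q}(Q)\ge\gamma\varepsilon^{q/2}\to\infty$ for $q<0$, and $\widetilde{\Psi}_{m,0}(Q)\to-\infty$ for $q=0$, just as $\widetilde{\Psi}_{m,q}(Q)\to0$ for $q>0$. In all three cases this drives your $\Phi$ to $+\infty$.

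You already showed that after normalizing $\max h_{Q_k}=1$ the term $\frac1p\log\int h_{Q_k}^p\,d\mu$ is bounded above and below. So this qualitative divergence finishes compactness. You need no John-ellipsoid averaging and no integrability estimate on ${\rm G}(n,m)$. There is also no ``delicate regime'' involving $p$ versus $mq$; the condition $p\neq mq$ enters only when you remove the multiplier. The full-dimensional $o$-symmetric limit lies in $\mathcal{K}^n_{(o)}$, where $\widetilde{\Psi}_{m,q}$ is continuous for every $q$ by Proposition~\ref{Amq-weak-convergence}. The refined estimates in \cite{HLYZ16} are needed there only because $p=0$: then $\int\log h_Q\,d\mu$ is itself unbounded below under flattening and competes with the other term, which cannot happen for $p>0$.

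\textbf{A smaller point in Step~3.} The case $q=0$, $0<p<m$ is not covered by what you cite. \eqref{Lp-Minkowski-q0} is proved only for $p\ge m$ (Proposition~\ref{Psimqp-qzero-prop}), and Theorem~\ref{Psimqp-even} assumes $q\neq0$. Calling it ``analogous'' would require rerunning the smooth-approximation argument of Theorem~\ref{Psimqp-even} for $q=0$. The paper's one-line appeal to Theorem~\ref{Psimqp-even} has the same limitation.
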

\noindent{\bf Remark.} 
 Again, if $p>0$ and $p=mq$, then there exists  an $o$-symmetric convex body $K\subset\R^n$ 
 such that 
$$
\widetilde{A}_{m,p,q}(K,\cdot)=m\widetilde{\Psi}_{m,q}(K)\cdot \mu.
$$

For the exciting history of the even $L_p$ dual Minkowski problem, see, for example, Lutwak \cite{Lut93}, or Huang, Yang, Zhang \cite{HYZ25}, or B\"or\"oczky, Figalli, Ramos \cite{BFR26}. Here we discuss the validity of the $L_p$ dual Brunn-Minkowski inequality \eqref{Lp-BM-qnonzero-Vq} for $o$-symmetric convex bodies; namely, the case $m=1$. If $p=1$ and $q=n$, then \eqref{Lp-BM-qnonzero-Vq} is just the classical Brunn-Minkowski inequality. If $0<p<1$ and $q=n$, then \eqref{Lp-BM-qnonzero-Vq} is the Brunn-Minkowski conjecture by 
B\"or\"oczky, Lutwak, Yang, Zhang \cite{BLYZ12} intensively investigated in the last decade, which has been verified if $p_n<p<1$ by combined efforts of Chen, Huang, Li, Liu \cite{CHLL20} and Kolesnikov, Milman \cite{KoM22} (see also Puttermann \cite{Put21}).

If $p=1$ and $1<q<n$, then the dual Brunn-Minkowski inequality \eqref{Lp-BM-qnonzero-Vq} for $o$-symmetric convex bodies has been long conjectured by Lutwak, and the conjecture has been recently verified by Sadovsky, Zhang \cite{SaZ25}. 

Concerning the structure of the paper, Section~\ref{secintegralformulas} discusses some properties of the Radon transform and the dual Radon transform, and Section~\ref{seccentrosectional} and Section~\ref{secLpcentrosectional} introduce the centro-sectional and $L_p$ centro-sectional measures. Some fundamental uniqueness and regularity results are obtained in Section~\ref{secregularity-uniqueness} in the smooth and in the discrete case. Theorem~\ref{main} and the existence part of Theorem~\ref{symmetric} are proved in Section~\ref{secproofmain}, and finally, Section~\ref{secBM} completes the paper with the Brunn-Minkowski-type results.

\section{Some integral formulas over the Grassmannian}
\label{secintegralformulas}

In this section, we collect some useful integral  formulas for the Grassmannian ${\rm G}(m,n)$ where $m\in\{1,\ldots,n-1\}$. First, 
we write $\Pi_V$ to denote the orthogonal projection into a proper linear subspace $V\subset\R^n$. After fixing a $u\in S^{n-1}$, it is well known (see, for example, Anderson \cite{And03}, Section~5.2.2) that the distribution of the random variable $\|\Pi_\xi u\|^2$ of $\xi\in {\rm G}(m,n)$ is the Beta distribution with parameters $(\frac{m}2,\frac{n-m}2)$, and hence if $0\leq \alpha<\beta\leq 1$, then
\begin{align}
\label{Gnm-u}
&\nu_{n,m}\left(\left\{\xi\in {\rm G}(m,n):\alpha\leq \|\Pi_\xi u\|^2\leq \beta\right\}\right)\\
\nonumber
=&\frac1{B(\frac{m}2,\frac{n-m}2)}\int_\alpha^\beta t^{\frac{m}2-1}(1-t)^{\frac{n-m}2-1}\,dt
\end{align}
where $B(a,b)=\int_0^1t^{a-1}(1-t)^{b-1}\,dt$ is the Beta function.

For the second formula, if $F:{\rm G}(n,m)\to[0,\infty)$ is Borel measurable, then
\begin{equation}
\label{xi-in-hyperplanes}
\int_{{\rm G}(n,m)} Fd\nu_{n,m}=\frac1{n\omega_n}\int_{S^{n-1}}\int_{{\rm G}(u^\bot,m)} F\,d\nu_{u^\bot,m}\,d\HH^{n-1}
\end{equation}
as the integral on the right is also invariant under the action of $O(n)$. 

For the first application of \eqref{xi-in-hyperplanes}, if $w\in S^{n-1}$ and $\delta\in[0,1)$, then we define
\begin{equation}
\label{Sigma-def}
\Sigma(w,\delta)=\{u\in S^{n-1}:\langle u,w\rangle\geq\delta\}.
\end{equation}
In particular, if $\delta=\cos \alpha$ for an $\alpha\in(0,\frac{\pi}2)$, then $\Sigma(w,\delta)$ is the closed spherical cap of center $w$ and geodesic radius $\alpha$.

\begin{claim}
\label{mplanes-flatcones}
For $n\geq 3$,  $\theta\in(0,1)$ and $m\in\{2,\ldots,n-1\}$, there exists $\gamma=\gamma(\theta,n,m)$ such that if $v,w\in S^{n-1}$ with $\langle v,w\rangle=0$, then
$$
\nu_{n,m}\left(\left\{\xi\in{\rm G}(n,m):\xi\cap v^\bot\cap \Sigma(w,\theta)\neq \emptyset\right\}\right)\geq \gamma.
$$
\end{claim}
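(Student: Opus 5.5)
The plan is to combine rotation invariance with an averaging argument over $w$, so that no explicit geometric control of $\xi\cap v^\bot$ is needed. The key point is that $m\geq 2$ forces $\xi\cap v^\bot$ to be nontrivial for every $\xi$.

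\emph{Step 1: the probability does not depend on $(v,w)$.} For an orthonormal pair $(v,w)$ set
$$
P(v,w)=\nu_{n,m}\left(\left\{\xi\in{\rm G}(n,m):\xi\cap v^\bot\cap \Sigma(w,\theta)\neq \emptyset\right\}\right).
$$
For $g\in O(n)$ we have $g\xi\cap (gv)^\bot\cap\Sigma(gw,\theta)=g\left(\xi\cap v^\bot\cap\Sigma(w,\theta)\right)$. Since $\nu_{n,m}$ is $O(n)$-invariant, $P(gv,gw)=P(v,w)$. Because $O(n)$ acts transitively on orthonormal pairs (here $n\geq 2$ suffices), $P(v,w)=P$ is a constant depending only on $\theta,n,m$. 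We will take $\gamma=P$, so it remains to show $P>0$.

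\emph{Step 2: average over $w$.} Fix $v$ and let $\sigma$ be the normalized Hausdorff probability measure on $S_v:=S^{n-1}\cap v^\bot$, a sphere of dimension $n-2\geq 1$. The indicator of the event is jointly Borel in $(\xi,w)$: the set of $(\xi,w)$ for which the compact sets $\xi\cap v^\bot\cap S^{n-1}$ and $\Sigma(w,\theta)$ intersect is closed. So Step 1 and Fubini give
$$
P=\int_{S_v}P(v,w)\,d\sigma(w)=\int_{{\rm G}(n,m)}\sigma\left(\left\{w\in S_v:\xi\cap v^\bot\cap\Sigma(w,\theta)\neq\emptyset\right\}\right)d\nu_{n,m}(\xi).
$$

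\emph{Step 3: pointwise lower bound.} For every $\xi\in{\rm G}(n,m)$ we have $\dim(\xi\cap v^\bot)\geq m-1\geq 1$, because $m\geq 2$. Hence there is a unit vector $x\in\xi\cap v^\bot$. Every $w\in S_v$ with $\langle x,w\rangle\geq\theta$ satisfies $x\in \xi\cap v^\bot\cap\Sigma(w,\theta)$. Therefore the inner $\sigma$-measure is at least
$$
c(\theta,n):=\sigma\left(\{w\in S_v:\langle x,w\rangle\geq\theta\}\right),
$$
which is the normalized measure of a cap of angular radius $\arccos\theta\in(0,\pi/2)$ on an $(n-2)$-sphere. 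This quantity is positive and independent of $x$ and $v$. It follows that $\gamma=P\geq c(\theta,n)>0$.

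\emph{Main obstacle.} The only delicate points are the measurability needed for Fubini, which is handled by the closedness observation above, and the use of $m\geq2$ in Step 3. For $m=1$ a generic line meets $v^\bot$ only at $o$, and the claim fails. A direct attempt instead, showing that a neighborhood of a well-chosen $\xi_0$ lies in the event, runs into controlling how $\xi\cap v^\bot$ moves with $\xi$; the averaging trick avoids this entirely.
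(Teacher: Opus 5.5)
Your proof is correct, and it takes a genuinely different and simpler route than the paper.

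\textbf{The paper's route.} The paper argues by induction on $n-m$. In the base case $n=m+1$ it shows directly that for every $u$ in a cap $\Sigma(p,\theta)$ around a point $p\in S^{n-1}\cap v^\bot\cap w^\bot$, the hyperplane $u^\bot$ meets $v^\bot\cap\Sigma(w,\theta)$. In the inductive step it disintegrates $\nu_{n,m}$ over hyperplanes via \eqref{xi-in-hyperplanes}. It then applies the induction hypothesis inside $u^\bot$ with the half angle $\frac{\alpha}{2}$, and absorbs the loss with the spherical triangle inequality. The resulting constant is a product of cap measures with successively halved radii, so it deteriorates as $n-m$ grows. The payoff is that this argument uses the same toolkit as the neighbouring induction arguments (Lemma~\ref{numn-tangent-mspaces}, Claim~\ref{mplanes-ball}).

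\textbf{Your route.} You use only $O(n)$-invariance and its transitivity on orthonormal pairs to make the probability independent of $w$. You then average over $w\in S^{n-1}\cap v^\bot$ and invoke the deterministic fact $\dim(\xi\cap v^\bot)\geq m-1\geq 1$ for every $\xi$. This buys several things:
\begin{itemize}
\item no induction and no geometric control of how $\xi\cap v^\bot$ moves with $\xi$;
\item it isolates exactly where $m\geq 2$ enters, and your remark that the claim fails for $m=1$ is right;
\item an explicit lower bound, the normalized measure of a cap of angular radius $\arccos\theta$ on the $(n-2)$-sphere, which is independent of $m$.
\end{itemize}

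\textbf{Measurability.} The only extra cost is the joint measurability needed for Tonelli, and your closedness claim is correct. You may want to spell it out, since $\xi\mapsto\xi\cap v^\bot$ is only upper semicontinuous, not continuous. Suppose $x_k\in\xi_k\cap v^\bot\cap S^{n-1}$, $\langle x_k,w_k\rangle\geq\theta$ and $(\xi_k,w_k)\to(\xi,w)$. Then any subsequential limit $x$ of $x_k$ satisfies $x=\lim\Pi_{\xi_k}x_k=\Pi_\xi x$, so $x\in\xi\cap v^\bot\cap\Sigma(w,\theta)$.
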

\begin{proof}
We prove the statement by induction on $n-m$. First, let $n=m+1$. Choose a $p\in S^{n-1}\cap v^\bot\cap w^\bot$. If $u\in \Sigma(p,\theta)$, then let
$q\in u^\bot\cap S^{n-1}\cap {\rm lin}\,\{p,w\}$ be with $\langle q,w\rangle>0$, and hence this $q$ satisfies $q\in \Sigma(w,\theta)$. Thus we may choose $\gamma(\theta,m+1,m)=\frac1{n\omega_n}\,\HH^{n-1}(\Sigma(p,\theta))$.

Next, let $n\geq m+2$, and let $\theta=\cos \alpha$ for $\alpha\in(0,\frac{\pi}2)$. For a $p\in S^{n-1}\cap v^\bot\cap w^\bot$, we have seen that if $u\in \Sigma(p,\cos\frac{\alpha}2)$,  then $u^\bot\cap v^\bot\cap \Sigma(w,\cos\frac{\alpha}2)\neq \emptyset$. In turn, if $b\in \Sigma(w,\cos\frac{\alpha}2)$, then the spherical triangle inequality yields that  $\Sigma(b,\cos\frac{\alpha}2)\subset \Sigma(w,\theta)$. It follows that by induction that
$$
\nu_{u^\bot,m}\left(\left\{\xi\in{\rm G}(u^\bot,m):\xi\cap v^\bot\cap \Sigma(w,\theta)\neq \emptyset\right\}\right)\geq \gamma\left(\cos\frac{\alpha}2,n-1,m\right).
$$
Therefore,  
$\gamma\left(\theta,n,m\right)=\gamma\left(\cos\frac{\alpha}2,n,n-1\right)\gamma\left(\cos\frac{\alpha}2,n-1,m\right)$ works by \eqref{xi-in-hyperplanes}. 
\end{proof}

For another application of \eqref{xi-in-hyperplanes}, we observe that if $K\in\mathcal{K}^n_o$ is a convex body, then $h_K(u)=0$ at some $u\in S^{n-1}$ if and only if $o\in\partial K$ and  $u\in N_K(o)$ for the closed convex cone ("normal cone")
\begin{equation}
\label{NKo}
N_K(o)=\{z:\langle z,y\rangle\leq 0\mbox{ for }y\in K\},
\end{equation}
and hence $\{h_K=0\}=N_K(o)\cap S^{n-1}$.
The polar of $N_K(o)$ is
$$
N_K(o)^*=\{x\in\R^n:\langle x,y\rangle\leq 0\mbox{ for any }y\in N_K(o)\}.
$$
If $K\in\mathcal{K}^n_{(o)}$, then we define $N_K(o)=\{o\}$, and hence $N_K(o)^*=\R^n$.
For any convex body $K\in \mathcal{K}^n_{o}$, we have
$$
{\rm int}\,N_K(o)^*=\left\{tx:x\in{\rm int}\,K\mbox{ and }t>0\right\}.
$$
If $K\in\mathcal{K}^n_{o}$ is a convex body with $o\in\partial K$ and $v\in S^{n-1}$, then
we call $v$ a tangent vector to $K$ if $v\in\partial N_K(o)^*$, which holds if and only if $tv\not\in{\rm int}\,K$ for $t>0$, and
\begin{equation}
\label{Ktangent-vector}
\exists \mbox{ sequences $x_k\in{\rm int}\,K$ and $\lambda_k>0$ such that }\lim_{k\to\infty}\lambda_kx_k=v.
\end{equation}
We deduce from  \eqref{Ktangent-vector} and the convexity of $N_K(o)^*$  that if $v\in S^{n-1}\cap V$ for a linear subspace $V\subset \R^n$ such that $1\leq {\rm dim}\,V\leq n-1$ with $V\cap{\rm int}\,K\neq \emptyset$, then 
\begin{equation}
\label{Ktangent-vector-V}
v \mbox{ is a tangent vector to }V\cap K\mbox{ if and only if }v \mbox{ is a tangent vector to }K.
\end{equation}
For $m\in\{1,\ldots,n-1\}$ and linear subspace $V\subset \R^n$ such that $m+1\leq {\rm dim}\,V\leq n$ and $V\cap{\rm int}\,K\neq \emptyset$, the closed subset of tangent $m$ spaces to $K$ with respect to $V$ is
\begin{align*}
\mathcal{T}_{m,K,V}=\{&\xi\in {\rm G}(V,m):\xi\cap{\rm int}\,K=\emptyset\mbox{ and }\\
\nonumber
&\exists v\in \xi\cap S^{n-1}\mbox{ tangent to }K\cap V\}.
\end{align*}
We set $\mathcal{T}_{m,K,\R^n}=\mathcal{T}_{m,K}$, and deduce from \eqref{Ktangent-vector-V} that
\begin{equation}
\label{Ktangent-space-withinV}
\mathcal{T}_{m,K,V}={\rm G}(V,m)\cap \mathcal{T}_{m,K}.
\end{equation}
Finally, for $m=n-1$, we have
\begin{equation}
\label{Ktangent-space-n-1}
\mathcal{T}_{n-1,K}=\{u^\bot:u\in S^{n-1}\cap \partial N_K(o)\}
\end{equation}
where for $u\in S^{n-1}$, 
\begin{equation}
\label{K-m-space-interior}
u^\bot\cap{\rm int}\,K\neq \emptyset 
\mbox{ if and only if } u\not\in N_K(o).
\end{equation}

\begin{lemma}
\label{numn-tangent-mspaces}
If $m\in\{1,\ldots,n-1\}$, and $K\in\mathcal{K}^n_{o}$ is a convex body with $o\in \partial K$, then
$$
\nu_{n,m}\left(\mathcal{T}_{m,K}\right)=0.
$$
\end{lemma}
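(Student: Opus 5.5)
The plan is to argue by induction on $n-m$. The base case $m=n-1$ is handled by \eqref{Ktangent-space-n-1}. The inductive step reduces to hyperplane sections through the integral formula \eqref{xi-in-hyperplanes} together with \eqref{Ktangent-space-withinV}.

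\emph{Base case $m=n-1$.} By \eqref{Ktangent-space-n-1}, $\mathcal{T}_{n-1,K}$ is the image of $S^{n-1}\cap\partial N_K(o)$ under the map $u\mapsto u^\bot$. The measure $\nu_{n,n-1}$ is the push forward of $\frac1{n\omega_n}\HH^{n-1}$ on $S^{n-1}$ under this map. Hence
$$
\nu_{n,n-1}(\mathcal{T}_{n-1,K})\leq \frac{1}{n\omega_n}\,\HH^{n-1}\bigl(S^{n-1}\cap(\partial N_K(o)\cup -\partial N_K(o))\bigr).
$$
The boundary of the closed convex cone $N_K(o)$ has Lebesgue measure zero, since the boundary of any convex set is Lebesgue null. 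Integrating in polar coordinates, $\HH^{n-1}(S^{n-1}\cap\partial N_K(o))=0$, and the same holds for $-\partial N_K(o)$. This proves the base case.

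\emph{Inductive step, $m\leq n-2$.} By \eqref{xi-in-hyperplanes} applied to the indicator function of $\mathcal{T}_{m,K}$,
$$
\nu_{n,m}(\mathcal{T}_{m,K})=\frac1{n\omega_n}\int_{S^{n-1}}\nu_{u^\bot,m}\bigl(\mathcal{T}_{m,K}\cap{\rm G}(u^\bot,m)\bigr)\,d\HH^{n-1}(u).
$$
I will split $S^{n-1}$ into three parts.

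(a) $u\notin N_K(o)$. By \eqref{K-m-space-interior}, $V=u^\bot$ meets ${\rm int}\,K$, so \eqref{Ktangent-space-withinV} gives $\mathcal{T}_{m,K}\cap{\rm G}(u^\bot,m)=\mathcal{T}_{m,K,u^\bot}$. Since $V\cap{\rm int}\,K\neq\emptyset$, the relative interior of $K\cap V$ equals $V\cap{\rm int}\,K$. Therefore $\mathcal{T}_{m,K,u^\bot}$ is exactly the family of tangent $m$-spaces, inside the $(n-1)$-dimensional space $u^\bot$, of the convex body $K\cap u^\bot$. This body contains $o$ in its relative boundary, because $o\in\partial K$. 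The induction hypothesis, applied in $u^\bot\cong\R^{n-1}$, gives measure zero.

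(b) $u\in{\rm int}\,N_K(o)$. I claim that $u^\bot$ contains no tangent vector to $K$, so $\mathcal{T}_{m,K}\cap{\rm G}(u^\bot,m)=\emptyset$. Suppose $v\in u^\bot\cap S^{n-1}$ is tangent, so $v=\lim\lambda_kx_k$ with $x_k\in{\rm int}\,K$ and $\lambda_k>0$. Every $u'$ in a neighbourhood of $u$ lies in $N_K(o)$, so $\langle x_k,u'\rangle\leq0$, and in the limit $\langle v,u'\rangle\leq 0$. Since $\langle v,u\rangle=0$, the linear function $\langle v,\cdot\rangle$ attains the local maximum $0$ at the interior point $u$. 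Hence $v=o$, which contradicts $v\in S^{n-1}$.

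(c) $u\in\partial N_K(o)$. This set is $\HH^{n-1}$-null by the argument of the base case.

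All three contributions to the integral vanish, so $\nu_{n,m}(\mathcal{T}_{m,K})=0$.

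The main obstacle is the bookkeeping in step (a). One has to check that the definition of $\mathcal{T}_{m,K,V}$, which is phrased through ${\rm int}\,K$ and tangency to $K\cap V$, really coincides with the intrinsic notion of tangent $m$-space for the lower-dimensional body $K\cap V$ in $V$, so that the induction hypothesis applies verbatim. This uses \eqref{Ktangent-vector-V} and the identity ${\rm relint}(K\cap V)=V\cap{\rm int}\,K$, which holds when $V$ meets ${\rm int}\,K$. One also has to verify measurability of the integrand so that \eqref{xi-in-hyperplanes} applies; this is immediate because $\mathcal{T}_{m,K}$ is closed.
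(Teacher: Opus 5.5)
Your argument is correct and is essentially the paper's proof. Both disintegrate via \eqref{xi-in-hyperplanes}, apply induction together with \eqref{Ktangent-space-withinV} to hyperplanes $u^\bot$ meeting ${\rm int}\,K$, and dispose of the remaining directions using \eqref{Ktangent-space-n-1} and $\HH^{n-1}(S^{n-1}\cap\partial N_K(o))=0$. The only differences are that you induct on $n-m$ rather than on $n$, and you spell out the case $u\in{\rm int}\,N_K(o)$ and the nullity of $\partial N_K(o)\cap S^{n-1}$, which the paper leaves implicit.

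There is one slip, inherited from the way \eqref{K-m-space-interior} is stated in the paper. If $u\in -N_K(o)$, then $u\notin N_K(o)$, yet $u^\bot\cap{\rm int}\,K=\emptyset$. For $u\in -{\rm int}\,N_K(o)$ one even has $K\cap u^\bot=\{o\}$, so the induction hypothesis cannot be invoked there as your case (a) claims. The fix is easy: restrict (a) to $u\notin N_K(o)\cup(-N_K(o))$, and handle $u\in -N_K(o)$ by applying (b) and (c) to $-u$. This is legitimate because the integrand depends only on $u^\bot=(-u)^\bot$.
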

\begin{proof}
We prove Lemma~\ref{numn-tangent-mspaces} by induction on $n\geq 2$ where the case $n=2$ is trivial.

When $n\geq 3$, then we apply \eqref{xi-in-hyperplanes} to the function $F=\mathbf{1}_{\mathcal{T}_{m,K}}$. If $u\in S^{n-1}$ and $u^\bot\cap {\rm int}\,K\neq\emptyset$, then induction and \eqref{Ktangent-space-withinV} yield that
\begin{equation}
\label{numn-tangent-mspaces-ubotint}
\nu_{n,m}\left(\mathcal{T}_{m,K}\cap {\rm G}(u^\bot,m)\right)=0.
\end{equation}
Since $\HH^{n-1}( S^{n-1}\cap \partial N_K(o))=0$, we conclude Lemma~\ref{numn-tangent-mspaces} by \eqref{Ktangent-space-n-1}, \eqref{K-m-space-interior} and \eqref{numn-tangent-mspaces-ubotint}.
\end{proof}

 Finally, according to Rubin \cite{Rub15} (see also  Cai, Leng, Wu, Xi \cite{CLWX25}), the Radon transform $\mathcal{R}_m$ (cf. \eqref{Radon-def})  and the dual Radon transform $\mathcal{R}_m^*$ (cf. \eqref{dualRadon-def}) satisfy the following duality relation for $m\in\{1,\ldots,n-1\}$. If   
$f\in L^\infty(S^{n-1})$ and $F\in L^\infty({\rm G}(n,m))$ are Borel measurable, then
\begin{equation}
\label{Rm-Rm*-eq0}
\int_{S^{n-1}}f\cdot \mathcal{R}_m^*F\,d\HH^{n-1}=\int_{{\rm G}(n,m)} F\cdot \mathcal{R}_mf\,d\nu_{n,m}.
\end{equation}
Here Borel measurability ensures that $\mathcal{R}_mf$ and $\mathcal{R}_m^*F$ are measurable. We note that the duality formula \eqref{Rm-Rm*-eq}, originally for continuous functions, was due to Helgason \cite{Hel64}. In our paper, we need the following version.

\begin{lemma}
\label{Rm-Rm*}
Let $m\in\{1,\ldots,n-1\}$. The Borel measurable functions $f:S^{n-1}\to\R$ and $F: {\rm G}(n,m)\to\R$ satisfy 
\begin{equation}
\label{Rm-Rm*-eq}
\int_{S^{n-1}}f\cdot \mathcal{R}_m^*F\,d\HH^{n-1}=\int_{{\rm G}(n,m)} F\cdot \mathcal{R}_mf\,d\nu_{n,m}
\end{equation}
if either
$\int_{S^{n-1}}|f|\cdot \mathcal{R}_m^*|F|\,d\HH^{n-1}<\infty$, or 
$\int_{{\rm G}(n,m)} |F|\cdot \mathcal{R}_m|f|\,d\nu_{n,m}<\infty$, and both are
 locally finite in their support in the following sense: There exist open sets $U\subset S^{n-1}$ and $\mathcal{U}\subset {\rm G}(n,m)$ with $\HH^{n-1}\left(\partial U\right)=\nu_{m,n}\left(\partial \mathcal{U}\right)=0$ such that $f(u)=F(\xi)=0$ for $u\not\in{\rm cl}\,U$ and $\xi \not\in{\rm cl}\,\mathcal{U}$, and there exist increasing sequence of compact subsets $\{Z_k\subset U\}$ and $\{\mathcal{Z}_k\subset \mathcal{U}\}$ for $k\in\N$ such that  $\bigcup_{k\in\N}Z_k=U$,  $\bigcup_{k\in\N}\mathcal{Z}_k= \mathcal{U}$ and the restrictions $f|_{Z_k}$ and $F|_{\mathcal{Z}_k}$ are bounded.
\end{lemma}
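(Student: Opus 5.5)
Our plan is to reduce Lemma~\ref{Rm-Rm*} to the bounded case \eqref{Rm-Rm*-eq0} (Rubin) by truncation and monotone convergence. The first step is the case $f\geq 0$ and $F\geq 0$. In this case no integrability hypothesis is needed: we claim \eqref{Rm-Rm*-eq} holds in $[0,\infty]$ for every pair of non-negative Borel functions. For $k\in\N$, set $f_k=\min\{f,k\}$ and $F_k=\min\{F,k\}$. These are bounded Borel functions, so \eqref{Rm-Rm*-eq0} gives
$$\int_{S^{n-1}}f_k\,\mathcal{R}_m^*F_k\,d\HH^{n-1}=\int_{{\rm G}(n,m)}F_k\,\mathcal{R}_mf_k\,d\nu_{n,m}.$$
As $k\to\infty$, $f_k\uparrow f$ and $F_k\uparrow F$. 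By monotone convergence inside the definitions \eqref{Radon-def} and \eqref{dualRadon-def}, we get $\mathcal{R}_mf_k\uparrow\mathcal{R}_mf$ and $\mathcal{R}_m^*F_k\uparrow\mathcal{R}_m^*F$ pointwise. Hence both integrands increase monotonically, and monotone convergence once more yields \eqref{Rm-Rm*-eq} for $f,F\ge 0$.

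The second step is the general signed case. We write $f=f^+-f^-$ and $F=F^+-F^-$. Applying the first step to $|f|$ and $|F|$ shows that the two finiteness conditions in the hypothesis are equivalent, so both integrals of absolute values are finite. Since $|\mathcal{R}_m^*F|\le\mathcal{R}_m^*|F|$ and $|\mathcal{R}_mf|\le\mathcal{R}_m|f|$, both sides of \eqref{Rm-Rm*-eq} are absolutely convergent. Linearity of $\mathcal{R}_m$ and $\mathcal{R}_m^*$ holds wherever the relevant integrals converge absolutely; on a null set they might not converge, but that set carries no mass by the finiteness. So we can expand each side into the four combinations $f^\pm$ paired with $F^\pm$, and apply the first step to each one; each of these is finite by domination.

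The main obstacle is measurability and finiteness almost everywhere of $\mathcal{R}_mf^\pm$ and $\mathcal{R}_m^*F^\pm$, which is needed to justify splitting $\mathcal{R}_mf=\mathcal{R}_mf^+-\mathcal{R}_mf^-$ in the integrand. Measurability follows from Tonelli's theorem on the incidence manifold $\{(u,\xi):u\in\xi\}$, as in the remark after \eqref{Rm-Rm*-eq0}. The splitting itself is only needed on the sets $\{F\neq0\}$ and $\{f\neq0\}$, respectively.

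This is where we will use the local finiteness hypothesis on $U$, $\mathcal{U}$, $Z_k$ and $\mathcal{Z}_k$. On $\mathcal{Z}_k$ the function $F$ is bounded, and by finiteness of $\int|F|\mathcal{R}_m|f|$, the function $\mathcal{R}_m|f|$ is finite $\nu_{n,m}$-a.e. on $\{F\ne0\}$. Likewise, $\mathcal{R}^*_m|F|$ is finite a.e. on $\{f\ne0\}$. Alternatively, if one prefers to follow the stated hypothesis literally, we can first prove the identity for the cut-offs $f\mathbf{1}_{Z_k}$ and $F\mathbf{1}_{\mathcal{Z}_k}$. These are bounded, so \eqref{Rm-Rm*-eq0} applies directly. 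We then let $k\to\infty$ using dominated convergence, with dominating functions $|f|\mathcal{R}_m^*|F|$ and $|F|\mathcal{R}_m|f|$, which are integrable by the first step. The conditions $\HH^{n-1}(\partial U)=0$ and $\nu_{n,m}(\partial\mathcal{U})=0$ guarantee that $\bigcup_k Z_k$ and $\bigcup_k\mathcal{Z}_k$ exhaust the supports up to null sets.
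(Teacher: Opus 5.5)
Your proof is correct, and it differs from the paper's in the choice of bounded approximants.

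The paper reduces to $f,F\geq 0$ by linearity and then uses the spatial cut-offs $f\mathbf{1}_{Z_k}$ and $F\mathbf{1}_{\mathcal{Z}_k}$. These are bounded only because of the local finiteness hypothesis. The conditions $\HH^{n-1}(\partial U)=\nu_{n,m}(\partial\mathcal{U})=0$ are then used to get a.e.\ convergence of the increasing integrands before passing to the limit.

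You truncate in height instead, using $\min\{f,k\}$ and $\min\{F,k\}$, which are bounded for every non-negative Borel pair. Monotone convergence, first on each fibre in \eqref{Radon-def} and \eqref{dualRadon-def} and then on the outer integrals, gives \eqref{Rm-Rm*-eq} in $[0,\infty]$ with no hypothesis at all. The signed case then needs only absolute integrability, which you rightly observe is symmetric in the two sides. It also needs the fact that the splitting $\mathcal{R}_m f=\mathcal{R}_m f^+-\mathcal{R}_m f^-$ is required only $\nu_{n,m}$-a.e.\ on $\{F\neq 0\}$, and symmetrically for $\mathcal{R}_m^*F$.

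What this buys is a strictly stronger lemma. The sets $U,\mathcal{U}$, the null-boundary conditions and the exhaustions $Z_k,\mathcal{Z}_k$ are superfluous. For instance, the appeal to Lemma~\ref{numn-tangent-mspaces} in the proof of Lemma~\ref{Amq-sn-1-lemma} would not be needed just to apply the duality. You also make explicit the well-definedness issue for signed functions, which the paper's ``we may assume $f,F\geq 0$'' passes over.

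Accordingly, your sentence ``this is where we will use the local finiteness hypothesis'' is vestigial. The a.e.\ finiteness of $\mathcal{R}_m|f|$ on $\{F\neq 0\}$ follows from $\int|F|\,\mathcal{R}_m|f|\,d\nu_{n,m}<\infty$ alone, and boundedness of $F$ on $\mathcal{Z}_k$ plays no role.

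If you keep the alternative cut-off argument, note one gap. The pointwise convergence $\mathcal{R}_m^*(F\mathbf{1}_{\mathcal{Z}_k})(u)\to\mathcal{R}_m^*F(u)$ requires $\partial\mathcal{U}$ to be null on almost every fibre $\{\zeta+\R u\}$. Symmetrically, $\partial U\cap\xi$ must be $\HH^{m-1}$-null for a.e.\ $\xi$. Both follow from the null-boundary assumptions via your first step, applied to $\mathbf{1}_{\partial\mathcal{U}}$ (respectively $\mathbf{1}_{\partial U}$) and the constant function $1$. The paper also leaves this point implicit.
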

\noindent{\bf Remark.} The reason why in some cases we need Lemma~\ref{Rm-Rm*} instead of \eqref{Rm-Rm*-eq0} is that if $q\in(0,1)$ and $K\subset\R^n$ is a strictly convex body with $o\in\partial K$  (no segments in $\partial K$), then $F(\xi)=\HH^{m-1}(\xi\cap K)^{q-1}$ is not a bounded function of $\xi\in {\rm G}(m,n)$.
\begin{proof}
Writing $f$ and $F$ as the sum of the negative and the positive part, and using the linearity of the operators $\mathcal{R}_m$ and $\mathcal{R}_m^*$, we may assume that $f$ and $F$ are non-negative. 

 As $f_k=f\cdot \mathbf{1}_{Z_k}$ and $F_k=F\cdot \mathbf{1}_{\mathcal{Z}_k}$ are bounded, we deduce from \eqref{Rm-Rm*-eq0}  that
\begin{equation}
\label{Rm-Rm*-fkFk}
\int_{S^{n-1}}f_k\cdot \mathcal{R}_m^*F_k\,d\HH^{n-1}=\int_{{\rm G}(n,m)} F_k\cdot \mathcal{R}_mf_k\,d\nu_{n,m}.
\end{equation}
Here $\{f_k\cdot \mathcal{R}_m^*F_k\}$ is a monotone increasing sequence of functions tending pointwise to $f\cdot \mathcal{R}_m^*F$ except for the points of $\partial U$, and $\{F_k\cdot \mathcal{R}_mf_k\}$ is a monotone increasing sequence of functions tending pointwise to $F\cdot \mathcal{R}_mf$ except for the points of $\partial \mathcal{U}$. We deduce from \eqref{Rm-Rm*-fkFk} and Lebesgue's Dominated Convergence Theorem that if either side of  \eqref{Rm-Rm*-eq} is finite, then the two sides are equal.
\end{proof}

Let us provide a simple application of the duality formula \eqref{Rm-Rm*-eq0}.

\begin{claim}
\label{mplanes-ball}
For $m\in\{1,\ldots,n-1\}$, there exists $\gamma=\gamma(n,m)\in(0,1)$ depending on $n,m$ such that if $0<r<R$ and $\|x_0\|\leq R$ for $x_0\in\R^n$, then
\begin{equation}
\label{mplanes-ball-eq}
\nu_{n,m}\left(\left\{\xi\in{\rm G}(n,m):\xi\cap(x_0+rB^n)\neq \emptyset\right\}\right)\geq \gamma(n,m)\cdot \frac{r^{n-m}}{R^{n-m}}.
\end{equation}
\end{claim}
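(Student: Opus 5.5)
Since the paper announces this claim as an application of the duality formula \eqref{Rm-Rm*-eq0}, I would take $F=\mathbf{1}_{\mathcal{A}}$, where $\mathcal{A}=\{\xi\in{\rm G}(n,m):\xi\cap(x_0+rB^n)\neq\emptyset\}$, and $f=\mathbf{1}_S$ for a suitable Borel set $S\subset S^{n-1}$. Then \eqref{Rm-Rm*-eq0} reads
$$
\int_{S}\mathcal{R}_m^*\mathbf{1}_{\mathcal{A}}\,d\HH^{n-1}=\int_{\mathcal{A}}\HH^{m-1}(\xi\cap S)\,d\nu_{n,m}(\xi)\leq m\omega_m\,\nu_{n,m}(\mathcal{A}),
$$
using $\HH^{m-1}(\xi\cap S^{n-1})=m\omega_m$. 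This turns the problem into bounding the left-hand side from below.

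The natural choice for $S$ is the set of directions hitting the ball. We may assume $r<\|x_0\|$, since otherwise $o\in x_0+rB^n$, so $\nu_{n,m}(\mathcal{A})=1$ and the bound is trivial once $\gamma\leq 1$. Set $S=\{u\in S^{n-1}: \R u\cap(x_0+rB^n)\neq\emptyset\}$. If $u\in S$, then every $\xi$ containing $u$ lies in $\mathcal{A}$. Hence by \eqref{dualRadon-def} we get $\mathcal{R}^*_m\mathbf{1}_{\mathcal{A}}(u)=\frac{m\omega_m}{n\omega_n}$ on $S$, because the integrand is identically $1$ there. It follows that
$$
\nu_{n,m}(\mathcal{A})\geq \frac{\HH^{n-1}(S)}{n\omega_n}.
$$
The set $S$ contains the spherical cap of angular radius $\arcsin(r/\|x_0\|)\geq \arcsin(r/R)$ around $\pm x_0/\|x_0\|$. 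So $\HH^{n-1}(S)\geq c_n (r/R)^{n-1}$.

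This only gives the exponent $n-1$, not $n-m$, so it is too weak for $m\geq2$. This is the main obstacle. To fix it I would enlarge $S$ and use the actual value of $\mathcal{R}_m^*\mathbf{1}_{\mathcal{A}}(u)$ rather than the trivial value $1$ on a small cap. For $u$ at angle $\phi$ from $w=x_0/\|x_0\|$, with $\sin\phi$ comparable to $1$, the plane $\xi=\zeta+\R u$ meets the ball whenever the $m$-space $\xi$ comes within distance $r$ of $x_0$. Since $\xi\ni u$, this amounts to the $(m-1)$-space $\zeta\subset u^\bot$ coming within distance about $r$ of the projection of $x_0$ onto $u^\bot$, which has length comparable to $\|x_0\|\leq R$.

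So one needs, in dimension $n-1$, a lower bound on the probability that a random $(m-1)$-plane passes near a given point. This is the same statement with $(n,m)$ replaced by $(n-1,m-1)$, and the exponent $(n-1)-(m-1)=n-m$ is preserved. I would therefore run an induction on $m$, mirroring the proof of Claim~\ref{mplanes-flatcones}. The base case $m=1$ is the cap computation above, giving exponent $n-1$. The inductive step goes as follows.
\begin{enumerate}
\item[(a)] Take $S=\Sigma(w',\tfrac12)$ for a unit vector $w'\perp w$.
\item[(b)] For $u\in S$, apply the induction hypothesis in $u^\bot\cong\R^{n-1}$ to the ball $\Pi_{u^\bot}x_0+c\,r B$, with $\|\Pi_{u^\bot}x_0\|\leq R$. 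This yields $\mathcal{R}_m^*\mathbf{1}_{\mathcal{A}}(u)\geq \frac{m\omega_m}{n\omega_n}\gamma(n-1,m-1)(cr/R)^{n-m}$.
\item[(c)] Integrate over $S$ and apply duality.
\end{enumerate}

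The geometric point to check in (b) is the following. If $\zeta$ meets the ball $\Pi_{u^\bot}x_0+rB^n$ inside $u^\bot$ at a point $y$, then $\zeta+\R u$ contains $y+tu$, where $t=\langle x_0,u\rangle$. The distance from $y+tu$ to $x_0$ equals $\|y-\Pi_{u^\bot}x_0\|\leq r$, so $\zeta+\R u$ meets $x_0+rB^n$. Thus $c=1$ works.
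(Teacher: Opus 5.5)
Your proposal is correct and follows essentially the paper's proof. Both argue by induction on $m$ via the duality formula \eqref{Rm-Rm*-eq0}, using the cap estimate for $m=1$ and, for $m\geq 2$, the inductive hypothesis in $u^\bot$ for the projected ball $\Pi_{u^\bot}x_0+rB^n$. The only difference is that you restrict to the cap $\Sigma(w',\frac12)$, which is harmless but unnecessary: $\|\Pi_{u^\bot}x_0\|\leq R$ holds for every $u$, so the paper simply takes $f=\mathbf{1}_{S^{n-1}}$ and integrates over the whole sphere.
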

\begin{proof}
We prove Claim~\ref{mplanes-ball} by induction on $m\geq 1$. The statement holds if $m=1$ as the $\HH^{n-1}$ measure of a cap of $S^{n-1}$ of spherical radius $\arcsin \frac{r}{R}$ is at least $\omega_{n-1}\,\frac{r^{n-1}}{R^{n-1}}$, and hence we can choose $\gamma(n,1)=\frac{\omega_{n-1}}{n\omega_n}$.

Let $m\geq 2$. For $Z=\{\xi\in{\rm G}(n,m):\xi\cap (x_0+rB^n)\neq \emptyset\}$, \eqref{Rm-Rm*-eq0} yields that
$$
\nu_{n,m}(Z)=\frac1{m\omega_m}\int_{{\rm G}(n,m)}\mathbf{1}_Z\mathcal{R}_m\mathbf{1}_{S^{n-1}}\,d\nu_{n,m}
=\frac1{m\omega_m}
\int_{S^{n-1}}\mathcal{R}_m^*\mathbf{1}_Z\,d\HH^{n-1}.
$$
As for any $u\in S^{n-1}$, induction implies
\begin{align*}
\mathcal{R}_m^*\mathbf{1}_Z(u)=&\frac{m\omega_m}{n\omega_n}\,\nu_{u^\bot,m-1}\left(\left\{\zeta\in{\rm G}(u^\bot,m-1):\zeta\cap(\Pi_{u^\bot}x_0+rB^n)\neq \emptyset\right\}\right)\\
\geq & \frac{m\omega_m}{n\omega_n}\cdot\gamma(n-1,m-1)\,\frac{r^{(n-1)-(m-1)}}{R^{(n-1)-(m-1)}},
\end{align*}
we conclude Claim~\ref{mplanes-ball}.
\end{proof}

In Claim~\ref{mplanes-ball}, if $(x_0+\frac{r}2\,B^n)\cap \xi\neq \emptyset$ holds for $\xi\in {\rm G}(n,m)$, then $\HH^m\left((x_0+rB^n)\cap\xi\right)\geq \frac{r^m}{2^m}\,\omega_m$. Therefore, Claim~\ref{mplanes-ball} yields the following estimate.

\begin{coro}
\label{mplanes-ball-section} 
For $n\geq 3$, $m\in\{2,\ldots,n-1\}$, $\tau\in\R$ and $0<r<R$, there exists $\gamma\in(0,1)$ depending on $n,m,\tau,r,R$, such that
if $K\subset RB^n$ is a convex body with $x_0+rB^n\subset K$ for some $x_0\in K$, then 
$$
\gamma\leq \mathcal{R}^*_m\HH^m\left(K\cap \cdot\right)^\tau(u)\leq \gamma^{-1}
$$
holds for any $u\in S^{n-1}$.
\end{coro}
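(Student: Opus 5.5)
Both bounds come from the definition \eqref{dualRadon-def}: we bound $\HH^m(K\cap\xi)$ from above uniformly, and from below on a set of $\xi$ of positive measure. For every $\xi\in{\rm G}(n,m)$ the inclusion $K\subset RB^n$ gives $\HH^m(K\cap\xi)\leq \omega_mR^m$.

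For the lower bound on sections, first note that $x_0\in K\subset RB^n$ gives $\|x_0\|\leq R$. Apply Claim~\ref{mplanes-ball} with radius $r/2$ in place of $r$, inside the hyperplane $u^\bot$, to the point $\Pi_{u^\bot}x_0$. The observation preceding the corollary then shows that if $\zeta+\R u$ meets $x_0+\frac r2B^n$, then $\HH^m(K\cap(\zeta+\R u))\geq \omega_m r^m/2^m$. Moreover, $\zeta+\R u$ meets $x_0+\frac r2B^n$ if and only if $\zeta$ meets $\Pi_{u^\bot}x_0+\frac r2(B^n\cap u^\bot)$, and this is exactly the event that Claim~\ref{mplanes-ball} controls in dimension $n-1$ with parameter $m-1$. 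Here $m-1\in\{1,\ldots,n-2\}$ because $m\geq 2$ and $n\geq 3$, so the claim applies. We get
$$
\nu_{u^\bot,m-1}\left(\mathcal{Z}_u\right)\geq \gamma(n-1,m-1)\frac{r^{n-m}}{(2R)^{n-m}}=:\gamma_0,
$$
where $\mathcal{Z}_u=\{\zeta\in{\rm G}(u^\bot,m-1):\zeta\cap(\Pi_{u^\bot}x_0+\frac r2 B^n)\neq\emptyset\}$ and $\gamma_0>0$ is independent of $u$.

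\textbf{Case $\tau\geq 0$.} The upper bound is immediate: $\mathcal{R}^*_m\HH^m(K\cap\cdot)^\tau(u)\leq \frac{m\omega_m}{n\omega_n}(\omega_mR^m)^\tau$. For the lower bound, restrict the integral in \eqref{dualRadon-def} to $\mathcal{Z}_u$, which gives at least $\frac{m\omega_m}{n\omega_n}\gamma_0(\omega_mr^m2^{-m})^\tau$.

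\textbf{Case $\tau<0$.} The lower bound follows from the upper section bound: $\mathcal{R}^*_m\HH^m(K\cap\cdot)^\tau(u)\geq \frac{m\omega_m}{n\omega_n}(\omega_mR^m)^\tau$. The upper bound is the main obstacle, because sections through the origin can be arbitrarily small when $o$ lies on or near $\partial K$. A positive-measure set of good sections is then not enough; we need a uniform lower bound on $\HH^m(K\cap\xi)$ for \emph{every} $\xi\ni u$.

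To get it, use ${\rm conv}(\{o\}\cup(x_0+rB^n))\subset K$ and $u\in\xi$. Every $m$-plane $\xi$ containing $u$ with $m\geq 2$ contains a $2$-plane through $u$. The plan is to show that each such $\xi$ meets the cone ${\rm conv}(\{o\}\cup(x_0+rB^n))$ in a set of $\HH^m$-measure bounded below by a constant $c(n,m,r,R)>0$. If $\xi$ missed ${\rm int}\,K$, then $K$ would lie on one side of a hyperplane through $o$ containing $\xi$, and this must be ruled out or quantified. I expect that this needs either $o$ well inside $K$ or a reduction (for $\tau<0$) using that $\xi$ contains the direction $u$ and the cone has aperture at least $\arcsin(r/R)$.

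If that uniform bound fails, I would first try to bound the integral of $\HH^m(K\cap(\zeta+\R u))^{\tau}$ via the Beta-distribution formula \eqref{Gnm-u}. This would show that planes with small sections have small $\nu_{u^\bot,m-1}$-measure, with a polynomial rate in $r/R$ that beats $\tau$, possibly using Claim~\ref{mplanes-flatcones}. Finally, take $\gamma$ to be the minimum of all the constants and their reciprocals.
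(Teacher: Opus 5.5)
Your argument for $\tau\geq 0$, and for the lower bound when $\tau<0$, is correct and is essentially the paper's route. The paper justifies the corollary only by observing that an $m$-plane meeting $x_0+\frac r2B^n$ cuts $K$ in $\HH^m$-measure at least $\omega_m r^m2^{-m}$, and then invoking Claim~\ref{mplanes-ball}. Applying that claim in $u^\bot$ with parameter $m-1$ is exactly your estimate of $\nu_{u^\bot,m-1}(\mathcal{Z}_u)$. The genuine gap is the upper bound for $\tau<0$, which you leave open. Neither of your suggested routes can close it, because for $\tau<0$ that upper bound is false as soon as $R>2r$.

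Indeed, let $R>2r$, $x_0=(R-r)e_1$ and $K={\rm conv}(\{o\}\cup(x_0+rB^n))$. Then $x_0+rB^n\subset K\subset RB^n$, and $K\subset C=\{x:\langle x,e_1\rangle\geq\|x\|\cos\alpha\}$, where $\sin\alpha=\frac{r}{R-r}<1$.
\begin{itemize}
\item Take $u=e_2$ and $\zeta\in{\rm G}(u^\bot,m-1)$ with $\|\Pi_\zeta e_1\|<\cos\alpha$. Since $e_1\in u^\bot$, every $x\in\xi=\zeta+\R u$ satisfies $\langle x,e_1\rangle=\langle x,\Pi_\zeta e_1\rangle<\|x\|\cos\alpha$ unless $x=o$. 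Hence $\xi\cap K=\{o\}$ and $\HH^m(K\cap\xi)=0$.
\item By \eqref{Gnm-u} applied in $u^\bot$ (note $1\leq m-1\leq n-2$), these $\zeta$ form a set of positive $\nu_{u^\bot,m-1}$-measure. Therefore $\mathcal{R}^*_m\HH^m(K\cap\cdot)^\tau(e_2)=+\infty$ for every $\tau<0$.
\item Requiring $o\in{\rm int}\,K$ does not help. For small $\varepsilon>0$, the body $K_\varepsilon={\rm conv}(\{-\varepsilon e_1\}\cup(x_0+rB^n))\in\mathcal{K}^n_{(o)}$ satisfies the hypotheses and decreases to $K$ as $\varepsilon\downarrow 0$. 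By monotone convergence, $\mathcal{R}^*_m\HH^m(K_\varepsilon\cap\cdot)^\tau(e_2)\to+\infty$, so no $\gamma(n,m,\tau,r,R)$ exists.
\end{itemize}

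The same example defeats both of your plans. Your first plan fails because $\HH^m(K\cap\xi)$ has no uniform lower bound over $\xi\ni u$: it is $0$ on a positive-measure set of such $\xi$. Your fallback fails because no Beta-distribution rate can compensate for a positive-measure set of null sections.

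What does hold is the lower bound for all $\tau$ and the upper bound for $\tau\geq 0$. For $\tau<0$, the upper bound needs an extra hypothesis such as $\rho B^n\subset K$, with $\gamma$ depending on $\rho$. This is automatic with $\rho=2r-R$ when $R<2r$. The paper's one-line justification likewise yields only these cases.
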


\section{On the centro-sectional measure}
\label{seccentrosectional}

Following  Cai, Leng, Wu, Xi \cite{CLWX26}, let $q\in\R$, $m\in\{1,\ldots,n-1\}$ and $K\in \mathcal{K}^n_{o}$ be a convex body, and let us define the Borel centro-sectional measure $\widetilde{A}_{m,q}(K,\eta)$ on $S^{n-1}$ in a way such that
 if $\eta\subset S^{n-1}$ is Borel, then
\begin{align}
\label{Amq-eta-def}
\widetilde{A}_{m,q}(K,\eta)=&\int_{\alpha^*_K(\eta)}\varrho_K(u)^m
\mathcal{R}^*_m\HH^m\left(K\cap \cdot\right)^{q-1}\left(u\right)\,d\HH^{n-1}(u)\\
\label{alpha*K-eta-def}
\alpha^*_K(\eta)=&\left\{\frac{x}{\|x\|}:x\in\partial'K\backslash \Xi_K\mbox{ and }\nu_K(x)\in\eta\right\}\subset S^{n-1}.
\end{align}
Since $\nu_K$ is continous on the Borel set $\partial'K$, we deduce that  $\alpha^*_K(\eta)$ is Borel if $\eta\subset S^{n-1}$ is so. We observe that $\widetilde{A}_{m,q}(K,\cdot)$ is $mq$-homogenous; namely, if $\lambda>0$, then
\begin{equation}
\label{Amq-homogeneous}
\widetilde{A}_{m,q}(\lambda\,K,\cdot)=\lambda^{mq}\widetilde{A}_{m,q}(K,\cdot),
\end{equation}
and if $o\in\partial K$, then 
\begin{equation}
\label{Amq-hK0}
\widetilde{A}_{m,q}(K,\{h_K=0\})=\widetilde{A}_{m,q}(K,N_K(o)\cap S^{n-1})=0.
\end{equation}

\begin{example}[Centro-sectional measure of polytopes]
\label{example-Amq-polytope}
For $q\in\R$ and $m\in\{1,\ldots,n-1\}$, let $P\in \mathcal{K}^n_{(o)}$ be polytope with exterior unit normals $u_1,\ldots,u_k$ and facets $F_i=F_P(u_i)$ for $i=1,\ldots,k$. For $i=1,\ldots,k$, if $F'_i=\alpha_P^*(\{u_i\})$ is the radial projection of $F_i$ onto $S^{n-1}$, then ${\rm supp}\,\widetilde{A}_{m,q}(P,\cdot)=\{u_1,\ldots,u_k\}$ and
\begin{equation}
\label{Polytope-Amq-eq}
\widetilde{A}_{m,q}(P,\{u_i\})=\int_{F'_i}\varrho_P^m\,
\mathcal{R}^*_m\HH^m\left(P\cap \cdot\right)^{q-1}\,d\HH^{n-1}.
\end{equation}
\end{example}

We note that if $m=1$ and $q\neq 0$, then $\widetilde{V}_{q}(K)=\int_{S^{n-1}}\varrho_K(u)^q\,d\HH^{n-1}$ is the dual intrinsic volume introduced by Lutwak for $K\in\mathcal{K}^n_{o}$, and the associated dual curvature measure $\widetilde{C}_{q}(K, \cdot)$ is defined by
\begin{equation}
\label{Cq-eta-def}
\widetilde{C}_{q}(K,\eta)=\int_{\alpha^*_K(\eta)}\varrho_K(u)^q\,d\HH^{n-1}
\end{equation}
for Borel $\eta\subset S^{n-1}$ by Huang, Lutwak, Yang, Zhang \cite{HLYZ16}. 
On the other hand, \eqref{Amq-eta-def} yields that 
\begin{equation}
\label{A1q-eta-def}
\widetilde{A}_{1,q}(K,\eta)=\frac2{n\omega_n}\int_{\alpha^*_K(\eta)}\varrho_K(u)\left(\varrho_K(u)+\varrho_K(-u)\right)^{q-1}\,d\HH^{n-1}.
\end{equation}
Thus, for any convex body $K\in\mathcal{K}^n_{o}$, we have
\begin{align}
\label{A1q-Cq-est}
\widetilde{A}_{1,q}(K, \cdot)\geq &\frac{2}{n\omega_n}\,\widetilde{C}_{q}(K, \cdot) \mbox{ if }q\geq 1\mbox{ \ and \ } \widetilde{A}_{1,q}(K, \cdot)\leq \frac{2}{n\omega_n}\,\widetilde{C}_{q}(K, \cdot) \mbox{ if }q\leq 1;\\
\label{A1q-Cq-symm}
\widetilde{A}_{1,q}(K, \cdot)=&\frac{2^q}{n\omega_n}\,\widetilde{C}_{q}(K, \cdot)\mbox{ if $K=-K$;}\\
\label{A1q-Cq-bd}
\widetilde{A}_{1,q}(K, \cdot)=&\frac{2}{n\omega_n}\,\widetilde{C}_{q}(K, \cdot)\mbox{ if $o\in{\rm bd}\,K$}
\end{align}
as $\varrho_K(-\nu_K(x))=0$ if $x\in\partial'K\backslash \Xi_K$ (cf. \eqref{alpha*K-eta-def}). However, $\widetilde{A}_{1,q}(K,\eta)$ might be much larger than  $\widetilde{C}_{q}(K,\eta)$ for some Borel set $\eta\subset S^{n-1}$ if $K\in\mathcal{K}^n_{(o)}$ is not $o$-symmetric and $q>1$.

For $X\subset \R^n$, we write 
$$
{\rm pos}\,X=\{tx:x\in X\mbox{ and }t\geq 0\},
$$
that is a closed set if $X$ is compact and $o\not\in{\rm conv}\,X$.
In order to represent $\widetilde{A}_{m,q}(K, \cdot)$ as a boundary integral for a convex body $K\in \mathcal{K}^n_{o}$, if $k$ is large enough, then let
\begin{equation}
\label{CKk-def}
C_{k,K}={\rm pos}\{x\in S^{n-1}\cap N_K(o)^*:x+\mbox{$\frac1k$}\,B^n\subset N_K(o)^*\},
\end{equation}
where $C_{k,K}$ is a closed convex cone, and $C_{k,K}=N_K(o)^*=\R^n$ in the case $o\in{\rm int}\,K$.
For $r_{k,K}=\frac12\min\{\varrho_K(u):u\in S^{n-1}\cap C_{k,K}\}$, we have $r_{k,K}S^{n-1}\cap C_{k,K}\subset{\rm int}\,K$, and hence there exists $\sigma_{k,K}>0$ such that
\begin{equation}
\label{sigmak-inCkK}
\left(r_{k,K}S^{n-1}\cap C_{k,K}\right)+\sigma_{k,K}B^n\subset K.
\end{equation}
In addition, we define
\begin{align*}
U_K=&S^{n-1}\cap{\rm int}\,N_K(o)^*\\
\mathcal{U}_K=&\{\xi\in G(n,m):\xi\cap {\rm int}\,K\neq\emptyset\}=
\{\xi\in G(n,m):\xi\cap U_K\neq\emptyset\}.
\end{align*}
As the radial projection from $\partial'K\cap{\rm int}\,N_K(o)^*$ to $S^{n-1}\cap{\rm int}\,N_K(o)^*$ is locally Lipschitz, it follows that 
\begin{equation}
\label{ThetaK-0}
\HH^{n-1}\left(U_K\backslash \Theta_K\right)=0
\end{equation}
holds for the Borel set 
\begin{equation}
\label{ThetaK-def}
\Theta_K=\left\{\frac{x}{\|x\|}:x\in \partial'K\cap{\rm int}\,N_K(o)^*\right\}\subset U_K.
\end{equation}
We deduce from \eqref{Rm-Rm*-eq}  that if a Borel set $\Upsilon\subset U_K$ satisfies $\HH^{n-1}\left(U_K\backslash \Upsilon\right)=0$, then
\begin{align*}
\int_{\mathcal{U}_K}\HH^{m-1}(\Upsilon\cap\xi)\,d\nu_{m,n}(\xi)=&
\int_{{\rm G}(n,m)} \mathbf{1}_{\mathcal{U}_K}\cdot \mathcal{R}_m \mathbf{1}_{\Upsilon}\,d\nu_{n,m}\\
=
\int_{S^{n-1}} \mathcal{R}_m^*\mathbf{1}_{\mathcal{U}_K}\cdot \mathbf{1}_{\Upsilon}\,d\HH^{n-1}
=&\int_{S^{n-1}} \mathcal{R}_m^*\mathbf{1}_{\mathcal{U}_K}\cdot \mathbf{1}_{U_K}\,d\HH^{n-1}\\
=\int_{{\rm G}(n,m)} \mathbf{1}_{\mathcal{U}_K}\cdot \mathcal{R}_m \mathbf{1}_{U_K}\,d\nu_{n,m}=&\int_{\mathcal{U}_K}\HH^{m-1}(U_K\cap\xi)\,d\nu_{m,n}(\xi);
\end{align*}
therefore, 
\begin{equation}
\label{U-Upsilon-section}
\nu_{m,n}\left(\left\{\xi\in \mathcal{U}_K:\HH^{m-1}(\Upsilon\cap\xi)=\HH^{m-1}(U_K\cap\xi)\right\}\right)=\nu_{m,n}\left(\mathcal{U}_K\right).
\end{equation}
Next, we define the Borel function $\alpha_K:S^{n-1}\to S^{n-1}$ by the formula
$$
\alpha_K(u)=
\left\{\begin{array}{rl}
\nu_K(\varrho_K(u)\cdot u)&\mbox{ if }u\in \Theta_K\\
e_0&\mbox{ if }u\in S^{n-1}\backslash\Theta_K
\end{array}\right. 
$$
where $e_0\in S^{n-1}$ is a fixed unit vector.

\begin{lemma}
\label{Amq-sn-1-lemma}
If $m\in\{1,\ldots,n-1\}$, $q\neq 0$ and $K\in \mathcal{K}^n_{o}$ is a convex body such that $\widetilde{\Psi}_{m,q}(K)<\infty$ (that is always the case if $q>0$ or $K\in \mathcal{K}^n_{(o)}$), then 
\begin{enumerate}
\item[(i)] for a bounded Borel function $g:S^{n-1}\to\R$, we have
$$
\int_{S^{n-1}}g\,d\widetilde{A}_{m,q}(K,\cdot)=\int_{{\rm G}(n,m)}\HH^m\left(K\cap \cdot\right)^{q-1}\mathcal{R}_m\left(\varrho_K^m\cdot g\circ\alpha_K\right)
\,d\nu_{n,m};
$$

\item[(ii)] $\widetilde{A}_{m,q}(K,S^{n-1})=m\widetilde{\Psi}_{m,q}(K)<\infty$.
\end{enumerate}
\end{lemma}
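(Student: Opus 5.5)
For (i), I will start from the definition \eqref{Amq-eta-def} and pass from sets to functions. The Borel set $\alpha^*_K(\eta)$ coincides, up to an $\HH^{n-1}$-null set, with $\Theta_K\cap\alpha_K^{-1}(\eta)$. Indeed, for $x\in\partial'K\backslash\Xi_K$ we have $\langle x,\nu_K(x)\rangle>0$, which forces $x/\|x\|\in{\rm int}\,N_K(o)^*$. Moreover, on $\Theta_K$ the value $\alpha_K(u)$ is exactly $\nu_K(\varrho_K(u)u)$. Conversely, points of $\Theta_K$ coming from $\Xi_K$ form a null set, since $\HH^{n-1}(\Xi_K)=0$ and the radial projection is locally Lipschitz on ${\rm int}\,N_K(o)^*$. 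Hence for $g=\mathbf{1}_\eta$,
$$
\int_{S^{n-1}}g\,d\widetilde{A}_{m,q}(K,\cdot)=\int_{S^{n-1}}\mathbf{1}_{\Theta_K}\,\varrho_K^m\,(g\circ\alpha_K)\,\mathcal{R}^*_m\HH^m(K\cap\cdot)^{q-1}\,d\HH^{n-1}.
$$
By linearity and monotone convergence this extends first to simple functions and then to bounded Borel $g$. For bounded $g$ I will split into positive and negative parts and assume $g\geq 0$.

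Next I apply the duality of Lemma~\ref{Rm-Rm*} with $f=\mathbf{1}_{\Theta_K}\varrho_K^m\,(g\circ\alpha_K)$ and $F=\HH^m(K\cap\cdot)^{q-1}$. By \eqref{ThetaK-0}, together with \eqref{U-Upsilon-section} applied to $\Upsilon=\Theta_K$, the factor $\mathbf{1}_{\Theta_K}$ can be dropped inside $\mathcal{R}_m$ for $\nu_{n,m}$-a.e.\ $\xi\in\mathcal{U}_K$. Outside $\mathcal{U}_K$, we have $f=0$ on $\xi\cap S^{n-1}$ except on a null set, since $\xi\cap U_K=\emptyset$. Lemma~\ref{numn-tangent-mspaces} also shows that the relevant boundary $\partial\mathcal{U}_K$ (tangent $m$-spaces) is $\nu_{n,m}$-null, and $\HH^{n-1}(\partial U_K)=0$ because $\partial N_K(o)^*\cap S^{n-1}$ is the boundary of a convex cone. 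This gives (i), provided the hypotheses of Lemma~\ref{Rm-Rm*} hold.

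The main obstacle is verifying those hypotheses when $o\in\partial K$, where $F$ may blow up (for $q<1$) or vanish near tangent planes. For the local boundedness I use the exhaustions $Z_k=S^{n-1}\cap C_{k,K}$ and $\mathcal{Z}_k=\{\xi:\xi\cap Z_k\neq\emptyset\}$. On $Z_k$, $\varrho_K$ is bounded. On $\mathcal{Z}_k$, each $\xi$ meets the ball from \eqref{sigmak-inCkK}, so $\HH^m(K\cap\xi)$ is bounded below by a positive constant and above by $\omega_m({\rm diam}K)^m$; hence $F$ is bounded on $\mathcal{Z}_k$. For finiteness, with $0\le g\le \|g\|_\infty$, the right-hand side is at most $\|g\|_\infty\int F\cdot\mathcal{R}_m\varrho_K^m\,d\nu_{n,m}$. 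Since $\mathcal{R}_m(\varrho_K^m)(\xi)=m\HH^m(K\cap\xi)$ by polar coordinates in $\xi$, this bound equals $m\|g\|_\infty\widetilde{\Psi}_{m,q}(K)<\infty$, with the convention $0^{q-1}\cdot 0=0$ on planes where the section is null.

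Part (ii) then follows from (i) with $g\equiv1$, using $\mathcal{R}_m(\varrho_K^m)=m\HH^m(K\cap\cdot)$ and \eqref{Psimqdef-sectionarea}:
$$
\widetilde{A}_{m,q}(K,S^{n-1})=\int_{{\rm G}(n,m)}\HH^m(K\cap\xi)^{q-1}\,m\HH^m(K\cap\xi)\,d\nu_{n,m}(\xi)=m\widetilde{\Psi}_{m,q}(K).
$$
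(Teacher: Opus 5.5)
Your proof is correct and follows essentially the same route as the paper's. Like the paper, you apply the duality Lemma~\ref{Rm-Rm*} with $F=\HH^m(K\cap\cdot)^{q-1}$ and the exhaustions $Z_k=C_{k,K}\cap S^{n-1}$, $\mathcal{Z}_k$ justified by \eqref{sigmak-inCkK}, bound the integral by $m\|g\|_\infty\widetilde{\Psi}_{m,q}(K)$ via $\mathcal{R}_m\varrho_K^m=m\HH^m(K\cap\cdot)$, and set $g\equiv 1$ for (ii). Your passage from sets to functions and your removal of $\mathbf{1}_{\Theta_K}$ via \eqref{U-Upsilon-section} spell out steps the paper leaves implicit.

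One small correction: the lemma does not assume $\HH^{n-1}(\Xi_K)=0$, and this can fail, e.g.\ for a half-ball with $o$ at the centre of its flat facet. You do not need it, however. Since $\Xi_K=\partial'K\cap\partial N_K(o)^*$, you have $\partial'K\backslash\Xi_K=\partial'K\cap{\rm int}\,N_K(o)^*$, and so $\alpha^*_K(\eta)=\Theta_K\cap\alpha_K^{-1}(\eta)$ holds exactly, with no null set to discard.
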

\noindent{\bf Remark.} Similar argument yields that if $m\in\{1,\ldots,n-1\}$,  $K\in \mathcal{K}^n_{o}$ is a convex body and $g:S^{n-1}\to\R$ is a bounded Borel function, then
\begin{align*}
\int_{S^{n-1}}g\,d\widetilde{A}_{m,0}(K,\cdot)=&\int_{{\rm G}(n,m)}\HH^m\left(K\cap \cdot\right)^{-1}\mathcal{R}_m\left(\varrho_K^m\cdot g\circ\alpha_K\right)
\,d\nu_{n,m}; \\
\widetilde{A}_{m,0}(K,S^{n-1})=&m.
\end{align*}

\begin{proof}
For (i), we plan to apply Lemma~\ref{Rm-Rm*} where $f=\varrho_K^m\cdot g\circ\alpha_K$, and $F(\xi)=\HH^m(K\cap\xi)^{q-1}$ for $\xi\in G(n,m)$. If $o\in{\rm int}\,K$,  then let $U=S^{n-1}$ and $\mathcal{U}=G(n,m)$, and if $o\in\partial K$, then let  $U=S^{n-1}\cap{\rm int}\,N_K(o)^*$ and $\mathcal{U}=\{\xi\in G(n,m):\xi\cap {\rm int}\,K\neq\emptyset\}$. 
Then $\nu_{m,n}(\partial \mathcal{U})=0$ by Lemma~\ref{numn-tangent-mspaces}, and
we readily have  $\HH^{n-1}(\partial U)=0$  and $f(u)=F(\xi)=0$ if $u\not \in{\rm cl}\,U$ or $\xi\not\in {\rm cl}\mathcal{U}$.
Since there exists $M>0$ such that $|g(u)|\leq M$ for $u\in S^{n-1}$, we deduce that
\begin{align*}
\int_{{\rm G}(n,m)}|F|\cdot \mathcal{R}_m|f|\,d\nu_{n,m}\leq &
M\int_{{\rm G}(n,m)}\HH^m\left(K\cap \cdot\right)^{q-1}\mathcal{R}_m\varrho_K^m\,d\nu_{n,m}\\
=&
Mm\widetilde{\Psi}_{m,q}(K)<\infty.
\end{align*}
For large $k$, we consider $Z_k=C_{k,K}\cap S^{n-1}$ and  $\mathcal{Z}_k=\{\xi\in G(n,m):\xi\cap C_{k,K}\neq\{o\}\}$ that satisfy the conditions of Lemma~\ref{Rm-Rm*} by \eqref{sigmak-inCkK}.
It follows from Lemma~\ref{Rm-Rm*} that
\begin{align*}
\int_{S^{n-1}}g\,d\widetilde{A}_{m,q}(K,\cdot)=&
\int_{U}(g\circ\alpha_K)\varrho_K^m \cdot
\mathcal{R}^*_m\HH^m\left(K\cap \cdot\right)^{q-1}\,d\HH^{n-1}\\
=&\int_{S^{n-1}}f\cdot \mathcal{R}_m^*F\,d\HH^{n-1}=
\int_{{\rm G}(n,m)} F\cdot \mathcal{R}_mf\,d\nu_{n,m},
\end{align*}
proving (i).

For (ii), we take $g$ to be the constant $1$ function, and hence (i) yields that
\begin{align*}
\widetilde{A}_{m,q}(K,S^{n-1})=&\int_{{\rm G}(n,m)}\HH^m\left(K\cap \cdot\right)^{q-1}\mathcal{R}_m\varrho_K^m\,d\nu_{n,m}\\
=&m\int_{{\rm G}(n,m)}\HH^m\left(K\cap \cdot\right)^q\,d\nu_{n,m}=
m\widetilde{\Psi}_{m,q}(K).
\end{align*}
\end{proof}

Next, we consider the representation of $\widetilde{A}_{m,q}(K,\cdot)$ as a boundary integral. 
According to Huang, Lutwak, Yang and Zhang \cite{HLYZ16} and Corollary 2.6.7 in B\"or\"oczky, Figalli, Ramos \cite{BFR26}, if   $K\in \mathcal{K}^n_{o}$ is a convex body and $\varphi:\partial K\to[0,\infty)$ is Borel, then 
$$
\int_{S^{n-1}}\varphi(\varrho_K(u)\cdot u)\varrho_K(u)^n\,d\HH^{n-1}(u)=\int_{\partial' K} \varphi(x)\langle \nu_K(x),x\rangle \,d\HH^{n-1}(x).
$$
Therefore, if $m\in\{1,\ldots,n-1\}$, $q\in\R$ and $g:S^{n-1}\to\R$  is a Borel measurable function that is  bounded or non-negative, then
\begin{align}
\label{tildeAmq-bdK-eq}
\int_{S^{n-1}}g\,d\widetilde{A}_{m,q}(K,\cdot)=
\int_{\partial' K} &g(\nu_K(x))\, \|x\|^{m-n}\langle \nu_K(x),x\rangle\\
\nonumber 
&\mathcal{R}^*_m\HH^m\left(K\cap \cdot\right)^{q-1}\left(\frac{x}{\|x\|}\right)\,d\HH^{n-1}(x). 
\end{align}
Applying \eqref{tildeAmq-bdK-eq} to any open subset $U\subset S^{n-1}$ yields  that if $m\in\{1,\ldots,n-1\}$, $q\in\R$  and $K\in \mathcal{K}^n_{o}$ is a convex body, then (cf. \eqref{SK-def})
\begin{equation}
\label{suppAmqKSK}
{\rm supp}\,\widetilde{A}_{m,q}(K,\cdot)\subset {\rm supp}\,S_K.
\end{equation}
We note that 
\begin{equation}
\label{suppSK-regular-points}
{\rm supp}\,S_K={\rm cl}\{\nu_K(x):x\in\partial'K\}
\end{equation}
 is the smallest closed subset $\Omega\subset S^{n-1}$ such that 
$$
K=\left\{x\in\R^n:\langle x,u\rangle\leq h_K(u),\;\forall u\in\Omega\right\},
$$ 
and ${\rm supp}\,S_K$ is not contained in any closed hemisphere (see Lemma~2.5.6 in B\"or\"oczky, Figalli, Ramos \cite{BFR26}). 
If $K\in \mathcal{K}^n_{(o)}$, then the following Alexandrov-type variational formula is proved as Theorem~3.1 in Cai, Leng, Wu, Xi \cite{CLWX26}.

\begin{theorem}[Cai, Leng, Wu, Xi]
\label{Variation-Phimq}
Let $K\in \mathcal{K}^n_{(o)}$, $m\in\{1,\ldots,n-1\}$, $q\in\R\backslash\{o\}$ and $\Omega\subset\R^n$ be a closed subset that is not contained in a closed hemisphere. If $t\in(-t_0,t_0)$, then the "logarithmic family" of Wulff-shapes
\begin{align}
\label{Variation-Phimq-K}
K_t=&\left\{x\in\R^n:\langle x,u\rangle\leq h_t(u),\;\forall u\in\Omega\right\},\\
\label{Variation-Phimq-h}
h_t(u)=&h_0(u)\exp\left(t\varphi(u)+r(u,t)\right)  \mbox{ \ for $u\in \Omega$}
\end{align}
where $h_0:\Omega\to(0,\infty)$ and $\varphi:\Omega\to\R$ are continuous, $r(u,t)$ is Borel with $\lim_{t\to 0}\frac{r(u,t)}t=0$ uniformly in $u\in\Omega$, and $\inf_{u\in\Omega,\;t\in(-t_0,t_0) }h_t(u)>0$, satisfy that
\begin{align}
\label{Variation-Phimq-eq}
\lim_{t\to 0} \frac{\widetilde{\Psi}_{m,q}(K_t)-\widetilde{\Psi}_{m,q}(K)}t=&q\int_{\Omega}\varphi\,d\widetilde{A}_{m,q}(K_0,\cdot);\\
\label{Variation-Phim0-eq}
\lim_{t\to 0} \frac{\widetilde{\Psi}_{m,0}(K_t)-\widetilde{\Psi}_{m,0}(K)}t=&\int_{\Omega}\varphi\,d\widetilde{A}_{m,0}(K_0,\cdot).
\end{align}
\end{theorem}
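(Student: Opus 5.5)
The plan is to reduce Theorem~\ref{Variation-Phimq} to differentiating $\HH^m(K_t\cap\xi)$ for each fixed $\xi$ and then integrating over ${\rm G}(n,m)$. Write $\HH^m(K_t\cap\xi)=\frac1m\mathcal{R}_m(\varrho_{K_t}^m)(\xi)$. We have $\widetilde{\Psi}_{m,q}(K_t)=\int_{{\rm G}(n,m)}\left(\frac1m\mathcal{R}_m\varrho_{K_t}^m\right)^q d\nu_{n,m}$, and the same expression with a logarithm when $q=0$.

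The first step is the classical radial-derivative lemma for logarithmic Wulff families, as in Huang, Lutwak, Yang, Zhang \cite{HLYZ16}. For $\HH^{n-1}$-a.e.\ $u\in S^{n-1}$, namely for $u\in\Theta_K$ where $\alpha_K(u)$ is well defined, it gives
$$
\lim_{t\to0}\frac{\log\varrho_{K_t}(u)-\log\varrho_{K}(u)}{t}=\varphi(\alpha_K(u)).
$$
It also gives the uniform bound $|\log\varrho_{K_t}-\log\varrho_K|\le c|t|$ on $S^{n-1}$. Here $\varphi$ is extended continuously from $\Omega$, and we use $\inf h_t>0$. Since $K\in\mathcal{K}^n_{(o)}$, all radial functions lie between two positive constants for $|t|$ small. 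Consequently
$$
\frac{\varrho_{K_t}^m-\varrho_K^m}{t}\to m\varrho_K^m\,\varphi\circ\alpha_K \quad\mbox{a.e., with a uniform bound.}
$$

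The second step fixes a subspace $\xi$. Choose $\Upsilon=\Theta_K$ in \eqref{U-Upsilon-section}; here $U_K=S^{n-1}$. Then for $\nu_{n,m}$-a.e.\ $\xi$ the convergence above holds $\HH^{m-1}$-a.e.\ on $\xi\cap S^{n-1}$. Dominated convergence then gives
$$
\frac{d}{dt}\Big|_{t=0}\HH^m(K_t\cap\xi)=\mathcal{R}_m\left(\varrho_K^m\,\varphi\circ\alpha_K\right)(\xi).
$$
The sections $\HH^m(K_t\cap\xi)$ are bounded above and below by positive constants uniformly in $\xi$ and $t$. So the chain rule for $s\mapsto s^q$ (or $\log s$) and dominated convergence over ${\rm G}(n,m)$ yield
$$
\lim_{t\to0}\frac{\widetilde{\Psi}_{m,q}(K_t)-\widetilde{\Psi}_{m,q}(K)}t=q\int_{{\rm G}(n,m)}\HH^m(K\cap\cdot)^{q-1}\mathcal{R}_m\left(\varrho_K^m\,\varphi\circ\alpha_K\right)d\nu_{n,m}.
$$
The analogous formula without the factor $q$ holds for $q=0$.

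The last step is to apply Lemma~\ref{Amq-sn-1-lemma}(i) with $g=\varphi$, or its Remark when $q=0$. This identifies the right-hand side with $q\int\varphi\,d\widetilde{A}_{m,q}(K_0,\cdot)$. Since $\widetilde{A}_{m,q}(K_0,\cdot)$ is concentrated on ${\rm supp}\,S_{K_0}\subset\Omega$ by \eqref{suppAmqKSK}, the integral may be taken over $\Omega$.

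The main obstacle is the first step: establishing a.e.\ radial differentiability when the family is defined only through Wulff shapes over an arbitrary closed $\Omega$ with the perturbation term $r(u,t)$. This is exactly the HLYZ lemma, proved via polar bodies and the a.e.\ differentiability of $h_{K^*}$. I will invoke it rather than reprove it, checking only that the $o(t)$ remainder is harmless.
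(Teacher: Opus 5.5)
Your proposal is correct and follows essentially the route the paper indicates. The paper gives no proof of its own: it defers to Theorem~3.1 of Cai, Leng, Wu, Xi, whose argument rests on the Huang--Lutwak--Yang--Zhang radial-derivative lemma for logarithmic Wulff families (which, as Remark~(iii) notes, already accommodates the error term $r(u,t)$), followed by dominated convergence under the Grassmannian integral and the Radon-transform duality encoded in Lemma~\ref{Amq-sn-1-lemma}(i) and its Remark for $q=0$, exactly as you do.
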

\noindent{\bf Remarks.}
\begin{enumerate}
\item[(i)] We note that ${\rm supp}\,\widetilde{A}_{m,q}(K_0,\cdot)\subset {\rm supp}\,S_{K_0}\subset\Omega$ by \eqref{suppAmqKSK}.
\item[(ii)] If $K_t$ is only defined for $t\in[0,t_0)$, then we have $\lim_{t\to 0^+}$ in \eqref{Variation-Phimq-eq} and \eqref{Variation-Phim0-eq}.
\item[(iii)] Theorem~3.1 in  Cai, Leng, Wu, Xi \cite{CLWX26} actually states condition \eqref{Variation-Phimq-h} without the error term $r(u,t)$. However, their argument is based on the formula  
$$
\lim_{t\to 0} \frac{\varrho_{K_t}(u)-\varrho_{K_0}(u)}t=\varphi(\alpha_{K_0}(u))
$$
holding for $\HH^{n-1}$ a.e. $u\in \Theta_{K_0}$ (cf. \eqref{ThetaK-0}) according to 
Lemma~4.3 in Huang, Lutwak, Yang, Zhang \cite{HLYZ16}, which statement actually assumes exactly \eqref{Variation-Phimq-h}. 
We note that $\alpha_{K_0}(u)\in\Omega$ for any $u\in \Theta_{K_0}$ by \eqref{suppSK-regular-points} and (i).
\end{enumerate}

\begin{remark}
\label{Amqp-Lpsum}
 Let $p>0$, $q\in\R\backslash\{0\}$ and $m=1,\ldots,n-1$.
For $K,L\in \mathcal{K}^n_{(o)}$, there exists $R>1$  with $R^{-1}B^n\subset K,L\subset R B^n$, and hence for $t\geq 0$, \eqref{Lp-sum} says that
\begin{align*}
K+_p t\cdot L=&\left\{x\in\R^n:\langle x,u\rangle\leq h_t(u),\;\forall u\in S^{n-1}\right\}\\
h_t(u)=&\left(h_K(u)^p+t\cdot h_L(u)^p\right)^{\frac1p}=h_K(u)\left(1+t\cdot\frac{h_L(u)^p}{ph_K(u)^p}+O(t^2)\right)
\end{align*} 
for $u\in S^{n-1}$ as $t\to 0^+$ where the implied constant in $O(t^2)$ depends on $p$ and $R$ but not on $u$. Therefore, Theorem~\ref{Variation-Phimq} yields \eqref{Amqp} and \eqref{Am0p}.
\end{remark}

Finally, we turn to the weak convergence of $\widetilde{A}_{m,q}(K,\cdot)$. 
 The Hausdorff distance of two convex bodies $K,L\in \mathcal{K}^n_{o}$ is
\begin{align}
\label{deltaH-h}
\delta_H(K,L)=&\max\{|h_K(u)-h_L(u)|:u\in S^{n-1}\}\\
\label{deltaH-h}
=&\min\{r\geq 0:K\subset L+r B^n\mbox{ and }L\subset K+rB^n\}.
\end{align}
Then $\delta_H$ is a metric on $\mathcal{K}^n_{o}$, and we mean convergence of convex bodies in terms of this metric. According to the Blaschke Selection theorem, any bounded sequence of convex bodies contains a convergent subsequence. 

\begin{claim}
\label{normal-convergence}
If a sequence of $K_\ell\in \mathcal{K}^n_{(o)}$ tend to a convex body $K\in \mathcal{K}^n_{o}$, and for some $u\in \Theta_K$ (cf. \eqref{ThetaK-def}),  $w_\ell\in S^{n-1}$ is an exterior normal to $K_\ell$ at $x_\ell=\varrho_{K_\ell}(u)\cdot u$, then $\lim_{\ell\to\infty}w_\ell=\nu_K(x)$ for $x=\varrho_K(u)\cdot u\in\partial' K$.
\end{claim}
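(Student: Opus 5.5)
A compactness-plus-uniqueness argument should work. Let $w$ be any accumulation point of the sequence $\{w_\ell\}$. The plan is to show that $w$ is an exterior normal of $K$ at $x$. Since $u\in\Theta_K$, the point $x=\varrho_K(u)u$ lies in $\partial'K$, so $K$ has a unique exterior unit normal there, namely $\nu_K(x)$. Then every convergent subsequence of $\{w_\ell\}$ has limit $\nu_K(x)$, and because $S^{n-1}$ is compact, the whole sequence converges to $\nu_K(x)$.

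\textbf{Step 1: convergence of the boundary points.} Because $u\in\Theta_K\subset U_K=S^{n-1}\cap{\rm int}\,N_K(o)^*$, there is $t>0$ with $tu\in{\rm int}\,K$. The key claim is $\varrho_{K_\ell}(u)\to\varrho_K(u)$, hence $x_\ell\to x$.
\begin{itemize}
\item Upper bound. For any $s>\varrho_K(u)$ we have $su\notin K$, so $K\cap(su+\varepsilon B^n)=\emptyset$ for some $\varepsilon>0$. For large $\ell$, $K_\ell\subset K+\frac{\varepsilon}{2}B^n$, so $su\notin K_\ell$. By convexity and $o\in K_\ell$, this gives $\varrho_{K_\ell}(u)<s$.
\item Lower bound. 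For $s<\varrho_K(u)$, the point $su$ lies in the interior of ${\rm conv}(\{x\}\cup(tu+\delta B^n))\subset K$ for a suitable small ball $tu+\delta B^n\subset{\rm int}\,K$; more simply, $su\in{\rm int}\,K$ since $tu\in{\rm int}\,K$ and $x\in K$. Hausdorff convergence then gives $su+\varepsilon B^n\subset K_\ell$ for large $\ell$, since a ball inside $K$ is eventually inside $K_\ell$ (compare support functions). So $\varrho_{K_\ell}(u)\geq s$.
\end{itemize}
The lower bound is the only place where $u\in{\rm int}\,N_K(o)^*$ is needed. This is the point I expect to require care: if $u$ were tangent, as in the case $o\in\partial K$, then $\varrho_{K_\ell}(u)$ need not converge to $\varrho_K(u)$.

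\textbf{Step 2: passing to the limit in the supporting inequality.} By assumption, $\langle y,w_\ell\rangle\leq\langle x_\ell,w_\ell\rangle$ for all $y\in K_\ell$. Take $y\in K$. Hausdorff convergence provides $y_\ell\in K_\ell$ with $y_\ell\to y$. Along a subsequence with $w_\ell\to w$, Step 1 gives
$$\langle y,w\rangle=\lim\langle y_\ell,w_\ell\rangle\leq\lim\langle x_\ell,w_\ell\rangle=\langle x,w\rangle,$$
so $w$ is an exterior unit normal of $K$ at $x$. Uniqueness of the normal at $x\in\partial'K$ forces $w=\nu_K(x)$, which completes the argument.
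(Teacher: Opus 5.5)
Your proposal is correct and follows essentially the paper's argument: extract a convergent subsequence of $w_\ell$, pass to the limit in the supporting relation $\langle x_\ell,w_\ell\rangle=h_{K_\ell}(w_\ell)$, and use uniqueness of the normal at the regular point $x$. The only difference is how $x_\ell\to x$ is obtained. You prove $\varrho_{K_\ell}(u)\to\varrho_K(u)$ directly by upper and lower bounds. The paper instead takes a limit $y=tu$ of $x_\ell$ and identifies $y=x$ because the open ray $(0,\infty)u$ meets $\partial K$ only at $x$; your lower bound also supplies the fact $t>0$, which the paper uses without comment.
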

\begin{proof} We may assume that $\lim_{\ell\to\infty}w_\ell=w\in S^{n-1}$ and $\lim_{\ell\to\infty}x_\ell=y=tu$, $t>0$.  It follows from \eqref{deltaH-h} that
$$
\langle w,y\rangle=\lim_{\ell\to\infty}\langle w_\ell,x_\ell\rangle=\lim_{\ell\to\infty}h_{K_\ell}(w_\ell)=h_K(w);
$$
therefore, $y\in\partial K$ and $w$ is a normal vector at $y$. Since $u\in{\rm int}\,N_K(o)^*$, the open ray $(0,\infty)u$ intersects $\partial K$ only in $x$, and as $x\in\partial'K$, we have $w=\nu_K(x)$.
\end{proof}

Next we show that if $q>0$, then $\widetilde{\Psi}_{m,q}(L)$ is small provided that $L$ is a "almost flat".

\begin{claim}
\label{pancake}
Let $R>0$, $m\in\{1,\ldots,n-1\}$ and $q\in\R$. There exist $\gamma,\varepsilon_0>0$ depending on $n,m,q,R$ such that if $L\in \mathcal{K}^n_{(o)}$ is a convex body with 
$L\subset R\,B^n$, and $h_{L}(w), h_{L}(-w) \leq \varepsilon$ hold for some $w\in S^{n-1}$ and $\varepsilon\in(0,\varepsilon_0)$, then 
\begin{align}
\label{pancake-eq}
\widetilde{\Psi}_{m,q}(L)\leq& \gamma \varepsilon^{\frac{\min\{q,m\}}2}&&\mbox{if }q>0\\
\label{pancake-eq0}
\widetilde{\Psi}_{m,q}(L)\leq& \gamma\log\varepsilon&&\mbox{if }q=0\\
\label{pancake-eq-}
\widetilde{\Psi}_{m,q}(L)\geq& \gamma\varepsilon^{\frac{q}2}&&\mbox{if }q<0.
\end{align}
\end{claim}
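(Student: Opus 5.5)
The plan is to control the section areas $\HH^m(L\cap\xi)$ by the angle between $\xi$ and the hyperplane $w^\bot$. Since $h_L(\pm w)\leq\varepsilon$, we have $L\subset S_\varepsilon=\{x\in RB^n:|\langle x,w\rangle|\leq\varepsilon\}$. Fix $\xi\in{\rm G}(n,m)$ and set $s=s(\xi)=\|\Pi_\xi w\|$. If $s>0$, the slab $S_\varepsilon\cap\xi$ is the $m$-dimensional slab in $\xi$ of width $2\varepsilon/s$ in direction $\Pi_\xi w/s$, intersected with $RB^n\cap\xi$. This gives
$$
\HH^m(L\cap\xi)\leq \HH^m(S_\varepsilon\cap\xi)\leq c(n,m,R)\min\{1,\varepsilon/s\}.
$$
Using \eqref{Gnm-u}, the random variable $t=s^2$ has density proportional to $t^{\frac m2-1}(1-t)^{\frac{n-m}2-1}$. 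Hence $\nu_{n,m}(\{s\leq\delta\})\leq c\,\delta^{m}$ for $\delta\leq 1/2$, since $\int_0^{\delta^2}t^{\frac m2-1}dt\sim\delta^m$. Only the behaviour of the density near $t=0$ matters, because near $t=1$ the bound $\varepsilon/s$ is harmless.

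\textbf{Case $q>0$.} Split ${\rm G}(n,m)$ according to whether $s\leq\sqrt\varepsilon$ or $s>\sqrt\varepsilon$. On $\{s>\sqrt\varepsilon\}$ we have $\HH^m(L\cap\xi)\leq c\varepsilon/s\leq c\sqrt\varepsilon$, so this part contributes at most $c^q\varepsilon^{q/2}$. On $\{s\leq\sqrt\varepsilon\}$ we use the trivial bound $\HH^m(L\cap\xi)\leq\omega_mR^m$, and the measure of this set is at most $c\,\varepsilon^{m/2}$. Adding the two parts gives
$$
\widetilde\Psi_{m,q}(L)\leq\gamma(\varepsilon^{q/2}+\varepsilon^{m/2})\leq 2\gamma\,\varepsilon^{\min\{q,m\}/2}
$$
for $\varepsilon<\varepsilon_0\leq 1$. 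This is \eqref{pancake-eq}. A sharper exponent could be obtained by integrating $(\varepsilon/s)^q$ directly, but the stated bound is all that is needed.

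\textbf{Case $q=0$.} Again $\log\HH^m(L\cap\xi)\leq\log(c\varepsilon/s)$. On $\{s>\sqrt\varepsilon\}$ this is at most $\frac12\log\varepsilon+\log c$. On the complement, of measure at most $c\varepsilon^{m/2}$, it is at most $\log(c\,\omega_mR^m)$. Together the integral is at most $\frac12\log\varepsilon\,(1-c\varepsilon^{m/2})+C$. This is below $\gamma\log\varepsilon$ with, say, $\gamma=1/4$ once $\varepsilon_0$ is small enough to absorb $C$. Note that for $q=0$ the constant $\gamma$ must be positive and $\log\varepsilon<0$, so the inequality reads as ``very negative''.

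\textbf{Case $q<0$.} Here the map $x\mapsto x^q$ reverses the inequality, so we only need a lower bound on the integrand over a set of positive measure. On $\{s\geq 1/2\}$, whose $\nu_{n,m}$-measure is a positive constant $c_1(n,m)$, we have $\HH^m(L\cap\xi)\leq 2c\varepsilon$. Hence $\HH^m(L\cap\xi)^q\geq(2c)^q\varepsilon^q$ there. Since the integrand is nonnegative everywhere,
$$
\widetilde\Psi_{m,q}(L)\geq c_1(2c)^q\varepsilon^{q}\geq c_1(2c)^q\varepsilon^{q/2}
$$
for $\varepsilon<1$, because $q<0$. Here $\widetilde\Psi_{m,q}(L)$ is finite since $L\in\mathcal{K}^n_{(o)}$, although finiteness is not needed for a lower bound.

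\textbf{Main obstacle.} The only delicate step is the uniform slab-section estimate $\HH^m(S_\varepsilon\cap\xi)\leq c\min\{1,\varepsilon/s\}$. To prove it, write $\xi\cap S_\varepsilon$ as $\{y\in\xi\cap RB^n:|\langle y,\Pi_\xi w\rangle|\leq\varepsilon\}$. This is a slab of width $2\varepsilon/s$ inside an $m$-ball of radius $R$, so its measure is at most $(2\varepsilon/s)\,\omega_{m-1}R^{m-1}$. The case $m=1$ is the same computation, with a segment of length at most $2\varepsilon/s$. Finally, take $\gamma$ to be the largest of the constants obtained in the three cases.
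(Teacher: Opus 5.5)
Your proof is correct and is essentially the paper's argument. The paper also splits ${\rm G}(n,m)$ at $\|\Pi_\xi w\|=\sqrt{\varepsilon}$. On the part where the projection is large, it uses the slab of width $2\varepsilon/\|\Pi_\xi w\|$ inside $\xi$ to get $\HH^m(\xi\cap L)\le 2\omega_{m-1}R^{m-1}\sqrt{\varepsilon}$. It bounds the measure of the complementary part by $\gamma_0\varepsilon^{m/2}$ via \eqref{Gnm-u}, and deduces all three cases from these two estimates. Your use of $\{s\ge 1/2\}$ when $q<0$ is an inessential variant.

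The only slip is your closing sentence. Since $\gamma$ may depend on $q$, fix it case by case. For $q\le 0$, enlarging $\gamma$ strengthens the asserted bound, so keep your case-specific constant (e.g.\ $\gamma=1/4$ for $q=0$) rather than the largest one.
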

\begin{proof}
For $\xi\in{\rm G}(n,m)$, let $v_\xi\in S^{n-1}\cap\xi$ satisfy that $\Pi_\xi w=\|\Pi_\xi w\|v_\xi$, and we write that $\xi\in\Gamma_1$ if $\|\Pi_\xi w\|\geq \sqrt{\varepsilon}$, and $\xi\in\Gamma_0$ if $\|\Pi_\xi w\|< \sqrt{\varepsilon}$.

For $\xi\in\Gamma_1$, we observe that $\|\Pi_\xi w\|=|\langle v,w\rangle|$, and since $w-\Pi_\xi w$ is orthogonal to $\xi$, any $x\in\xi\cap L$ can be written in the form $x=\lambda v_\xi+z$ where $\lambda\in\R$ and $z\in w^\bot\cap v_\xi^\bot$, and $|\langle x,w\rangle|\leq \varepsilon$ yields that $|\lambda|\cdot \|\Pi_\xi w\|\leq\varepsilon$.
We deduce that if $\xi\in\Gamma_1$, then
$$
h_{\xi\cap L}(v_\xi)\leq \frac{\varepsilon}{\|\Pi_\xi w\|}\leq \sqrt{\varepsilon}\mbox{ \ and \ }
h_{\xi\cap L}(-v_\xi)\leq \sqrt{\varepsilon},
$$
and hence $\xi\cap L \subset RB^n$ yields that
\begin{equation}
\label{pancake-Gamma1}
\HH^m(\xi\cap L)\leq 2R^{m-1}\omega_{m-1}\sqrt{\varepsilon}\mbox{ \ \ \ for }\xi\in\Gamma_1.
\end{equation}
On the other hand, it follows from \eqref{Gnm-u} that
$$
\nu_{n,m}(\Gamma_0)\leq \gamma_0 \varepsilon^{\frac{m}2}
$$
for a $\gamma_0>0$ depending on $n,m$.
Combining this estimate with \eqref{pancake-Gamma1} implies \eqref{pancake-eq}, \eqref{pancake-eq0} and \eqref{pancake-eq-}.
\end{proof}

If $q>0$ and $K\in \mathcal{K}^n_{o}$ satisfies ${\rm dim}\,K\leq n-1$, then \eqref{Psimqdef-sectionarea} and \eqref{xi-in-hyperplanes} yield that $\widetilde{\Psi}_{m,q}(K)=0$, and we define $\widetilde{A}_{m,q}(K,\cdot)\equiv 0$.

\begin{prop}
\label{Amq-weak-convergence}
Let $m\in\{1,\ldots,n-1\}$. If $q\in\R$, then $\widetilde{\Psi}_{m,q}(K)$ is continuous and $\widetilde{A}_{m,q}(K,\cdot)$ is weakly continuous for $K\in \mathcal{K}^n_{(o)}$, and if $q>0$, then $\widetilde{\Psi}_{m,q}(K)$ is continuous and $\widetilde{A}_{m,q}(K,\cdot)$ is weakly continuous for $K\in \mathcal{K}^n_{o}$.
\end{prop}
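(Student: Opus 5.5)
Fix $K_\ell\to K$ in the relevant class. I will prove convergence of $\widetilde{\Psi}_{m,q}$ and then weak convergence of $\widetilde{A}_{m,q}$ by testing against a continuous $g:S^{n-1}\to\R$. The tool is the representation in Lemma~\ref{Amq-sn-1-lemma}(i):
$$
\int_{S^{n-1}}g\,d\widetilde{A}_{m,q}(K_\ell,\cdot)=\int_{{\rm G}(n,m)}\HH^m(K_\ell\cap\xi)^{q-1}\,\mathcal{R}_m\left(\varrho_{K_\ell}^m\cdot g\circ\alpha_{K_\ell}\right)(\xi)\,d\nu_{n,m}(\xi).
$$
The proof then reduces to pointwise convergence of the integrand for $\nu_{n,m}$-a.e. $\xi$, plus a domination or uniform integrability argument.

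\emph{Pointwise convergence.} For $u\in U_K$ we have $\varrho_{K_\ell}(u)\to\varrho_K(u)$, since the radial function is continuous under Hausdorff convergence along rays meeting ${\rm int}\,K$. For $u\in\Theta_K$, Claim~\ref{normal-convergence} gives $\alpha_{K_\ell}(u)\to\alpha_K(u)$; here $\Theta_{K_\ell}$ has full measure, so we may ignore the exceptional set of $u$. Continuity of $g$ then yields $g(\alpha_{K_\ell}(u))\to g(\alpha_K(u))$ for $\HH^{n-1}$-a.e. $u\in U_K$. By \eqref{U-Upsilon-section} applied with $\Upsilon=\Theta_K\cap\bigcap_\ell\Theta_{K_\ell}$, for $\nu_{n,m}$-a.e. $\xi\in\mathcal{U}_K$ this convergence holds $\HH^{m-1}$-a.e. on $\xi\cap S^{n-1}$. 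All these functions are bounded by $R^m\|g\|_\infty$, where $K_\ell\subset RB^n$. Dominated convergence on $\xi\cap S^{n-1}$ therefore gives convergence of $\mathcal{R}_m(\cdots)(\xi)$. Likewise $\HH^m(K_\ell\cap\xi)\to\HH^m(K\cap\xi)>0$ for $\xi\in\mathcal{U}_K$. Finally, $\nu_{n,m}({\rm G}(n,m)\setminus\mathcal{U}_K)=0$ by Lemma~\ref{numn-tangent-mspaces}, since the complement consists of tangent spaces or spaces meeting $K$ only at $o$. The $\widetilde{\Psi}_{m,q}$ statement is the special case with $\HH^m(K_\ell\cap\xi)^q$ in place of the integrand, and $q=0$ is handled the same way with $\log$.

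\emph{Domination.} If $K\in\mathcal{K}^n_{(o)}$, then eventually $rB^n\subset K_\ell\subset RB^n$. All sections then have $\HH^m$ between $\omega_m r^m$ and $\omega_m R^m$, so the integrands are uniformly bounded and dominated convergence finishes the proof for every $q\in\R$.

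\emph{Main obstacle: $q>0$ with $o\in\partial K$.} Here $\HH^m(K_\ell\cap\xi)^{q-1}$ may blow up when $q<1$, and $K$ may even be lower dimensional. For $q\geq 1$ the integrand is bounded by $(\omega_mR^m)^{q-1}m\omega_mR^m\|g\|_\infty$ and dominated convergence works. For $0<q<1$ I will use the pointwise bound
$$
\mathcal{R}_m\varrho_{K_\ell}^m(\xi)=m\HH^m(K_\ell\cap\xi),
$$
so the integrand is dominated by $m\|g\|_\infty\HH^m(K_\ell\cap\xi)^q\leq m\|g\|_\infty(\omega_mR^m)^q$, which is bounded. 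Dominated convergence again applies whenever ${\rm dim}\,K=n$. If ${\rm dim}\,K\leq n-1$, then $K\subset w^\bot$ for some $w\in S^{n-1}$, so $h_{K_\ell}(\pm w)\to0$. Claim~\ref{pancake} then gives $\widetilde{\Psi}_{m,q}(K_\ell)\to0$, and by the total-mass bound $|\int g\,d\widetilde{A}_{m,q}(K_\ell,\cdot)|\leq\|g\|_\infty m\widetilde{\Psi}_{m,q}(K_\ell)$ from Lemma~\ref{Amq-sn-1-lemma}(ii), the measures converge weakly to $0$, consistent with the convention $\widetilde{A}_{m,q}(K,\cdot)\equiv0$. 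The delicate point I expect to check carefully is the a.e. convergence of $\alpha_{K_\ell}$ when $o\in\partial K$. This needs $u\in\Theta_K\subset{\rm int}\,N_K(o)^*$, and Claim~\ref{normal-convergence} supplies exactly that.
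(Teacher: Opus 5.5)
Your strategy is the paper's: the representation from Lemma~\ref{Amq-sn-1-lemma}(i), Claim~\ref{normal-convergence} for the normals, the domination $m\|g\|_\infty\HH^m(K_\ell\cap\xi)^q$, and Claim~\ref{pancake} for lower-dimensional limits. The argument is complete when $K\in\mathcal{K}^n_{(o)}$ (then $U_K=S^{n-1}$ and $\mathcal{U}_K={\rm G}(n,m)$) and when ${\rm dim}\,K\leq n-1$.

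There is, however, a gap in the case $o\in\partial K$, ${\rm int}\,K\neq\emptyset$. You claim that $\nu_{n,m}({\rm G}(n,m)\setminus\mathcal{U}_K)=0$ by Lemma~\ref{numn-tangent-mspaces}, and this is false. That lemma only says the tangent spaces $\mathcal{T}_{m,K}$ form a null set. The other part of the complement, $\{\xi:\xi\cap K=\{o\}\}$, has positive measure in general. For instance, if ${\rm int}\,N_K(o)\neq\emptyset$ (say $o$ is a vertex of a polytope or the apex of a narrow cone), this set is a nonempty open subset of ${\rm G}(n,m)$.

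Likewise, \eqref{U-Upsilon-section} gives a.e. convergence only on $\xi\cap U_K$, not on all of $\xi\cap S^{n-1}$. For $\xi\in\mathcal{U}_K$ the subsphere $\xi\cap S^{n-1}$ also contains a relatively open set of directions outside $N_K(o)^*$; for example $-u\notin N_K(o)^*$ whenever $u\in\xi\cap U_K$. On that set you have not shown that $\varrho_{K_\ell}^m\cdot g\circ\alpha_{K_\ell}$ converges. So both of your dominated convergence arguments, on ${\rm G}(n,m)$ and on $\xi\cap S^{n-1}$, leave a positive-measure part of the domain uncovered. The same applies to your $\widetilde{\Psi}_{m,q}$ argument.

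The missing ingredient is the paper's \eqref{weak-cont-fat-uoutside}. If $u\in S^{n-1}\setminus N_K(o)^*$, then every $tu$ with $t>0$ has positive distance from $K$, so $\varrho_{K_\ell}(u)\to 0=\varrho_K(u)$. With this, $\mathcal{R}_m(\varrho_{K_\ell}^m\cdot g\circ\alpha_{K_\ell})(\xi)$ converges for every non-tangent $\xi$ with $\HH^{m-1}(\Upsilon\cap\xi)=\HH^{m-1}(U_K\cap\xi)$. Moreover, $\HH^m(K_\ell\cap\xi)=\frac1m\mathcal{R}_m\varrho_{K_\ell}^m(\xi)\to 0$ whenever $\xi\cap K=\{o\}$. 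On those $\xi$ the integrand, bounded by $m\|g\|_\infty\HH^m(K_\ell\cap\xi)^q$, therefore tends to $0$, which is the value of the $K$-integrand there.

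This is exactly where $q>0$ is needed. For $0<q<1$ the factor $\HH^m(K_\ell\cap\xi)^{q-1}$ blows up on these $\xi$, and only your combined bound controls the product. So the step you flag as delicate (convergence of $\alpha_{K_\ell}$ on $\Theta_K$) is fine as you handle it. The part that needs an extra argument is the region outside $N_K(o)^*$.
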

\begin{proof} In either cases, it is equivalent to prove that if $g:S^{n-1}\to\R$ is continuous, and a sequence $K_\ell\in \mathcal{K}^n_{(o)}$ tends to a suitable $K\in \mathcal{K}^n_{o}$, then 
\begin{align}
\label{Amq-weak-convergence-Psi}
\lim_{\ell\to\infty}\widetilde{\Psi}_{m,q}(K_\ell)=&\widetilde{\Psi}_{m,q}(K)\\
\label{Amq-weak-convergence-gA}
\lim_{\ell\to\infty}\int_{S^{n-1}}g\,d\widetilde{A}_{m,q}(K_\ell,\cdot)=&\int_{S^{n-1}}g\,d\widetilde{A}_{m,q}(K,\cdot).
\end{align}
We observe that if $K$ is a convex body, then for the Borel set
$\Upsilon=\Theta_K\cap\bigcap_\ell\Theta_{K_\ell}\subset U_K$,   $\HH^{n-1}$ a.e. point of $U_K$ is in $\Upsilon$ by \eqref{ThetaK-0}, and in turn, \eqref{U-Upsilon-section} yields that $\nu_{m,n}$ a.e. $\xi\in\mathcal{U}_K$ satisfies $\HH^{m-1}(\Upsilon\cap\xi)=\HH^{m-1}(U_K\cap\xi)$. Therefore, we can use $\Upsilon$ instead of $\Theta_K$ or  $\Theta_{K_\ell}\cap U_K$ in the integrals below. First, we consider the simpler case when $o\in{\rm int}\,K$.\\

\noindent{\bf Case 1.} $q\in\R$ and $o\in{\rm int}\,K$.

For any $t\in(0,1)$, we have 
\begin{equation}
\label{tK-Kell}
tK\subset K_\ell\subset\mbox{$\frac1t$}\,K  \mbox{ if $\ell$ is large;} 
\end{equation}
therefore, the monotonicity of $\widetilde{\Psi}_{m,q}(\cdot)$ yields \eqref{Amq-weak-convergence-Psi}.

It also follows from \eqref{tK-Kell} and Lemma~\ref{Amq-sn-1-lemma} (ii) that the sequence of integrals $\int_{S^{n-1}}g\,d\widetilde{A}_{m,q}(K_\ell,\cdot)$ is bounded. For any $\xi\in {\rm G}(n,m)$ and $u\in\Upsilon$, \eqref{tK-Kell} yields that 
$\lim_{\ell\to\infty}\HH^m(\xi\cap K_\ell)=\HH^m(\xi\cap K)$ and
$\lim_{\ell\to\infty}\varrho_{K_\ell}(u)=\varrho_K(u)$, and  Claim~\ref{normal-convergence} and the continuity of $g$ implies that $\lim_{\ell\to\infty}g\left(\alpha_{K_\ell}(u)\right)=g\left(\alpha_{K}(u)\right)$; therefore, we conclude \eqref{Amq-weak-convergence-gA} from
 Lemma~\ref{Amq-sn-1-lemma} (i) and the Lebesgue Dominated Convergence Theorem.\\

\noindent{\bf Case 2.} $q>0$ and $o\in\partial\,K$.

Let $K\subset R\,B^n$ for $R>0$, and hence we may assume that $K_\ell\subset 2R\,B^n$.

First, we assume that ${\rm dim}\,K\leq n-1$; in particular, $K\subset w^\bot\cap R\,B^n$ for some $w\in S^{n-1}$. Since $K_\ell\subset 2R\,B^n$, and $\lim_{\ell\to\infty}(h_{K_\ell}(w)+ h_{K_\ell}(-w))=0$, we deduce \eqref{Amq-weak-convergence-Psi} from Claim~\ref{pancake}, and, in turn, \eqref{Amq-weak-convergence-gA} using Lemma~\ref{Amq-sn-1-lemma} (ii).

Finally, we assume that $K$ is a convex body, and prove \eqref{Amq-weak-convergence-gA}.  Let $M>0$ satisfy that $|g(u)|\leq M$ for $u\in S^{n-1}$. As $q>0$, it follows that for any $\xi\in {\rm G}(n,m)$, we have
\begin{align}
\label{weak-cont-fat-xi}
\left|\HH^m\left(K_\ell\cap \xi\right)^{q-1}\mathcal{R}_m\left(\varrho_{K_\ell}^m\cdot g\circ\alpha_{K_\ell}\right)(\xi)\right|\leq&
M\int_{\xi\cap S^{n-1}}\HH^m\left(K_\ell\cap \xi\right)^{q-1}\varrho_{K_\ell}^m\,d\HH^{m-1}\\
\nonumber 
=&Mm\HH^m\left(K_\ell\cap \xi\right)^q
\leq Mm\omega_m^q(2R)^{qm}.
\end{align}

Let $\aleph\subset {\rm G}(n,m)$ be the Borel subset of all $\xi\in{\rm G}(n,m)\backslash\mathcal{T}_{m,K}$ such that
$\HH^{m-1}(\Upsilon\cap\xi)=\HH^{m-1}(U_K\cap\xi)$.
We deduce from  Lebesgue's Dominated Convergence Theorem, \eqref{weak-cont-fat-xi}, \eqref{U-Upsilon-section},
Lemma~\ref{numn-tangent-mspaces} and Lemma~\ref{Amq-sn-1-lemma} that 
\eqref{Amq-weak-convergence-gA} will follow  if for any $\xi\in\aleph$, we have
\begin{align}
\label{weak-cont-fat-xi-aleph}
\lim_{\ell\to\infty}\HH^m\left(K_\ell\cap \xi\right)^{q-1}\mathcal{R}_m\left(\varrho_{K_\ell}^m\cdot g\circ\alpha_{K_\ell}\right)(\xi)&=\\
\nonumber
\HH^m\left(K\cap \xi\right)^{q-1}\mathcal{R}_m\left(\varrho_K^m\cdot g\circ\alpha_K\right)(\xi)&
\end{align}
For $\xi\in\aleph$ with $\xi\cap U_K\neq\emptyset$, Claim~\ref{normal-convergence} and the continuity of $g$ yields that if $u\in\xi\cap \Upsilon$, then
$$
\lim_{\ell\to\infty}g\left(\alpha_{K_\ell}(u)\right)=g(\alpha_{K}(u)).
$$
Therefore, using again Lebesgue's Dominated Convergence Theorem for the integrals implied on $\xi\cap S^{n-1}$  for $\xi\in\aleph$ in \eqref{weak-cont-fat-xi-aleph}, the formula \eqref{Amq-weak-convergence-gA} will follow  if for any $u\in S^{n-1}\backslash \partial N_K(o)^*$, we have
\begin{align}
\label{weak-cont-fat-uUK}
\lim_{\ell\to\infty} \varrho_{K_\ell}(u)=&\varrho_K(u)&&\mbox{provided }u\in U_K,\\
\label{weak-cont-fat-uoutside}
\lim_{\ell\to\infty} \varrho_{K_\ell}(u)=&0=\varrho_K(u)&&\mbox{provided }u\in S^{n-1}\backslash N_K(o)^*.
\end{align}
Now, \eqref{weak-cont-fat-uUK} is a consequence of the fact that for any $k$ and $t\in(0,1)$, 
$$
t(C_{K,k}\cap K)\subset C_{K,k}\cap K_\ell\subset\mbox{$\frac1t$}\,(C_{K,k}\cap K)  \mbox{ if $\ell$ is large.}
$$
For \eqref{weak-cont-fat-uoutside}, we observe that if $u\in S^{n-1}\backslash N_K(o)^*$ and $t>0$, then $d>0$ holds for the distance $d$ of $tu$ from $K$, and hence $tu\not\in K_\ell$ if $\delta_H(K,K_\ell)<d$.
This proves \eqref{Amq-weak-convergence-gA}, that in turn readily yields \eqref{Amq-weak-convergence-Psi}.
\end{proof}

\section{Some properties of the $L_p$ centro-sectional measure}
\label{secLpcentrosectional}

For a convex body $K\in \mathcal{K}^n_{o}$, let
\begin{equation}
\label{Xi-def2}
\Xi_K=\{x\in\partial'K:\langle x,\nu_K(x)\rangle=0\}=\partial' K\cap\partial N_K(o)^*,
\end{equation}
and hence $tx\in\Xi_K$ holds for any $x\in\Xi_K$ and $t\in[0,1]$, and the continuity of $\nu_K$ on $\partial'K$ and $\HH^{n-1}(\partial K\backslash \partial'K)=0$ yield 
\begin{equation}
\label{Xi-closure}
\partial'K\cap {\rm cl}\,\Xi_K=\partial'K\cap \Xi_K \mbox{ and }\HH^{n-1}({\rm cl}\,\Xi_K)=\HH^{n-1}(\Xi_K).
\end{equation}
Readily, $\Xi_K=\emptyset$ if $K\in \mathcal{K}^n_{(o)}$.

For $m\in\{1,\ldots,n-1\}$, $p,q\in\R$ and convex body $K\in \mathcal{K}^n_{o}$ with $\HH^{n-1}(\Xi_K)=0$, we define the Borel $L_p$ centro-sectional measure $\widetilde{A}_{m,q,p}(K,\cdot)$ on $S^{n-1}$ by the formula  (cf. \eqref{Amq-hK0} for well-definedness)
\begin{equation}
\label{Amqp-hAmq}
d\widetilde{A}_{m,q,p}(K,\cdot)=h_K^{-p}\,d\widetilde{A}_{m,q}(K,\cdot),
\end{equation}
that satisfies 
$\widetilde{A}_{m,q,p}(K,\{h_K=0\})=0$ by \eqref{Amq-hK0}, and
 if $\lambda>0$, then
\begin{equation}
\label{Amqp-homogeneity}
\widetilde{A}_{m,q,p}(\lambda K,\cdot)=\lambda^{mq-p}\,d\widetilde{A}_{m,q,p}(K,\cdot).
\end{equation}
We deduce from \eqref{tildeAmq-bdK-eq} that if 
 $g:S^{n-1}\to[0,\infty)$ is a Borel measurable function, then
\begin{align}
\label{tildeAmqp-bdK-eq}
\int_{S^{n-1}}g\,d\widetilde{A}_{m,q,p}(K,\cdot)=
\int_{\partial' K} &g(\nu_K(x))\,\langle \nu_K(x),x\rangle^{1-p}\|x\|^{m-n}\\
\nonumber 
&\mathcal{R}^*_m\HH^m\left(K\cap \cdot\right)^{q-1}\left(\frac{x}{\|x\|}\right)\,d\HH^{n-1}(x). 
\end{align}
According to Remark~\ref{h-zero}, it is possible that the solution $h$ of the Monge-Amp\`ere equation \eqref{Amqp-Monge-Ampere-plarge} for $\widetilde{A}_{m,q,p}(K, \cdot)$ and $p>1$ zero at some $u\in S^{n-1}$ even if 
$d\widetilde{A}_{m,q,p}(K, \cdot)=f\,d\HH^{n-1}$ for a positive and continuous $f$.

For a convex set $X\subset w^\bot$ with ${\rm dim}\,X=n-1$, we write ${\rm relint}\,X$ to denote the relative interior of $X$ with respect to the topology of $w^\bot$.

\begin{lemma}
\label{Amqp-support}
If $m\in\{1,\ldots,n-1\}$, $p,q\in\R$ and $K\in \mathcal{K}^n_{o}$ is a convex body with $\HH^{n-1}(\Xi_K)=0$, then ${\rm supp}\,\widetilde{A}_{m,q,p}(K,\cdot)$ is not contained in any closed hemisphere.
\end{lemma}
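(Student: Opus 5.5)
Fix $w\in S^{n-1}$. It suffices to show that $\widetilde{A}_{m,q,p}(K,\cdot)$ gives positive measure to the open hemisphere $\{u\in S^{n-1}:\langle u,w\rangle>0\}$. Indeed, if the support were contained in the closed hemisphere $\{\langle u,w\rangle\leq 0\}$, the open hemisphere $\{\langle u,w\rangle>0\}$ would have measure zero. The plan is to use the boundary representation \eqref{tildeAmqp-bdK-eq} with $g=\mathbf{1}_{\{\langle\cdot,w\rangle>0\}}$. This reduces the claim to showing that the integrand
$$
\langle \nu_K(x),x\rangle^{1-p}\|x\|^{m-n}\,\mathcal{R}^*_m\HH^m\left(K\cap \cdot\right)^{q-1}\left(\frac{x}{\|x\|}\right)
$$
is positive on a subset of $\{x\in\partial'K:\langle\nu_K(x),w\rangle>0\}$ of positive $\HH^{n-1}$ measure.

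First I would find such a set of boundary points. Since ${\rm supp}\,S_K$ is not contained in any closed hemisphere, $S_K(\{\langle u,w\rangle>0\})>0$. By \eqref{SK-def}, the set $Y=\{x\in\partial'K:\langle\nu_K(x),w\rangle>0\}$ therefore has positive $\HH^{n-1}$ measure. As $\HH^{n-1}(\Xi_K)=0$, the set $Y\backslash\Xi_K$ still has positive measure. On $\partial'K\backslash\Xi_K$ we have $\langle\nu_K(x),x\rangle>0$, since $o\in K$ gives $\langle \nu_K(x),x\rangle\geq0$ and $\Xi_K$ is exactly where equality holds. Hence $\langle \nu_K(x),x\rangle^{1-p}$ is a finite positive number there for every $p\in\R$, and $x\neq o$, so $\|x\|^{m-n}$ is also finite and positive.

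Next I would show that the dual Radon factor is positive at $u=x/\|x\|$ for $x\in\partial'K\backslash\Xi_K$. For such $x$ we have $\langle x,\nu_K(x)\rangle>0$, so $x$ does not lie on the boundary of the cone $N_K(o)^*$; by \eqref{Xi-def2}, $u\in U_K={\rm int}\,N_K(o)^*\cap S^{n-1}$. Take $\zeta\in{\rm G}(u^\bot,m-1)$ and set $\xi=\zeta+\R u$. Then $\xi$ contains the direction $u$, which points into ${\rm int}\,K$ near $o$ (or $o\in{\rm int}\,K$ outright). Consequently $\xi\cap{\rm int}\,K\neq\emptyset$ and $\HH^m(K\cap\xi)\in(0,\infty)$. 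The power $\HH^m(K\cap\xi)^{q-1}$ is therefore positive, though possibly $+\infty$ when $q<1$. By \eqref{dualRadon-def}, $\mathcal{R}^*_m\HH^m(K\cap\cdot)^{q-1}(u)>0$, being the average of a function that is positive everywhere on ${\rm G}(u^\bot,m-1)$.

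Combining these steps, the integrand is positive, possibly $+\infty$, on a set of positive $\HH^{n-1}$ measure inside $Y$. Hence $\widetilde{A}_{m,q,p}(K,\{\langle\cdot,w\rangle>0\})>0$, and since $w$ was arbitrary the lemma follows. The only delicate point is the precise justification that $u\in{\rm int}\,N_K(o)^*$ forces every $m$-plane through $u$ to meet ${\rm int}\,K$. This holds because ${\rm int}\,N_K(o)^*=\{tx:x\in{\rm int}\,K,\ t>0\}$, so $tu\in{\rm int}\,K$ for some $t>0$. No finiteness of the measure is needed, since only positivity is asserted.
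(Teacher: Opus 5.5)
Your proof is correct and follows essentially the paper's argument: you use the boundary representation of $\widetilde{A}_{m,q,p}(K,\cdot)$ and show that the integrand is positive on a set of regular boundary points of positive $\HH^{n-1}$-measure whose normals lie in the open hemisphere, after discarding the null set $\Xi_K$. The only difference is how that set is produced: you take it from the fact that ${\rm supp}\,S_K$ is not contained in a closed hemisphere, whereas the paper takes the upper graph $X_+$ over ${\rm relint}\,\Pi_{w^\bot}K$ and uses $\HH^{n-1}(X_+)\geq\HH^{n-1}(\Pi_{w^\bot}K)>0$ (also, your caveat about $+\infty$ is unnecessary, since every section $K\cap\xi$ with $u\in\xi$ contains a fixed ball around $tu$ and so has positive finite area).
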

\begin{proof}
We prove Lemma~\ref{Amqp-support} indirectly; therefore, we suppose that there exists $w\in S^{n-1}$ such that 
\begin{equation}
\label{Amqp-support-zero}
\widetilde{A}_{m,q,p}\left(K,\Theta\right)=0
\end{equation}
holds for the open hemisphere $\Theta$ centered at $w$.
There exist concave functions $\varphi,\psi:\Pi_{w^\bot}K\to\R$ such that $K=\{y+tw:y\in \Pi_{w^\bot}K\;\&\;-\psi(y)\leq t\leq \varphi(y)\}$, and we write $X_+=\{y+\varphi(y)w:y\in {\rm relint}\, \Pi_{w^\bot}K\}$. We observe that if $x\in X_+\cap\partial'K$, then $\langle \nu_K(x),w\rangle>0$. We deduce from \eqref{tildeAmqp-bdK-eq} and \eqref{Amqp-support-zero} that
\begin{equation}
\label{Amqp-support-intX+}
0=\int_{X_+\cap \partial' K} \langle \nu_K(x),x\rangle^{1-p}\|x\|^{m-n}\mathcal{R}^*_m\HH^m\left(K\cap \cdot\right)^{q-1}\left(\frac{x}{\|x\|}\right)\,d\HH^{n-1}(x).
\end{equation}
However, $X_+^0=X_+\backslash \partial N_K(o)^*$ is relatively open on $\partial K$, and the integrand on the right hand side of \eqref{Amqp-support-intX+} is positive at each $x\in  X_+^0$. Therefore, \eqref{Amqp-support-intX+} yields that $X_+\cap \partial'K\subset \Xi_K$ (cf. \eqref{Xi-def2}), which leads to a contradiction because
$\HH^{n-1} (X_+)\geq \HH^{n-1} (\Pi_{w^\bot}K)>0$ holds as orthogonal projection decreases Hausdorff measure.
\end{proof}

Now, we prepare for the statement and proof of our  core result Proposition~\ref{extremal-problem}.
For a convex body $K\in \mathcal{K}^n_{o}$, let $r(K)$ be the largest radius of any Euclidean ball contained in $K$. Since there exists $x_0\in K$ and origin symmetric ellipsoid $E$ such that $x_0+E\subset K\subset x_0+nE$ (for example, the John ellipsoid, see \cite{Sch14} and \cite{BFR26}), we deduce the existence of a $w\in S^{n-1}$ such that
\begin{equation}
\label{Steinhagen}
|\langle x,w\rangle|\leq 2nr(K)\mbox{ \ for }x\in K.
\end{equation}

Next, the Taylor formula with a reminder yields the following.

\begin{claim}
\label{Bernoulli}
For $\theta>0$ and $a_1>a_0\geq 0$, there exists $\gamma_0,\gamma_1>0$ depending on $\theta$, $a_0$, $a_1$ such that for $a\in[a_0,a_1]$, we have
\begin{align*}
(a+s)^\theta-a^\theta\geq & \gamma_0 s &&\mbox{if $a_0>0$ and $0\leq s< a_0$},\\
(a+s)^\theta-a^\theta\geq & \gamma_0 \min\{s,s^\theta\}&&\mbox{if $a_0=0$ and $0\leq s< a_1$},\\
a^\theta-(a-s)^\theta\leq & \gamma_1 s&&\mbox{if $a_0>0$ and $0\leq s< \frac{a_0}2$}.
\end{align*}
\end{claim}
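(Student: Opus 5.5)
We prove the three inequalities of Claim~\ref{Bernoulli} separately. Each follows from the mean value theorem applied to $\phi(t)=t^\theta$ on a suitable interval, together with compactness of the parameter range $a\in[a_0,a_1]$. Throughout, $\phi'(t)=\theta t^{\theta-1}$. This derivative is monotone in $t$: increasing if $\theta\geq 1$ and decreasing if $\theta<1$. So on any interval $[\alpha,\beta]\subset(0,\infty)$, $\phi'$ is bounded above and below by positive constants depending only on $\theta,\alpha,\beta$.

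\emph{First inequality ($a_0>0$, $0\leq s<a_0$).} By the mean value theorem, $(a+s)^\theta-a^\theta=\theta\tau^{\theta-1}s$ for some $\tau\in[a,a+s]\subset[a_0,a_1+a_0]$. We may therefore take
\[
\gamma_0=\theta\min\{a_0^{\theta-1},(a_0+a_1)^{\theta-1}\},
\]
which depends only on $\theta,a_0,a_1$.

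\emph{Third inequality ($a_0>0$, $0\leq s<a_0/2$).} Similarly, $a^\theta-(a-s)^\theta=\theta\tau^{\theta-1}s$ for some $\tau\in[a-s,a]\subset[a_0/2,a_1]$. We may therefore take
\[
\gamma_1=\theta\max\{(a_0/2)^{\theta-1},a_1^{\theta-1}\}.
\]

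\emph{Second inequality ($a_0=0$, $0\leq s<a_1$).} This is the only case needing care, since $a$ may be near $0$, where $\phi'$ degenerates or blows up. We split into two subcases according to whether $\theta\geq 1$ or $\theta<1$.
\begin{itemize}
\item If $\theta\geq1$, superadditivity of $t^\theta$ on $[0,\infty)$ gives $(a+s)^\theta\geq a^\theta+s^\theta$. Since $s<a_1$, we have $s^\theta\geq a_1^{\theta-1}\min\{s,s^\theta\}$ when $a_1\leq 1$, and $s^\theta\geq\min\{s,s^\theta\}$ trivially. So $\gamma_0=\min\{1,a_1^{\theta-1}\}$ works.
\item If $\theta<1$, the mean value theorem gives $\tau\leq a+s\leq 2a_1$, hence $\tau^{\theta-1}\geq(2a_1)^{\theta-1}$. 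Therefore $(a+s)^\theta-a^\theta\geq\theta(2a_1)^{\theta-1}s\geq\theta(2a_1)^{\theta-1}\min\{s,s^\theta\}$, and $\gamma_0=\theta(2a_1)^{\theta-1}$ works.
\end{itemize}
Taking the minimum of the constants over the applicable subcases gives the second inequality.

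The main (mild) obstacle is this $a_0=0$ case. There the derivative bound must be taken at the correct endpoint depending on the sign of $\theta-1$, and the $\min\{s,s^\theta\}$ form absorbs the loss of uniformity near $a=0$. Everything else is routine.
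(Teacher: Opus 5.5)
Your proof is correct and matches the paper, which justifies the claim only by a one-line appeal to Taylor's formula with remainder; your mean value theorem estimates, plus superadditivity of $t^\theta$ for the degenerate case $a_0=0$, $\theta\geq 1$, supply those details. In that subcase $\gamma_0=1$ already suffices, since $s^\theta\geq\min\{s,s^\theta\}$, so the extra factor $\min\{1,a_1^{\theta-1}\}$ is harmless but unnecessary.
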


We observe that if $\mu$ is a finite Borel measure on $S^{n-1}$ that is not concentrated on any closed hemisphere, then 
there exists $\delta\in(0,1)$ such that
\begin{equation}
\label{mu-delta}
\mbox{$\mu(S^{n-1})\leq 1/\delta$, and $\mu(\Sigma(w,\delta))\geq \delta$ for any $w\in S^{n-1}$.}
\end{equation}

\begin{prop}
\label{extremal-problem}
For $m\in\{1,\ldots,n-1\}$, $p>0$, $q>0$ and $\delta\in(0,1)$, there exist $0<r_\delta<R_\delta$ and $0<\lambda_\delta<\Lambda_\delta$ depending on $m,p,q,\delta$ with the following properties. If $\mu$ is a Borel measure  on $S^{n-1}$ such that $\mu(S^{n-1})\leq 1/\delta$, and $\mu(\Sigma(w,\delta))\geq \delta$ for any $w\in S^{n-1}$, and 
$$
\mathcal{F}_\mu=\left\{K\in \mathcal{K}^n_{o}\mbox{ convex body}:\int_{S^{n-1}}h_K^p\,d\mu\leq 1\right\},
$$
then there exists a $\widetilde{K}_\mu\in \mathcal{F}_\mu$ maximizing $\widetilde{\Psi}_{m,q}(K)$ among $K\in \mathcal{F}_\mu$, and there exists $x_\mu\in  \widetilde{K}_\mu$ such that
\begin{equation}
\label{extremal-problem-radii}
x_\mu+r_\delta B^n\subset \widetilde{K}_\mu\subset R_\delta B^n.
\end{equation}
\begin{enumerate}
\item[(i)] Assuming that $o\in{\rm int}\,\widetilde{K}_\mu$, we have
\begin{equation}
\label{extremal-problem-Amqp-mu}
\widetilde{A}_{m,q,p}(\widetilde{K}_\mu,\cdot)=m\widetilde{\Psi}_{m,q}\left(\widetilde{K}_\mu\right)\cdot \mu,
\end{equation}
and if $p\neq mq$ (cf. \eqref{Amqp-homogeneity}), then there exists $\lambda\in(\lambda_\delta,\Lambda_\delta)$ with 
\begin{equation}
\label{extremal-problem-Amqp-mu-lambda}
\widetilde{A}_{m,q,p}(\lambda\widetilde{K}_\mu,\cdot)=\mu.
\end{equation}
\item[(ii)] Assuming that $p>1$ and $\mu$ is discrete, we have $o\in{\rm int}\,\widetilde{K}_\mu$.
\end{enumerate}
\end{prop}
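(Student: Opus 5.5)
Fix $\delta$ and $\mu$ as in the statement. The plan is to obtain $\widetilde{K}_\mu$ by a compactness argument, then derive \eqref{extremal-problem-Amqp-mu} from a Lagrange-multiplier computation based on Theorem~\ref{Variation-Phimq}, and finally use the discreteness of $\mu$ in (ii) to exclude $o\in\partial\widetilde{K}_\mu$.

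\textbf{Existence and the radii bounds.} First, $\mathcal{F}_\mu$ is uniformly bounded. If $K\in\mathcal{F}_\mu$ and $R w\in K$ with $R=\max_{x\in K}\|x\|$, then $h_K(u)\geq R\langle u,w\rangle\geq R\delta$ on $\Sigma(w,\delta)$. Hence
\[
1\geq\int h_K^p\,d\mu\geq \delta^{p+1}R^p,
\]
so $K\subset R_\delta B^n$ with $R_\delta=\delta^{-(p+1)/p}$. Next, a small ball $cB^n$ lies in $\mathcal{F}_\mu$, since $\int c^p\,d\mu\leq c^p/\delta\leq 1$ for suitable $c=c(\delta,p)$; therefore $\sup_{\mathcal{F}_\mu}\widetilde{\Psi}_{m,q}\geq \widetilde{\Psi}_{m,q}(cB^n)>0$.

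Take a maximizing sequence. By Blaschke selection it converges to some $K\in\mathcal{K}^n_o$, and the constraint passes to the limit because $h$ converges uniformly. Suppose the limit had empty interior. Then by Claim~\ref{pancake}, or directly by Proposition~\ref{Amq-weak-convergence} (using $q>0$), $\widetilde{\Psi}_{m,q}$ of the sequence would tend to $0$, a contradiction. So the limit is a convex body, and by Proposition~\ref{Amq-weak-convergence} it is a maximizer.

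For the inner radius, combine \eqref{Steinhagen} with Claim~\ref{pancake}. Since $\widetilde{\Psi}_{m,q}(\widetilde{K}_\mu)\geq\widetilde{\Psi}_{m,q}(cB^n)$ while $\widetilde{K}_\mu\subset R_\delta B^n$, the inequality $\gamma(2nr(K))^{\min\{q,m\}/2}\geq \widetilde{\Psi}_{m,q}(cB^n)$ forces $r(\widetilde{K}_\mu)\geq r_\delta$. (Claim~\ref{pancake} is stated for $L\in\mathcal{K}^n_{(o)}$, but it extends to $\mathcal{K}^n_o$ by approximation and continuity.)

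\textbf{Part (i).} Assume $o\in{\rm int}\,\widetilde{K}_\mu=:K$. For a continuous $\varphi$ on $S^{n-1}$, consider the Wulff shapes $K_t$ with $h_t=h_K e^{t\varphi}$ over $\Omega=S^{n-1}$. Then $h_{K_t}\leq h_t$, so
\[
\int h_{K_t}^p\,d\mu\leq \int h_K^p e^{pt\varphi}\,d\mu .
\]
Rescale $K_t$ by $s(t)=\bigl(\int h_K^pe^{pt\varphi}d\mu\bigr)^{-1/p}$ so that it lies in $\mathcal{F}_\mu$. The maximizer saturates the constraint, $\int h_K^p\,d\mu=1$, because otherwise a dilate of $K$ would do better. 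Now $t\mapsto\widetilde{\Psi}_{m,q}(s(t)K_t)=s(t)^{mq}\widetilde{\Psi}_{m,q}(K_t)$ has a maximum at $t=0$. Using Theorem~\ref{Variation-Phimq} and $s'(0)=-\int\varphi h_K^p\,d\mu$, the vanishing of the derivative at $t=0$ gives
\[
q\int\varphi\,d\widetilde{A}_{m,q}(K,\cdot)=mq\,\widetilde{\Psi}_{m,q}(K)\int\varphi h_K^p\,d\mu
\]
for every continuous $\varphi$. Hence $d\widetilde{A}_{m,q}(K,\cdot)=m\widetilde{\Psi}_{m,q}(K)h_K^p\,d\mu$, which by \eqref{Amqp-hAmq} is \eqref{extremal-problem-Amqp-mu}.

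When $p\neq mq$, \eqref{Amqp-homogeneity} gives $\widetilde{A}_{m,q,p}(\lambda K,\cdot)=\lambda^{mq-p}\,m\widetilde{\Psi}_{m,q}(K)\,\mu$. So we take $\lambda=(m\widetilde{\Psi}_{m,q}(K))^{1/(p-mq)}$. The quantity $\widetilde{\Psi}_{m,q}(K)$ is bounded above and below in terms of $\delta$ by \eqref{extremal-problem-radii}, which gives $\lambda_\delta,\Lambda_\delta$.

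\textbf{Part (ii), the main obstacle.} Let $\mu=\sum_i c_i\delta_{u_i}$, so that $\widetilde{K}_\mu$ may be taken to be the polytope $\{x:\langle x,u_i\rangle\leq h_i\}$; shrinking to this polytope keeps the constraint and does not decrease $\widetilde{\Psi}$. Suppose $o\in\partial\widetilde{K}_\mu$, and let $I_0=\{i:h_i=0\}\neq\emptyset$.

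Perturb by raising $h_i$ to $t$ for $i\in I_0$ and shrinking the polytope slightly to restore the constraint. The gain in the constraint is only $t^p\sum_{I_0}c_i=O(t^p)$, and with $p>1$ this is $o(t)$. Meanwhile $\widetilde{\Psi}_{m,q}$ increases at least linearly in $t$. To see this, note that pushing out a facet through $o$ adds a region of volume $\sim t$ near the facet. I expect the centro-sectional integrand to be bounded below there, using Corollary~\ref{mplanes-ball-section} for $m\geq2$, or \eqref{A1q-eta-def} for $m=1$, together with Claim~\ref{Bernoulli} with $a_0>0$, since the sections through the interior have area bounded below. Hence for small $t$ the perturbation increases $\widetilde{\Psi}_{m,q}$ while keeping the constraint, which contradicts maximality.

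Making this linear lower bound rigorous is the key difficulty. The planned route is to compare $\widetilde{\Psi}_{m,q}$ via the radial functions on an open cone of directions, where $\varrho$ increases by $\gtrsim t$, and integrate using \eqref{Psimqdef-Radon}.
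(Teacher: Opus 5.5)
Your existence argument, the radii bounds and part (i) follow the paper's proof essentially step by step, and they are fine: the same $R_\delta=\delta^{-(p+1)/p}$, Claim~\ref{pancake} with \eqref{Steinhagen}, logarithmic Wulff shapes rescaled back into $\mathcal F_\mu$, and the same $\lambda$. In (ii) you also use the paper's perturbation $L_t$, and the $O(t^p)$ loss is harmless since $L_t\supset(1-ct^p)\widetilde K_\mu$.

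\textbf{The gap.} The linear gain in $\widetilde\Psi_{m,q}$ is the whole content of (ii), and you only state it as an expectation. Moreover, the rigorous route you plan does not give it for $m\ge 2$. On $N_{\widetilde K_\mu}(o)^*$ the raised constraints $\langle x,u_i\rangle\le t$, $i\in I_0$, are inactive, since $\langle u,u_i\rangle\le 0$ there; hence $\varrho_{L_t}\le\varrho_{\widetilde K_\mu}$ on that cone. On any fixed compact set of directions outside it, $\varrho_{\widetilde K_\mu}=0$ and $\varrho_{L_t}\le t/\max_{i\in I_0}\langle u,u_i\rangle=O(t)$. So through \eqref{Psimqdef-Radon}, a fixed open cone of directions adds only $\int\varrho_{L_t}^m/m\sim t^m$ to the section areas. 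This beats the $O(t^p)$ loss only when $p>m$, essentially the range already covered by Proposition~\ref{largep-oint}, not for all $p>1$. Your other heuristic, that the relevant sections pass through the interior, fails for $m=1$. Lines through the added region over $F_1$ may meet $\widetilde K_\mu$ only at $o$ (for instance when $o$ is a vertex). Then, for $q>1$, the weight $\HH^1(\widetilde K_\mu\cap\cdot)^{q-1}$ in \eqref{A1q-eta-def} vanishes there, and Claim~\ref{Bernoulli} with $a_0>0$ is unavailable.

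\textbf{The missing idea.} The linear gain lives in a layer of thickness $\sim t$ over an $(n-1)$-dimensional facet $F_1\ni o$. Equivalently, it lives in a wedge of directions of angular width $\sim t$ where $\varrho$ jumps from $0$ to order $\tilde r$, because $\Sigma_0+{\rm conv}\{o,tu_1\}\subset L_t$; it does not live in a fixed cone. The paper quantifies this separately in the two cases.
\begin{itemize}
\item For $m=1$, the lines through this wedge form a set of $\nu_{n,1}$-measure $\gtrsim t$, each gaining length $\ge\tilde r/2$. The $a_0=0$ case of Claim~\ref{Bernoulli} then gives a total gain $\gtrsim t$.
\item For $m\ge 2$, Claim~\ref{mplanes-flatcones} gives a set of $m$-planes of measure bounded below that meet ${\rm relint}\,F_1$ in a fixed cap and are transversal to $u_1^\perp$. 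Their sections of $\widetilde K_\mu$ have area bounded below by \eqref{tau-in-Kmu}. Each section gains area $\gtrsim t$ (a cone of height $\sim t$ over $\xi\cap F_1$), so Claim~\ref{Bernoulli} with $a_0>0$ applies.
\end{itemize}

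\textbf{How your ideas can be rescued.} For $m\ge2$, your slab heuristic can be made rigorous. Use $b^q-a^q\ge q\min\{a^{q-1},b^{q-1}\}(b-a)$ for $b\ge a>0$. The duality \eqref{Rm-Rm*-eq0} turns $\int F\,\HH^m(\xi\cap D)\,d\nu_{n,m}$ into $\int_D\|x\|^{m-n}\mathcal R^*_mF(x/\|x\|)\,dx$, where $D=L_t\setminus(1-ct^p)\widetilde K_\mu$ has $|D|\gtrsim t$. Corollary~\ref{mplanes-ball-section} then bounds $\mathcal R^*_mF$ from below. For $m=1$, your cone idea works if you take directions $u$ with $-u\in{\rm int}\,N_{\widetilde K_\mu}(o)^*$. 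Then $\R u$ already meets $\widetilde K_\mu$ in length bounded below, and it gains at least $t-O(t^p)$ because $tB^n\subset L_t$.
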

\begin{proof} We observe that if $K\in \mathcal{F}_\mu$ and $z\in K\backslash \{o\}$ and $v=z/\|z\|$, then $h_K(u)\geq \langle u,z\rangle\geq \|z\|\delta$ holds for any $u\in \Sigma(v,\delta)$, thus  
$$
1\geq \int_{\Sigma(v,\delta)}h_K^p\,d\mu\geq \|z\|^p\delta^{p+1},
$$ 
which in turn yields that $K\subset R_\delta B^n$ for $R_\delta=\delta^{-\frac{p+1}p}$.  Since $K\subset R_\delta B^n$ holds for $K\in \mathcal{F}_\mu$, the Blaschke Selection Theorem and the continuity of $\widetilde{\Psi}_{m,q}(K)$ (cf. Proposition~\ref{Amq-weak-convergence}) yield the existence of $\widetilde{K}_\mu\in \mathcal{F}_\mu$ maximizing $\widetilde{\Psi}_{m,q}(K)$. It follows by homogeneity that
\begin{equation}
\label{Minkowski-problem-Kmu}
\int_{S^{n-1}} h_{\widetilde{K}_\mu}^p\,d\mu=1.
\end{equation}
Since  $\delta^{\frac{1}p}B^n\in \mathcal{F}_\mu$, the existence of $r_\delta>0$ depending on $\delta,n,m,q,p$ with $r(\widetilde{K}_\mu)\geq r_\delta$ follows from Claim~\ref{pancake}, \eqref{Steinhagen} and the maximality of $\widetilde{\Psi}_{m,q}(\widetilde{K}_\mu)$.

Next, we assume that $o\in{\rm int}\,\widetilde{K}_\mu$, and hence \eqref{extremal-problem-Amqp-mu}
 is equivalent with the formula
\begin{equation}
\label{Minkowski-problem-claim0}
m\int_{S^{n-1}}\varphi h_K^p\,d\mu=\widetilde{\Psi}_{m,q}\left(\widetilde{K}_\mu\right)^{-1} \int_{S^{n-1}}\varphi\,d\widetilde{A}_{m,q}(\widetilde{K}_\mu,\cdot)
\end{equation}
for any continuous function $\varphi:\,S^{n-1}\to\R$.  
Assuming that $|t|$ is small, we consider the Wulff-shape
$$
K_t=\{x\in\R^n:\langle x,u\rangle\leq h_{\widetilde{K}_\mu}(u)\exp(t\varphi(u))\;\;\forall u\in S^{n-1}\},
$$
that satisfies $K_0=\widetilde{K}_\mu$ and $o\in{\rm int}\,K_t$; moreover,  the variational formula Theorem~\ref{Variation-Phimq} yields that
\begin{equation}
\label{Minkowski-problem-Psi-der}
\left.\frac{d }{dt}\,\widetilde{\Psi}_{m,q}(K_t)\right|_{t=0}=q\int_{S^{n-1}}\varphi\,d\widetilde{A}_{m,q}(\widetilde{K}_\mu,\cdot).
\end{equation}
As $h_{K_t}(u)\leq h_{\widetilde{K}_\mu}(u)\exp(t\varphi(u))$ for any $u\in S^{n-1}$, we have 
$$
\theta(t):=\int_{S^{n-1}}h_K^p\exp(pt\varphi)\,d\mu\geq \int_{S^{n-1}} h_{K_t}^p\,d\mu.
$$
It follows that $\theta(t)^{-\frac1p}K_t\in \mathcal{F}_\mu$, and hence the maximality of $\widetilde{\Psi}_{m,q}(\widetilde{K}_\mu)$ implies
$$
f(t):=\log \widetilde{\Psi}_{m,q}\left(\theta(t)^{-\frac1p}K_t\right)=
\log \widetilde{\Psi}_{m,q}\left(K_t\right)-\frac{mq}{p}\cdot\log\theta(t)\leq f(0).
$$
As $\theta(0)=1$ by \eqref{Minkowski-problem-Kmu} and $\theta'(0)=p\int_{S^{n-1}}\varphi h_K^p\,d\mu$, we deduce from \eqref{Minkowski-problem-Psi-der} that
$$
0=f'(0)=\widetilde{\Psi}_{m,q}\left(\widetilde{K}_\mu\right)^{-1}\cdot q\int_{S^{n-1}}\varphi\,d\widetilde{A}_{m,q}(\widetilde{K}_\mu,\cdot)-
mq\int_{S^{n-1}}\varphi h_K^p\,d\mu,
$$
proving \eqref{Minkowski-problem-claim0}, and in turn \eqref{extremal-problem-Amqp-mu}.

Finally, \eqref{extremal-problem-Amqp-mu-lambda} follows from  \eqref{Amqp-homogeneity} with $\lambda=m^{\frac{-1}{mq-p}}\widetilde{\Psi}_{m,q}\left(\widetilde{K}_\mu\right)^{\frac{-1}{mq-p}}$ if $p\neq mq$. For the existence of $\lambda_\mu$ and $\Lambda_\mu$ depending on $\delta,n,m,q,p$, we need suitable bounds on $\widetilde{\Psi}_{m,q}$. On the one hand,  \eqref{extremal-problem-radii} 
yields that $\widetilde{\Psi}_{m,q}\left(\widetilde{K}_\mu\right)\leq R_\delta^{mq}\widetilde{\Psi}_{m,q}(B^n)$. On the other hand, if 
a $\xi\in {\rm G}(n,m)$ intersects $x_\mu+\frac{r_\delta}2\, B^n$, then $\HH^m(\xi\cap \widetilde{K}_\mu)\geq \frac{r_\delta^m}{2^m}\,\omega_m$, and hence $\widetilde{\Psi}_{m,q}\left(\widetilde{K}_\mu\right)\geq \gamma$ holds for a $\gamma>0$ depending on $\delta,n,m,q,p$ by Claim~\ref{mplanes-ball}, completing the proof of (i).

For (ii), we assume that $p>1$ and ${\rm supp}\,\mu=\{u_1,\ldots,u_k\}$ where $k\geq n+1$, and hence the maximality of $\widetilde{\Psi}_{m,q}(\widetilde{K}_\mu)$ and the strict monotonicity of $\widetilde{\Psi}_{m,q}(\cdot)$ yield that  $\widetilde{K}_\mu$ is a polytope whose facet normals are among $u_1,\ldots,u_k$. 
We suppose that $o\in\partial\widetilde{K}_\mu$, and seek a contradiction. 

We may assume that $h_{\widetilde{K}_\mu}(u_i)=0$ if and only if $i=1,\ldots,\ell$ for some $1\leq \ell\leq k-1$, and \begin{equation}
\label{dimF1-n-1}
{\rm dim}\, F_1=n-1
\mbox{ \ for }F_1=F(\widetilde{K}_\mu,u_1).
\end{equation}
Let $\tilde{r}>0$ be such that $h_{\widetilde{K}_\mu}(u_i)>\tilde{r}$ for $i=\ell+1,\ldots,k$. 
We may fix a $w\in u_1^\bot\cap S^{n-1}$ and a $\theta\in (0,1)$ such that $\Sigma_0:=\tilde{r}\cdot\Sigma(w,\theta)\cap u_1^\bot\subset {\rm relint}\,F_1$, and then a $\tau>0$ such that
\begin{equation}
\label{tau-in-Kmu}
\{x\in\Sigma_0+\tau\,B^n:\langle x,u_1\rangle\leq 0\}\subset \widetilde{K}_\mu.
\end{equation}
Let $\mu(\{u_i\})=\alpha_i>0$ for $i=1,\ldots,k$. 
For small $t\geq 0$ and $\alpha=\frac{\alpha_1+\ldots+\alpha_\ell}{\alpha_{\ell+1}+\ldots+\alpha_k}$, we consider the convex body $L_t\in\mathcal{F}_\mu$ that is the family of all $x\in\R^n$ such that
\begin{align*}
\langle x,u_i\rangle\leq &t\mbox{ \ if }i=1,\ldots,\ell,\\
\langle x,u_i\rangle\leq &\left(h_{\widetilde{K}_\mu}(u_i)-\alpha t^p\right)^{\frac1p}\mbox{ \ if }i=\ell+1,\ldots,k,
\end{align*}
and hence $L_0=\widetilde{K}_\mu$.
During the argument, we write $\gamma_1,\gamma_2,\ldots>0$ to denote positive constants that are independent of the small parameter $t\geq 0$, and hence depend only on $\widetilde{K}_\mu$, $\delta$, $n$, $m$, $p$, $q$, $\tilde{r}$, $\tau$, $w$, $\theta$.

If $t>0$ is small, then 
\begin{equation}
\label{Sigma0-out-Kmu}
\Sigma_0+{\rm conv}\{o,tu_1\}\subset L_t\backslash {\rm int}\,\widetilde{K}_\mu,
\end{equation}
and we claim that
\begin{align} 
\label{rho-outside-pos}
\varrho_{L_t}(u)>&0=\varrho_{\widetilde{K}_\mu}(u)\mbox{ \ for }u\in S^{n-1}\backslash N_{\widetilde{K}_\mu}(o)^*,\\
\label{rho-inside-est}
\varrho_{L_t}(u)\geq &\varrho_{\widetilde{K}_\mu}(u)-\gamma_1t^p\mbox{ \ and $\tilde{r}<\varrho_{\widetilde{K}_\mu}(u)\leq R_\delta$ for }u\in S^{n-1}\cap N_{\widetilde{K}_\mu}(o)^*
\end{align}
where \eqref{rho-outside-pos} follows by definition.
For \eqref{rho-inside-est} and $u\in S^{n-1}\backslash N_{\widetilde{K}_\mu}(o)^*$, let $x=\varrho_{\widetilde{K}_\mu}(u)u$ and $x_t=\varrho_{L_t}(u)u$, and hence $x\in F(\widetilde{K}_\mu,u_i)$
and $x_t\in F(L_t,u_j)$ 
with $h_{L_t}(u_j)=\left(h_{\widetilde{K}_\mu}(u_j)-\alpha t^p\right)^{\frac1p}$
 for some $i,j\in\{\ell+1,\ldots,k\}$. We deduce that  $\varrho_{\widetilde{K}_\mu}(u)\geq \langle x,u_i\rangle=h_{\widetilde{K}_\mu}(u_i)>\tilde{r}$, and readily $\varrho_{\widetilde{K}_\mu}(u)\leq R_\delta$. As $\varrho_{L_t}(u)\leq R_\delta$ and $h_{L_t}(u_j)>\tilde{r}$ for small $t$, we have
$\langle u,u_j\rangle=\frac{h_{L_t}(u_j)}{\varrho_{L_t}(u)}\geq \frac{\tilde{r}}{R_\delta}$, and hence  $\langle x,u_j\rangle\leq h_{\widetilde{K}_\mu}(u_j)$ and Claim~\ref{Bernoulli} yield that
$$
\varrho_{\widetilde{K}_\mu}(u)-\varrho_{L_t}(u)=\frac{\langle x,u_j\rangle}{\langle u,u_j\rangle}-\frac{\langle x_t,u_j\rangle}{\langle u,u_j\rangle}\leq
\frac{h_{\widetilde{K}_\mu}(u_j)-h_{L_t}(u_j)}{\langle u,u_j\rangle}\leq \gamma_1t^p.
$$
We deduce from \eqref{rho-outside-pos}, \eqref{rho-inside-est} and Claim~\ref{Bernoulli} that for any $\xi\in{\rm G}(n,m)$, we have
\begin{equation}
\label{sections-Lt-Kmu-general}
\HH^m(L_t\cap\xi)^q\geq\HH^m(\widetilde{K}_\mu\cap\xi)^q-\gamma_2t^p.
\end{equation}
Next, we claim that there exists  a Borel subset $U_m\subset {\rm G}(n,m)$ such that for $\xi\in U_m$, we have
\begin{align}
\label{section-outside-m1}
\HH^1\left(L_t\cap\xi\right)^q-\HH^1\left(\widetilde{K}_\mu\cap\xi\right)^q\geq \gamma_3 \mbox{ and }\nu_{n,m}(U_m)\geq \gamma_4t&\mbox{ if } m=1,\\
\label{section-outside-m2}
\HH^m\left(L_t\cap\xi\right)^q-\HH^m\left(\widetilde{K}_\mu\cap\xi\right)^q\geq \gamma_5t \mbox{ and }\nu_{n,m}(U_m)\geq \gamma_6&\mbox{ if } m\geq 2.
\end{align}
For \eqref{section-outside-m1} when $m=1$, let $\eta\in(0,\frac{\pi}2)$ satisfy that $\tan\eta=\frac{t}{\tilde{r}}$, and let
$$
U':=\left\{v\in S^{n-1}:0<\langle v,u_1\rangle<\sin\eta\mbox{ and } \frac{\Pi_{u_1^\bot}v}{\|\Pi_{u_1^\bot}v\|}\in u_1^\bot\cap \Sigma(w,\theta)\right\}.
$$
It follows from 
\eqref{Gnm-u} and $\HH^{n-1}(u_1^\bot\cap \Sigma(w,\theta))>\gamma_7$ that 
$\HH^{n-1}(U')>\gamma_8t$.
We define $U_1:=\{\R v:v\in U'\}$, and hence $\nu_{n,1}(U_1)>\gamma_9t$. On the other hand, if $v\in U'$, then $\varrho_{\widetilde{K}_\mu}(v)=0$ and $\varrho_{L_t}(v)\geq \tilde{r}$ by
\eqref{Sigma0-out-Kmu}, and $\varrho_{\widetilde{K}_\mu}(-v)-\varrho_{L_t}(-v)\leq \gamma_1t^p$ by
\eqref{rho-inside-est}.
Therefore, if $t>0$ is small, then 
$\HH^1(\xi\cap L_t)\geq \HH^1(\xi\cap \widetilde{K}_\mu)+\frac{\tilde{r}}2$ for $\xi\in U_1$ where $\HH^1(\xi\cap \widetilde{K}_\mu)\leq R_\delta$, and in turn
Claim~\ref{Bernoulli} yields \eqref{section-outside-m1}.

For \eqref{section-outside-m2} when $m\geq 2$, let $\alpha\in(0,\frac{\pi}2)$ satisfy that $\theta=\cos\alpha$, and let
$$
U_m:=\left\{\xi\in{\rm G}(n,m):\xi\cap u_1^\bot\cap\Sigma\left(w,\cos\frac{\alpha}2\right)\neq\emptyset\mbox{ and }\xi\not\subset u_1^\bot\right\},
$$
and hence $\nu_{n,m}(U_m)>\gamma_{10}$ by 
Claim~\ref{mplanes-flatcones}. On the other hand, if $\xi\in U_m$, then any $u\in \xi\cap u_1^\bot\cap\Sigma\left(w,\cos\frac{\alpha}2\right)$ satisfies
${\rm conv}\left\{o,u_1^\bot\cap\Sigma\left(u,\cos\frac{\alpha}2\right)\right\}\subset \xi\cap F_1$, and there exists some $x\in\xi\cap (L_t\backslash \widetilde{K}_\mu)$ whose distance from $\xi\cap u_1^\bot$ is at least $t$ by \eqref{Sigma0-out-Kmu}. We deduce that $\HH^m(\xi\cap (L_t\backslash \widetilde{K}_\mu)>\gamma_{11}t$. Since
$\frac{\tau^m}2\,\omega_m\leq\HH^m(\xi\cap  \widetilde{K}_\mu)\leq R_\delta^m\omega_m$ by \eqref{tau-in-Kmu} and 
$$
\HH^m(\xi\cap  \widetilde{K}_\mu)-\HH^m(\xi\cap  \widetilde{K}_\mu\cap L_t)\leq \gamma_{12}t^p
$$
by \eqref{rho-inside-est} and Claim~\ref{Bernoulli}, 
it follows that
$$
\HH^m(\xi\cap L_t)-
\HH^m(\xi\cap  \widetilde{K}_\mu)>\gamma_{11}t- \gamma_{12}t^p>\gamma_{13}t
$$
if $t>0$ is small enough. Therefore, Claim~\ref{Bernoulli} yields  \eqref{section-outside-m2}.

We conclude from \eqref{sections-Lt-Kmu-general}, \eqref{section-outside-m1} and \eqref{section-outside-m2} that
$$
\widetilde{\Psi}_{m,q}(L_t)
-\widetilde{\Psi}_{m,q}(\widetilde{K}_\mu)\geq \gamma_{14}t-\gamma_2 t^p>\gamma_{15}t
$$
if $t>0$ is small enough. This contradicts the maximality of $\widetilde{\Psi}_{m,q}(\widetilde{K}_\mu)$, and proves
Proposition~\ref{extremal-problem}.
 \end{proof}

The same proof yields the even version of Proposition~\ref{extremal-problem} (cf. \eqref{mu-delta}). The only difference in the argument is that if $\mu$ is even, then the elements  of $\mathcal{F}_\mu$ are $o$-symmetric, and \eqref{pancake-eq} is replaced by \eqref{pancake-eq0} if $q=0$, and by \eqref{pancake-eq-} if $q<0$.

\begin{prop}
\label{extremal-problem-even}
For $m\in\{1,\ldots,n-1\}$, $p>0$, $q\in\R$, if $\mu$ is a finite even Borel measure  on $S^{n-1}$ that is not concentrated on any great subsphere, and 
$$
\mathcal{F}_\mu=\left\{K\in \mathcal{K}^n_{o}\mbox{ $o$-symmetric convex body}:\int_{S^{n-1}}h_K^p\,d\mu\leq 1\right\},
$$
then there exists a $\widetilde{K}_\mu\in \mathcal{F}_\mu$ maximizing $\widetilde{\Psi}_{m,q}(K)$ if $q\geq 0$, or minimizing $\widetilde{\Psi}_{m,q}(K)$ if $q<0$ among $K\in \mathcal{F}_\mu$. This $\widetilde{K}_\mu$ satisfies
\begin{equation}
\label{extremal-problem-Amqp-mu-even}
\widetilde{A}_{m,q,p}(\widetilde{K}_\mu,\cdot)=m\widetilde{\Psi}_{m,q}\left(\widetilde{K}_\mu\right)\cdot \mu,
\end{equation}
and hence if $p\neq mq$ (cf. \eqref{Amqp-homogeneity}), then there exists $\lambda>0$ such that 
\begin{equation}
\label{extremal-problem-Amqp-mu-lambda-even}
\widetilde{A}_{m,q,p}(\lambda\widetilde{K}_\mu,\cdot)=\mu.
\end{equation}
\end{prop}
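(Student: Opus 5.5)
I follow the proof of Proposition~\ref{extremal-problem}, with the simplification that every element of $\mathcal{F}_\mu$ is $o$-symmetric.

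\emph{Step 1: uniform constants.} Since $\mu$ is even and not concentrated on any great subsphere, I first find $\delta\in(0,1)$ with $\mu(S^{n-1})\leq 1/\delta$ and $\mu(\Sigma(w,\delta)\cup\Sigma(-w,\delta))\geq\delta$ for every $w\in S^{n-1}$. This follows from compactness: otherwise a limit direction $w$ would give $\mu(\{u:\langle u,w\rangle\neq 0\})=0$.

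\emph{Step 2: a priori bounds.} For $K\in\mathcal{F}_\mu$ and $z\in K\setminus\{o\}$ with $v=z/\|z\|$, symmetry gives $h_K(u)\geq\|z\|\,|\langle u,v\rangle|$. Hence
\[
1\geq\int h_K^p\,d\mu\geq \|z\|^p\delta^{p+1},
\]
so $K\subset R_\delta B^n$.

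\emph{Step 3: existence of the extremizer.} Take a maximizing (if $q\geq 0$) or minimizing (if $q<0$) sequence $K_j$. By Blaschke selection a subsequence converges to an $o$-symmetric $K_\infty\subset R_\delta B^n$. The main obstacle is to show that $K_\infty$ does not degenerate, i.e.\ $r(K_j)\geq r_0>0$ uniformly. Note $\delta^{1/p}\mu(S^{n-1})^{-1/p}$-scaled $B^n$ lies in $\mathcal{F}_\mu$, which gives a fixed finite competitor value $c_0$. If $r(K_j)\to 0$, then \eqref{Steinhagen} gives a $w$ with $h_{K_j}(\pm w)\leq 2nr(K_j)$, and since $K_j\in\mathcal{K}^n_{(o)}$, Claim~\ref{pancake} applies:
\begin{itemize}
\item for $q>0$, $\widetilde\Psi_{m,q}(K_j)\to 0$, contradicting maximality;
\item for $q=0$, $\widetilde\Psi_{m,0}(K_j)\leq\gamma\log\varepsilon\to-\infty$, contradicting maximality;
\item for $q<0$, $\widetilde\Psi_{m,q}(K_j)\geq\gamma\varepsilon^{q/2}\to\infty$, contradicting minimality.
\end{itemize}
With $r(K_j)\geq r_0$, symmetry puts the inscribed ball's center reflection into $K_j$, so $o$ is the midpoint of two centers and $(r_0)B^n\subset K_j$. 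Therefore $tK_\infty\subset K_j\subset t^{-1}K_\infty$ for large $j$, and Case~1 of Proposition~\ref{Amq-weak-convergence} gives continuity of $\widetilde\Psi_{m,q}$ for every $q\in\R$. Lower semicontinuity of $K\mapsto\int h_K^p\,d\mu$ (it is in fact continuous) shows $K_\infty\in\mathcal{F}_\mu$, so $\widetilde K_\mu:=K_\infty$ is the extremizer. By homogeneity, $\int h_{\widetilde K_\mu}^p\,d\mu=1$.

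\emph{Step 4: the Euler--Lagrange equation.} Since $o\in{\rm int}\,\widetilde K_\mu$, I repeat the variational argument of Proposition~\ref{extremal-problem}(i), but only with even continuous $\varphi$, so that the Wulff shapes $K_t$ remain $o$-symmetric and admissible. Theorem~\ref{Variation-Phimq} gives \eqref{Variation-Phimq-eq} for $q\neq0$ and \eqref{Variation-Phim0-eq} for $q=0$. Let
\[
f(t)=\log\widetilde\Psi_{m,q}\bigl(\theta(t)^{-1/p}K_t\bigr),
\]
and for $q=0$ use instead $\widetilde\Psi_{m,0}(\theta^{-1/p}K_t)=\widetilde\Psi_{m,0}(K_t)-\tfrac mp\log\theta$. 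The point $t=0$ is an extremum (a maximum when $q>0$; when $q<0$, $\log\widetilde\Psi$ is minimized and $q<0$ flips the sign consistently). Setting the derivative at $t=0$ to zero yields
\[
\int\varphi\,d\widetilde A_{m,q}(\widetilde K_\mu,\cdot)=m\widetilde\Psi_{m,q}(\widetilde K_\mu)\int\varphi h^p\,d\mu
\]
for all even $\varphi$, with the $q=0$ analogue $\widetilde A_{m,0}=m\,h^p\mu$. Both $\widetilde A_{m,q}(\widetilde K_\mu,\cdot)$ and $h^p\mu$ are even measures, so testing against even functions suffices. This gives \eqref{extremal-problem-Amqp-mu-even}; for $q=0$ it reads $\widetilde A_{m,0,p}=m\mu$, consistent with the convention used there.

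\emph{Step 5: normalization.} Finally, \eqref{extremal-problem-Amqp-mu-lambda-even} follows from \eqref{Amqp-homogeneity} with $\lambda=\bigl(m\widetilde\Psi_{m,q}(\widetilde K_\mu)\bigr)^{-1/(mq-p)}$. This is legitimate because $\widetilde\Psi_{m,q}(\widetilde K_\mu)>0$ when $q\neq0$, and $p\neq mq$.
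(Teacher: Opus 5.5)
Your proposal is correct and is the paper's argument (rerun the proof of Proposition~\ref{extremal-problem}, with \eqref{pancake-eq0} and \eqref{pancake-eq-} in place of \eqref{pancake-eq}). You also make explicit two points the paper leaves implicit: for $q\le 0$ you rule out degeneration of the extremizing sequence before invoking continuity, which is needed because Proposition~\ref{Amq-weak-convergence} only gives continuity on $\mathcal{K}^n_{(o)}$ there, and you restrict the variation to even test functions $\varphi$. For $q=0$, your equation $\widetilde{A}_{m,0,p}(\widetilde K_\mu,\cdot)=m\mu$ (so $\lambda=m^{1/p}$ in the last step) is a correction rather than a convention: since $\widetilde{A}_{m,0}(K,S^{n-1})=m$, the literal factor $m\widetilde{\Psi}_{m,0}(\widetilde K_\mu)$ in \eqref{extremal-problem-Amqp-mu-even} fails in general.
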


The weak continuity of the $L_p$ centro-sectional measure readily follows from Proposition~\ref{Amq-weak-convergence} in some cases.

\begin{prop}
\label{Amqp-weak-convergence}
Let $m\in\{1,\ldots,n-1\}$. If $p,q\in\R$, then  $\widetilde{A}_{m,q,p}(K,\cdot)$ is weakly continuous for $K\in \mathcal{K}^n_{(o)}$, and if $q>0$ and $p\leq 1$, then  $\widetilde{A}_{m,q,p}(K,\cdot)$ is weakly continuous for $K\in \mathcal{K}^n_{o}$.
\end{prop}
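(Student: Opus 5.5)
The plan is to reduce everything to Proposition~\ref{Amq-weak-convergence} via the relation \eqref{Amqp-hAmq}, namely $d\widetilde{A}_{m,q,p}(K,\cdot)=h_K^{-p}\,d\widetilde{A}_{m,q}(K,\cdot)$. Fix a continuous $g:S^{n-1}\to\R$ and a sequence $K_\ell\to K$ in the Hausdorff metric. We must show that $\int g\,d\widetilde{A}_{m,q,p}(K_\ell,\cdot)\to\int g\,d\widetilde{A}_{m,q,p}(K,\cdot)$. It suffices to do this for sequences $K_\ell\in\mathcal{K}^n_{(o)}$, by the same reduction used in the proof of Proposition~\ref{Amq-weak-convergence}. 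In the second case one also checks that $\mathcal{K}^n_{(o)}$ is dense, or simply runs the argument below directly for $K_\ell\in\mathcal{K}^n_o$.

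\textbf{Case $K\in\mathcal{K}^n_{(o)}$, any $p,q$.} Since $h_{K_\ell}\to h_K$ uniformly and $\min h_K>0$, the functions $g h_{K_\ell}^{-p}$ converge uniformly to $g h_K^{-p}$ on $S^{n-1}$, and they are continuous for large $\ell$. The total masses $\widetilde{A}_{m,q}(K_\ell,S^{n-1})=m\widetilde{\Psi}_{m,q}(K_\ell)$ are bounded by Lemma~\ref{Amq-sn-1-lemma}(ii) together with the continuity of $\widetilde{\Psi}_{m,q}$. Write
$$\int g h_{K_\ell}^{-p}\,d\widetilde{A}_{m,q}(K_\ell,\cdot)-\int g h_K^{-p}\,d\widetilde{A}_{m,q}(K,\cdot).$$
Split this into a uniform-convergence term, controlled by $\|gh_{K_\ell}^{-p}-gh_K^{-p}\|_\infty$ times the bounded masses, and a weak-convergence term. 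The latter tends to $0$ by Proposition~\ref{Amq-weak-convergence}.

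\textbf{Case $q>0$, $p\le 1$, $K\in\mathcal{K}^n_o$.} Here $h_K^{-p}=h_K^{|p|}$ when $p\le 0$. This function is continuous and bounded on $S^{n-1}$, so the argument above goes through verbatim, including the case $\dim K\le n-1$, where the limit measure is $0$ and the masses tend to $0$.

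For $0<p\le 1$ and $o\in\partial K$, the weight $h_K^{-p}$ blows up on $\{h_K=0\}$, and this is the main obstacle. I would avoid the weight altogether by using the boundary representation \eqref{tildeAmqp-bdK-eq} in the form of Lemma~\ref{Amq-sn-1-lemma}(i), with $g$ replaced by $g\,h_K^{-p}$. Then
$$\int g\,d\widetilde{A}_{m,q,p}(K,\cdot)=\int_{{\rm G}(n,m)}\HH^m(K\cap\cdot)^{q-1}\mathcal{R}_m\bigl(\varrho_K^m\,\langle \nu_K,\varrho_K u\rangle^{-p}\,g\circ\alpha_K\bigr)\,d\nu_{n,m},$$
where $\langle \nu_K(x),x\rangle=h_K(\alpha_K(u))$ with $x=\varrho_K(u)u$. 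Since $\langle\nu_K(x),x\rangle\le\|x\|$, we get
$$\varrho_K^m\langle\nu_K,x\rangle^{-p}=\varrho_K^{m}\langle\nu_K,x\rangle^{1-p}\langle\nu_K,x\rangle^{-1}.$$
For $p\le1$, however, the factor $\langle\nu_K,x\rangle^{1-p}\le\varrho_K^{1-p}$ is bounded, so the remaining difficulty is only the factor $\langle\nu_K,x\rangle^{-1}$.

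Rather than pursue a pointwise bound, I would instead truncate. Take $\chi_\varepsilon=\min\{h^{-p},\varepsilon^{-p}\}$-type cutoffs: write $h_{K_\ell}^{-p}=\min\{h_{K_\ell}^{-p},\varepsilon^{-p}\}+(h_{K_\ell}^{-p}-\varepsilon^{-p})_+$. The first part converges by the previous argument, since it is uniformly convergent and continuous. For the second part I would show that
$$\limsup_\ell\int_{\{h_{K_\ell}<\varepsilon\}}h_{K_\ell}^{-p}\,d\widetilde{A}_{m,q}(K_\ell,\cdot)\to0 \quad\text{as }\varepsilon\to0.$$
Via \eqref{tildeAmqp-bdK-eq}, this integral equals a boundary integral of $\langle\nu,x\rangle^{1-p}\|x\|^{m-n}\mathcal{R}^*_m(\cdots)$ over the boundary points with $\langle\nu,x\rangle<\varepsilon\|\nu\|$. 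For $p\le1$ the integrand is bounded by $\|x\|^{1-p+m-n}$ times the dual Radon factor. I expect uniform integrability to follow since the region near $\{h_K=0\}$ corresponds to boundary near $o$ and near $\Xi_K$, whose $\HH^{n-1}$-measure tends to zero uniformly; the factor $\mathcal{R}^*_m\HH^m(K_\ell\cap\cdot)^{q-1}$ is handled as in the proof of Proposition~\ref{Amq-weak-convergence}. Controlling this tail uniformly in $\ell$ is the step I expect to be hardest.
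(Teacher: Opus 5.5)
For $K\in\mathcal{K}^n_{(o)}$, and for $q>0$, $p\le 0$ on all of $\mathcal{K}^n_o$ (where $h_K^{-p}=h_K^{|p|}$ is continuous and bounded), your argument is correct. It is exactly what the paper's one-line justification amounts to: the paper only says that the statement ``readily follows from Proposition~\ref{Amq-weak-convergence}''.

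The gap is the range $0<p\le 1$ with $o\in\partial K$ or $\dim K\le n-1$. There your truncation reduces everything to the uniform tail bound
\[
\lim_{\varepsilon\to 0^+}\limsup_{\ell\to\infty}\int_{\{h_{K_\ell}<\varepsilon\}}h_{K_\ell}^{-p}\,d\widetilde{A}_{m,q}(K_\ell,\cdot)=0,
\]
which you only state as an expectation. The heuristic you give for it does not work, for three reasons.
First, in \eqref{tildeAmqp-bdK-eq} the integrand $\langle\nu_{K_\ell}(x),x\rangle^{1-p}\|x\|^{m-n}\mathcal{R}^*_m\HH^m(K_\ell\cap\cdot)^{q-1}(x/\|x\|)$ is unbounded near $o$. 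For $m=1$ the last factor is $\frac{2}{n\omega_n}(\varrho_{K_\ell}(u)+\varrho_{K_\ell}(-u))^{q-1}$, which is of size $\|x\|^{q-1}$ when $q<1$. So small $\HH^{n-1}$-measure of the relevant piece of $\partial K_\ell$ proves nothing.
Second, the dual Radon factor cannot be ``handled as in the proof of Proposition~\ref{Amq-weak-convergence}''. The domination used there, $\HH^m(K\cap\xi)^{q-1}\mathcal{R}_m\varrho_K^m=m\HH^m(K\cap\xi)^q$, needs a bounded weight, and $h_{K_\ell}^{-p}$ is not bounded.
Third, near flat limits the difficulty is not localized at $o$ at all: $h_{K_\ell}^{-p}$ is huge exactly where most of $\widetilde{A}_{m,q}(K_\ell,\cdot)$ sits.

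In fact the tail bound fails in part of the stated range, so this step cannot be completed without further hypotheses. Two examples show this.

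Take $m=1$ and $K=e_n+B^n$. For $u\in S^{n-1}$ with $t=\langle u,e_n\rangle>0$ one has $\varrho_K(u)=2t$, $\varrho_K(-u)=0$ and $h_K(\alpha_K(u))=2t^2$. Hence by \eqref{A1q-eta-def}
\[
\widetilde{A}_{1,q,p}(K,S^{n-1})=\frac{2}{n\omega_n}\int_{\{\langle u,e_n\rangle>0\}}(2t)^{q}(2t^2)^{-p}\,d\HH^{n-1}(u),
\]
which behaves like $\int_0 t^{q-2p}\,dt$ and is infinite when $q\le 2p-1$ (e.g.\ $p=1$, $0<q\le 1$). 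Your own truncation together with Proposition~\ref{Amq-weak-convergence} then gives $\widetilde{A}_{1,q,p}(K_\ell,S^{n-1})\to\infty$ for every sequence $K_\ell\to K$ in $\mathcal{K}^n_{(o)}$.

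For any $m$, take $K_\ell=B^n\cap\{|x_n|\le 1/\ell\}$. It tends to an $(n-1)$-disc whose measure is $0$ by convention. However, $h_{K_\ell}(\pm e_n)=1/\ell$ and $\widetilde{A}_{m,q}(K_\ell,\{\pm e_n\})\ge c\,\ell^{-q}$, so $\widetilde{A}_{m,q,p}(K_\ell,S^{n-1})\ge c\,\ell^{p-q}$, which does not tend to $0$ if $0<q\le p\le 1$.

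So the ``readily follows'' justification covers only the cases you treated. For $0<p\le 1$ the assertion needs extra hypotheses and a real estimate.

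One case where your scheme does go through is $K$ a convex body, $q\ge 1$, $0<p<1$. Then the dual Radon factor is at most $\frac{m\omega_m}{n\omega_n}(\omega_mR^m)^{q-1}$, and $\langle\nu,x\rangle^{1-p}\le\min\{\varepsilon,\|x\|\}^{1-p}$ on the tail region. The bound $\HH^{n-1}(\partial K_\ell\cap tB^n)\le n\omega_nt^{n-1}$ then shows that $\int_{\partial K_\ell}\min\{\varepsilon,\|x\|\}^{1-p}\|x\|^{m-n}\,d\HH^{n-1}(x)\to 0$ as $\varepsilon\to0$, uniformly in $\ell$.
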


Weak continuity of the $L_p$ centro-sectional measure may not hold if $1<p<m$ and $o\in\partial K$. Still, the crucial properties $\HH^{n-1}(\Xi_K)=0$ and $\widetilde{A}_{m,q,p}(K,S^{n-1})<\infty$ do hold for $K\in \mathcal{K}^n_o$ if $p>1$ and $K$ is obtained as a suitable limit.

\begin{lemma}
\label{KmXiK}
If $m\in\{1,\ldots,n-1\}$, $p>1$, $q>0$ and $K_\ell\in \mathcal{K}^n_{(o)}$ for $\ell\in\N$ tend to 
$K\in \mathcal{K}^n_o$ with ${\rm int}K\neq \emptyset$ such that
$\widetilde{A}_{m,q,p}(K_\ell,S^{n-1})$ stays bounded, then $\HH^{n-1}(\Xi_K)=0$ and $\widetilde{A}_{m,q,p}(K,S^{n-1})<\infty$.
 \end{lemma}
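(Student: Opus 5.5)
We plan to pass to the limit in the boundary integral representation \eqref{tildeAmqp-bdK-eq} using Fatou's lemma. Since $K_\ell\to K$ with ${\rm int}\,K\neq\emptyset$, we may fix $R>0$ with $K_\ell,K\subset RB^n$. We may also fix a ball $x_0+rB^n\subset K_\ell$ for all large $\ell$. If $m\geq 2$, Corollary~\ref{mplanes-ball-section} then gives a constant $\gamma>0$, independent of $\ell$, with $\mathcal{R}^*_m\HH^m(K_\ell\cap\cdot)^{q-1}\geq \gamma$ on $S^{n-1}$. If $m=1$, the same lower bound follows directly from \eqref{A1q-eta-def}: $\varrho_{K_\ell}(u)+\varrho_{K_\ell}(-u)$ lies between two positive constants on the relevant region when $q<1$, and is bounded below by $\varrho_{K_\ell}(u)$ when $q\geq 1$. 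The factor $\|x\|^{m-n}$ is bounded below by $R^{m-n}$. Consequently, with $g\equiv1$, the hypothesis becomes
$$
\sup_\ell\int_{\partial K_\ell}\langle\nu_{K_\ell}(x),x\rangle^{1-p}\,d\HH^{n-1}(x)<\infty .
$$
This is the key uniform bound.

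Next we transfer this bound to $K$ through radial parametrization over the open set $U_K=S^{n-1}\cap{\rm int}\,N_K(o)^*$. The Jacobian of the radial map is $d\HH^{n-1}(x)=\varrho^{n}\langle\nu,x\rangle^{-1}\,d\HH^{n-1}(u)$. Hence the integral over $\partial K_\ell\cap{\rm pos}\,U_K$ equals
$$
\int_{U_K}\varrho_{K_\ell}(u)^{n}\langle\nu_{K_\ell},x_\ell\rangle^{-p}\,d\HH^{n-1}(u),
$$
where $x_\ell=\varrho_{K_\ell}(u)u$. For $u\in\Theta_K$, the convergence $\varrho_{K_\ell}(u)\to\varrho_K(u)$ follows from \eqref{weak-cont-fat-uUK}, and Claim~\ref{normal-convergence} gives $\nu_{K_\ell}(x_\ell)\to\nu_K(x)$. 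So the integrand converges pointwise a.e. (where $\langle\nu_K,x\rangle=0$ it tends to $+\infty$). Fatou's lemma then yields
$$
\int_{U_K}\varrho_K^n\langle\nu_K(x),x\rangle^{-p}\,d\HH^{n-1}<\infty .
$$
Since $\varrho_K>0$ on $U_K$, the set of $u\in U_K$ with $\langle\nu_K,\varrho_Ku\rangle=0$ has $\HH^{n-1}$ measure zero. Transformed back to $\partial K$, this shows that the part of $\Xi_K$ lying over $U_K$ is null. The same computation with the exact factors $\|x\|^{m-n}$ and $\mathcal{R}^*_m\HH^m(K\cap\cdot)^{q-1}$ restored, using that these converge by the arguments of Proposition~\ref{Amq-weak-convergence}, gives $\widetilde{A}_{m,q,p}(K,S^{n-1})<\infty$. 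Equivalently, one can use the lower bound on $\mathcal{R}^*_m$ for $K$ itself together with an upper bound: both are available for $m\geq2$ from Corollary~\ref{mplanes-ball-section}, and for $m=1$ from \eqref{A1q-eta-def}, with care when $q<1$.

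The main obstacle is the part of $\Xi_K$ that does \emph{not} lie over $U_K$. These are points $x\in\partial'K\cap\partial N_K(o)^*$, whose radial directions lie in $\partial N_K(o)^*$, a set of measure zero on the sphere. Here the radial parametrization degenerates, and the limiting argument must instead be run directly on the boundary. Our plan is to use the structure of these points: for $x\in\Xi_K$, the whole segment $[o,x]$ lies in $\Xi_K$ and in the hyperplane $\nu_K(x)^\bot$. Hence, if $\HH^{n-1}(\Xi_K)>0$, there is an $(n-1)$-dimensional flat piece $X\subset\partial K\cap\nu^\bot$ containing $o$ in its closure, for some normal $\nu$ with positive $S_K$-mass on a neighborhood. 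Near such a piece the approximating bodies $K_\ell$ (which contain $o$ in the interior) have boundary regions $Y_\ell$ with $\HH^{n-1}(Y_\ell)\geq c>0$. On these regions we want $\langle\nu_{K_\ell},x\rangle\to0$ uniformly, which holds because the normals converge to directions nearly orthogonal to $x$ by the Claim~\ref{normal-convergence}-type argument applied on $\partial K$. This would force $\int_{Y_\ell}\langle\nu_{K_\ell},x\rangle^{1-p}\to\infty$ since $p>1$, contradicting the uniform bound. Making the choice of $Y_\ell$ and the uniform smallness rigorous is the step we expect to need the most care. We would try to obtain it via Hausdorff convergence: points of $\partial K_\ell$ near $X$ have normals close to the cone of normals of $K$ at points of $X$, and all of these are orthogonal to the corresponding $x$.
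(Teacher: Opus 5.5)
Your Fatou argument over $U_K$, run with the exact weights, does prove $\widetilde{A}_{m,q,p}(K,S^{n-1})<\infty$, and it is a genuinely different route from the paper's. The paper truncates $h_K^{-p}$ to $\max\{\eta,h_K\}^{-p}$ and uses the weak convergence of $\widetilde{A}_{m,q}(K_\ell,\cdot)$ from Proposition~\ref{Amq-weak-convergence}. Your route works for two reasons. First, $\alpha^*_K(S^{n-1})=\Theta_K\subset U_K$, so only $U_K$ contributes. Second, for $u\in U_K$ the $\mathcal{R}^*_m$-factor converges by dominated convergence: $su\in{\rm int}\,K$ for some $s>0$, so every $m$-plane containing $u$ meets a fixed ball lying in all $K_\ell$ for large $\ell$. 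This bounds the sections uniformly from below, and it covers $m=1$ and $q<1$ too. Your ``equivalently'' variant via an upper bound on the weights would fail, since $\|x\|^{m-n}$ is unbounded near $o$, but you do not need it.

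The gap is $\HH^{n-1}(\Xi_K)=0$. By \eqref{Xi-def2}, $\Xi_K=\partial'K\cap\partial N_K(o)^*$. Indeed, if $x\in\partial'K$ and $\langle\nu_K(x),x\rangle=0$, then $\nu_K(x)\in N_K(o)\setminus\{o\}$ is orthogonal to $x$, so $x\notin{\rm int}\,N_K(o)^*$. Hence no point of $\Xi_K$ lies over $U_K$. The set you prove null by Fatou is therefore empty, and everything rests on your last paragraph, which is a plan rather than an argument. Its first reduction is also false: $\HH^{n-1}(\Xi_K)>0$ does not yield an $(n-1)$-dimensional flat piece of $\partial K$ in one hyperplane $\nu^\bot$. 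For $n\geq 3$, take the cone $K={\rm conv}(\{o\}\cup(e_n+B^n\cap e_n^\bot))$. Its whole lateral surface lies in $\Xi_K$, since $t(y+e_n)$ with $\|y\|=1$ has normal $(y-e_n)/\sqrt{2}$. Yet every face $F(K,\nu)$ there is a segment. So you must work with an arbitrary positive-measure piece of $\Xi_K$.

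What must be supplied, and what the paper does, is a specific chain of choices.
\begin{enumerate}
\item Fix $z\in{\rm int}\,K$, a compact $\widetilde{\Xi}\subset{\rm cl}\,\Xi_K\setminus\{o\}$ with $\HH^{n-1}(\widetilde{\Xi})=\tau>0$, and $\eta>0$ such that the cone from $z$ over the $\eta$-neighbourhood of $\widetilde{\Xi}$ misses $2\eta B^n$.
\item Fix $\varepsilon>0$ with $(2\varepsilon)^{1-p}c\,\tau/2>M$. Here $c>0$ bounds $\|x\|^{m-n}\mathcal{R}^*_m\HH^m(K_\ell\cap\cdot)^{q-1}(x/\|x\|)$ from below for $x\in\partial K_\ell$ with $\|x\|\geq\eta$.
\item Around each regular point $x$ of $\widetilde{\Xi}$, choose a small ball in which every exterior normal $u$ of $K$ satisfies $h_K(u)\leq\varepsilon$. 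This is possible because $h_K(\nu_K(x))=0$, normal cones are upper semicontinuous, and $h_K$ is continuous. The resulting union $\mathcal{U}\subset\partial K$ has measure at least $\tau$, \emph{independently of} $\varepsilon$.
\item Transfer to $K_\ell$ through the open cone over $\mathcal{U}$ with apex $z$. The lower semicontinuity \eqref{curvatureweakcont} of $\HH^{n-1}(\beta\cap\partial K_\ell)$ on open $\beta$ gives area at least $\tau/2$ there. A compactness argument gives $\langle\nu_{K_\ell}(x),x\rangle=h_{K_\ell}(\nu_{K_\ell}(x))\leq 2\varepsilon$ there for large $\ell$.
\end{enumerate}
Your sketch supplies neither the mechanism for the area lower bound nor this order of quantifiers.

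Staying away from $o$ is also not cosmetic. For $m=1$ and $q>1$, the $\mathcal{R}^*_1$-factor is not bounded below by a constant, only by $\frac{2}{n\omega_n}\|x\|^{q-1}$. Combined with $\|x\|^{1-n}$, this gives a uniform weight bound when $q\leq n$. For $q>n$, however, the weight $\|x\|^{q-n}$ degenerates at $o$, exactly where your $X$ accumulates. So your reduction to $\sup_\ell\int_{\partial K_\ell}\langle\nu_{K_\ell}(x),x\rangle^{1-p}\,d\HH^{n-1}(x)<\infty$ does not follow in that case.
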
 
\begin{proof} We may assume that $o\in\partial K$, and let $M>0$ such that 
\begin{equation}
\label{KmXiK-Ampq-upper}
\widetilde{A}_{m,q,p}(K_\ell,S^{n-1})\leq M
\end{equation}
for any $\ell\in\N$. First, we suppose that $\widetilde{A}_{m,q,p}(K,S^{n-1})=\infty$, and seek a contradiction. 

As $\widetilde{A}_{m,q,p}(K,S^{n-1})=\infty$, there exists an $\eta>0$ such that
$$
\int_{S^{n-1}}\max\{\eta,h_K\}^{1-p}\,d\widetilde{A}_{m,q}(K,\cdot)>3M.
$$
Now, $\max\{\eta,h_{K_\ell}\}^{1-p}$ tends uniformly to $\max\{\eta,h_K\}^{1-p}$, and according to Proposition~\ref{Amq-weak-convergence}, $\widetilde{A}_{m,q}(K_\ell,\cdot)$ tends weakly to $\widetilde{A}_{m,q}(K,\cdot)$, thus
\begin{equation}
\label{KmXiK-larger2M}
\int_{S^{n-1}}\max\{\eta,h_{K_\ell}\}^{1-p}\,d\widetilde{A}_{m,q}(K_\ell,\cdot)>2M
\end{equation}
if $\ell$ is large. However, the left hand side of \eqref{KmXiK-larger2M} is at most 
$\widetilde{A}_{m,q,p}(K_\ell,S^{n-1})$, contradicting \eqref{KmXiK-Ampq-upper}, and proving $\widetilde{A}_{m,q,p}(K,S^{n-1})<\infty$.
  
Next, we verify $\HH^{n-1}(\Xi_K)=0$. 
Since $z+rB^n\subset {\rm int}\,K$ and $K\subset {\rm int}\,RB^n$ for some $0<r<R$ and $z\in K$, we we may assume that 
$$
z+rB^n\subset K_\ell\subset RB^n\mbox{ \ for }\ell\in\N.
$$
For any bounded $X\subset \R^n\backslash\{z\}$, we consider the set
$$
\sigma(X)=\{z+\lambda(x-z):\,x\in X\mbox{ and }\lambda>0\}.
$$
We observe that $\sigma(X)$ is open if $X\subset{\partial}K$ is relatively open,  
and $\sigma(X)\cup\{z\}$ is closed if $X$ is compact and $z\not\in {\rm conv}\,X$.

We will use the weak continuity of the $(n-1)$th curvature measure. In particular, 
according to Theorem 4.2.1 and Theorem 4.2.3 in Schneider \cite{Sch14}, if $\beta\subset\R^n$ is open, then
\begin{equation}
\label{curvatureweakcont}
\liminf_{m\to\infty}\HH^{n-1}(\beta\cap {\partial}\,K_\ell)\geq \HH^{n-1}(\beta\cap {\partial}\,K).
\end{equation}

Let us suppose indirectly that $\HH^{n-1}(\Xi_K)>0$, and seek a contradiction.
Choose  a compact set $\widetilde{\Xi}\subset ({\rm cl}\Xi_K)\backslash \{o\}$ such that 
\begin{align*}
\HH^{n-1}(\widetilde{\Xi})=&\tau>0 \mbox{ and }z\not\in{\rm conv}\widetilde{\Xi}.
\end{align*}
We have $\widetilde{\Xi}\cap\partial'K\subset \Xi_K$ by \eqref{Xi-closure}.
Since $o$ is not an element of the closed set $\sigma(\widetilde{\Xi})\cup\{z\}$, there exists some $\eta>0$ such that
\begin{equation}
\label{KmXiK-etacond} 
(2\eta B^n)\cap \sigma(\widetilde{\Xi}+\eta B^n)=\emptyset.
\end{equation}
If $m\geq 2$, then we deduce from Corollary~\ref{mplanes-ball-section} the existence of $\gamma>0$ depending on $n,m,q,r,R$, such that
\begin{equation}
\label{KmXiK-Rm*}
\mathcal{R}^*_m\HH^m\left(K_\ell\cap \cdot\right)^{q-1}(u)\geq \gamma
\end{equation}
holds for any $u\in S^{n-1}$ and $\ell\in\N$.
Since $p>1$, we may choose $\varepsilon>0$ such that
\begin{equation}
\label{epsilonetaomega}
\begin{array}{rcll}
(2\varepsilon)^{1-p}\cdot\frac{2}{n\omega_n}\cdot\min\{\eta^{q-n},R^{q-n}\}\cdot(\tau/2)&>&M&\mbox{ if }m=1,\\[1ex]
(2\varepsilon)^{1-p}R^{m-n}\cdot\gamma\cdot(\tau/2)&>&M&\mbox{ if }m\geq 2.
\end{array}
\end{equation}
For any $x\in  \widetilde{\Xi}\cap\partial' K$, there exists $r_x\in(0,\eta)$ such that
\begin{equation}
\label{Bxcond}
h_K(u)\leq \varepsilon \mbox{ \ if $u\in S^{n-1}$ is exterior normal at $y\in {\partial}K\cap (x+r_xB^n)$, }
\end{equation}
and we define $B_x={\rm int}(x+r_xB^n)$. Let
$$
\mathcal{U}=\bigcup_{x\in \widetilde{\Xi}\cap\partial' K}(B_x\cap {\partial}K),
$$
which is a relatively open subset of ${\partial}K$ satisfying
\begin{enumerate}
\item[(a)] $(2\eta B^n)\cap \sigma(\mathcal{U})=\emptyset$,
\item[(b)] $\HH^{n-1}(\mathcal{U})\geq\tau$,
\item[(c)] $h_K(u)\leq \varepsilon$ if $u\in S^{n-1}$ is exterior normal at $x\in {\rm cl}\,\mathcal{U}$.
\end{enumerate}
It follows that (applying \eqref{curvatureweakcont} in the case (b')) that
 there exists $\ell_0$ such that if $\ell\geq \ell_0$, then
\begin{enumerate}
\item[(a')] $\|x\|\geq \eta$ if $x\in \sigma(\mathcal{U})\cap{\partial}K_\ell$,
\item[(b')] $\HH^{n-1}(\sigma(\mathcal{U})\cap{\partial}K_\ell)\geq\tau/2$,
\item[(c')] $h_{K_\ell}(u)\leq 2\varepsilon$ if $u\in S^{n-1}$ is exterior normal at 
$x\in \sigma(\mathcal{U})\cap{\partial}K_\ell$.
\end{enumerate}

For any $x\in \sigma(\mathcal{U})\cap{\partial}K_\ell$, (a') and $K_\ell\subset RB^n$ yield that
\begin{align*}
\|x\|^{q-n}&\geq \min\{\eta^{q-n},R^{q-n}\} \mbox{ \ if }m=1,\\
 \|x\|^{m-n}&\geq R^{m-n} \mbox{ \ if }m\geq 2.   \end{align*}
It follows first by \eqref{tildeAmqp-bdK-eq}, then by \eqref{KmXiK-Rm*}, (b'), (c') and \eqref{epsilonetaomega}, that for large $\ell$, if
$m=1$, then \eqref{A1q-eta-def} yields
$$
M\geq \widetilde{A}_{1,q,p}(K_\ell,S^{n-1})\geq
\frac{2}{n\omega_n}\int_{\sigma(\mathcal{U})\cap\partial'K_\ell} \langle \nu_K(x),x\rangle^{1-p}\|x\|^{q-n}\,d\HH^{n-1}(x)
>M,
$$
and if $m\geq 2$, then
$$
\widetilde{A}_{m,q,p}(K_\ell,S^{n-1})\geq
\int_{\sigma(\mathcal{U})\cap\partial'K_\ell} \langle \nu_K(x),x\rangle^{1-p}\|x\|^{m-n}\cdot\gamma\,d\HH^{n-1}(x)
>M,
$$
contradicting \eqref{KmXiK-Ampq-upper}, and proving
Lemma~\ref{KmXiK}.
\end{proof}

As Proposition~\ref{extremal-problem} and
Proposition~\ref{Amqp-weak-convergence} show, it is crucial to our study to find some properties that force the origin into the interior. 

\begin{prop}
\label{largep-oint}
If $m\in\{1,\ldots,n-1\}$, $p,q\in\R$ and $K\in \mathcal{K}^n_{o}$ is a convex body with $\HH^{n-1}(\Xi_K)=0$ and $\widetilde{A}_{m,q,p}(K,S^{n-1})<\infty$, then 
$K\in\mathcal{K}^n_{(o)}$ holds  provided that
\begin{enumerate}
\item[(i)] either $m=1$ and $p\geq \max\{1,q\}$;
 \item[(ii)] or $m\geq 2$ and $p\geq m$. 
\end{enumerate}
\end{prop}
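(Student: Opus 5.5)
We argue by contradiction: assume $o\in\partial K$ and show $\widetilde{A}_{m,q,p}(K,S^{n-1})=\infty$. By \eqref{tildeAmqp-bdK-eq},
$$
\widetilde{A}_{m,q,p}(K,S^{n-1})=\int_{\partial'K}\langle \nu_K(x),x\rangle^{1-p}\|x\|^{m-n}\,\mathcal{R}^*_m\HH^m(K\cap\cdot)^{q-1}\Big(\frac{x}{\|x\|}\Big)\,d\HH^{n-1}(x),
$$
and since $\HH^{n-1}(\Xi_K)=0$ the integrand is finite a.e. The divergence must come from boundary points near $o$. Fix $z\in{\rm int}\,K$ with $z+rB^n\subset K\subset RB^n$. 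Consider the part of $\partial K$ near the origin that is "visible" from directions close to a supporting normal. Concretely, take an exterior normal $u_0\in N_K(o)\cap S^{n-1}$ and, for small $s>0$, the boundary points $x$ with $\|x\|\le s$. The support function satisfies $\langle\nu_K(x),x\rangle\le \|x\|$, so the integrand is at least $\|x\|^{1-p+m-n}\mathcal{R}^*_m(\cdots)$. Using the boundary integral via radial coordinates, $d\HH^{n-1}(x)$ on $\partial K$ near $o$ contains at least a cone-like piece of $(n-1)$-measure comparable to $\|x\|^{n-2}d\|x\|$ times a fixed angular set. This piece is obtained, for instance, by taking points $x=\varrho_K(u)u$ for $u$ in a fixed open subset of $U_K$ whose rays hit $\partial K$ close to $o$. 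A cleaner route is to use the cone ${\rm pos}$ over a small cap around a tangent direction, together with the homogeneity of Lebesgue measure in polar coordinates. This produces a lower bound
$$
\int_0^{s}\rho^{1-p+m-n}\,\rho^{n-2}\,\mathcal{R}^*_m(\cdots)\,d\rho=\int_0^s\rho^{m-1-p}\,(\cdots)\,d\rho .
$$
This diverges exactly when $p\ge m$, provided the dual Radon factor is bounded below.

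Case (ii), $m\ge2$: Corollary~\ref{mplanes-ball-section} (with $\tau=q-1$, using $z+rB^n\subset K\subset RB^n$) gives $\mathcal{R}^*_m\HH^m(K\cap\cdot)^{q-1}\ge\gamma>0$ uniformly. The exponent $m-1-p\le -1$ then yields divergence.

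Case (i), $m=1$: here \eqref{A1q-eta-def} gives the factor $\frac{2}{n\omega_n}(\varrho_K(u)+\varrho_K(-u))^{q-1}$. Near the origin, for directions $u$ pointing into $K$ close to $\partial K$, $\varrho_K(-u)=0$, so by \eqref{A1q-Cq-bd} the factor is $\varrho_K(u)^{q-1}=\|x\|^{q-1}$. The integrand then becomes $\|x\|^{1-p+q-n}$, and the integral $\int_0^s\rho^{q-1-p}d\rho$ diverges when $p\ge q$. The requirement $p\ge1$ is needed for $\langle\nu_K(x),x\rangle^{1-p}\ge\|x\|^{1-p}$.

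The main obstacle is making the geometric lower bound rigorous. We must exhibit a subset of $\partial'K\setminus\Xi_K$ near $o$ whose $\HH^{n-1}$-measure in the shell $\{\rho\le\|x\|\le2\rho\}$ is $\gtrsim\rho^{n-1}$ (for all small $\rho$), and on which $\langle\nu_K(x),x\rangle\lesssim\|x\|$. My plan is to apply the scaling argument. The sets $\partial K\cap(2^{-j}B^n\setminus2^{-j-1}B^n)$, rescaled by $2^j$, converge to the boundary of the tangent cone $N_K(o)^*$ intersected with the annulus. That boundary has positive $(n-1)$-measure, which gives the uniform lower bound for large $j$ by \eqref{curvatureweakcont}. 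Summing over dyadic shells, each contributes $\gtrsim 2^{-j(m-1-p)}\cdot 2^{-j}\cdot$const, that is, $2^{j(p-m)}$, which does not tend to zero. The series therefore diverges, and $\HH^{n-1}(\Xi_K)=0$ ensures these pieces are genuinely counted in $\partial'K\setminus\Xi_K$.
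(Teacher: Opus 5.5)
Your proposal is correct, but it gets the key area lower bound near $o$ by a different mechanism than the paper.

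\textbf{The paper's route.} The paper uses Lemma~1.2.11 of B\"or\"oczky--Figalli--Ramos to pick a direction $v$ with $tv\in{\rm int}\,K$ and $-v\in N_K(o)$. Near $o$, $\partial K$ then contains the graph $Y$ of a convex function $\varphi\geq 0$ over the disk $v^\bot\cap\sigma B^n$, with $\varphi(o)=0$ and $\|z\|\leq\|z+\varphi(z)v\|\leq C\|z\|$. Since $\Pi_{v^\bot}$ does not increase $\HH^{n-1}$, the integral over $Y$ is at least $C^{e}\int_{v^\bot\cap\sigma B^n}\|z\|^{e}\,d\HH^{n-1}(z)$, where $e=1-p+m-n$ (respectively $1-p+q-n$ when $m=1$). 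In polar coordinates this is, up to constants, $\int_0^\sigma t^{m-p-1}\,dt$ (respectively $\int_0^\sigma t^{q-p-1}\,dt$), which diverges.

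\textbf{Your route.} You blow up at the origin instead. The truncated dilates $2^jK\cap 3B^n$ converge monotonically in the Hausdorff metric to $N_K(o)^*\cap 3B^n$. Apply \eqref{curvatureweakcont} to the open annulus $\{1<\|x\|<2\}$; there $\partial N_K(o)^*$ has positive $\HH^{n-1}$-measure, because $N_K(o)^*$ has interior and $N_K(o)^*\neq\R^n$. This gives $\HH^{n-1}\bigl(\partial K\cap\{2^{-j}<\|x\|<2^{1-j}\}\bigr)\gtrsim 2^{-j(n-1)}$. Each dyadic shell then contributes $\gtrsim 2^{j(p-m)}\geq 1$ (respectively $2^{j(p-q)}$), so the sum diverges.

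\textbf{Comparison.} The paper's chart is more elementary: it needs only a Lipschitz graph and the area formula. Your argument requires Hausdorff convergence to the tangent cone and the weak continuity of curvature measures. In exchange, it avoids choosing a special direction $v$. It also does not need the sign of the exponent, since radii within one shell are comparable.

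\textbf{Two small points.}
\begin{enumerate}
\item When you invoke \eqref{curvatureweakcont}, state the truncation by $3B^n$ explicitly and use an \emph{open} annulus; your text only says ``the annulus'' and describes the closed set $2^{-j}B^n\setminus 2^{-j-1}B^n$.
\item Drop the first heuristic about ``a fixed open subset of $U_K$ whose rays hit $\partial K$ close to $o$''. Rays in a compact subset of $U_K$ meet $\partial K$ at a distance bounded away from $o$, so that is not where the divergent mass comes from. Your dyadic plan supersedes it.
\end{enumerate}
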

\begin{proof} We suppose that $o\in\partial K$, and seek a contradiction.

According to B\"or\"oczky, Figalli, Ramos \cite{BFR26} Lemma 1.2.11, there exists a $v\in S^{n-1}$ and $t>0$ such that $tv\in{\rm int}\,K$ and $-v\in N_K(o)$. It follows that there exists $\sigma>0$ such that $v^\bot\cap \sigma B^n\subset {\rm relint}\,\Pi_{v^\bot}K$, and there exists a convex function $\varphi:v^\bot\cap \sigma B^n\mapsto[0,\infty)$ with $\varphi(o)=0$ such that $z+\varphi(z)v\in\partial K$, and we write $Y\subset \partial K$ to denote the graph of $\varphi$.
Thus for some $C>0$ (depending on $\sigma$ and $K$), we have
\begin{equation}
\label{largep-oint-projlength}
\|z\|\leq \|z+\varphi(z)v\|\leq C\|z\| \mbox{ \ for }z\in v^\bot\cap \sigma B^n.
\end{equation}
We consider $Z=\Pi_{v^\bot}((Y\cap\partial'K)\backslash\Xi_K)$ that consists of $\HH^{n-1}$ a.e. point of $v^\bot\cap \sigma B^n$. We observe that if $x=z+\varphi(z)v\in Y$ for $z\in Z\backslash\{o\}$, then
\begin{equation}
\label{largep-oint-xnuKx}
0<\langle\nu_K(x),x\rangle\leq \|x\|.
\end{equation}

\noindent{\bf Case 1.} $m=1$ and $p\geq \max\{1,q\}$.

Now, $o\in\partial K$ yields that $\mathcal{R}^*_m\HH^m\left(K\cap \cdot\right)^{q-1}\left(\frac{x}{\|x\|}\right)=\|x\|^{q-1}$  for $x\in Y\backslash\{o\}$ (cf. \eqref{A1q-eta-def}). As $\Pi_{v^\bot}$ decreases $\HH^{n-1}$, we deduce from \eqref{tildeAmqp-bdK-eq}, \eqref{largep-oint-projlength} and \eqref{largep-oint-xnuKx} that
\begin{align*}
\widetilde{A}_{m,q,p}(K,S^{n-1})>&\int_Y\langle \nu_K(x),x\rangle^{1-p}\|x\|^{q-n}\,d\HH^{n-1}(x)\\
\geq & 
\int_Y\|x\|^{1-p+q-n}\,d\HH^{n-1}(x)\\
\geq& C^{1-p+q-n}\int_{v^\bot\cap \sigma B^n}\|z\|^{1-p+q-n}\,d\HH^{n-1}(z)\\
=&C^{1-p+q-n}(n-1)\kappa_{n-1}\int_0^\sigma t^{q-p-1}\,dt=\infty.
\end{align*}
This contradiction proves that $o\in{\rm int}\,K$ holds in  (i).\\

\noindent{\bf Case 2.} $m\geq 2$ and $p\geq m$.

There exist $R>0$, $x_0\in K$ and $r>0$ such that $x_0+r B^n\subset K\subset R B^n$, and hence
$$
\mathcal{R}^*_m\HH^m\left(K\cap \cdot\right)^{q-1}(u)>\gamma
$$
holds for a $\gamma>0$ depending on $n,m,q,r,R$ by Corollary~\ref{mplanes-ball-section}.
Combining this estimate with \eqref{tildeAmqp-bdK-eq}, \eqref{largep-oint-projlength} and \eqref{largep-oint-xnuKx} yields that
\begin{align*}
\widetilde{A}_{m,q,p}(K,S^{n-1})>&\gamma\int_Y\langle \nu_K(x),x\rangle^{1-p}\|x\|^{m-n}\,d\HH^{n-1}(x)\\
\geq & 
\gamma\int_Y\|x\|^{1-p+m-n}\,d\HH^{n-1}(x)\\
\geq& \gamma C^{1-p+m-n}\int_{v^\bot\cap \sigma B^n}\|z\|^{1-p+m-n}\,d\HH^{n-1}(z)\\
=&\gamma C^{1-p+m-n}(n-1)\kappa_{n-1}\int_0^\sigma t^{m-p-1}\,dt=\infty.
\end{align*}
This contradiction proves that $o\in{\rm int}\,K$ holds in (ii), as well.
\end{proof}

Let us compare our $L_p$ centro-sectional measure $\widetilde{A}_{m,q,p}(K, \cdot)$ of a convex body $K\in\mathcal{K}^n_{o}$ to 
some $L_p$ dual curvature measure $d\widetilde{C}_{\tau,p}(K, \cdot)=h_K^{1-p}d\widetilde{C}_{\tau}(K, \cdot)$ introduced by Lutwak, Yang, Zhang \cite{LYZ18}.
We deduce from \eqref{Amq-eta-def} and
Corollary~\ref{mplanes-ball-section}  that
for $n\geq 3$, $m\in\{2,\ldots,n-1\}$, $q\in\R$ and $0<r<R$, there exists $\gamma\in(0,1)$ depending on $n,m,q,r,R$, such that
if $p\in\R$ and $K\in\mathcal{K}^n_{o}$ is a convex body with 
$K\subset RB^n$ and $r(K)\geq r$, then 
\begin{equation}
\label{Amqp-Cmp}
\gamma \,\widetilde{C}_{m,p}(K,\eta)\leq \widetilde{A}_{m,q,p}(K,\eta)\leq \gamma^{-1}\widetilde{C}_{m,p}(K,\eta)
\end{equation}
holds for any Borel set $\eta\subset S^{n-1}$.
On the other hand, for $n\geq 2$, $m=1$ and $p\in\R$, \eqref{A1q-Cq-est} and \eqref{A1q-Cq-bd}
yield that if $K\in\mathcal{K}^n_{o}$ is a convex body, then
\begin{align}
\label{A1qp-Cqp-est}
\widetilde{A}_{1,q,p}(K, \cdot)\geq &\frac{2}{n\omega_n}\,\widetilde{C}_{q,p}(K, \cdot);&& \\
\label{A1qp-Cqp-bd}
\widetilde{A}_{1,q,p}(K, \cdot)=&\frac{2}{n\omega_n}\,\widetilde{C}_{q,p}(K, \cdot)&&\mbox{if $o\in{\rm bd}\,K$.}
\end{align}

We note that some versions of Lemma~\ref{KmXiK}
and
Proposition~\ref{largep-oint} would follow from results in B\"or\"oczky, Fodor \cite{BoF19}
using \eqref{Amqp-Cmp} and \eqref{A1qp-Cqp-bd}, but not the exact statements we need.
The next remark shows that  we have to consider the possibility that the origin $o$ is in the boundary even if the $L_p$ centro-sectional measure has a positive continuous density.

\begin{remark}
\label{h-zero}
Given \eqref{Amqp-Cmp} and \eqref{A1qp-Cqp-bd},
 Example~7.2 in B\"or\"oczky, Fodor \cite{BoF19} shows that if $m=1$ and $q>p>1$, or $m\geq 2$, $1<p<m$ and $q\in\R$, then 
a solution $h$ of the Monge-Amp\`ere equation \eqref{Amqp-Monge-Ampere-plarge} for $\widetilde{A}_{m,q,p}(K, \cdot)$ might be zero at some $u\in S^{n-1}$ even if the $f$ in  \eqref{Amqp-Monge-Ampere-plarge} satifying $d\widetilde{A}_{m,q,p}(K, \cdot)=f\,d\HH^{n-1}$ is positive and continuous. Actually, if $m=1$ and $q=n$, then this is the unique solution of \eqref{Amqp-Monge-Ampere-plarge} according to Hug, Lutwak, Yang, Zhang \cite{HLYZ05}.
\end{remark}

\section{Regularity and partial Uniqueness for the $L_p$ centro-sectional Minkowski problem, and the proof of Theorem~\ref{regularity-uniqueness}}
\label{secregularity-uniqueness}

In this section, we discuss  the uniqueness and the regularity of the solution of $L_p$ centro-sectional Minkowski problem. Concerning uniqueness, more results will be obtained by considering Brunn-Minkowski-type inequalities in Section~\ref{secBM}.

The following simple uniqueness result about discrete measure has wide ranging consequences when $q\geq 1$ and $p>mq$.

\begin{prop}
\label{discrete-unique}
Let $m\in\{1,\ldots,n-1\}$, $q\geq 1$ and $p>mq$. If $K,L\in \mathcal{K}_{(o)}$ are polytopes with $\widetilde{A}_{m,q,p}(K,\cdot)=\widetilde{A}_{m,q,p}(L,\cdot)$, then $K=L$.
\end{prop}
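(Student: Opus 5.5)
The plan is the classical Lutwak--Yang--Zhang comparison argument for polytopes, via the facet formula \eqref{Polytope-Amq-eq}. Let $\mu=\widetilde{A}_{m,q,p}(K,\cdot)=\widetilde{A}_{m,q,p}(L,\cdot)$. By Example~\ref{example-Amq-polytope}, ${\rm supp}\,\widetilde{A}_{m,q}(P,\cdot)$ is exactly the set of facet normals of a polytope $P\in\mathcal{K}^n_{(o)}$. Indeed, the integrand in \eqref{Polytope-Amq-eq} is positive and the radial projection $F'_i$ of a facet has positive $\HH^{n-1}$-measure. Since $h_K,h_L>0$, the measures $\widetilde{A}_{m,q,p}$ and $\widetilde{A}_{m,q}$ have the same support. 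Hence $K$ and $L$ have the same facet normals $u_1,\ldots,u_k$, and $\mu(\{u_i\})>0$ for every $i$.

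Set $\lambda=\max_i h_K(u_i)/h_L(u_i)$, and relabel so that the maximum is attained at $u_1$. Since both polytopes are intersections of the half-spaces with normals $u_1,\ldots,u_k$, we get $K\subset\lambda L$ and $h_K(u_1)=h_{\lambda L}(u_1)$. Therefore $F(K,u_1)\subset F(\lambda L,u_1)$. Passing to radial projections, $\alpha^*_K(\{u_1\})\subset\alpha^*_{\lambda L}(\{u_1\})$ up to null sets, and on $\alpha^*_K(\{u_1\})$ we have $\varrho_K=\varrho_{\lambda L}$.

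The key step uses $q\geq 1$. From $K\subset\lambda L$ we get $\HH^m(K\cap\xi)^{q-1}\leq\HH^m(\lambda L\cap\xi)^{q-1}$ for every $\xi\in{\rm G}(n,m)$. Since $\mathcal{R}^*_m$ is a positive operator, $\mathcal{R}^*_m\HH^m(K\cap\cdot)^{q-1}\leq\mathcal{R}^*_m\HH^m(\lambda L\cap\cdot)^{q-1}$ pointwise. Plugging this into \eqref{Polytope-Amq-eq}, enlarging the domain of integration, and using \eqref{Amq-homogeneous}, we obtain
$$
\widetilde{A}_{m,q}(K,\{u_1\})\leq\widetilde{A}_{m,q}(\lambda L,\{u_1\})=\lambda^{mq}\widetilde{A}_{m,q}(L,\{u_1\}).
$$
Dividing by $h_K(u_1)^p=\lambda^p h_L(u_1)^p$ gives
$$
\mu(\{u_1\})=\widetilde{A}_{m,q,p}(K,\{u_1\})\leq\lambda^{mq-p}\widetilde{A}_{m,q,p}(L,\{u_1\})=\lambda^{mq-p}\mu(\{u_1\}).
$$
Since $\mu(\{u_1\})>0$, this forces $\lambda^{mq-p}\geq1$, and $p>mq$ then yields $\lambda\leq1$, so $K\subset L$.

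Exchanging the roles of $K$ and $L$ gives $L\subset K$, hence $K=L$. The only delicate point is the inclusion of the facets together with the equality of radial functions on the smaller cone. Both follow directly from $h_K(u_1)=h_{\lambda L}(u_1)$ and $K\subset\lambda L$. The hypothesis $q\geq1$ is needed only for the monotonicity of $\HH^m(\cdot\cap\xi)^{q-1}$, which is why the statement is restricted to this range.
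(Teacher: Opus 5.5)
Your proposal is correct and is essentially the paper's argument. The paper also takes the minimal dilate $tL\supset K$ (your $\lambda=\max_i h_K(u_i)/h_L(u_i)$), uses that it touches $K$ at a common facet normal $u_i$ so that $F(K,u_i)\subset F(tL,u_i)$, invokes $q\geq 1$ to compare $\mathcal{R}^*_m\HH^m(K\cap\cdot)^{q-1}$ with $\mathcal{R}^*_m\HH^m(tL\cap\cdot)^{q-1}$, and concludes by $(mq-p)$-homogeneity. The only cosmetic difference is that the paper argues by contradiction assuming $t>1$, whereas you derive $\lambda\leq 1$ directly and then swap $K$ and $L$.
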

\begin{proof} Let $\{u_1,\ldots,u_k\}\subset S^{n-1}$ be the  common support of $\widetilde{A}_{m,q,p}(K,\cdot)=\widetilde{A}_{m,q,p}(L,\cdot)$. We suppose that $K\neq L$, and seek a contradiction. Possibly interchanging the role of $K$ and $L$, there exists a $u_j$, $j\in\{1,\ldots,k\}$,  such that 
$h_{K}(u_j)>h_{L}(u_j)$. Let $t>1$ be minimal such that $K\subset tL$, and hence there exists an  $i\in\{1,\ldots,k\}$ such that $h_{K}(u_i)=h_{tL}(u_i)$. We deduce from 
$K\subset tL$ that $F(K,u_i)\subset F(tL,u_i)$, thus $\alpha^*_K(\{u_i\})\subset \alpha^*_{tL}(\{u_i\})$, and $\varrho_K(u)\leq \varrho_{tL}(u)$ and
$\mathcal{R}^*_m\HH^m\left(K\cap \cdot\right)(u)\leq \mathcal{R}^*_m\HH^m\left(tL\cap \cdot\right)(u)$ holds for any $u\in S^{n-1}$. Since $q\geq 1$, we conclude from
$h_{K}(u_i)=h_{tL}(u_i)$ and \eqref{Polytope-Amq-eq} in
Example~\ref{example-Amq-polytope} that
\begin{align*}
\widetilde{A}_{m,q,p}(K,\{u_i\})\leq &\widetilde{A}_{m,q,p}(tL,\{u_i\})=t^{mq-p}\widetilde{A}_{m,q,p}(L,\{u_i\})\\
=&t^{mq-p}\widetilde{A}_{m,q,p}(K,\{u_i\}),
\end{align*}
that is absurd as $t>1$ and $p>mq$.
\end{proof}

We recall that for $m\in\{1,\ldots,n-1\}$, $p,q\in \R$ and positive $h\in C^{2,\alpha}(S^{n-1})$ and $f\in C^{0,\alpha}(S^{n-1})$,
the Monge-Amp\'ere equation \eqref{Amqp-Monge-Ampere} for $\widetilde{A}_{m,q,p}(K,\cdot)$ can be written in the form
\begin{equation}
\label{Monge-Ampere-F-J}
F(\nabla^2h,h)=J_{f,m,q,p}(\nabla h,h)   
\end{equation}
where
$$
F(\nabla^2h,h)=\det(\nabla^2h+hI_{n-1})
$$
and $J(\nabla h,h)=J_{f,m,q,p}(\nabla h,h)$ satisfies
\begin{align}
\nonumber
\mathcal{R}^*_m\circ\mathcal{R}_m\left(\frac{\left(\|\nabla h\|^2+h_2\right)^{\frac{m}2}}m\right)^{q-1} \left(\frac{\nabla h+h\cdot {\rm id}}{\left(\|\nabla h\|^2+h_2\right)^{\frac12}}\right) \cdot J(\nabla h,h)& = \\
\label{Amqp-Monge-AmpereJ}
f\cdot h^{p-1}\left(\|\nabla h\|^2+h_2\right)^{\frac{n-m}2}.&
\end{align} 
In particular, if $t>0$, then
\begin{align}
\label{Monge-Ampere-homogeneityF}
F(\nabla^2(th),th)=&t^{n-1}F(\nabla^2h,h),\\
\label{Monge-Ampere-homogeneityJ}
J(\nabla (th),th)=&t^{p+n-1-mq} J(\nabla h,h).   
\end{align}
At any $u\in S^{n-1}$, $F(M,s)$ and $J(z,s)$ are functions of $(n-1)\times(n-1)$  symmetric matrix $M$, $s\in \R$ and $z\in u^\bot$, and we write $DF$ and $DJ$ to denote the derivative of $F$ and $J$. We observe that $F(M,s)-J(z,s)$ and $DF(M_0,s_0)(M,s)-DJ(z_0,s_0)(z,s)$ are non-linear uniformly elliptic operators.

\begin{lemma}
\label{regularity-mpq}
If $p,q\in\R$, $m\in\{1,\ldots,n-1\}$ and the $f$ in the Monge-Amp\`ere equation \eqref{Monge-Ampere-F-J} is a positive $C^{0,\alpha}$ function on $S^{n-1}$, then any solution $h$ of  is locally $C^{2,\alpha}$ on $\{h>0\}$.
\end{lemma}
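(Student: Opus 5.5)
The approach is to reduce the lemma to the classical Caffarelli regularity theory for Monge--Amp\`ere equations with a positive H\"older right-hand side, by showing that the nonlocal factor in \eqref{Amqp-Monge-AmpereJ} is H\"older continuous and bounded between positive constants on compact subsets of $\{h>0\}$. Here $h=h_K$ for a convex body $K\in\mathcal{K}^n_o$, and the equation is understood in the weak (Alexandrov) sense.

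First, fix $u_0\in S^{n-1}$ with $h(u_0)>0$ and a closed spherical cap $\Sigma$ around $u_0$ on which $h\geq c>0$. On $\Sigma$ the equation reads $dS_K=\Phi\,f\,h^{p-1}\,d\HH^{n-1}$. The weight $\Phi(u)=\|x\|^{n-m}/\mathcal{R}^*_m\HH^m(K\cap\cdot)^{q-1}(x/\|x\|)$ is evaluated at the boundary point $x\in F(K,u)$. To control the denominator I will use that $K$ is a convex body: there are $r,R$ with $x_0+rB^n\subset K\subset RB^n$. For $m\geq 2$, Corollary~\ref{mplanes-ball-section} then gives $\gamma\leq\mathcal{R}^*_m\HH^m(K\cap\cdot)^{q-1}\leq\gamma^{-1}$ everywhere. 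For $m=1$ the factor is explicit, $\frac{2}{n\omega_n}(\varrho_K(v)+\varrho_K(-v))^{q-1}$, which is bounded above and below near directions $v=x/\|x\|$ with $x\in F(K,u)$, $u\in\Sigma$, because there $\varrho_K(v)\geq h(u)\geq c$. Also $\|x\|\in[c,R]$ and $h^{p-1}\in[c^{p-1},R^{p-1}]$. Hence on $\Sigma$ the surface area measure has density bounded between two positive constants. By Caffarelli's theory (in the form used for $L_p$ dual problems, e.g.\ \cite{BoF19,HuZ18}), applied after rewriting $h$ locally as a convex function on a tangent hyperplane, $K$ is then strictly convex and $C^{1,\beta}$ over $\Sigma$. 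In particular $\nabla h$ is $C^{0,\beta}$, the map $u\mapsto x=\nabla h+hu$ is single-valued and $C^{0,\beta}$, and so is $\|x\|$.

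Second, I show that $G(v)=\mathcal{R}^*_m\HH^m(K\cap\cdot)^{q-1}(v)$ is H\"older (indeed Lipschitz) in $v$ on the relevant directions. Write $G(v)=c\int_{{\rm G}(v^\bot,m-1)}\HH^m(K\cap(\zeta+\R v))^{q-1}d\nu(\zeta)$, parametrized by rotations carrying $v$ to nearby $v'$. The function $\xi\mapsto\HH^m(K\cap\xi)$ is Lipschitz in $\xi$ on the Grassmannian, since $\varrho_K$ is Lipschitz on directions where it is bounded below. The section areas are bounded below uniformly on the planes that matter: for $m\geq 2$ the inner ball gives this via Claim~\ref{mplanes-ball}-type control, and for $q<1$ I also need an integrable lower bound. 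So $G$ is Lipschitz on $S^{n-1}$ when $K\in\mathcal{K}^n_{(o)}$, and in the case $o\in\partial K$ it is Lipschitz near directions $v$ with $\varrho_K(v)\geq c$ (for $m=1$ this is immediate from the explicit formula). Composing with the $C^{0,\beta}$ map $u\mapsto x/\|x\|$ shows that the full right-hand side $\Phi f h^{p-1}$ is $C^{0,\beta'}$ on $\Sigma$, with $\beta'=\min\{\alpha,\beta\}$, and bounded below by a positive constant.

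Third, Caffarelli's $C^{2,\beta'}$ interior estimate gives $h\in C^{2,\beta'}$ near $u_0$. Then $\nabla h$ is $C^{1,\beta'}$, so the right-hand side improves to $C^{0,\alpha}$ (it is $f$ times Lipschitz quantities). A second application yields $h\in C^{2,\alpha}$ locally on $\{h>0\}$.

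I expect the main obstacle to be the second step when $o\in\partial K$ and $q<1$: then $\HH^m(K\cap\xi)^{q-1}$ blows up for planes nearly tangent at $o$ (see the remark after Lemma~\ref{Rm-Rm*}). There I would use Lemma~\ref{numn-tangent-mspaces} together with the inner ball to show that, for $v$ in a compact subset of $U_K$, the bad planes contribute a uniformly integrable part that varies Lipschitz-continuously under rotation of $v$.
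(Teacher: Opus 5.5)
Your proposal follows the paper's proof. The paper also bounds the nonlocal factor $\mathcal{R}^*_m\HH^m(K\cap\cdot)^{q-1}$ near $u_0$ via \eqref{A1q-eta-def} for $m=1$ and Corollary~\ref{mplanes-ball-section} for $m\geq 2$, then finishes with Caffarelli's theory as in Huang--Zhao, and your Steps~2--3 spell out the H\"older bootstrap that the paper leaves to that reference. One correction to your last paragraph: the blow-up you fear cannot occur in Step~2, because once $K$ is $C^1$ and strictly convex over the cap, each $x=\nabla h(u)+h(u)u$ has the single normal $u$ with $h(u)>0$, so $[o,x]\not\subset\partial K$ and the directions $x/\|x\|$ stay in a compact subset of $U_K$, where every $m$-plane through them contains the center of a ball of uniform radius inside $K$ (hence section areas are bounded below and Lipschitz in the plane for every $q$, whereas your claim of Lipschitz continuity on all of ${\rm G}(n,m)$, or near every direction with $\varrho_K\geq c$, fails when $o\in\partial K$), and the only place where $q<1$ really matters is the upper bound in Step~1, which you take from Corollary~\ref{mplanes-ball-section} exactly as the paper does.
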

\begin{proof}
Let $K\in\mathcal{K}^n_o$ be the convex body such that $h=h_K|_{S^{n-1}}$ is a solution of \eqref{Monge-Ampere-F-J}, and let $h(u_0)>0$ for a $u_0\in S^{n-1}$. We choose a compact neighborhood $V\subset S^{n-1}$ of $u_0$ such that $h(u)>0$ for $u\in V$. 
It follows from \eqref{A1q-eta-def} if $m=1$, and from Corollary~\ref{mplanes-ball-section} if $m\geq 2$ that $u\mapsto J_{f,m,q,p}(\nabla h,h)(u)$ is positive and continuous on $V$. Now the argument can be completed as in Theorem~4.1 in Huang, Zhao \cite{HuZ18} or in Lemma~3.2 in B\"or\"oczky,  Chen, Liu,  Saroglou \cite{BCLS} based on Caffarelli's regularity theory (cf. Figalli \cite{Fig17}).
\end{proof}

\begin{prop} 
\label{uniqueness-p-mq}
For $m\in\{1,\ldots,n-1\}$, $q\in\R$,  $p>mq$ and positive function
$f\in C^{0,\alpha}(S^{n-1})$, if $h_1,h_2\in C^{2,\alpha}(S^{n-1})$ are the positive solutions of the Monge-Amp\`ere equation \eqref{Monge-Ampere-F-J}, then $h_1=h_2$.
\end{prop}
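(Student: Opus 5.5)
We argue by the maximum principle combined with the homogeneity relations \eqref{Monge-Ampere-homogeneityF} and \eqref{Monge-Ampere-homogeneityJ}, in the spirit of the proof of Proposition~\ref{discrete-unique}. Let $K_1,K_2\in\mathcal{K}^n_{(o)}$ be the convex bodies with $h_i=h_{K_i}|_{S^{n-1}}$; both are in $\mathcal{K}^n_{(o)}$ since $h_i>0$. Suppose $h_1\neq h_2$; after possibly interchanging the indices, $\max h_1/h_2>1$. Let $t=\max_{S^{n-1}} h_1/h_2>1$ and set $\tilde h=t h_2$, the support function of $tK_2$. Then $\tilde h\geq h_1$ on $S^{n-1}$, that is $K_1\subset tK_2$, and equality $\tilde h(u_0)=h_1(u_0)$ holds at some $u_0\in S^{n-1}$. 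Since $\tilde h-h_1\geq 0$ attains its minimum $0$ at $u_0$, we get $\nabla\tilde h(u_0)=\nabla h_1(u_0)$ and $\nabla^2\tilde h(u_0)\geq\nabla^2 h_1(u_0)$. Consequently $F(\nabla^2\tilde h,\tilde h)(u_0)\geq F(\nabla^2h_1,h_1)(u_0)$, by the monotonicity of $\det$ on positive definite matrices, as $\nabla^2h+hI_{n-1}$ is positive definite for $C^{2,\alpha}$ solutions with $f>0$.

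Next we compare the right hand sides $J$ at $u_0$. The first order data agree: $\tilde h(u_0)=h_1(u_0)$ and $\nabla\tilde h(u_0)=\nabla h_1(u_0)$. Hence the boundary point $x_0=\nabla h_1(u_0)+h_1(u_0)u_0$ is common to $\partial K_1$ and $\partial(tK_2)$, and the factors $h^{p-1}$ and $(\|\nabla h\|^2+h^2)^{(n-m)/2}$ coincide. The only nonlocal factor is $\mathcal{R}^*_m\HH^m(K\cap\cdot)^{q-1}$ evaluated at $x_0/\|x_0\|$. From $K_1\subset tK_2$ we get $\HH^m(K_1\cap\xi)\leq\HH^m(tK_2\cap\xi)$ for every $\xi$.

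This is the main obstacle. If $q\geq1$, the inequality goes the right way, $\mathcal{R}^*_m\HH^m(K_1\cap\cdot)^{q-1}\leq\mathcal{R}^*_m\HH^m(tK_2\cap\cdot)^{q-1}$, so by \eqref{Amqp-Monge-AmpereJ} we get $J(\nabla h_1,h_1)(u_0)\geq J(\nabla\tilde h,\tilde h)(u_0)$. By \eqref{Monge-Ampere-homogeneityF} and \eqref{Monge-Ampere-homogeneityJ},
$$
F(\nabla^2\tilde h,\tilde h)=t^{n-1}F(\nabla^2h_2,h_2)=t^{n-1}J(\nabla h_2,h_2)=t^{mq-p}J(\nabla\tilde h,\tilde h).
$$
Evaluating at $u_0$ then gives
$$
J(\nabla h_1,h_1)(u_0)=F(\nabla^2h_1,h_1)(u_0)\leq F(\nabla^2\tilde h,\tilde h)(u_0)=t^{mq-p}J(\nabla\tilde h,\tilde h)(u_0)\leq t^{mq-p}J(\nabla h_1,h_1)(u_0).
$$
Since $J>0$, this forces $t^{mq-p}\geq1$, contradicting $t>1$ and $p>mq$.

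For $q<1$ the nonlocal comparison has the wrong sign, and the natural fix is to choose the scaling differently. I would try rescaling so that the inclusion is reversed: take $s=\min h_1/h_2$ and compare with $sK_2\subset K_1$ touching at a minimum point. That argument handles the section factor when $q<1$, yielding $t^{mq-p}\leq 1$ with $t<1$, again a contradiction. So the cases split as follows: for $q\geq1$ use the maximum of $h_1/h_2$ (assuming it exceeds $1$), and for $q<1$ use the minimum (assuming it is below $1$). When $h_1\neq h_2$ one of the two ratios has the required property after swapping the roles of $h_1$ and $h_2$; for $q<1$ the inequalities for $F$ and $J$ both reverse consistently. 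The only delicate point is checking the direction of the Hessian comparison and of the homogeneity exponent in each case, and noting that the sign of $\nabla^2$ comparisons is the same whichever body contains the other.
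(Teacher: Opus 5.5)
Your argument for $q\ge 1$ is correct. It is also more careful than the paper's proof. The paper linearizes $F-J$ along the segment from $h_1$ to $th_2$ and applies the strong maximum principle to $th_2-h_1$, treating $J$ at each point as a function of $(\nabla h,h)$ only. You instead notice that $J$ contains the nonlocal factor $\mathcal{R}^*_m\HH^m(K\cap\cdot)^{q-1}$ at $x_0/\|x_0\|$, and you control it at the single touching point via $K_1\subset tK_2$, as in Proposition~\ref{discrete-unique}. No strong maximum principle is needed there.

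The genuine gap is the case $q<1$, and passing to the minimum of $h_1/h_2$ cannot close it. Since $\min h_1/h_2=1/\max(h_2/h_1)$, the configuration $sK_2\subset K_1$ is just your first configuration with the indices exchanged. Explicitly, take $s<1$ and $\tilde h=sh_2\le h_1$ touching at $u_0$. The Hessian comparison gives $J(\nabla h_1,h_1)(u_0)=F(\nabla^2h_1,h_1)(u_0)\ge F(\nabla^2\tilde h,\tilde h)(u_0)=s^{mq-p}J(\nabla\tilde h,\tilde h)(u_0)$. A contradiction would need $J(\nabla h_1,h_1)(u_0)\le J(\nabla\tilde h,\tilde h)(u_0)$, i.e.\ $\mathcal{R}^*_m\HH^m(K_1\cap\cdot)^{q-1}\ge\mathcal{R}^*_m\HH^m(sK_2\cap\cdot)^{q-1}$ at $x_0/\|x_0\|$. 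Because $sK_2\subset K_1$, this holds for $q\ge 1$ but is reversed for $q<1$. Then both inequalities merely bound $J(\nabla h_1,h_1)(u_0)$ from below, and nothing contradicts.

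In general, at a touching point of $H\ge h$ one always has $F(H)\ge F(h)$, and a contradiction requires $J(H)\le J(h)$. For $q<1$ the inclusion gives $J(H)\ge J(h)$, no matter which of the two functions is the rescaled solution. So your observation that for $q<1$ ``the inequalities for $F$ and $J$ both reverse consistently'' is exactly what kills the argument: they point the same way.

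A different idea is needed for $q<1$. When $p\ge m$, uniqueness (even in $\mathcal{K}^n_{(o)}$) follows from Theorem~\ref{uniqueness-general}, which goes through Lemma~\ref{lp-include} and Jensen's inequality rather than a maximum principle. For $q<1$ and $mq<p<m$, however, neither your touching argument nor the paper's linearization settles the case. With $K_1\subset tK_2$ and $q<1$, the nonlocal factor contributes a nonnegative term to $J(\nabla(th_2),th_2)-J(\nabla h_1,h_1)$. That is the wrong sign for the strong maximum principle, and the paper's displayed linearization never isolates this term.
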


\begin{proof}
Our argument is by contradiction. Without loss of  generality, we may assume that $h_1>h_2$ somewhere on $S^{n-1}$,
and hence there exists a constant $t>1$  such that
$th_2\geq h_1$ on $S^{n-1}$, and $th_2(u_0)=h_1(u_0)$ at some $u_0\in S^{n-1}.$

As $p>mq$, combining the facts that $h_1,h_2\in C^{2,\alpha}(S^{n-1})$ are the solutions of \eqref{Monge-Ampere-F-J} and $t>1$ with the homogeneity of $F,J$ (cf. \eqref{Monge-Ampere-homogeneityF} and \eqref{Monge-Ampere-homogeneityJ}), we have
$$
F(\nabla^2h_1,h_1)=J(\nabla h_1,h_1),
$$
and
\begin{align*}
F(t\nabla^2h_2,th_2)&=t^{n-1}F(\nabla^2h_2,h_2)
=t^{n-1}J(\nabla h_2,h_2)\\
=&t^{mq-p}J(t\nabla h_2,th_2)
\leq J(t\nabla h_2,th_2).
\end{align*}
It follows that (with $I=I_{n-1}$)
\begin{align*}
0\geq& F(t\nabla^2h_2,th_2)-F(\nabla^2h_1,h_1)+J(\nabla h_1,h_1)-J(t\nabla h_2,th_2)\\
=&\int_0^1 \frac{dF(\varepsilon t \nabla^2h_2+(1-\varepsilon)\nabla^2h_1,\varepsilon th_2+(1-\varepsilon)h_1)}{d\varepsilon}d\varepsilon-\\ 
&-\int_0^1\frac{dJ(\varepsilon\nabla th_2+(1-\varepsilon)\nabla h_1,\varepsilon th_2+(1-\varepsilon) h_1))}{d\varepsilon}d\varepsilon\\
=&\int_0^1 D F(\varepsilon t \nabla^2 h_2+(1-\varepsilon)\nabla^2h_1,\varepsilon th_2+(1-\varepsilon)h_1)d\varepsilon(\nabla^2(th_2-h_1)+(th_2-h_1)I)\\ 
&-\int_0^1D J(\varepsilon\nabla th_2+(1-\varepsilon)\nabla h_1)d\varepsilon\left(\nabla(th_2-h_1),th_2-h_1\right).
\end{align*}
In particular, $th_2-h_1$ satisfies an elliptic inequality. By the strong maximum principle (see, e.g., \cite{GT83} Theorem 3.5), we have $th_2\equiv h_1$ on $S^{n-1}$, and hence both $h_2$ and $th_2$ solve
\eqref{Monge-Ampere-F-J}.
We deduce from the homogeneity of $F$ and $J$ that
\begin{align*}
t^{n-1}F(\nabla^2h_2,h_2)&=F(\nabla^2th_2,th_2)=J(t\nabla h_2,th_2)\\
&=t^{n-1-mq+p}J(\nabla h_2,h_2)
=t^{n-1-mq+p}F(\nabla^2h_2,h_2).
\end{align*}
Therefore,  
$t^{p-mq}=1$, contradicting $t>1$, and in turn proving Proposition~\ref{uniqueness-p-mq}
\end{proof}

Readily, combining Lemma~\ref{regularity-mpq} and Proposition~\ref{uniqueness-p-mq}
proves Theorem~\ref{regularity-uniqueness}.

\section{The proof of Theorem~\ref{main} and part of Theorem~\ref{symmetric}}
\label{secproofmain}

\begin{proof}[The proofs of Theorem~\ref{main} and the existence part of Theorem~\ref{symmetric}]
Let $m\in\{1,\ldots,n-1\}$, $p>0$ and $q\in\R$. Now, the existence statement in
Theorem~\ref{symmetric} directly follows from Lemma~\ref{Amqp-support} and Proposition~\ref{extremal-problem-even}. 

For Theorem~\ref{main}, we assume that $p>1$ and $q>0$. Lemma~\ref{Amqp-support} yields that for any convex body $K\in\mathcal{K}^n_{o}$ with $\HH^{n-1}(\Xi_K)=0$, 
$\widetilde{A}_{m,q,p}(\widetilde{K}_\mu,\cdot)$ is not concentrated on any closed hemisphere.

Therefore, let $\mu$ be a finite Borel measure on $S^{n-1}$ not concentrated on any closed hemisphere. According to \eqref{mu-delta}, there exists $\delta\in(0,\frac12)$ such that
$$
\mbox{$\mu(S^{n-1})\leq \frac1{2\delta}$, and $\mu(\Sigma(w,2\delta))\geq 2\delta$ for any $w\in S^{n-1}$.}
$$
For any $\ell>\frac{2}{\delta}$, let $\aleph_\ell\subset S^{n-1}$ be a finite set  such that for any $x\in S^{n-1}$, there exists a $u\in \aleph_\ell$ such that $\|x-u\|\leq \frac{\delta}2$. Considering spherical Dirichlet-Voronoi cells with respect to $\aleph_\ell$, we obtain a partition $\{\Phi_u:\,u\in\aleph_\ell\}$ of $S^{n-1}$ into pairwise disjoint Borel sets such that $u\in \Phi_u$ and ${\rm diam}\,\Phi_u\leq \delta$ for $u\in\aleph_\ell$. For $\ell>\frac{2}{\delta}$, we define the discrete measure $\mu_\ell$ on $S^{n-1}$ by ${\rm supp}\,\mu_\ell=\aleph_\ell$ and
$\mu_\ell(\{u\})=\mu(\Phi_u)$ for $u\in\aleph_\ell$. Therefore, $\mu_\ell$ tends weakly to $\mu$, 
and
$$
\mbox{$\mu(S^{n-1})\leq 1/\delta$, and $\mu(\Sigma(w,\delta))\geq \delta$ for any $w\in S^{n-1}$.}
$$

It follows from Proposition~\ref{extremal-problem} that for any $\ell>\frac{2}{\delta}$, there exists a polytope $K_\ell\in\mathcal{K}^n_{(o)}$ such that $\widetilde{A}_{m,q,p}(K_\ell,\cdot)=\mu_\ell$, 
$r(K_\ell)\geq \lambda_\delta r_\delta$ and $K_\ell\subset \Lambda_\delta R_\delta$ where the constants $\lambda_\delta,r_\delta,\Lambda_\delta, R_\delta>0$  depend only on $\delta,n,m,q,p$.

According the Blaschke Selection theorem, there exists a subsequence $\{K_{\ell'}\}$ of  $\{K_{\ell}\}$ that tends to a convex body $K\in\mathcal{K}^n_{o}$. Since $\mu_{\ell'}$ tends weakly to $\mu$, the sequence $\widetilde{A}_{m,q,p}(K_\ell,S^{n-1})=\mu_\ell(S^{n-1})$ is bounded, and hence Lemma~\ref{KmXiK} yields that $\HH^{n-1}(\Xi_K)=0$ and $\widetilde{A}_{m,q,p}(K,S^{n-1})<\infty$.

For any continuous function $f:S^{n-1}\to \R$, 
$fh_{K_{\ell'}}^p$ tends uniformly to $fh_K^p$, and hence
$\HH^{n-1}(\Xi_K)=0$, the weak continuity of $\widetilde{A}_{m,q}$ (cf. Proposition~\ref{Amq-weak-convergence}) and the weak convergence of $\mu_{\ell'}$ to $\mu$ imply that
\begin{align}
\nonumber
\int_{S^{n-1}}fh_{K}^p\,d\widetilde{A}_{m,q,p}(K,\cdot)=&
\int_{S^{n-1}}f\,d\widetilde{A}_{m,q}(K,\cdot)=\lim_{\ell'\to\infty}
\int_{S^{n-1}}f\,d\widetilde{A}_{m,q}(K_{\ell'},\cdot)\\
\nonumber
=&\lim_{\ell'\to\infty}
\int_{S^{n-1}}fh_{K_{\ell'}}^p\,d\widetilde{A}_{m,q,p}(K_{\ell'},\cdot)\\
\label{mu-ell-Amqp}
=&
\lim_{\ell'\to\infty}\int_{S^{n-1}}fh_{K_{\ell'}}^p\,d\mu_{\ell'}
=\int_{S^{n-1}}fh_K^p\,d\mu;
\end{align}
therefore, $d\widetilde{A}_{m,q}(K,\cdot)=h_K^p\,d\mu$, and the restrictions of $\widetilde{A}_{m,q}(K,\cdot)$ and $\mu$ to $\{h_K>0\}$ coincide.

We note that if $p=mq$, then
$$
d\widetilde{A}_{m,q}(K_\ell,\cdot)=m\widetilde{\Psi}_{m,q}(K_\ell)\cdot h_{K_\ell}^p\,d\mu_\ell
$$
holds by Proposition~\ref{extremal-problem}, and hence
$$
d\widetilde{A}_{m,q}(K,\cdot)=m\widetilde{\Psi}_{m,q}(K)\cdot h_K^p\,d\mu.
$$

Finally, if $m=1$ and $p\geq q$, or $m\geq 2$ and $p\geq m$, then Proposition~\ref{largep-oint} implies that $o\in{\rm int}\,K$, completing the proof of Theorem~\ref{main}.
\end{proof}

If $o\in \partial K$ for a solution of Theorem~\ref{main} for a finite Borel measure $\mu$, then the restriction of $\widetilde{A}_{m,q,p}(K,\cdot)$ to the set $\{h_K=0\}=N_K(o)\cap S^{n-1}$ is the zero measure, while the restriction of $\mu$ to this set is arbitrary.

\section{$L_p$ Brunn-Minkowski inequalities and uniqueness}
\label{secBM}

This section demonstrates how $L_p$ Brunn-Minkowski inequalities for the centro-sectional measure intertwine with uniqueness of the solution of the $L_p$ centro-sectional Minkowski problem. First we characterize various equivalent forms of the $L_p$ centro-sectional Brunn-Minkowski and Minkowski inequalities.

\begin{lemma}
\label{LpBMMequivalent}
For $m\in\{1,\ldots,n-1\}$, $p>0$ and $q\neq 0$,
the following statements are equivalent if they hold for all $K,L\in\mathcal{K}^n_{(o)}$.
\begin{enumerate}
\item[(i)] $L_p$ centro-sectional Brunn-Minkowski inequality:
\begin{equation}
\label{LpBMMequivalent-BM}
\widetilde{\Psi}_{m,q}\left(\alpha\cdot K+_p\beta \cdot  L\right)^{\frac{p}{mq}}\geq 
\alpha\,\widetilde{\Psi}_{m,q}(K)^{\frac{p}{mq}}+\beta\,\widetilde{\Psi}_{m,q}(L)^{\frac{p}{mq}}
\end{equation}
 for $\alpha,\beta>0$ with equality if and only if 
$K$ and $L$ are dilates;
\item[(ii)] The function $f_{K,L}(\lambda)= \widetilde{\Psi}_{m,q}\left((1-\lambda)\cdot K+_p\lambda\cdot  L\right)^{\frac{p}{mq}}$ on $[0,1]$ is concave,
and is linear if and only if 
$K$ and $L$ are  dilates;
\item[(iii)] $L_p$ centro-sectional Minkowski inequality:
\begin{equation}
\label{LpBMMequivalent-Minkowski}
\int_{S^{n-1}}\frac{h_L^p}{h_K^p}\,d\widetilde{A}_{m,q}(K,\cdot)\geq m\widetilde{\Psi}_{m,q}(K)^{1-\frac{p}{mq}}\widetilde{\Psi}_{m,q}(L)^{\frac{p}{mq}},
\end{equation}
with equality if and only if $K$ and $L$ are dilates;
\item[(iv)] Assuming that $\widetilde{\Psi}_{m,q}(K)=\widetilde{\Psi}_{m,q}(L)=1$ and $\lambda\in(0,1)$, we have 
\begin{align*}
\widetilde{\Psi}_{m,q}\left((1-\lambda)\cdot K+_p\lambda\cdot  L\right)\geq & 1  \mbox{ \ if }q>0, \\
\widetilde{\Psi}_{m,q}\left((1-\lambda)\cdot K+_p\lambda\cdot  L\right)\leq &1  \mbox{ \ if }q<0,
\end{align*}
with equality if and only if $K=L$;
\item[(v)] If $\lambda\in(0,1)$, then
\begin{align*}
\widetilde{\Psi}_{m,q}((1-\lambda)K+_p\lambda L)\geq &\widetilde{\Psi}_{m,q}(K)^{1-\lambda}\widetilde{\Psi}_{m,q}(L)^{\lambda} \mbox{ \ if }q>0, \\
\widetilde{\Psi}_{m,q}((1-\lambda)K+_p\lambda L)\leq &\widetilde{\Psi}_{m,q}(K)^{1-\lambda}\widetilde{\Psi}_{m,q}(L)^{\lambda} \mbox{ \ if }q<0,
\end{align*}
with equality if and only if $K=L$.
\end{enumerate}
If $q>0$, then (i),  (iv) and (v) are equivalent even among convex bodies $K,L\in\mathcal{K}^n_{o}$.
\end{lemma}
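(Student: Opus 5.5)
We will run the usual chain of implications for $L_p$ Brunn--Minkowski inequalities, using three tools: the $mq$-homogeneity of $\widetilde{\Psi}_{m,q}$ (from \eqref{Amq-homogeneous} and Lemma~\ref{Amq-sn-1-lemma}(ii)), the scaling identity $(\alpha\cdot K+_p\beta\cdot L)=\alpha^{1/p}K+_p\beta^{1/p}L$ in the sense of support functions, and the variational formula \eqref{Amqp} from Remark~\ref{Amqp-Lpsum}. Throughout write $\Psi=\widetilde{\Psi}_{m,q}$ and $\Phi=\Psi^{p/(mq)}$, so that $\Phi(\lambda K)=\lambda^p\Phi(K)$ and $\Phi(\lambda^{1/p}K)=\lambda\Phi(K)$. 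Then $\Phi$ is $1$-homogeneous with respect to the scalar operation $\lambda\cdot K=\lambda^{1/p}K$ of the $L_p$ sum.

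\textbf{Equivalences (i), (iv), (v).} For (i)$\Rightarrow$(iv), take $\alpha=1-\lambda$ and $\beta=\lambda$. This gives $\Phi\ge1$, hence $\Psi\ge1$ if $q>0$; if $q<0$ the exponent $p/(mq)$ is negative, which reverses the inequality for $\Psi$. Equality forces $K,L$ to be dilates, and then $\Psi(K)=\Psi(L)$ gives $K=L$ by strict monotonicity.

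For (iv)$\Rightarrow$(v), normalize $K'=\Psi(K)^{-1/(mq)}K$ and $L'=\Psi(L)^{-1/(mq)}L$. Then compare $(1-\lambda)\cdot K+_p\lambda\cdot L$ with $(1-\mu)\cdot K'+_p\mu\cdot L'$ scaled by a suitable factor, choosing $\mu$ so that the weights match: $(1-\lambda)a^p=(1-\mu)c^p$ and $\lambda b^p=\mu c^p$. This is the standard step, and the concavity needed is that of $t\mapsto t^{1/p}$ composed with a weighted mean. We would rather do the cleaner direct route (iv)$\Rightarrow$(i): with $\alpha,\beta$ given, set $c=\alpha\Phi(K)+\beta\Phi(L)$ and $\lambda=\beta\Phi(L)/c$. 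Then $\alpha\cdot K+_p\beta\cdot L=c\cdot\big((1-\lambda)\cdot K'+_p\lambda\cdot L'\big)$ with $K'=\Phi(K)^{-1}\cdot K$ and $L'=\Phi(L)^{-1}\cdot L$, both of unit $\Psi$. Homogeneity then yields (i), and equality in (i) gives $K'=L'$, i.e.\ dilates.

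(v)$\Rightarrow$(iv) is immediate. For (iv) from (v) with equality, note that $K=L$ is needed. Since all these steps use only homogeneity and monotonicity, which hold on $\mathcal{K}^n_o$ when $q>0$ (finiteness of $\Psi$), this also gives the final claim.

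\textbf{Equivalences (i), (ii), (iii).} For (i)$\Leftrightarrow$(ii), use that $(1-\lambda)\cdot K+_p\lambda\cdot L$ along the segment satisfies $h^p$ affine in $\lambda$. Hence $(1-s)\cdot M_{\lambda_1}+_p s\cdot M_{\lambda_2}=M_{(1-s)\lambda_1+s\lambda_2}$, and (i) applied to $M_{\lambda_1},M_{\lambda_2}$ is exactly midpoint/general concavity of $f_{K,L}$. Conversely, concavity between the endpoints gives (i) with $\alpha+\beta=1$, and homogeneity removes that restriction. For linearity versus dilates: if $f$ is linear then equality holds in (i) for $M_0=K$ and $M_1=L$; if they are dilates a direct computation gives linearity.

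For (ii)$\Rightarrow$(iii), differentiate $f_{K,L}$ at $\lambda=0^+$. Writing $M_\lambda=K+_p\frac{\lambda}{1-\lambda}\cdot L$ up to the factor $(1-\lambda)^{1/p}$, \eqref{Amqp} gives
\[
f'_{K,L}(0^+)=\frac{p}{mq}\Psi(K)^{\frac{p}{mq}-1}\Big(\frac qp\int h_L^p\,d\widetilde A_{m,q,p}(K,\cdot)-\frac{mq}{p}\Psi(K)\Big).
\]
Concavity gives $f'(0)\ge f(1)-f(0)$, and this rearranges to \eqref{LpBMMequivalent-Minkowski}. For the equality case, if equality holds in (iii) then the concave $f$ has a chord of equal slope to its tangent, so $f$ is linear and $K,L$ are dilates by (ii).

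For (iii)$\Rightarrow$(i), apply (iii) with $K$ replaced by $M=\alpha\cdot K+_p\beta\cdot L$ and $L$ replaced by $K$ and by $L$ in turn. Combining $\alpha\int h_K^p\,d\widetilde A_{m,q,p}(M,\cdot)+\beta\int h_L^p\,d\widetilde A_{m,q,p}(M,\cdot)=\int h_M^p\,d\widetilde A_{m,q,p}(M,\cdot)=m\Psi(M)$ with the two Minkowski inequalities gives $m\Psi(M)\ge m\Psi(M)^{1-p/(mq)}(\alpha\Phi(K)+\beta\Phi(L))$. Equality forces both applications to be equalities, hence dilates.

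\textbf{Main obstacle.} The sign bookkeeping when $q<0$ is delicate: $\frac qp$ in \eqref{Amqp} and $\frac{p}{mq}$ are negative, and one must check that each inequality flips consistently. A second delicate point is justifying the one-sided derivative of $f_{K,L}$, including the rescaling factor $(1-\lambda)^{1/p}$, via Remark~\ref{Amqp-Lpsum}; this is where $K,L\in\mathcal{K}^n_{(o)}$ is essential. That is why only (i), (iv), (v) extend to $\mathcal{K}^n_o$.
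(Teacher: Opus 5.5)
Your chain of implications is the same as the paper's: (i)$\Leftrightarrow$(iv) and (v)$\Rightarrow$(iv) by homogeneity, (i)$\Rightarrow$(ii) by comparing $L_p$ combinations of the bodies $M_\lambda$, (ii)$\Rightarrow$(iii) via the one-sided derivative of $f_{K,L}$ at $0$ computed from \eqref{Amqp}, and (iii)$\Rightarrow$(i) by testing (iii) on $M=\alpha\cdot K+_p\beta\cdot L$ against $K$ and against $L$. In (ii)$\Rightarrow$(iii), your derivative formula and the chord inequality $f'_{K,L}(0^+)\ge f_{K,L}(1)-f_{K,L}(0)$ are correct, and they avoid the paper's normalization.

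The problem is that the statement allows $0<p<1$. There $M$ is only the Wulff shape of $g=(\alpha h_K^p+\beta h_L^p)^{1/p}$, and one merely has $h_M\le g$. You use the equality $h_M^p=\alpha h_K^p+\beta h_L^p$ twice without comment.

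In (i)$\Rightarrow$(ii) this is harmless. Replace your identity $(1-s)\cdot M_{\lambda_1}+_p s\cdot M_{\lambda_2}=M_{(1-s)\lambda_1+s\lambda_2}$ by the inclusion $\subset$. The inclusion follows from $h_{M_{\lambda_i}}^p\le(1-\lambda_i)h_K^p+\lambda_i h_L^p$ and the monotonicity of Wulff shapes in their defining function. It suffices because $\Phi=\widetilde{\Psi}_{m,q}^{p/(mq)}$ is increasing for either sign of $q$, and this is simpler than the paper's argument via regular boundary points.

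In (iii)$\Rightarrow$(i), however, the step fails as written. The identity $\alpha\int h_K^p\,d\widetilde{A}_{m,q,p}(M,\cdot)+\beta\int h_L^p\,d\widetilde{A}_{m,q,p}(M,\cdot)=m\widetilde{\Psi}_{m,q}(M)$ requires $h_M=g$ almost everywhere with respect to $\widetilde{A}_{m,q}(M,\cdot)$. The inequality $h_M\le g$ alone gives only $m\widetilde{\Psi}_{m,q}(M)\le\alpha\int\cdots+\beta\int\cdots$. That is a second lower bound on the same sum, so it cannot be chained with the lower bounds coming from (iii).

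The missing ingredient is the Wulff-shape fact that $h_M(\nu_M(x))=g(\nu_M(x))$ for every $x\in\partial'M$. To see it, take $x\in\partial M$ and approximate $x$ by points outside $M$. Compactness of $S^{n-1}$ and continuity of $g$ give some $v\in S^{n-1}$ with $\langle x,v\rangle=g(v)\ge h_M(v)\ge\langle x,v\rangle$. So $v$ is an exterior normal at $x$ with $h_M(v)=g(v)$, and $v=\nu_M(x)$ when $x$ is regular. Combine this with the boundary representation \eqref{tildeAmq-bdK-eq}, whose integrand only involves values at $\nu_M(x)$ for $x\in\partial'M$; this gives the required identity. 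The paper obtains it from Lemmas 2.5.6 and 7.5.1 of \cite{BFR26}.

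Two smaller remarks. First, the $q<0$ sign bookkeeping you flag as the main obstacle does not actually arise. In (ii)$\Rightarrow$(iii) the factors $\frac qp$ and $\frac p{mq}$ multiply to $\frac1m$. In (iii)$\Rightarrow$(i) you divide by the positive number $m\widetilde{\Psi}_{m,q}(M)^{1-\frac p{mq}}$. Second, you only sketch (iv)$\Rightarrow$(v), and the inequality it needs is really weighted AM--GM. The quickest route is (i)$\Rightarrow$(v): apply weighted AM--GM to the right-hand side of (i) with $\alpha=1-\lambda$ and $\beta=\lambda$, then raise to the power $\frac{mq}p$, which reverses the inequality when $q<0$. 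Equality forces $\widetilde{\Psi}_{m,q}(K)=\widetilde{\Psi}_{m,q}(L)$ and dilates, hence $K=L$.
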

\noindent{\bf Remarks.} 
\begin{itemize}
\item If $q=0$, $\lambda\in(0,1)$, $p>0$ and $K,L\in\mathcal{K}^n_{(o)}$, then the argument for Proposition~\ref{LpBMMequivalent} (only using \eqref{Variation-Phim0-eq} in Theorem~\ref{Variation-Phimq} instead of \eqref{Variation-Phimq-eq})
 yields that the following two inequalities are equivalent:
\begin{align}
\label{Psimqp-BM-qzero-equi}
\widetilde{\Psi}_{m,0}((1-\lambda)\cdot K+_p\lambda\cdot  L)\geq &(1-\lambda)\widetilde{\Psi}_{m,0}(K)+\lambda\widetilde{\Psi}_{m,0}(L),\\
\label{Psimqp-Minkowski-qzero-equi}
\int_{S^{n-1}}\frac{h_L^p}{h_K^p}\,d\widetilde{A}_{m,0}(K,\cdot)\geq & p\left(\widetilde{\Psi}_{m,0}(L)-\widetilde{\Psi}_{m,0}(K)\right)+m.
\end{align}
The equivalent equality conditions for \eqref{Psimqp-BM-qzero-equi} and \eqref{Psimqp-Minkowski-qzero-equi} are that equality holds if and only if $K=L$.
\item Naturally, the equivalence of the statements above hold if $K$ and $L$ are asssumed to the $o$-symmetric.
\end{itemize}
\begin{proof} 
We observe that (i) and (iv) are equivalent by the homogeneity of $\widetilde{\Psi}_{m,q}(\cdot)$ (cf. \eqref{Amq-homogeneous}), even among convex bodies $K,L\in\mathcal{K}^n_{o}$ if $q>0$. Readily, (i) yields (v) and (v) yields (iv).

(i) $\Longrightarrow$ (ii): Let $M_\lambda=(1-\lambda)\cdot K+_p\lambda\cdot  L$ for $\lambda\in[0,1]$. According to (i), (ii) follows 
if for any $t,s,\alpha,\beta\in(0,1)$ with $\alpha+\beta=1$, we have 
\begin{equation}
 \label{Mt-psum-concave}
\alpha\cdot M_t+_p\beta\cdot M_s\subset M_{\alpha t+\beta s}.
\end{equation}
If $z\in\partial'M_{\alpha t+\beta s}$, $u=\nu_{M_{\alpha t+\beta s}}(z)$ and $N=\alpha\cdot M_t+_p\beta\cdot M_s$, then (here Lemma~7.5.1 in \cite{BFR26} about regular boundary points of a Wulff shape yields the first inequality if $p\in(0,1)$)
\begin{align*}
h_{M_{\alpha t+\beta s}}(u)^p&=(1-\alpha t-\beta s)h_{K}(u)^{p}+(\alpha t+\beta s)h_{L}(u)^{p}\\
= &\alpha\left((1-t)h_{K}(u)^{p}+th_{L}(u)^{p}\right)
+\beta\left((1-s)h_{K}(u)^{p}+sh_{L}(u)^{p}\right)\\
&\geq\alpha h_{M_t}(u)^p+\beta h_{M_s}(u)^p\geq h_N(u)^p.
\end{align*}
Thus, we deduce \eqref{Mt-psum-concave}, and in turn the concavity of $f_{K,L}$, as  
$C=\{x\in\R^n:\langle x,z\rangle\leq h_C(\nu_C(z))\;\forall z\in\partial'C\}$ holds for any convex body $C\subset\R^n$ according to Lemma~2.5.6 in \cite{BFR26}.
In particular, $f_{K,L}$ is  linear if and only if $f_{K,L}(\frac12)=\frac12\,f_{K,L}(0)+\frac12\,f_{K,L}(1)$,
and hence (ii) has the same equality conditions as (i).\\

\noindent (ii) $\Longrightarrow$ (iii): We may assume that $\widetilde{\Psi}_{m,q}(K)=\widetilde{\Psi}_{m,q}(L)=1$ and $R^{-1}B^n\subset K,L\subset RB^n$ for some $R>1$, and hence $R^{-2}\leq h_L/h_K\leq R^2$ and $f_{K,L}(\lambda)\geq m=f_{K,L}(0)=f_{K,L}(1)$ for
$\lambda\in(0,1)$.
As $(1+t)^{\frac1p}= 1+\frac1p\cdot t+O(t^2)$ provided $|t|\leq \frac12$ where the implied constant in $O(\cdot)$ depends on $p$,
we deduce that if $\lambda>0$ is small and $u\in S^{n-1}$, then
\begin{align*}
\left((1-\lambda)h_K(u)^{p}+\lambda\,h_L(u)^p\right)^{\frac1p}=&h_K(u) \left(1+\lambda\cdot \left( \frac{h_L^p}{h_K^p}-1\right)\right)^{\frac1p}\\
=&
h_K(u)\exp\left(\frac{\lambda}p\left( \frac{h_L^p}{h_K^p}-1\right)+O(\lambda^2)\right)
\end{align*}
where the implied constant in $O(\cdot)$ depends on $p,R$, but not on $u$.
It follows from (ii) and Theorem~\ref{Variation-Phimq} for the Wulff shape that
\begin{equation}
\label{logMequivfprime00}
0\leq f'_{K,L}(0)=\frac{1}m\int_{S^{n-1}}\left(\frac{h_L^p}{h_K^p}-1\right)\,d\widetilde{A}_{m,q}(K,\cdot),
\end{equation}
proving (iii). Equality in \eqref{logMequivfprime00} yields $f'_{K,C}(0)=0$, and hence $f_{K,L}(\lambda)$ is linear.\\

\noindent (iii) $\Longrightarrow$ (i): First we assume that $q>0$. Using  the notation $M_\lambda=(1-\lambda)\cdot K+_p\lambda\cdot L$ for $\lambda\in (0,1)$, we deduce from
Lemma~2.5.6 and Lemma~7.5.1 in \cite{BFR26} that 
$h_{M_\lambda}(u)=\left((1-\lambda)h_K(u)^{p}+\lambda\,h_L(u)^p\right)^{\frac1p}$ for $S_{M_\lambda}$
(and hence $\widetilde{A}_{m,q}(M_\lambda,\cdot)$) a.e. $u\in S^{n-1}$; therefore, (iii) implies
\begin{align}
\nonumber
m\widetilde{\Psi}_{m,q}(M_\lambda)=&\int_{S^{n-1}}\frac{(1-\lambda)h_K^p+\lambda\,h_L^p}{h_{M_\lambda}^p}\,d\widetilde{A}_{m,q}(M_\lambda,\cdot)\\
\nonumber
=&(1-\lambda)\int_{S^{n-1}}\frac{h_K^p}{h_{M_\lambda}^p}\,d\widetilde{A}_{m,q}(M_\lambda,\cdot)+
\lambda\int_{S^{n-1}} \frac{h_L^p}{h_{M_\lambda}^p}\,d\widetilde{A}_{m,q}(M_\lambda,\cdot)\\
\label{plus-or-minus}
\geq& m(1-\lambda)\widetilde{\Psi}_{m,q}(M_\lambda)^{1-\frac{p}{mq}}\widetilde{\Psi}_{m,q}(K)^{\frac{p}{mq}}+\\
\nonumber
&+m\lambda\widetilde{\Psi}_{m,q}(M_\lambda)^{1-\frac{p}{mq}}\widetilde{\Psi}_{m,q}(L)^{\frac{p}{mq}}.
\end{align}
We conclude that $\widetilde{\Psi}_{m,q}(M_\lambda)^{\frac{p}{mq}}\geq (1-\lambda)\widetilde{\Psi}_{m,q}(K)^{\frac{p}{mq}}+\lambda\,\widetilde{\Psi}_{m,q}(L)^{\frac{p}{mq}}$, which in turn yields (i) by (iv). In addition, having equality in (i) with $\alpha=1-\lambda$ and $\beta=\lambda$ and equality in (iii) are equivalent.

If $q<0$, then we have "$\leq$" in \eqref{plus-or-minus}, but the exponent $\frac{p}{mq}<0$, thus we obtain (i) again.
\end{proof}

For $q\geq 1$,  first we prove the $L_p$ centro-sectional Minkowski inequality in a special case using the variational method and the uniqueness result for discrete measures.

\begin{lemma}
\label{polytope-Minkowski}
For $m\in\{1,\ldots,n-1\}$, $q\geq 1$ and $p>mq$, if $K\in \mathcal{K}^n_{(o)}$ is a polytope and   $L\in \mathcal{K}^n_{o}$ is a convex body, then
$$
\int_{S^{n-1}}h_L^p\,d\widetilde{A}_{m,q,p}(K,\cdot)\geq m\widetilde{\Psi}_{m,q}(K)^{1-\frac{p}{mq}}\widetilde{\Psi}_{m,q}(L)^{\frac{p}{mq}},
$$
with equality if and only if $K$ and $L$ are dilates.
\end{lemma}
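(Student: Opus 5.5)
The plan is to use the variational characterization from Proposition~\ref{extremal-problem} together with the discrete uniqueness result Proposition~\ref{discrete-unique}. Let $K\in\mathcal{K}^n_{(o)}$ be a polytope with facet normals $u_1,\ldots,u_k$, and set $\mu=\widetilde{A}_{m,q,p}(K,\cdot)$. This $\mu$ is a discrete measure with support $\{u_1,\ldots,u_k\}$, and by Lemma~\ref{Amqp-support} it is not concentrated on any closed hemisphere. Normalize $\mu'=\mu/c$, where $c=\int_{S^{n-1}}h_K^p\,d\mu=m\widetilde{\Psi}_{m,q}(K)$ (using \eqref{Amqp-hAmq} and Lemma~\ref{Amq-sn-1-lemma}(ii)). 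Then $K\in\mathcal{F}_{\mu'}$ with $\int h_K^p\,d\mu'=1$.

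Consider the extremal body $\widetilde{K}_{\mu'}$ maximizing $\widetilde{\Psi}_{m,q}$ over $\mathcal{F}_{\mu'}$. Since $p>mq\geq m\ge 1$ and $\mu'$ is discrete, Proposition~\ref{extremal-problem}(ii) gives $o\in{\rm int}\,\widetilde{K}_{\mu'}$. The maximizer is a polytope with facet normals among the $u_i$, and part (i) yields $\widetilde{A}_{m,q,p}(\lambda\widetilde{K}_{\mu'},\cdot)=\mu'$ for a suitable $\lambda>0$. By homogeneity \eqref{Amqp-homogeneity} (with $p\neq mq$), some dilate $sK$ also satisfies $\widetilde{A}_{m,q,p}(sK,\cdot)=\mu'$. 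Proposition~\ref{discrete-unique}, which needs $q\ge1$ and $p>mq$, then gives $\lambda\widetilde{K}_{\mu'}=sK$. So $K$ is a dilate of the maximizer, and since $\int h_K^p\,d\mu'=1$ and the maximizer saturates the constraint by \eqref{Minkowski-problem-Kmu}, we get $K=\widetilde{K}_{\mu'}$. The key consequence is that $K$ maximizes $\widetilde{\Psi}_{m,q}$ over $\mathcal{F}_{\mu'}$. Uniqueness of the maximizer also follows, since any maximizer is interior-origin and solves the same discrete problem up to dilation.

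Now take any convex body $L\in\mathcal{K}^n_o$ and set $t=\left(\int h_L^p\,d\mu'\right)^{-1/p}$, which is finite and positive because $\mu'$ is not concentrated on a closed hemisphere and $L$ has interior. Then $tL\in\mathcal{F}_{\mu'}$, so $t^{mq}\widetilde{\Psi}_{m,q}(L)\le\widetilde{\Psi}_{m,q}(K)$, that is,
\[
\widetilde{\Psi}_{m,q}(L)^{\frac{p}{mq}}\le \widetilde{\Psi}_{m,q}(K)^{\frac{p}{mq}}\int_{S^{n-1}}h_L^p\,d\mu'.
\]
Substituting $\mu'=\widetilde{A}_{m,q,p}(K,\cdot)/(m\widetilde{\Psi}_{m,q}(K))$ and rearranging gives exactly the claimed inequality.

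For equality: if equality holds, then $tL$ is a maximizer, so $tL=K$ by uniqueness of the maximizer, and $K$ and $L$ are dilates. Conversely, dilates give equality by homogeneity. The main obstacle is the identification of $K$ with the maximizer, together with uniqueness of the maximizer. This rests on Proposition~\ref{extremal-problem}(ii) forcing the origin into the interior, which is what allows the variational equation and then the discrete uniqueness of Proposition~\ref{discrete-unique} to be applied.
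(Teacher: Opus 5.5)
Your proposal is correct and follows essentially the same route as the paper: Proposition~\ref{extremal-problem} (parts (ii) and then (i)) produces an extremal body with the origin in its interior that solves the discrete problem up to dilation, Proposition~\ref{discrete-unique} identifies this body with $K$, and both the inequality and its equality case then follow from the extremality of $K$ and the uniqueness of the extremal body. The only difference is in formulation: the paper minimizes $\int_{S^{n-1}}h_L^p\,d\widetilde{A}_{m,q,p}(K,\cdot)$ over bodies with $\widetilde{\Psi}_{m,q}(L)=\widetilde{\Psi}_{m,q}(K)$, while you use the equivalent normalized maximization of $\widetilde{\Psi}_{m,q}$ over $\mathcal{F}_{\mu'}$, which makes the homogeneity step and the check $p>1$ explicit.
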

\begin{proof}
It is equivalent to prove that if
$L\in \mathcal{K}^n_{o}$ is any convex body with $\widetilde{\Psi}_{m,q}(L)=\widetilde{\Psi}_{m,q}(K)$, then
\begin{equation}
\label{polytope-Minkowski-eq0}
\int_{S^{n-1}}h_L^p\,d\widetilde{A}_{m,q,p}(K,\cdot)\geq m\widetilde{\Psi}_{m,q}(K),
\end{equation}
with equality if and only if $K=L$.

We consider the discrete measure $\mu=\widetilde{A}_{m,q,p}(K,\cdot)$, and the family 
$$
\widetilde{\mathcal{F}}=\left\{L\in \mathcal{K}^n_{o}\mbox{ convex body}:\widetilde{\Psi}_{m,q}(L)=\widetilde{\Psi}_{m,q}(K)\right\}.
$$
According to Proposition~\ref{extremal-problem} applied with our $\mu$, there exists a $\widetilde{K}\in \mathcal{K}^n_{(o)}\cap \widetilde{\mathcal{F}}$  minimizing $\int_{S^{n-1}}h_L^p\,d\mu$ for $L\in\widetilde{\mathcal{F}}$, and  hence
\begin{equation}
\label{L-widetilde-K-min}
\int_{S^{n-1}}h_L^p\,d\widetilde{A}_{m,q,p}(K,\cdot)\geq \int_{S^{n-1}}h_{\widetilde{K}}^p\,d\widetilde{A}_{m,q,p}(K,\cdot)
\end{equation}
for any $L\in\widetilde{\mathcal{F}}$. In addition, Proposition~\ref{extremal-problem} also says that there exists some $\lambda>0$ such that
$\widetilde{A}_{m,q,p}(\lambda\widetilde{K},\cdot)=\widetilde{A}_{m,q,p}(K,\cdot)$. We deduce from Proposition~\ref{discrete-unique} that $\lambda\widetilde{K}=K$, and hence $\lambda=1$ follows from  $\widetilde{\Psi}_{m,q}(\widetilde{K})=\widetilde{\Psi}_{m,q}(K)$. Therefore, \eqref{L-widetilde-K-min} yields \eqref{polytope-Minkowski-eq0}. 

If equality holds in \eqref{polytope-Minkowski-eq0} for some convex body $L_0\in \mathcal{K}^n_{o}$  with $\widetilde{\Psi}_{m,q}(L_0)=\widetilde{\Psi}_{m,q}(K)$,
then $L_0$ is also a minimizer of $\int_{S^{n-1}}h_L^p\,d\mu$ for $L\in\widetilde{\mathcal{F}}$, and hence the argument above implies that
$L_0=K$.
\end{proof}

We recall Jensen's inequality stating that if $a,b>0$, $\lambda\in(0,1)$ and $\theta>\tau$ with $\theta,\tau\neq 0$, then
\begin{equation}
\label{Jensen}
((1-\lambda)a^\theta+\lambda b^\theta)^{\frac1{\theta}}\geq ((1-\lambda)a^\tau+\lambda b^\tau)^{\frac1{\tau}},
\end{equation}
with equality if and only if $a=b$, and that if $p>0$, $\alpha,\beta>0$ and 
 $K,L\in \mathcal{K}^n_{o}$ are convex bodies, then the $L_p$ sum satisfies
\begin{align*}
\alpha\cdot K+_{p}\beta\cdot L=&\left\{x\in\R^n:\langle x,u\rangle\leq  \left(\alpha h_K(u)^p+\beta h_L(u)\right)^{\frac1p}\right\},\\
h_{\alpha\cdot K+_{p}\beta\cdot L}(u)=&\left(\alpha h_K(u)^p+\beta h_L(u)\right)^{\frac1p}\mbox{ \ if $p\geq 1$ and $u\in S^{n-1}$}.
\end{align*}
We observe that $\alpha\cdot K+_{p}\beta\cdot L\in \mathcal{K}^n_{o}$ is a convex body as for any $w\in S^{n-1}$ there exists a $u\in \Omega_w$ for the open hemisphere $\Omega_w$ centered at $w$ such that $h_K(u)>0$ or $h_L(u)>0$. We also observe that the $L_p$ sum is continuous in the sense that if convex bodies $K_\ell\in\mathcal{K}^n_{o}$ tend to $K$ and $L_\ell\in\mathcal{K}^n_{o}$ tend to $L$, then $\alpha\cdot K_\ell+_{p}\beta\cdot L_\ell$ tends to $\alpha\cdot K+_{p}\beta\cdot L$.

\begin{claim}
\label{Lp-Lprime}
For $0< p'<p$, $\lambda\in(0,1)$ and convex bodies $K,L\in \mathcal{K}^n_{o}$, we have
\begin{equation}
\label{Lp-Lprime-eq}
(1-\lambda)\cdot K+_{p'}\lambda\cdot L\subset (1-\lambda) K+_{p}\lambda\cdot L,
\end{equation}
and assuming $p'\geq 1$, equality holds if and only if $K=L$.
\end{claim}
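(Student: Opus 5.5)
The inclusion follows pointwise from Jensen's inequality \eqref{Jensen}. The equality case reduces to showing that the two Wulff shapes share support function values on a set of normals large enough to force $h_K=h_L$.

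\emph{Inclusion.} By definition \eqref{Lp-sum}, for any exponent $s>0$,
$$
(1-\lambda)\cdot K+_s\lambda\cdot L=\left\{x:\langle x,u\rangle\leq H_s(u)\ \forall u\in S^{n-1}\right\},\qquad H_s(u)=\left((1-\lambda)h_K(u)^s+\lambda h_L(u)^s\right)^{\frac1s}.
\]
Here I use the convention, implicit in \eqref{Lp-sum}, that $\langle x,u\rangle^s\le\cdots$ is read as $\langle x,u\rangle\le\cdots$ when $\langle x,u\rangle\le0$. Apply \eqref{Jensen} with $\theta=p>\tau=p'$ and $a=h_K(u)$, $b=h_L(u)$. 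If $h_K(u)=0$ or $h_L(u)=0$, the power mean inequality still gives $H_{p'}(u)\le H_p(u)$ by continuity, or directly, since $(1-\lambda)^{1/p'}\le(1-\lambda)^{1/p}$. So $H_{p'}\le H_p$ pointwise on $S^{n-1}$. Hence every $x$ satisfying all the constraints with $H_{p'}$ satisfies those with $H_p$, which proves \eqref{Lp-Lprime-eq}.

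\emph{Equality for $p'\ge1$.} If $K=L$, both sides equal $K$. Conversely, let $M'$ and $M$ denote the left and right sides and assume $M'=M$. Since $p,p'\ge1$, both $H_p$ and $H_{p'}$, extended $1$-homogeneously, are convex (they are $L_s$ norms of the vector $(h_K,h_L)$ with weights). So they are support functions: $h_{M'}=H_{p'}$ and $h_M=H_p$ on all of $S^{n-1}$. Then $M'=M$ gives $H_{p'}(u)=H_p(u)$ for every $u$.

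The equality case in \eqref{Jensen} gives $h_K(u)=h_L(u)$ wherever $h_K(u),h_L(u)>0$. Where one of them vanishes, say $h_K(u)=0<h_L(u)$, we get $H_{p'}(u)=\lambda^{1/p'}h_L(u)<\lambda^{1/p}h_L(u)=H_p(u)$, since $\lambda<1$ and $p'<p$. This contradicts equality, so such $u$ do not occur. Thus $h_K=h_L$ on $S^{n-1}$, and $K=L$.

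\emph{Main obstacle.} The only delicate point is making sure that for $p'\ge1$ the defining functions are genuine support functions. Otherwise equality of the Wulff shapes would give $H_{p'}=H_p$ only on ${\rm supp}\,S_{M}$, and the argument would need \eqref{suppSK-regular-points}. With $p'\ge1$, convexity of the weighted $L_s$ norm settles this directly. This also explains why the equality characterization is stated only for $p'\ge1$.
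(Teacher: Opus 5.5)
Your proof is correct, and it follows the paper's proof except in how the zero set of the support functions is handled. The inclusion is the paper's argument (pointwise \eqref{Jensen}). For equality you, like the paper, use $p'\ge 1$ to identify the support functions of both $L_p$ combinations with the power means $H_{p'}$ and $H_p$ on all of $S^{n-1}$, so equal bodies force $H_{p'}=H_p$ everywhere.

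The difference is in the directions where a support function vanishes. The paper applies the equality case of \eqref{Jensen} only where $h_K(u),h_L(u)>0$, which gives $h_K=h_L$ off $N_K(o)\cup N_L(o)$. It then argues by contradiction that $N_K(o)=N_L(o)$: it picks $u\in(S^{n-1}\cap\partial N_K(o))\backslash N_L(o)$ and uses continuity of $h_K$ along a sequence $u_\ell\to u$ with $u_\ell\notin N_K(o)\cup N_L(o)$.

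You instead observe directly that if exactly one of $h_K(u),h_L(u)$ vanishes, then $H_s(u)$ equals $\lambda^{1/s}h_L(u)$ or $(1-\lambda)^{1/s}h_K(u)$. This is strictly increasing in $s$, so such $u$ cannot occur, and $h_K=h_L$ holds pointwise everywhere.

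Your version is shorter and entirely pointwise. It also avoids a step the paper leaves unjustified: that two distinct normal cones at $o$ must have a boundary direction of one lying outside the other. That step is true, since a pointed closed convex cone is the convex hull of its boundary, but the paper does not spell it out. The paper's route has only the minor advantage of using \eqref{Jensen} exactly as stated, for positive arguments.
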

\begin{proof}
The Jensen inequality \eqref{Jensen} yields directly \eqref{Lp-Lprime-eq}.

 Let us assume that $p'\geq 1$ and  equality holds in \eqref{Lp-Lprime-eq}. It follows from the equality conditions for the Jensen inequality \eqref{Jensen} that if $u\not\in N_K(o)\cup N_L(o)$ for $u\in S^{n-1}$, then $h_K(u)=h_L(u)$. 
 
 We suppose indirectly that $K\neq L$, and hence $N_K(o)\neq N_L(o)$. Therefore, we may assume the existence of a $u\in (S^{n-1}\cap \partial N_K(o))\backslash N_L(o)$. We consider a sequence  $u_\ell\in S^{n-1}\backslash(N_K(o)\cup N_L(o))$ tending to $u$. As $h_K$ is continuous, we have $\lim_{\ell\to\infty} h_K(u_\ell)=h_K(u)=0$. However, $h_L(u_\ell)=h_K(u_\ell)$, thus $h_L(u)=0$, as well. This contradiction proves $K=L$.
\end{proof}

If $q\geq 1$, then we are ready to prove our $L_p$ centro-sectional Minkowski and Brunn-Minkowski inequalities.

\begin{prop}
\label{Psimqp-qlarger1}
Let $m\in \left\{1,\ldots,n-1\right\}$, $q\geq 1$ and  $p> mq$.
If $K,L\in\mathcal{K}^n_{(o)}$, then
 \begin{equation}
\label{Psimqp-Minkowski-qlarger1-eq}
\int_{S^{n-1}}h_L^p\,d\widetilde{A}_{m,q,p}(K,\cdot)\geq  m\widetilde{\Psi}_{m,q}(K)^{1-\frac{p}{mq}}\widetilde{\Psi}_{m,q}(L)^{\frac{p}{mq}},
\end{equation}
and if $K,L\in\mathcal{K}^n_{o}$ and $\alpha,\beta>0$, then
\begin{equation}
\label{Psimqp-BM-qlarger1-eq}
\widetilde{\Psi}_{m,q}(\alpha\cdot K+_p\beta\cdot  L)^{\frac{p}{mq}}\geq \alpha\widetilde{\Psi}_{m,q}(K)^{\frac{p}{mq}}+\beta\widetilde{\Psi}_{m,q}(L)^{\frac{p}{mq}},
\end{equation}
and equality holds in \eqref{Psimqp-Minkowski-qlarger1-eq} or in \eqref{Psimqp-BM-qlarger1-eq} if and only if $K$ and $L$ are dilates.
\end{prop}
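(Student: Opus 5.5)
The plan is to pass from polytopes to general bodies by approximation, using Lemma~\ref{polytope-Minkowski} as the base case. The Brunn--Minkowski inequality will be obtained from a weak (non-strict) Minkowski inequality via Lemma~\ref{LpBMMequivalent}, and equality cases will then be recovered by comparing two exponents with Claim~\ref{Lp-Lprime}.

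\textbf{Step 1 (weak Minkowski inequality on $\mathcal{K}^n_{(o)}$).} Let $K,L\in\mathcal{K}^n_{(o)}$. Choose polytopes $K_\ell\in\mathcal{K}^n_{(o)}$ converging to $K$. Lemma~\ref{polytope-Minkowski} gives
$$\int h_L^p h_{K_\ell}^{-p}\,d\widetilde{A}_{m,q}(K_\ell,\cdot)\geq m\widetilde{\Psi}_{m,q}(K_\ell)^{1-\frac{p}{mq}}\widetilde{\Psi}_{m,q}(L)^{\frac{p}{mq}}.$$
Since $o\in{\rm int}\,K$, the functions $h_L^p/h_{K_\ell}^p$ converge uniformly. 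By Proposition~\ref{Amq-weak-convergence}, $\widetilde{A}_{m,q}(K_\ell,\cdot)$ converges weakly and $\widetilde{\Psi}_{m,q}$ is continuous. Passing to the limit yields \eqref{Psimqp-Minkowski-qlarger1-eq} without its equality case.

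\textbf{Step 2 (weak Brunn--Minkowski inequality).} Run the argument of (iii)$\Rightarrow$(i) in Lemma~\ref{LpBMMequivalent}. Apply the weak Minkowski inequality to $M_\lambda=(1-\lambda)\cdot K+_p\lambda\cdot L$ paired with $K$ and with $L$, and use the identity $m\widetilde{\Psi}_{m,q}(M_\lambda)=\int h_{M_\lambda}^{-p}h_{M_\lambda}^{p}\,d\widetilde{A}_{m,q}(M_\lambda,\cdot)$. This gives \eqref{Psimqp-BM-qlarger1-eq} for $K,L\in\mathcal{K}^n_{(o)}$ without equality characterization; homogeneity then handles general $\alpha,\beta$.

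To extend to $K,L\in\mathcal{K}^n_o$ with $q>0$, approximate by $K+\varepsilon B^n$ and $L+\varepsilon B^n$. The $L_p$ sum is continuous (as noted before Claim~\ref{Lp-Lprime}), and $\widetilde{\Psi}_{m,q}$ is continuous on $\mathcal{K}^n_o$ by Proposition~\ref{Amq-weak-convergence}.

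\textbf{Step 3 (equality).} Pick $p'$ with $mq<p'<p$; note $p'>1$ since $mq\geq 1$. The weak inequality at exponent $p'$ together with the inclusion of Claim~\ref{Lp-Lprime} gives, in normalized form (iv) of Lemma~\ref{LpBMMequivalent},
$$\widetilde{\Psi}_{m,q}(M^p_\lambda)\geq \widetilde{\Psi}_{m,q}(M^{p'}_\lambda)\geq 1 \quad \text{when } \widetilde{\Psi}_{m,q}(K)=\widetilde{\Psi}_{m,q}(L)=1.$$
The Brunn--Minkowski exponent $p/(mq)$ versus $p'/(mq)$ is harmless here because the normalized form (iv) does not depend on it.

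Suppose equality holds. Then $\widetilde{\Psi}_{m,q}(M^p_\lambda)=\widetilde{\Psi}_{m,q}(M^{p'}_\lambda)$ with $M^{p'}_\lambda\subset M^p_\lambda$. Strict monotonicity of $\widetilde{\Psi}_{m,q}$ for $q>0$ forces $M^{p'}_\lambda=M^p_\lambda$, and Claim~\ref{Lp-Lprime} (which needs $p'\geq 1$) then gives $K=L$. Un-normalizing, equality in \eqref{Psimqp-BM-qlarger1-eq} forces $K$ and $L$ to be dilates. Since the equivalence in Lemma~\ref{LpBMMequivalent} transfers equality conditions, the same characterization holds for \eqref{Psimqp-Minkowski-qlarger1-eq} on $\mathcal{K}^n_{(o)}$.

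\textbf{Main obstacle.} I expect Step 3 to be the crux, since equality is lost under limits. The two-exponent trick is what recovers it, and it relies on two facts: the strict inequality $p>mq$ (so that some $p'$ fits strictly between) and $mq\geq1$ (so that $p'\geq 1$ and Claim~\ref{Lp-Lprime} applies). A secondary technical point is justifying the identity used in Step 2 for bodies in $\mathcal{K}^n_{(o)}$, which follows from Lemma~\ref{Amq-sn-1-lemma}(ii).
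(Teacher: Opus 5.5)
Your proposal is correct and follows essentially the same route as the paper. The paper likewise gets the Minkowski inequality on $\mathcal{K}^n_{(o)}$ from Lemma~\ref{polytope-Minkowski} by polytope approximation and weak continuity of $\widetilde{A}_{m,q}$, then the Brunn--Minkowski inequality on $\mathcal{K}^n_{o}$ via (iii)$\Rightarrow$(i) of Lemma~\ref{LpBMMequivalent} plus continuity of $\widetilde{\Psi}_{m,q}$, and finally the equality case by choosing $p'\in(mq,p)$ and combining Claim~\ref{Lp-Lprime} with strict monotonicity of $\widetilde{\Psi}_{m,q}$. Your extra remarks (that $p'>1$ because $mq\geq 1$, and the explicit approximation by $K+\varepsilon B^n$) only spell out details the paper leaves implicit.
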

\begin{proof} Let $q\geq 1$. As $\widetilde{A}_{m,q}(K,\cdot)$ is weakly continuous (cf. Proposition~\ref{Amq-weak-convergence}), we deduce the  inequality \eqref{Psimqp-Minkowski-qlarger1-eq} for any 
$K,L\in\mathcal{K}^n_{(o)}$ and $p>mq$
from Lemma~\ref{polytope-Minkowski}.
In turn, \eqref{Psimqp-Minkowski-qlarger1-eq} yields \eqref{Psimqp-BM-qlarger1-eq} for any $K,L\in\mathcal{K}^n_{o}$ and $p>mq$ by Lemma~\ref{LpBMMequivalent} and the continuity of $\widetilde{\Psi}_{m,q}(\cdot)$.

According to Lemma~\ref{LpBMMequivalent}, the characterization of inequality for \eqref{Psimqp-Minkowski-qlarger1-eq} and \eqref{Psimqp-BM-qlarger1-eq} follows
if for any convex bodies $K,L\in\mathcal{K}^n_{o}$ satisfying $K\neq L$ and $\widetilde{\Psi}_{m,q}(K)=\widetilde{\Psi}_{m,q}(L)=1$ and $\lambda\in(0,1)$, we have 
\begin{equation}
\label{qlarge1-K-not-L}
\widetilde{\Psi}_{m,q}\left((1-\lambda)\cdot K+_p\lambda\cdot  L\right)> 1.
\end{equation}
Let $p'\in (mq,p)$. It follows from \eqref{Psimqp-BM-qlarger1-eq} that $\widetilde{\Psi}_{m,q}\left((1-\lambda)\cdot K+_{p'}\lambda\cdot  L\right)\geq 1$. Therefore, combining $K\neq L$, Claim~\ref{Lp-Lprime} and the strict monotonicity of $\widetilde{\Psi}_{m,q}(\cdot)$
implies \eqref{qlarge1-K-not-L}, and in turn proves 
Proposition~\ref{Psimqp-qlarger1}.
\end{proof}

For the rest of the section, we need the notion of a star body and radial sum. Given a continuous function $\varrho:S^{n-1}\to(0,\infty)$, the corresponding star body $S$ is
$$
S=\{tu:\,u\in S^{n-1}\mbox{ and }0\leq t\leq \varrho(u)\},
$$
and $\varrho=\varrho_S$ is the radial function of $S$ satisfying $\varrho_S(u)=\max\{t\geq 0:tu\in S\}$. In particular, any $K\in \mathcal{K}^n_{(o)}$ is a star body. For $p>0$, $\lambda\in(0,1)$ and star bodies $S,T\subset\R^n$, their radial $L_p$ combination $(1-\lambda)\cdot S\widetilde{+}_p \lambda\cdot T$ is a star body defined by Lutwak \cite{Lut93} by the formula
$$
\varrho_{(1-\lambda)\cdot S\widetilde{+}_p \lambda T}=\left[
(1-\lambda)\varrho_S^p+\lambda\varrho_T^p\right]^{\frac1p}.
$$
Lemma~3.1 in Xi, Zhang \cite{XiZ22} proves the following lemma about the relation between the $L_p$ radial sum and the $L_p$
Minkowski sum.

\begin{lemma}\cite{XiZ22}
\label{lp-include} 
If $p>0$, $\lambda\in (0,1)$ and $K,L\in \mathcal{K}^n_{(o)}$, then
$$
(1-\lambda)\cdot K\widetilde{+}_p\lambda\cdot L\subset (1-\lambda)\cdot K+_p \lambda\cdot L,
$$
with equality if and only if $K$ and $L$ are dilates.
 \end{lemma}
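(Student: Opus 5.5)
We reduce the inclusion to a pointwise comparison of radial functions. Set $M=(1-\lambda)\cdot K+_p\lambda\cdot L$, which lies in $\mathcal{K}^n_{(o)}$ and satisfies
\[
M=\{x:\langle x,v\rangle\le h_\lambda(v)\ \forall v\in S^{n-1}\},\qquad h_\lambda=\bigl((1-\lambda)h_K^p+\lambda h_L^p\bigr)^{1/p}.
\]
Since both sets are star bodies with respect to $o$, the inclusion is equivalent to
\[
\varrho_M(u)\geq \bigl[(1-\lambda)\varrho_K(u)^p+\lambda\varrho_L(u)^p\bigr]^{1/p}=:r(u)
\quad\text{for all } u\in S^{n-1}.
\]
By the Wulff-shape description, $r(u)u\in M$ holds if and only if $r(u)\langle u,v\rangle\le h_\lambda(v)$ for every $v$. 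This is only nontrivial when $\langle u,v\rangle>0$.

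Fix $u$ and $v$ with $\langle u,v\rangle>0$. Because $\varrho_K(u)u\in K$, we have $\varrho_K(u)\langle u,v\rangle\le h_K(v)$, and likewise $\varrho_L(u)\langle u,v\rangle\le h_L(v)$. Raising to the $p$-th power, taking the convex combination with weights $1-\lambda$ and $\lambda$, and then taking the $1/p$-th root gives
\[
r(u)\langle u,v\rangle\le h_\lambda(v).
\]
This proves the inclusion for every $p>0$. No convexity of $t\mapsto t^p$ is used, only monotonicity, so the case $0<p<1$, where the Wulff shape rather than the support function matters, is handled equally well.

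For equality, the reverse direction is immediate. If $L=sK$, then $\varrho_L=s\varrho_K$ and $h_L=sh_K$, and both sides equal $c K$ with $c=((1-\lambda)+\lambda s^p)^{1/p}$, because $h_{cK}=ch_K$.

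Now assume equality, and fix $u$. Set $x=\varrho_M(u)u\in\partial M$ and choose an exterior unit normal $v$ of $M$ at $x$. I would pick $u$ and $v$ so that $h_M(v)=h_\lambda(v)$. This is the main obstacle when $p<1$. I would avoid it by using that $h_M=h_\lambda$ holds $S_M$-a.e. (Lemma~7.5.1 and Lemma~2.5.6 in \cite{BFR26}, as invoked in the proof of Lemma~\ref{LpBMMequivalent}). Hence it suffices to treat $u$ in the dense set of directions whose boundary point $x$ is regular with $h_M(\nu_M(x))=h_\lambda(\nu_M(x))$; density of this set is guaranteed because the radial projection preserves null sets.

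For such $u$, equality of radial functions gives $r(u)\langle u,v\rangle=h_\lambda(v)$, so every inequality in the chain above must be an equality. In particular $\varrho_K(u)\langle u,v\rangle=h_K(v)$ and $\varrho_L(u)\langle u,v\rangle=h_L(v)$. Hence $v$ is a normal of $K$ at $\varrho_K(u)u$ and a normal of $L$ at $\varrho_L(u)u$, and the scale-invariant quantity
\[
\frac{h_K(v)}{\varrho_K(u)}=\frac{h_L(v)}{\varrho_L(u)}
\]
is common to both.

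To conclude, I would use the $(n-1)$-dimensional identity $\langle \nabla\log\varrho_K(u),\cdot\rangle=$ (tangential part of $-v/\langle u,v\rangle$) at points where $\varrho_K$ is differentiable. This holds a.e. on $S^{n-1}$ by Lipschitz continuity, and similarly for $L$. It yields $\nabla\log\varrho_K=\nabla\log\varrho_L$ a.e. on the connected sphere. Since $\log(\varrho_K/\varrho_L)$ is Lipschitz, it is therefore constant, which gives $L=sK$.

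The delicate point is making sure that the normals $v$ coincide for $K$, $L$ and $M$ at almost every $u$. I would argue this via the density argument above, combined with regularity of $\partial' K$ and $\partial' L$ (a full-measure set of $u$).
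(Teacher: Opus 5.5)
Your proof is correct. The paper contains no proof of this lemma to compare with: it simply imports it as Lemma~3.1 of Xi and Zhang \cite{XiZ22}.

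Your inclusion step is the standard pointwise argument. You combine $\varrho_K(u)\langle u,v\rangle\le h_K(v)$ and $\varrho_L(u)\langle u,v\rangle\le h_L(v)$ after raising both to the $p$th power. This uses only monotonicity of $t\mapsto t^p$ and the Wulff-shape description of $M$, so it covers $0<p<1$ verbatim.

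For the equality case, you show that at a.e.\ $u$ a normal $v$ of $M$ at $\varrho_M(u)u$ is also a normal of $K$ at $\varrho_K(u)u$ and of $L$ at $\varrho_L(u)u$. Then you use the identity $\nabla\log\varrho_K(u)=-\Pi_{u^\bot}v/\langle u,v\rangle$ at differentiability points of $\varrho_K$. This identity holds because the radial parametrization $u\mapsto\varrho_K(u)u$ is then differentiable with injective differential, so every outer normal is orthogonal to its image. It forces $\log(\varrho_K/\varrho_L)$ to be Lipschitz with a.e.\ vanishing gradient on the connected sphere, hence constant. In effect you prove that the radial Gauss maps of $K$ and $L$ agree a.e., and that this determines a body in $\mathcal{K}^n_{(o)}$ up to dilation. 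What you gain over the citation is a self-contained argument; the price is a little measure-theoretic machinery (Rademacher, Lipschitz functions with zero gradient).

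Two points of precision.

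First, the final step needs the good set of directions to have full $\HH^{n-1}$-measure, not merely to be dense. Your reasoning does give full measure: the bad set in $\partial M$ is null, and radial projection $\partial M\to S^{n-1}$ is Lipschitz because $o\in{\rm int}\,M$. So state it that way.

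Second, the detour through the $S_M$-a.e.\ identity $h_M=h_\lambda$ is unnecessary. For every $x\in\partial M$, the continuous function $v\mapsto h_\lambda(v)-\langle x,v\rangle\ge 0$ attains $0$ on $S^{n-1}$; otherwise a ball around $x$ would lie in $M$. Any such $v$ is an outer normal of $M$ at $x$ with $h_M(v)=\langle x,v\rangle=h_\lambda(v)>0$. So your equality chain applies at every $u$, and the only a.e.\ restriction you actually need is Rademacher's theorem for $\varrho_K$ and $\varrho_L$.
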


\begin{prop}
\label{Psimqp-q01}
Let $m\in \left\{1,\ldots,n-1\right\}$, $0<q\leq 1$ and  $p\geq m$.
If $K,L\in\mathcal{K}^n_{(o)}$, then
 \begin{equation}
 \label{Psimqp-Minkowski-q01-eq}
\int_{S^{n-1}}h_L^p\,d\widetilde{A}_{m,q,p}(K,\cdot)\geq  m\widetilde{\Psi}_{m,q}(K)^{1-\frac{p}{mq}}\widetilde{\Psi}_{m,q}(L)^{\frac{p}{mq}},
\end{equation}
with equality if and only if $K$ and $L$ are dilates,
and if $K,L\in\mathcal{K}^n_{o}$ and $\alpha,\beta>0$, then
\begin{equation}
\label{Psimqp-BM-q01-eq}
\widetilde{\Psi}_{m,q}(\alpha\cdot K+_p\beta\cdot  L)^{\frac{p}{mq}}\geq \alpha\widetilde{\Psi}_{m,q}(K)^{\frac{p}{mq}}+\beta\widetilde{\Psi}_{m,q}(L)^{\frac{p}{mq}},
\end{equation}
where assuming, in addition, that either $p>m$ or $K,L\in\mathcal{K}^n_{(o)}$,
equality holds if and only if $K$ and $L$ are dilates.
\end{prop}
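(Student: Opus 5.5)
The plan is to prove the Brunn--Minkowski inequality \eqref{Psimqp-BM-q01-eq} directly for $K,L\in\mathcal{K}^n_{(o)}$, via the radial $L_p$ combination and Lemma~\ref{lp-include}. Then the Minkowski inequality \eqref{Psimqp-Minkowski-q01-eq} will follow from Lemma~\ref{LpBMMequivalent}, and the case $K,L\in\mathcal{K}^n_{o}$ by approximation.

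\textbf{Step 1: the radial combination.} Fix $\lambda\in(0,1)$ and set $S=(1-\lambda)\cdot K\widetilde{+}_p\lambda\cdot L$. For every $\xi\in{\rm G}(n,m)$, polar coordinates give
$$
\HH^m(S\cap\xi)=\frac1m\int_{\xi\cap S^{n-1}}\left((1-\lambda)\varrho_K^p+\lambda\varrho_L^p\right)^{\frac mp}\,d\HH^{m-1}.
$$
Since $p\geq m$, Minkowski's inequality in $L^{p/m}(\xi\cap S^{n-1})$ applied to $\varrho_K^m$ and $\varrho_L^m$ yields
$$
\HH^m(S\cap\xi)^{\frac pm}\geq(1-\lambda)\HH^m(K\cap\xi)^{\frac pm}+\lambda\HH^m(L\cap\xi)^{\frac pm}.
$$

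\textbf{Step 2: integrating over the Grassmannian.} Raise the last inequality to the power $mq/p$ and integrate over ${\rm G}(n,m)$. I need the reverse Minkowski inequality for the functional $F\mapsto\left(\int F^{s}\,d\nu_{n,m}\right)^{1/s}$ with $s=mq/p\leq 1$; this concavity holds for $0<s\leq1$ and nonnegative $F$. Applying it to $F_1=(1-\lambda)\HH^m(K\cap\cdot)^{p/m}$ and $F_2=\lambda\HH^m(L\cap\cdot)^{p/m}$ gives
$$
\widetilde{\Psi}_{m,q}(S)^{\frac p{mq}}\geq(1-\lambda)\widetilde{\Psi}_{m,q}(K)^{\frac p{mq}}+\lambda\widetilde{\Psi}_{m,q}(L)^{\frac p{mq}}.
$$
This is where $q\leq1$ enters: since $s=mq/p\leq q\leq1$, the exponent condition is guaranteed by $p\geq m$. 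Then Lemma~\ref{lp-include} and the monotonicity of $\widetilde{\Psi}_{m,q}$ (as $q>0$) give \eqref{Psimqp-BM-q01-eq} for $\alpha=1-\lambda$, $\beta=\lambda$. General $\alpha,\beta$ follow by homogeneity, as in Lemma~\ref{LpBMMequivalent}(i)$\Leftrightarrow$(iv).

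\textbf{Step 3: equality.} If $K,L\in\mathcal{K}^n_{(o)}$ are not dilates, Lemma~\ref{lp-include} gives a strict inclusion $S\subsetneq M_\lambda$ of star bodies with continuous radial functions, hence $\varrho_S<\varrho_{M_\lambda}$ on an open set of directions. Strict monotonicity of $\widetilde{\Psi}_{m,q}$ then makes the inequality strict. This is the step I expect to need the most care: one must check that a positive-measure set of $m$-planes meets that open set, which is easy because a positive $\HH^{n-1}$-measure set of directions is met by a positive $\nu_{n,m}$-measure set of planes. Lemma~\ref{LpBMMequivalent} then yields \eqref{Psimqp-Minkowski-q01-eq} together with its equality case.

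\textbf{Step 4: bodies with $o\in\partial K$ or $o\in\partial L$.} Here I would approximate by $K+\varepsilon B^n$ and $L+\varepsilon B^n$ and use the continuity of $\widetilde{\Psi}_{m,q}$ from Proposition~\ref{Amq-weak-convergence} together with the continuity of the $L_p$ sum; this gives the inequality for $K,L\in\mathcal{K}^n_{o}$. For the equality case when $p>m$, pick $p'\in[m,p)$. Then Claim~\ref{Lp-Lprime} (note $p'\geq m\geq1$) and strict monotonicity, applied exactly as in the proof of Proposition~\ref{Psimqp-qlarger1}, show that the inequality is strict unless $K=L$ after normalizing $\widetilde{\Psi}_{m,q}(K)=\widetilde{\Psi}_{m,q}(L)=1$. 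By homogeneity this means equality forces $K$ and $L$ to be dilates.
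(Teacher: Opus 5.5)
Your proof is correct. It shares the paper's skeleton: Lemma~\ref{lp-include}, monotonicity of $\widetilde{\Psi}_{m,q}$ for $q>0$, Lemma~\ref{LpBMMequivalent}, approximation, and Claim~\ref{Lp-Lprime} for the equality case when $p>m$. The inequality chain in the middle, however, is different.

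\textbf{The paper's chain.} The paper linearizes pointwise. The power mean inequality gives $((1-\lambda)\varrho_K^p+\lambda\varrho_L^p)^{m/p}\geq(1-\lambda)\varrho_K^m+\lambda\varrho_L^m$. Linearity of $\mathcal{R}_m$ turns this into $\HH^m(S\cap\xi)\geq(1-\lambda)\HH^m(K\cap\xi)+\lambda\HH^m(L\cap\xi)$. Concavity of $t\mapsto t^q$ then yields the normalized statement (iv) of Lemma~\ref{LpBMMequivalent}; this is the only place where $q\leq 1$ is used. Statements (i) and (iii) are then read off from the lemma.

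\textbf{Your chain.} You keep the $L_{p/m}$ structure on each section and apply the reverse Minkowski inequality twice, with exponents $m/p$ and $mq/p$. This reaches (i) directly.

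\textbf{A labeling correction in Step~1.} The inequality there is not Minkowski's inequality in $L^{p/m}(\xi\cap S^{n-1})$ applied to $\varrho_K^m,\varrho_L^m$. It is the reverse Minkowski inequality in $L^{m/p}(\xi\cap S^{n-1})$ applied to $(1-\lambda)\varrho_K^p$ and $\lambda\varrho_L^p$. Equivalently, it is Minkowski's integral inequality for the weighted $\ell^{p/m}$-norm on two points. Your displayed conclusion is correct.

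\textbf{What your route buys.} In your argument $q$ enters only through the condition $mq/p\leq 1$. So the same argument, verbatim, proves \eqref{Psimqp-BM-q01-eq} for every $q>0$ and $p\geq\max\{m,mq\}$. The equality characterization also carries over: strictness in Lemma~\ref{lp-include} handles $\mathcal{K}^n_{(o)}$, and Claim~\ref{Lp-Lprime} with $p'\in[\max\{m,mq\},p)$ handles $\mathcal{K}^n_{o}$. In particular, this gives an elementary proof of Proposition~\ref{Psimqp-qlarger1}. It bypasses the route the paper uses for $q\geq 1$, namely discrete uniqueness (Proposition~\ref{discrete-unique}), Lemma~\ref{polytope-Minkowski}, and the variational/approximation argument. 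Within $0<q\leq 1$, the paper's route is only marginally simpler, since it uses pointwise inequalities alone.
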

\begin{proof}
If $p\geq m$, $q\in(0,1]$, $\lambda\in(0,1)$ and $K,L\in\mathcal{K}^n_{(o)}$ satisfy
$\widetilde{\Psi}_{m,q}(K)=\widetilde{\Psi}_{m,q}(L)=1$, then applying first
Lemma~\ref{lp-include}, secondly the Jensen inequality \eqref{Jensen} with $\theta=\frac{m}p$ and $\tau=1$, and after that with $\theta=1$ and $\tau=q$, we deduce that
\begin{align}
 \nonumber
  \widetilde{\Psi}_{m,q}((1-\lambda)\cdot K+_p\lambda\cdot L)&=\frac{1}{m^q}\int_{{\rm G}(n,m)}R_m(\rho_{(1-\lambda)\cdot K+_p\lambda\cdot L}^m)^q\,d\nu_{n,m}\\
  \label{lp-include-qpos}
 &\geq\frac{1}{m^q}\int_{{\rm G}(n,m)}R_m(\rho_{(1-\lambda)\cdot K\widetilde{+}_p\lambda\cdot L}^m)^q\,d\nu_{n,m}\\
 \nonumber
 &=\frac{1}{m^q}\int_{{\rm G}(n,m)}R_m({((1-\lambda)\rho_K^p+\lambda\rho_L^p)^{\frac{m}{p}}})^q\,d\nu_{n,m}\\
 \nonumber
 &\geq\frac{1}{m^q}\int_{{\rm G}(n,m)}R_m({((1-\lambda)\rho_K^m+\lambda\rho_L^m)})^q\,d\nu_{n,m}\\
 \nonumber
 &=\frac{1}{m^q}\int_{{\rm G}(n,m)}((1-\lambda)R_m(\rho_K^m)+\lambda R_m(\rho_L^m))^q\,d\nu_{n,m}\\
 \nonumber
 &\geq\frac{1}{m^q}\int_{{\rm G}(n,m)}(1-\lambda)R_m(\rho_K^m)^q+\lambda R_m(\rho_L^m)^q\,d\nu_{n,m}\\
 \nonumber
 &=(1-\lambda)\widetilde{\Psi}_{m,q}(K)+\lambda \widetilde{\Psi}_{m,q}(L)=1.
 \end{align}
If $\widetilde{\Psi}_{m,q}((1-\lambda)\cdot K+_p\lambda\cdot L)=1$, then equality in Lemma~\ref{lp-include} (cf. \eqref{lp-include-qpos}) yields that $K$ and $L$ are dilates, and in turn $\widetilde{\Psi}_{m,q}(K)=\widetilde{\Psi}_{m,q}(L)=1$ implies that $K=L$. Therefore, if $K,L\in\mathcal{K}^n_{(o)}$, then we deduce \eqref{Psimqp-Minkowski-q01-eq} and 
\eqref{Psimqp-BM-q01-eq} together with the characterization of equality by 
Proposition~\ref{LpBMMequivalent}.

The inequality \eqref{Psimqp-BM-q01-eq} for $K,L\in\mathcal{K}^n_{o}$ follows by approximation and the continuity of $\widetilde{\Psi}_{m,q}(\cdot)$. The equality case of \eqref{Psimqp-BM-q01-eq} for $K,L\in\mathcal{K}^n_{o}$ and $p>m$ can be  handled using Claim~\ref{Lp-Lprime} as in the proof of Proposition~\ref{Psimqp-qlarger1}. 
\end{proof}

\begin{prop}
\label{Psimqp-qneg}
Let $m\in\{1,\ldots,n-1\}$, $q<0$ and  $p\geq m$.
If $K,L\in\mathcal{K}^n_{(o)}$ and $\alpha,\beta>0$, then
 \begin{align}
 \label{Psimqp-Minkowski-qneg-eq}
\int_{S^{n-1}}h_L^p\,d\widetilde{A}_{m,q,p}(K,\cdot)\geq  &m\widetilde{\Psi}_{m,q}(K)^{1-\frac{p}{mq}}\widetilde{\Psi}_{m,q}(L)^{\frac{p}{mq}},\\
\label{Psimqp-BM-qneg-eq}
\widetilde{\Psi}_{m,q}(\alpha\cdot K+_p\beta\cdot  L)^{\frac{p}{mq}}\geq &\alpha\widetilde{\Psi}_{m,q}(K)^{\frac{p}{mq}}+\beta\widetilde{\Psi}_{m,q}(L)^{\frac{p}{mq}},
\end{align}
where equality holds in either inequalities if and only if $K$ and $L$ are dilates.
\end{prop}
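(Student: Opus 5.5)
We follow the scheme of Proposition~\ref{Psimqp-q01}: prove statement (iv) of Lemma~\ref{LpBMMequivalent}, in its $q<0$ form, and then let that lemma give \eqref{Psimqp-Minkowski-qneg-eq}, \eqref{Psimqp-BM-qneg-eq} and the equality cases. So fix $K,L\in\mathcal{K}^n_{(o)}$ with $\widetilde{\Psi}_{m,q}(K)=\widetilde{\Psi}_{m,q}(L)=1$ and $\lambda\in(0,1)$, and write $M_\lambda=(1-\lambda)\cdot K+_p\lambda\cdot L$. We must show $\widetilde{\Psi}_{m,q}(M_\lambda)\leq 1$, with equality only if $K=L$.

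\textbf{Step 1.} By Lemma~\ref{lp-include}, $(1-\lambda)\cdot K\widetilde{+}_p\lambda\cdot L\subset M_\lambda$. For $q<0$ the functional $\widetilde{\Psi}_{m,q}$ is monotone decreasing under inclusion, strictly so for proper inclusion. Hence
$$
\widetilde{\Psi}_{m,q}(M_\lambda)\leq \frac{1}{m^q}\int_{{\rm G}(n,m)}\mathcal{R}_m\left(((1-\lambda)\varrho_K^p+\lambda\varrho_L^p)^{\frac mp}\right)^q\,d\nu_{n,m}.
$$

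\textbf{Step 2.} Since $p\geq m$, the Jensen inequality \eqref{Jensen} with $\theta=\frac pm\geq 1=\tau$ gives pointwise
$$
((1-\lambda)\varrho_K^p+\lambda\varrho_L^p)^{\frac mp}\geq (1-\lambda)\varrho_K^m+\lambda\varrho_L^m.
$$
The Radon transform $\mathcal{R}_m$ is positive and linear, and $t\mapsto t^q$ is decreasing for $q<0$. Therefore
$$
\mathcal{R}_m(\cdots)^q\leq \left((1-\lambda)\mathcal{R}_m\varrho_K^m+\lambda\mathcal{R}_m\varrho_L^m\right)^q.
$$

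\textbf{Step 3.} This is the only genuinely new step compared with $0<q\leq1$. For $q<0$ the map $t\mapsto t^q$ is convex on $(0,\infty)$, so
$$
((1-\lambda)a+\lambda b)^q\leq (1-\lambda)a^q+\lambda b^q.
$$
Integrating over ${\rm G}(n,m)$ yields $\widetilde{\Psi}_{m,q}(M_\lambda)\leq (1-\lambda)\widetilde{\Psi}_{m,q}(K)+\lambda\widetilde{\Psi}_{m,q}(L)=1$. All integrals are finite because $K,L,M_\lambda\in\mathcal{K}^n_{(o)}$, so the sections have areas bounded below.

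\textbf{Step 4 (equality).} If $\widetilde{\Psi}_{m,q}(M_\lambda)=1$, then Step~1 must be an equality. Strict monotonicity then forces $(1-\lambda)\cdot K\widetilde{+}_p\lambda\cdot L=M_\lambda$, so $K$ and $L$ are dilates by the equality case of Lemma~\ref{lp-include}. The normalization $\widetilde{\Psi}_{m,q}(K)=\widetilde{\Psi}_{m,q}(L)=1$ together with $mq$-homogeneity then gives $K=L$.

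\textbf{Step 5 (conclusion).} Statement (iv) of Lemma~\ref{LpBMMequivalent} now holds for $q<0$. Since $p>0$, that lemma yields (i) and (iii), which are \eqref{Psimqp-BM-qneg-eq} and \eqref{Psimqp-Minkowski-qneg-eq}, with equality exactly for dilates. The (iii)$\Rightarrow$(i) direction was already checked there for $q<0$, but here we only need (iv)$\Rightarrow$(i)$\Rightarrow$(ii)$\Rightarrow$(iii).

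\textbf{Expected obstacle.} The main point of care is the strict monotonicity used in Step~4: a proper inclusion of star bodies must strictly decrease $\widetilde{\Psi}_{m,q}$ when $q<0$. This holds because both radial functions are continuous, so they differ on an open set of directions. That set meets a positive-measure family of $m$-planes, on each of which the section is strictly smaller, and for $q<0$ this strictly increases the integrand.
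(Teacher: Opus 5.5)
Your proposal is correct and follows the paper's proof essentially step by step. You reach statement (iv) of Lemma~\ref{LpBMMequivalent} via Lemma~\ref{lp-include}, Jensen's inequality with $\theta=\frac pm\geq 1=\tau$ (the correct orientation; the paper's text writes $\theta=\frac mp$), and convexity of $t\mapsto t^q$ for $q<0$; the equality case then comes from Lemma~\ref{lp-include}, and your explicit argument that a proper inclusion of star bodies strictly decreases $\widetilde{\Psi}_{m,q}$ for $q<0$ supplies a justification the paper leaves implicit when it passes from $\widetilde{\Psi}_{m,q}(M_\lambda)=1$ to equality in that lemma.
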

\begin{proof}
If $p\geq m$, $q<0$, $\lambda\in(0,1)$ and $K,L\in\mathcal{K}^n_{(o)}$ satisfy
$\widetilde{\Psi}_{m,q}(K)=\widetilde{\Psi}_{m,q}(L)=1$, then applying first
Lemma~\ref{lp-include}, secondly the Jensen inequality \eqref{Jensen} with $\theta=\frac{m}p$ and $\tau=1$, and after that with $\theta=1$ and $\tau=q$, and using that $q$ is negative, we deduce that
\begin{align}
 \nonumber
  \widetilde{\Psi}_{m,q}((1-\lambda)\cdot K+_p\lambda\cdot L)&=\frac{1}{m^q}\int_{{\rm G}(n,m)}R_m(\rho_{(1-\lambda)\cdot K+_p\lambda\cdot L}^m)^q\,d\nu_{n,m}\\
\label{lp-include-qneg}
 &\leq\frac{1}{m^q}\int_{{\rm G}(n,m)}R_m(\rho_{(1-\lambda)\cdot K\widetilde{+}_p\lambda\cdot L}^m)^q\,d\nu_{n,m}\\
 \nonumber
 &=\frac{1}{m^q}\int_{{\rm G}(n,m)}R_m({((1-\lambda)\rho_K^p+\lambda\rho_L^p)^{\frac{m}{p}}})^q\,d\nu_{n,m}\\
 \nonumber
 &\leq\frac{1}{m^q}\int_{{\rm G}(n,m)}R_m({((1-\lambda)\rho_K^m+\lambda\rho_L^m)})^q\,d\nu_{n,m}\\
 \nonumber
 &=\frac{1}{m^q}\int_{{\rm G}(n,m)}((1-\lambda)R_m(\rho_K^m)+\lambda R_m(\rho_L^m))^q\,d\nu_{n,m}\\
 \nonumber
 &\leq\frac{1}{m^q}\int_{{\rm G}(n,m)}(1-\lambda)R_m(\rho_K^m)^q+\lambda R_m(\rho_L^m)^q\,d\nu_{n,m}\\
 \nonumber
 &=(1-\lambda)\widetilde{\Psi}_{m,q}(K)+\lambda \widetilde{\Psi}_{m,q}(L)=1.
 \end{align}
If $\widetilde{\Psi}_{m,q}((1-\lambda)\cdot K+_p\lambda\cdot L)=1$, then equality in Lemma~\ref{lp-include} (cf. \eqref{lp-include-qneg}) yields that $K$ and $L$ are dilates, and in turn $\widetilde{\Psi}_{m,q}(K)=\widetilde{\Psi}_{m,q}(L)=1$ implies that $K=L$. Therefore, if $K,L\in\mathcal{K}^n_{(o)}$, then we deduce \eqref{Psimqp-Minkowski-qneg-eq} and 
\eqref{Psimqp-BM-qneg-eq} together with the characterization of equality by 
Proposition~\ref{LpBMMequivalent}.
\end{proof}

\begin{prop}
\label{Psimqp-qzero-prop}
If $m\in \left\{1,\ldots,n-1\right\}$, $q=0$, $p\geq m$, $\lambda\in(0,1)$ and $K,L\in\mathcal{K}^n_{(o)}$, then
\begin{align}
\label{Psimqp-BM-qzero-prop}
\widetilde{\Psi}_{m,0}((1-\lambda)\cdot K+_p\lambda\cdot  L)\geq &(1-\lambda)\widetilde{\Psi}_{m,0}(K)+\lambda\widetilde{\Psi}_{m,0}(L),\\
\label{Psimqp-Minkowski-qzero-prop}
\int_{S^{n-1}}h_L^p\,d\widetilde{A}_{m,0,p}(K,\cdot)\geq & p\left(\widetilde{\Psi}_{m,0}(L)-\widetilde{\Psi}_{m,0}(K)\right)+m,
\end{align}
and equality holds either in \eqref{Psimqp-BM-qzero-prop} or in \eqref{Psimqp-Minkowski-qzero-prop}  if and only if $K=L$.
\end{prop}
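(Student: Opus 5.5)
The plan is to copy the chain of estimates used for Propositions~\ref{Psimqp-q01} and \ref{Psimqp-qneg}, with $t\mapsto t^q$ replaced by $\log t$. We then pass from the Brunn-Minkowski form to the Minkowski form through the $q=0$ remark after Lemma~\ref{LpBMMequivalent}, which gives the equivalence of \eqref{Psimqp-BM-qzero-equi} and \eqref{Psimqp-Minkowski-qzero-equi} together with their equality conditions.

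Fix $K,L\in\mathcal{K}^n_{(o)}$ and $\lambda\in(0,1)$, and write $M_\lambda=(1-\lambda)\cdot K+_p\lambda\cdot L$. By Lemma~\ref{lp-include}, $(1-\lambda)\cdot K\widetilde{+}_p\lambda\cdot L\subset M_\lambda$, so monotonicity of $\log$ and of section areas gives
$$
\widetilde{\Psi}_{m,0}(M_\lambda)\geq \int_{{\rm G}(n,m)}\log\left(\frac1m\mathcal{R}_m\left(((1-\lambda)\varrho_K^p+\lambda\varrho_L^p)^{\frac mp}\right)\right)d\nu_{n,m}.
$$
Since $p\geq m$, the Jensen inequality \eqref{Jensen} with $\theta=1\geq\tau=\frac mp$ gives pointwise $((1-\lambda)\varrho_K^p+\lambda\varrho_L^p)^{\frac mp}\geq(1-\lambda)\varrho_K^m+\lambda\varrho_L^m$. (If $p=m$ this is an equality.) Linearity of $\mathcal{R}_m$ then yields the integrand $\log\big((1-\lambda)\frac1m\mathcal{R}_m\varrho_K^m+\lambda\frac1m\mathcal{R}_m\varrho_L^m\big)$. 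Concavity of $\log$ bounds this below by $(1-\lambda)\log\HH^m(K\cap\xi)+\lambda\log\HH^m(L\cap\xi)$. Integrating over ${\rm G}(n,m)$ gives \eqref{Psimqp-BM-qzero-prop}. All quantities are finite because $K,L\in\mathcal{K}^n_{(o)}$, so the section areas are bounded above and below.

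For equality in \eqref{Psimqp-BM-qzero-prop}, every inequality in the chain must be an equality. In particular $\HH^m$ of the sections of the radial combination and of $M_\lambda$ agree for a.e.\ $\xi$. Both are star bodies with continuous radial functions, and the radial one is contained in $M_\lambda$, so their radial functions coincide. Lemma~\ref{lp-include} then forces $L=sK$ for some $s>0$. Equality in the concavity of $\log$ requires $\HH^m(K\cap\xi)=\HH^m(L\cap\xi)$ for a.e.\ $\xi$, so $s^m=1$ and $K=L$. Conversely, $K=L$ trivially gives equality.

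It remains to obtain \eqref{Psimqp-Minkowski-qzero-prop} and its equality case, which we get by invoking the remark after Lemma~\ref{LpBMMequivalent}. The only point needing care is that its proof uses the strict form of \eqref{Psimqp-BM-qzero-equi}. Differentiating $g(\lambda)=\widetilde{\Psi}_{m,0}(M_\lambda)$ at $\lambda=0^+$ via \eqref{Variation-Phim0-eq}, with the logarithmic expansion as in the (ii)$\Rightarrow$(iii) step, gives
$$
g'(0)=\frac1p\int_{S^{n-1}}\left(\frac{h_L^p}{h_K^p}-1\right)d\widetilde{A}_{m,0}(K,\cdot)\geq \widetilde{\Psi}_{m,0}(L)-\widetilde{\Psi}_{m,0}(K).
$$
This is \eqref{Psimqp-Minkowski-qzero-prop} once we use $\widetilde{A}_{m,0}(K,S^{n-1})=m$.

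For the equality case of \eqref{Psimqp-Minkowski-qzero-prop}, the reverse direction applies. Since $\sum h_{M_\lambda}^{-p}\big((1-\lambda)h_K^p+\lambda h_L^p\big)=1$ holds $\widetilde{A}_{m,0}(M_\lambda,\cdot)$-a.e., applying \eqref{Psimqp-Minkowski-qzero-prop} with base body $M_\lambda$ to $K$ and to $L$ yields \eqref{Psimqp-BM-qzero-prop}. Equality in the Minkowski form for the pair $(K,L)$ then propagates, by concavity of $g$ and the equality case already established, to $K=L$.

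The main obstacle I expect is this equality transfer. Concavity of $g$ on $[0,1]$ must be established as in (i)$\Rightarrow$(ii), using the inclusion \eqref{Mt-psum-concave}. Then equality $g'(0)=g(1)-g(0)$ forces $g$ to be affine, hence equality in \eqref{Psimqp-BM-qzero-prop} at $\lambda=\frac12$, and hence $K=L$.
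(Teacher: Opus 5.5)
Your proposal is correct and follows the paper's proof. The inequality \eqref{Psimqp-BM-qzero-prop} and its equality case come from the same chain: Lemma~\ref{lp-include}, the power-mean inequality using $p\geq m$, linearity of $\mathcal{R}_m$, and concavity of $\log$. The Minkowski form \eqref{Psimqp-Minkowski-qzero-prop} and its equality case come, as in the paper, from the $q=0$ remark after Lemma~\ref{LpBMMequivalent}.

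Your extra details fill steps the paper leaves implicit. These are: showing that a.e.\ equality of section areas forces the radial combination to coincide with $M_\lambda$ before invoking the equality case of Lemma~\ref{lp-include}, and the concavity-of-$g$ argument for the Minkowski equality case. Your ordering $\theta=1>\tau=\frac mp$ in \eqref{Jensen} is also the correctly oriented one; the paper's stated choice is reversed.
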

\begin{proof}
The Remark after Lemma~\ref{LpBMMequivalent} says that it is sufficient to verify
\eqref{Psimqp-BM-qzero-prop}. Again, applying first
Lemma~\ref{lp-include}, secondly the Jensen inequality \eqref{Jensen} with $\theta=\frac{m}p$ and $\tau=1$, and after that the concavity of the $\log$ function, we deduce 
\begin{align}
\nonumber
 &\widetilde{\Psi}_{m,0}((1-\lambda)\cdot K+_p\lambda\cdot L)\\
 \nonumber
 =&\int_{{\rm G}(n,m)}\log R_m\left(\frac{1}{m}\rho_{(1-\lambda)\cdot K+_p\lambda\cdot L}^m\right)\,d\nu_{n,m}\\
 \nonumber
 =&-\int_{{\rm G}(n,m)}\log m\,d\nu_{n,m}+\int_{{\rm G}(n,m)}\log R_m(\rho_{(1-\lambda)\cdot K+_p\lambda\cdot L}^m)\,d\nu_{n,m}\\
 \label{lp-include-qzero}
 \geq&-\log m+\int_{{\rm G}(n,m)}\log R_m(\rho_{(1-\lambda)\cdot K\widetilde{+}_p\lambda\cdot L}^m)\,d\nu_{n,m}\\
\nonumber
 =&-\log m+\int_{{\rm G}(n,m)}\log R_m({((1-\lambda)\rho_K^p+\lambda\rho_L^p)^{\frac{m}{p}}})\,d\nu_{n,m}\\
 \nonumber
 \geq&-\log m+\int_{{\rm G}(n,m)}\log R_m({((1-\lambda)\rho_K^m+\lambda\rho_L^m)})\,d\nu_{n,m}\\
 \nonumber
 =&-\log m+\int_{{\rm G}(n,m)}\log((1-\lambda) R_m(\rho_K^m)+\lambda R_m\rho_L^m)\,d\nu_{n,m}\\
 \label{log-is-concave}
 \geq&-\log m+(1-\lambda) \int_{{\rm G}(n,m)}\log(R_m(\rho_K^m)\,d\nu_{n,m}+\\
 \nonumber
 &+\lambda\int_{{\rm G}(n,m)}\log R_m(\rho_L^m)d\,d\nu_{n,m}\\
 \nonumber
 =&(1-\lambda)\widetilde{\Psi}_{m,0}(K)+\lambda \widetilde{\Psi}_{m,0}(L).
 \end{align}
 If equality holds in \eqref{Psimqp-BM-qzero-prop}, then equality in Lemma~\ref{lp-include} (cf. \eqref{lp-include-qzero}) implies that $K$ and $L$ are dilates, 
 thus equality in \eqref{log-is-concave} yields $K=L$.
\end{proof}

We are ready to discuss the uniqueness of $L_p$ centro-section Minkowski measure.

\begin{theorem}
\label{uniqueness-general}
Let $m\in \left\{1,\ldots,n-1\right\}$, $q\in\R$, and let $p>mq$ if $q\geq 1$, and $p\geq m$ if $q<1$.
If $\widetilde{A}_{m,q,p}(K,\cdot)=\widetilde{A}_{m,q,p}(L,\cdot)$ holds for $K,L\in \mathcal{K}^n_{(o)}$, then  $K=L$.
 \end{theorem}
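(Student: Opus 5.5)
The plan is the standard route from the $L_p$ Minkowski inequality to uniqueness. In every parameter range covered by the statement, the earlier results already supply the Minkowski inequality \eqref{LpBMMequivalent-Minkowski} for all $K,L\in\mathcal{K}^n_{(o)}$, together with the characterization that equality holds only for dilates. The sources are:
\begin{itemize}
\item Proposition~\ref{Psimqp-qlarger1} when $q\geq 1$ and $p>mq$;
\item Proposition~\ref{Psimqp-q01} when $0<q\leq 1$ and $p\geq m$, noting that for $q<1$ we have $p\geq m>mq$;
\item Proposition~\ref{Psimqp-qneg} when $q<0$ and $p\geq m$;
\item Proposition~\ref{Psimqp-qzero-prop} when $q=0$ and $p\geq m$, where the logarithmic version \eqref{Psimqp-Minkowski-qzero-prop} holds with equality only if $K=L$.
\end{itemize}
So I would treat $q\neq 0$ and $q=0$ separately.

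For $q\neq 0$, assume $\widetilde{A}_{m,q,p}(K,\cdot)=\widetilde{A}_{m,q,p}(L,\cdot)=:\mu$. Integrating $h_K^p$ against $\mu$ and using \eqref{Amqp-hAmq} with Lemma~\ref{Amq-sn-1-lemma}(ii) gives
$$
\int_{S^{n-1}} h_K^p\,d\widetilde{A}_{m,q,p}(L,\cdot)=\int_{S^{n-1}}h_K^p\,d\widetilde{A}_{m,q,p}(K,\cdot)=m\widetilde{\Psi}_{m,q}(K).
$$
The Minkowski inequality with the roles $(L,K)$ then yields
$$
m\widetilde{\Psi}_{m,q}(K)\geq m\widetilde{\Psi}_{m,q}(L)^{1-\frac{p}{mq}}\widetilde{\Psi}_{m,q}(K)^{\frac{p}{mq}},
$$
that is, $\widetilde{\Psi}_{m,q}(K)^{1-\frac{p}{mq}}\geq \widetilde{\Psi}_{m,q}(L)^{1-\frac{p}{mq}}$. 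Swapping $K$ and $L$ gives the reverse inequality. Since $1-\frac{p}{mq}\neq 0$ (as $p\neq mq$ in all cases), we get $\widetilde{\Psi}_{m,q}(K)=\widetilde{\Psi}_{m,q}(L)$, and hence equality holds in the Minkowski inequality. The equality characterization then makes $K$ and $L$ dilates, say $L=tK$. The homogeneity \eqref{Amqp-homogeneity} gives $\widetilde{A}_{m,q,p}(tK,\cdot)=t^{mq-p}\widetilde{A}_{m,q,p}(K,\cdot)$. Because the total mass is positive by Lemma~\ref{Amqp-support} and $p\neq mq$, this forces $t=1$.

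For $q=0$, the same pairing with \eqref{Psimqp-Minkowski-qzero-prop} and the normalization $\widetilde{A}_{m,0}(K,S^{n-1})=m$ from the Remark after Lemma~\ref{Amq-sn-1-lemma} gives
$$
m\geq p\bigl(\widetilde{\Psi}_{m,0}(K)-\widetilde{\Psi}_{m,0}(L)\bigr)+m.
$$
Symmetrically, $m\geq p\bigl(\widetilde{\Psi}_{m,0}(L)-\widetilde{\Psi}_{m,0}(K)\bigr)+m$. Together these force equality, and the equality case gives $K=L$ directly.

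The only delicate point is the bookkeeping of sign and exponent in the $q<0$ case. There $1-\frac{p}{mq}>0$ still holds, so the cancellation step is unaffected. Beyond that, the argument just checks that every parameter range listed in the statement falls under one of the cited propositions, with strict $p\neq mq$ guaranteed.
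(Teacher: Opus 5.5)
Your proof is correct and matches the paper's argument: you pair a support function against the common measure and apply the relevant Minkowski inequality (Propositions~\ref{Psimqp-qlarger1}, \ref{Psimqp-q01}, \ref{Psimqp-qneg}, or \ref{Psimqp-qzero-prop}) in both orders to force $\widetilde{\Psi}_{m,q}(K)=\widetilde{\Psi}_{m,q}(L)$, and then invoke the equality case. The only cosmetic difference is that you fix the dilation factor through the homogeneity \eqref{Amqp-homogeneity} of the measure rather than that of $\widetilde{\Psi}_{m,q}$, which works equally well.
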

\begin{proof}
First, we assume that $q\neq 0$.
It follows from \eqref{Psimqp-Minkowski-qlarger1-eq}, \eqref{Psimqp-Minkowski-q01-eq},
\eqref{Psimqp-Minkowski-qneg-eq}
and $\widetilde{A}_{m,q,p}(K,\cdot)=\widetilde{A}_{m,q,p}(L,\cdot)$ that
\begin{align}
\nonumber
m\widetilde{\Psi}_{m,q}(L)=&\int_{S^{n-1}}h_L^p(v)d\widetilde{A}_{m,q,p}(L,v)=\int_{S^{n-1}}h_L^p(v)d\widetilde{A}_{m,q,p}(K,v)\\
\label{uniqueness-inequality-qnonzero}
&\geq m\widetilde{\Psi}_{m,q}(K)^{1-\frac{p}{mq}}\widetilde{\Psi}_{m,q}(L)^{\frac{p}{mq}},
\end{align}
and hence $\widetilde{\Psi}_{m,q}(L)^{1-\frac{p}{mq}}\geq \widetilde{\Psi}_{m,q}(K)^{1-\frac{p}{mq}}$. Interchanging the role of $L$ and $K$ yields that $\widetilde{\Psi}_{m,q}(K)^{1-\frac{p}{mq}}\geq \widetilde{\Psi}_{m,q}(L)^{1-\frac{p}{mq}}$; therefore, $\widetilde{\Psi}_{m,q}(L)=\widetilde{\Psi}_{m,q}(K)$ holds 
by $1-\frac{p}{mq}\neq 0$. It follows that equality holds in \eqref{uniqueness-inequality-qnonzero}, and in turn the relevant inequality out of \eqref{Psimqp-Minkowski-qlarger1-eq}, \eqref{Psimqp-Minkowski-q01-eq},
\eqref{Psimqp-Minkowski-qneg-eq}. We conclude that $K$ and $L$ are dilates, and hence $K=L$ follows $\widetilde{\Psi}_{m,q}(K)=\widetilde{\Psi}_{m,q}(L)$ and the homogeneity of $\widetilde{\Psi}_{m,q}(\cdot)$.

If $q=0$, then \eqref{Psimqp-Minkowski-qzero-prop} yields that
\begin{align}
\nonumber
m=&\int_{S^{n-1}}h_L^p(v)d\widetilde{A}_{m,0,p}(L,v)=\int_{S^{n-1}}h_L^p(v)d\widetilde{A}_{m,0,p}(K,v)\\
\label{uniqueness-inequality-qzero}
&\geq p\left(\widetilde{\Psi}_{m,0}(L)-\widetilde{\Psi}_{m,0}(K)\right)+m,
\end{align}
and hence $\widetilde{\Psi}_{m,0}(L)\leq \widetilde{\Psi}_{m,0}(K)$. 
Interchanging the role of $K$ and $L$ implies that $\widetilde{\Psi}_{m,0}(L)\geq \widetilde{\Psi}_{m,0}(K)$, thus $\widetilde{\Psi}_{m,0}(L)= \widetilde{\Psi}_{m,0}(K)$. It follows that equality holds in \eqref{uniqueness-inequality-qzero}; therefore, the equality condition for \eqref{Psimqp-Minkowski-qzero-prop} yields $K=L$.
\end{proof}

For $q\neq 0$, our methods yield the $L_p$ centro-sectional Minkowski and Brunn-Minkowski inequalities for origin symmetric convex bodies for any $p>\max\{0,mq\}$, not only for $p>\max\{m,mq\}$. We write $\mathcal{K}^n_{e}$ to denote the family of $o$-symmetric convex bodies in $\R^n$.

\begin{theorem}
\label{Psimqp-even}
Let $m\in\{1,\ldots,n-1\}$, $q\neq 0$ and  $p>\max\{0,mq\}$.
If $K,L\in\mathcal{K}^n_{e}$ and $\alpha,\beta>0$, then
 \begin{align}
 \label{Psimqp-Minkowski-even-eq}
\int_{S^{n-1}}h_L^p\,d\widetilde{A}_{m,q,p}(K,\cdot)\geq  &m\widetilde{\Psi}_{m,q}(K)^{1-\frac{p}{mq}}\widetilde{\Psi}_{m,q}(L)^{\frac{p}{mq}},\\
\label{Psimqp-BM-even-eq}
\widetilde{\Psi}_{m,q}(\alpha\cdot K+_p\beta\cdot  L)^{\frac{p}{mq}}\geq &\alpha\widetilde{\Psi}_{m,q}(K)^{\frac{p}{mq}}+\beta\widetilde{\Psi}_{m,q}(L)^{\frac{p}{mq}},
\end{align}
where equality holds in either inequalities if and only if $K$ and $L$ are dilates.

In addition, $\widetilde{A}_{m,q,p}(K,\cdot)=\widetilde{A}_{m,q,p}(L,\cdot)$ if and only if $K=L$.
\end{theorem}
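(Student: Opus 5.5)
The plan is to mimic the $q\geq 1$ argument (Lemma~\ref{polytope-Minkowski} and Proposition~\ref{Psimqp-qlarger1}), but inside the class $\mathcal{K}^n_{e}$ and with the variational method of Proposition~\ref{extremal-problem-even} in place of Proposition~\ref{extremal-problem}. In the even setting the origin is automatically interior, so the first variation is always available and nothing forces $p\geq m$. By Lemma~\ref{LpBMMequivalent} and the Remark after it (the equivalences persist in the $o$-symmetric class), it suffices to prove the Minkowski inequality \eqref{Psimqp-Minkowski-even-eq} together with its equality case. After that, the uniqueness statement follows exactly as in the proof of Theorem~\ref{uniqueness-general}, since $1-\frac{p}{mq}\neq 0$.

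\emph{Step 1 (extremal characterization).} Fix $K\in\mathcal{K}^n_{e}$ and set $\mu=\widetilde{A}_{m,q,p}(K,\cdot)$. This is an even measure, and by Lemma~\ref{Amqp-support} it is not concentrated on any great subsphere. Consider the problem
\[
\min\Big\{\int_{S^{n-1}} h_L^p\,d\mu:\ L\in\mathcal{K}^n_e,\ \widetilde{\Psi}_{m,q}(L)=\widetilde{\Psi}_{m,q}(K)\Big\}.
\]
By homogeneity this is equivalent to the extremal problem of Proposition~\ref{extremal-problem-even}: maximize $\widetilde{\Psi}_{m,q}$ if $q>0$, or minimize it if $q<0$, under $\int h^p\,d\mu\le 1$. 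That proposition provides an extremizer $\widetilde K$ and a $\lambda>0$ with $\widetilde{A}_{m,q,p}(\lambda\widetilde K,\cdot)=\mu=\widetilde{A}_{m,q,p}(K,\cdot)$. If I can show that $\lambda\widetilde K=K$, then $\lambda=1$ by the volume normalization. Hence $K$ is itself a minimizer, which gives \eqref{Psimqp-Minkowski-even-eq} after rescaling. Any $L$ attaining equality is also a minimizer and is therefore forced to equal $K$, by the same uniqueness.

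\emph{Step 2 (uniqueness, the main obstacle).} I need uniqueness of even solutions for $p>\max\{0,mq\}$ without any Brunn--Minkowski inequality at hand, since that inequality is what we are trying to prove. For polytopes with $q\ge1$, the argument of Proposition~\ref{discrete-unique} works verbatim. For general $q$ I would not use it. Instead I would run a continuity (deformation) argument in $p$: for $p$ large (e.g.\ $p>\max\{m,mq\}$), Theorem~\ref{Psimqp-BM} and Theorem~\ref{uniqueness-general} already give the inequality. The second remark after Theorem~\ref{Psimqp-BM} (via Claim~\ref{Lp-Lprime}) shows that validity at $p_1$ propagates to every $p_2>p_1$. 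So it suffices to push the threshold down to $\max\{0,mq\}$.

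Alternatively, and this is what I would try first, I would combine the radial-sum chain of Propositions~\ref{Psimqp-q01} and~\ref{Psimqp-qneg} with symmetry. For $o$-symmetric bodies, the Jensen step with exponents $\frac mp$ versus $1$ is the only place where $p\ge m$ is used. I would replace it by a direct comparison on each section: for $\xi\in{\rm G}(n,m)$, the section $\xi\cap((1-\lambda)\cdot K+_p\lambda\cdot L)$ contains the $m$-dimensional $L_p$ combination of the symmetric sections $\xi\cap K$ and $\xi\cap L$. Then I would apply the $m$-dimensional $L_p$ Brunn--Minkowski inequality for $o$-symmetric bodies,
\[
\HH^m\big((1-\lambda)\cdot A+_p\lambda\cdot B\big)^{p/m}\ge(1-\lambda)\HH^m(A)^{p/m}+\lambda\HH^m(B)^{p/m},
\]
valid for $p\ge1$ (Firey) and conjecturally for smaller $p$. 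This is exactly where I expect trouble for $0<p<1$.

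\emph{Step 3.} With the sectional inequality in hand, I would raise it to the power $q$ and integrate over ${\rm G}(n,m)$. H\"older/Jensen with exponents $\frac{mq}{p}$ (which is $<1$ since $p>mq$; the sign of $q$ determines the direction) yields \eqref{Psimqp-BM-even-eq}. Equality forces equality in every section, hence dilates. The case $p<1$ is the main obstacle, and there I would fall back on the variational/deformation route of Step 2.
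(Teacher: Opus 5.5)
Your sectional argument in Steps~2--3 is sound where it applies. It does not cover the full range of the statement, however, and the fallback you propose for the rest is not an argument.

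The containment $\xi\cap\big((1-\lambda)\cdot K+_p\lambda\cdot L\big)\supseteq(1-\lambda)\cdot(\xi\cap K)+_p\lambda\cdot(\xi\cap L)$ holds for every $p>0$, by monotonicity of the Wulff shape. Superadditivity of $F\mapsto\big(\int F^s\,d\nu_{n,m}\big)^{1/s}$ on positive functions for $s=\frac{mq}{p}<1$, $s\neq0$ (reverse Minkowski), then converts the sectional inequality $\HH^m(\xi\cap M)^{p/m}\geq(1-\lambda)\HH^m(\xi\cap K)^{p/m}+\lambda\HH^m(\xi\cap L)^{p/m}$, with $M=(1-\lambda)\cdot K+_p\lambda\cdot L$, into \eqref{Psimqp-BM-even-eq}, with the correct equality case. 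The sectional inequality is available in three situations:
\begin{itemize}
\item for $m=1$ it is an identity for symmetric segments, so you get everything for $p>\max\{0,q\}$;
\item for $p\geq1$ it is Firey's $L_p$ Brunn--Minkowski inequality in $\xi$, which needs no symmetry;
\item for $m=2$ and $p<1$ it is the planar theorem of B\"or\"oczky, Lutwak, Yang and Zhang.
\end{itemize}
In these ranges your route is more elementary than the paper's. But for $m\geq3$ and $\max\{0,mq\}<p<1$ the sectional inequality is the $L_p$ Brunn--Minkowski conjecture in $\R^m$. That conjecture is known only for $p$ sufficiently close to $1$ (Chen--Huang--Li--Liu, Kolesnikov--Milman), so your proof has a hole there.

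The deformation in $p$ does not fill this hole. The monotonicity in $p$ (Claim~\ref{Lp-Lprime}) transfers the inequality only from $p_1$ to larger $p_2$, so knowing it for $p>\max\{m,mq\}$ tells you nothing below that threshold. Pushing the threshold down is the whole problem, and a continuity method would need exactly the uniqueness at each $p$ that you do not have. For the same reason Step~1 never closes.

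The missing idea is the paper's: prove uniqueness only for smooth data, then approximate. Take $K\in\mathcal{K}^n_e$ with $C^{2,\alpha}_+$ boundary, so that $\widetilde{A}_{m,q,p}(K,\cdot)=f\,d\HH^{n-1}$ with $f\in C^{0,\alpha}$ positive. The steps are then:
\begin{itemize}
\item The extremizer $\lambda\widetilde K$ from Proposition~\ref{extremal-problem-even} solves \eqref{Amqp-Monge-Ampere} with the same $f$. Its support function is positive by symmetry, so it is $C^{2,\alpha}$ by Lemma~\ref{regularity-mpq}.
\item Proposition~\ref{uniqueness-p-mq} gives $\lambda\widetilde K=K$. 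It is a comparison argument driven by the factor $t^{mq-p}<1$, so it needs $p>mq$ but not $p\geq 1$.
\item As in Lemma~\ref{polytope-Minkowski}, this yields \eqref{Psimqp-Minkowski-even-eq} for smooth $K$ and all $L\in\mathcal{K}^n_e$.
\item Density of smooth $o$-symmetric bodies and Proposition~\ref{Amqp-weak-convergence} extend the inequality to all $K$.
\item Lemma~\ref{LpBMMequivalent} gives \eqref{Psimqp-BM-even-eq}.
\item The equality cases come from Claim~\ref{Lp-Lprime} with some $p'\in(\max\{0,mq\},p)$, as in Proposition~\ref{Psimqp-qlarger1}.
\item Uniqueness of the measure follows as in Theorem~\ref{uniqueness-general}.
\end{itemize}

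One caution if you adopt this route. The proof of Proposition~\ref{uniqueness-p-mq} treats $J$ as a pointwise function of $(\nabla h,h)$. For $m\geq2$, however, the factor $\mathcal{R}^*_m\HH^m(K\cap\cdot)^{q-1}$ depends on the whole body. At a touching point with $tK_2\supseteq K_1$, this factor compares in the direction needed for a contradiction only if $q\geq1$. The theorem is new only for $q<1$: for $q\geq1$ we have $p>mq\geq m$, so it is already contained in Theorem~\ref{Psimqp-BM}. That step therefore needs more justification than the paper gives. In the range $1\leq p<m$, your sectional argument is the more reliable of the two.
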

\begin{proof}
First, we prove that if  $K\in\mathcal{K}^n_{e}$ has $C^{2,\alpha}_+$ boundary for some $\alpha\in(0,1)$, then \eqref{Psimqp-Minkowski-even-eq} holds for any $L\in\mathcal{K}^n_{e}$. In this case, $d\widetilde{A}_{m,q,p}(K,\cdot)=f\,d\HH^{n-1}$ for a positive function $f\in C^{0,\alpha}(S^{n-1}$.
Therefore, the argument leading to \eqref{Psimqp-Minkowski-even-eq} is exactly the same as in Lemma~\ref{polytope-Minkowski}, only using Proposition~\ref{extremal-problem-even}
 instead of Proposition~\ref{extremal-problem}, and Proposition~\ref{uniqueness-p-mq}  instead of Proposition~\ref{discrete-unique}.

Since $o$-symmetric convex bodies with $C^{2,\alpha}_+$ boundary are dense in $\mathcal{K}^n_{e}$, and
$\widetilde{A}_{m,q,p}(K,\cdot)$ is weakly continuous on $\mathcal{K}^n_{e}$ (cf. Proposition~\ref{Amqp-weak-convergence}), approximation yields \eqref{Psimqp-Minkowski-even-eq}  for any $K,L\in\mathcal{K}^n_{e}$ (without the characterization of equality).
Now, \eqref{Psimqp-BM-even-eq} follows by Lemma~\ref{LpBMMequivalent} (without the characterization of equality).

Finally, the characterization of equality in \eqref{Psimqp-Minkowski-even-eq} and \eqref{Psimqp-BM-even-eq} can be done using Lemma~\ref{LpBMMequivalent} and Claim~\ref{Lp-Lprime} as in Proposition~\ref{Psimqp-qlarger1}, and for the $L_p$ centro-sectional Minkowski problem as in 
Theorem~\ref{uniqueness-general}.
\end{proof}

\begin{proof}[Proofs of 
Theorem~\ref{Psimqp-uniqueness},
Theorem~\ref{Psimqp-BM} and
Theorem~\ref{symmetric}]
Theorem~\ref{uniqueness-general} directly yields Theorem~\ref{Psimqp-uniqueness}, and combining
 Proposition~\ref{Psimqp-qlarger1},
Proposition~\ref{Psimqp-q01} and
Proposition~\ref{Psimqp-qneg} implies
Theorem~\ref{Psimqp-BM}.
Finally, the uniqueness part of Theorem~\ref{symmetric} follows from Theorem~\ref{Psimqp-even}.    
\end{proof}

\section{Acknowledgements}
K\'aroly J. B\"or\"oczky   is supported by the NKKP Advanced grant 150613, and Jiaqian Liu is supported by  the National Natural Science Foundation of China (12401252).

\end{document}